\theoremstyle{plain}
\newtheorem{theorem}[equation]{Theorem}
\newtheorem*{theorem*}{Theorem}
\newtheorem{corollary}[equation]{Corollary}
\newtheorem{lemma}[equation]{Lemma}
\newtheorem{proposition}[equation]{Proposition}
\newtheorem{fact}[equation]{Fact}
\newtheorem*{fact*}{Fact}
\newtheorem*{question*}{Question}
\newtheorem{main}{Theorem}
\newtheorem{maintheorem}[main]{Theorem}
\theoremstyle{remark}
\newtheorem*{remark}{Remark}
\theoremstyle{definition}
\newtheorem{definition}[equation]{Definition}
\newtheorem*{standing hypothesis}{Standing Hypothesis}
\newcommand{\thmref}[1]{Theorem~\ref{#1}}
\newcommand{\corref}[1]{Corollary~\ref{#1}}
\newcommand{\lemref}[1]{Lemma~\ref{#1}}
\newcommand{\propref}[1]{Proposition~\ref{#1}}
\newcommand{\itemrefstar}[1]{(\ref*{#1})}
\newcommand{\secref}[1]{Section~\ref{#1}}
\newcommand{\defn}[1]{\emph{#1}}
\newcommand{\N}{\mathbb{N}}
\newcommand{\Z}{\mathbb{Z}}
\newcommand{\R}{\mathbb{R}}
\renewcommand{\H}{\mathbb{H}}
\renewcommand{\setminus}{\smallsetminus}
\DeclareMathOperator{\injrad}{injrad}
\DeclareMathOperator{\diam}{diam}
\DeclareMathOperator{\id}{id}
\DeclareMathOperator{\Isom}{Isom}
\newcommand{\slashfrac}[2]{{#1} / {#2}}
\newcommand{\biggslashfrac}[2]{{#1} \bigg/ {#2}}
\newcommand{\set}[1]{\left\{#1\right\}}
\newcommand{\setp}[2]{\left\{#1 : #2\right\}}
\newcommand{\family}[1]{\left(#1\right)}
\newcommand{\abs}[1]{\left| #1 \right|}
\newcommand{\norm}[1]{\left\| #1 \right\|}
\newcommand{\smallnorm}[1]{\| #1 \|}
\DeclareMathOperator*{\fakelim}{\widetilde \lim}
\newcommand{\cl}[1]{\overline{#1}}
\newcommand{\bd}{\partial}
\newcommand{\double}[1]{#1 \times #1}
\newcommand{\dbX}{\double{\bd X}}
\newcommand{\Reg}{\mathcal R}
\newcommand{\RE}{\Reg^E}
\newcommand{\Zerowidth}{\mathcal{Z}}
\newcommand{\lmod}{\backslash}
\newcommand{\modgp}[2]{#2 \lmod #1}%{#1 \rmod #2}
\newcommand{\modG}[1]{\modgp{#1}{\Gamma}}
\DeclareMathOperator{\pr}{pr}%{\pi}
\DeclareMathOperator{\emap}{E}
\DeclareMathOperator{\A}{\mathcal A}
\newcommand{\G}{\Gamma}
\newcommand{\g}{\gamma}
\DeclareMathOperator{\dT}{d_T}
\renewcommand{\epsilon}{\varepsilon}
\DeclareMathOperator{\width}{width}
\DeclareMathOperator{\Limitset}{\Lambda}
\DeclareMathOperator{\BT}{B_T}
\DeclareMathOperator{\Bcone}{B_{\rho}}
\newcommand{\length}[1]{\abs{#1}}
\DeclareMathOperator{\CAT}{CAT}
\newcommand{\ceil}[1]{\lceil #1 \rceil}
\newcommand{\floor}[1]{\lfloor #1 \rfloor}
\newcommand{\card}[1]{\# {#1}}
\newcommand{\Aset}{\ensuremath \mathfrak{v}}
\newcommand{\Bset}{\ensuremath \mathfrak{w}}
\newcommand{\Bu}{\ensuremath \Bset_{\rm unclipped}}
\newcommand{\Iu}{\ensuremath I^{\rm unclipped}}
\newcommand{\Nu}{\ensuremath N^{\rm unclipped}}
\newcommand{\Aed}{\ensuremath \Aset_{\epsilon,\delta}}
\newcommand{\Azd}{\ensuremath \Aset_{0,\delta}}
\newcommand{\Ced}{\ensuremath C_{\epsilon,\delta}}
\newcommand{\Cad}{\ensuremath C_{\alpha,\delta}}
\newcommand{\Cedpn}{\ensuremath C_{\epsilon,\delta'_n}}
\newcommand{\Iedt}{\ensuremath I_{\epsilon,\delta,t}}
\newcommand{\Iperedt}{\ensuremath I_{\epsilon,\delta,t}^{\rm periodic}}
\newcommand{\Nedt}{\ensuremath N_{\epsilon,\delta,t}}
\newcommand{\Gedt}{\ensuremath N_{\epsilon,\delta,t}^{\rm periodic}}
\newcommand{\Iuedt}{\ensuremath I_{\epsilon,\delta,t}^{\rm unclipped}}
\newcommand{\Nuedt}{\ensuremath N_{\epsilon,\delta,t}^{\rm unclipped}}
\newcommand{\tpm}{t - \epsilon, t + \epsilon}
\newcommand{\tpma}{t - \alpha, t + \alpha}
\newcommand{\toma}{t - \alpha, t}
\DeclareMathOperator{\axis}{\mathfrak a}
\DeclareMathOperator{\segclassesA}{\mathfrak S^{\Aed}}
\DeclareMathOperator{\segcount}{S}
\newcommand{\segcountA}{\segcount^{\Aed}}
\newcommand{\Conj}{\mathfrak C}
\newcommand{\Conjp}{\Conj^{\rm prime}}
\newcommand{\Conjinset}[1]{\Conj^{#1}}
\newcommand{\Conjpinset}[1]{\Conj^{{\rm prime}, #1}}
\newcommand{\ConjG}{\Conj_\G}
\newcommand{\ConjGp}{\Conj_\G^{\rm prime}}
\newcommand{\ConjGinset}[1]{\Conj_\G^{#1}}
\newcommand{\ConjGpinset}[1]{\Conj_\G^{{\rm prime}, #1}}
\newcommand{\ConjGA}{\ConjGinset{\Aed}}
\newcommand{\ConjGpA}{\ConjGpinset{\Aed}}
\newcommand{\CG}{{\rm Conj}_\G}
\newcommand{\CGp}{{\rm Conj}_\G^{\rm prime}}
\newcommand{\CGinset}[1]{{\rm Conj}_\G^{#1}}
\newcommand{\CGpinset}[1]{{\rm Conj}_\G^{{\rm prime}, #1}}
\newcommand{\CGA}{\CGinset{\Aed}}
\newcommand{\lv}{\lambda^{v}}
\newcommand{\lvG}{\lambda_{\G}^{v}}
\newcommand{\laG}{\lambda_{\G}^{\axis[\g]}}
\newcommand{\lata}{\lambda_{\axis,t,\alpha}}
\newcommand{\lmatainset}[1]{\lata^{{\rm mult}, #1}}
\newcommand{\lpatainset}[1]{\lata^{{\rm prime}, #1}}
\newcommand{\lmataA}{\lmatainset{\Aed}}
\newcommand{\lpataA}{\lpatainset{\Aed}}
\newcommand{\aaltlata}{\tilde{\lambda}_{\axis,t,\alpha}}
\newcommand{\aaltlmatainset}[1]{\aaltlata^{{\rm mult}, #1}}
\newcommand{\aaltlate}{\tilde{\lambda}_{\axis,t,\epsilon}}
\newcommand{\aaltlmate}{\aaltlate^{\rm mult}}
\newcommand{\aaltlmateinset}[1]{\aaltlate^{{\rm mult}, #1}}
\newcommand{\latta}{\lambda_{\axis, t + \alpha, 2 \alpha}}
\newcommand{\lmattainset}[1]{\latta^{{\rm mult}, #1}}
\newcommand{\lpattainset}[1]{\latta^{{\rm prime}, #1}}
\newcommand{\latte}{\lambda_{\axis, t + \epsilon, 2 \epsilon}}
\newcommand{\lmatteinset}[1]{\latte^{{\rm mult}, #1}}
\renewcommand{\Bu}{\ensuremath \Bset^{\rm unclipped}}
\newcommand{\fn}{\psi}
\newcommand{\varalpha}{\alpha}
\title[Counting periodic geodesics in a compact locally CAT(0) space]{Counting closed geodesics in a compact rank one locally CAT(0) space}
\author{Russell Ricks}
\address{Binghamton University, Binghamton, New York, USA}
\email{ricks@math.binghamton.edu}
\date{\today}
\begin{document}

\begin{abstract}
Let $X$ be a compact, geodesically complete, locally CAT(0) space such that the universal cover admits a rank one axis.
Assume $X$ is not homothetic to a metric graph with integer edge lengths.
Let $P_t$ be the number of parallel classes of oriented closed geodesics of length $\le t$; then $\lim\limits_{t \to \infty} P_t / \frac{e^{ht}}{ht} = 1$, where $h$ is the entropy of the geodesic flow on the space $SX$ of parametrized unit-speed geodesics in $X$.
\end{abstract}

\maketitle

\section{Introduction}

Given a locally geodesic space, it is natural to consider the number $P_t$ of closed geodesics of length at most $t > 0$.
In general, $P_t$ may be infinite for all $t$ above a certain threshold $T \ge 0$, but under certain geometric conditions one finds it is finite for all $t$ and can obtain asymptotic information about the growth rate of $P_t$.

The classic example of this situation is a theorem of Margulis \cite{margulis}:
If $M$ is a closed, negatively-curved Riemannian manifold, then $\lim\limits_{t \to \infty} \slashfrac{P_t}{\frac{e^{ht}}{ht}} = 1$, where $h$ is the entropy of the geodesic flow on the unit tangent bundle $SM$.
Margulis also proved that the number $Q_t$ of geodesic arcs of length $\le t$ starting at $x \in M$ and ending at $y \in M$, satisfies $\lim\limits_{t \to \infty} \slashfrac{Q_t}{e^{ht}} = C$, where $C$ depends only on $x,y$.

In nonpositive curvature (instead of strictly negative curvature), there are often parallel geodesics, which can make the number $P_t$ as defined above infinite for large $t$.
However, if one refines the definition of $P_t$ to be the number of \emph{parallel classes} of closed geodesics of length $\le t$, it becomes meaningful again in this case, while staying the same in the case of negative curvature.
Knieper \cite{knieper98} proved that when $M$ is a closed, rank one nonpositively-curved Riemannian manifold, there exists $C > 0$ such that
$\frac{1}{C} \le \liminf \slashfrac{P_t}{\frac{e^{ht}}{ht}}$ and $\limsup \slashfrac{P_t}{e^{ht}} \le C$.
Knieper later improved his bounds \cite{knieper01} to
$\frac{1}{C} \le \liminf \slashfrac{P_t}{\frac{e^{ht}}{ht}} \le \limsup \slashfrac{P_t}{\frac{e^{ht}}{ht}} \le C$.
(This type of inequality occurs frequently enough in this paper that we will use the notation $\fakelim$ when the inequality holds for both $\liminf$ and $\limsup$.
In this notation, the last inequalities become
$\frac{1}{C} \le \fakelim \slashfrac{P_t}{\frac{e^{ht}}{ht}} \le C$.)
Knieper's original bounds were recently proved by different means by Burns, Climenhaga, Fisher, and Thompson \cite{bcft18}.
A recent preprint \cite{liu-wang-wu} generalizes this beyond nonpositive curvature to the case of closed Riemannian manifolds without focal points.

Another way to generalize the setting of Margulis' theorem is to allow the spaces to admit singularities.
In fact, locally $\CAT(-1)$ spaces are a generalization of negatively-curved manifolds which allow branching and other singularities.
They are locally geodesic spaces in which all sufficiently small geodesic triangles are ``thinner'' than their respective comparison triangles in the hyperbolic plane $\H^2$.
Roblin proved \cite{roblin} that if the Bowen--Margulis measure of a proper, locally $\CAT(-1)$ space is finite, then
$\lim\limits_{t \to \infty} \slashfrac{Q_t}{e^{ht}} = C$, where $C$ depends only on $x,y$.
A recent preprint by Link \cite{link} generalizes this statement from $\CAT(-1)$ to rank one $\CAT(0)$.
Locally $\CAT(0)$ spaces generalize nonpositively-curved manifolds by allowing singularities; the definition uses comparison triangles in the Euclidean plane $\R^2$ instead of $\H^2$.
Roblin also proved \cite{roblin} that if the Bowen--Margulis measure of a proper, locally $\CAT(-1)$ space $X$ is finite and mixing, and $X$ is geometrically finite, then
$\lim\limits_{t \to \infty} \slashfrac{P_t}{\frac{e^{ht}}{ht}} = 1$.
\footnote{Technically, Roblin and Link do not address the question of entropy.
The constant $h$ used here is actually $\delta_\G$ the critical exponent of the Poincar\'e series for $\G$ (see \secref{measures}).
At least in the case where $\G$ acts cocompactly, $\delta_\G$ equals the topological entropy $h$.}

In this paper, we focus on the case of proper, rank one, locally $\CAT(0)$ spaces.
We assume throughout the paper (with the exception of \secref{pi-convergence section}) that $\G$ is a group acting freely, properly discontinuously, non-elementarily, and by isometries on a proper, geodesically complete $\CAT(0)$ space $X$ with rank one axis.
We also assume the geodesic flow is mixing and the Bowen-Margulis measure (constructed in \cite{ricks-mixing}) is finite and mixing under the geodesic flow.
When $\G$ acts cocompactly, it is well-known to also act non-elementarily unless $X$ is isometric to the real line; in \cite{ricks-mixing} it was shown that cocompactness also implies the Bowen-Margulis measure is always finite and mixing unless $X$ is homothetic to a tree with integer edge lengths.
We prove the following.

\begin{maintheorem}				\label{main}
Let $\G$ be a group acting freely, geometrically (that is, properly discontinuously, cocompactly, and by isometries) on a proper, geodesically complete $\CAT(0)$ space $X$ with rank one axis.
Assume $X$ is not homothetic to a tree with integer edge lengths.
Let $P_t$ be the number of parallel classes of oriented closed geodesics of length $\le t$ in $\modG{X}$; then $\lim\limits_{t \to \infty} P_t / \frac{e^{ht}}{ht} = 1$, where $h$ is the entropy of the geodesic flow on the space $SX$ of parametrized unit-speed geodesics in $X$.
\end{maintheorem}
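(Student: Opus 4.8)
The plan is to follow the classical Margulis–Knieper strategy, adapted to the singular rank one $\CAT(0)$ setting via the Bowen–Margulis measure. The core principle is that the Bowen–Margulis measure $m$ on $\modG{SX}$, being mixing under the geodesic flow $g^t$ with entropy $h$, equidistributes orbit segments, and the number of closed geodesics of length roughly $t$ is controlled by the volume $m(\modG{SX})$ together with the exponential rate $h$. Concretely, I would fix a small flow box $B$ (a product neighborhood transverse to the flow) in $\modG{SX}$ of the form $B \cong B' \times (-\epsilon, \epsilon)$ with $B'$ a local transversal, and count the return orbits: intersections $B \cap g^{-t}(B)$ correspond, after a shadowing/closing argument, to closed geodesics of length near $t$ passing through $B$. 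Mixing gives $m(B \cap g^{-t}(B)) \to m(B)^2 / m(\modG{SX})$, and integrating this in $t$ over a window produces the count of closed orbits hitting $B$, up to the error from orbits that wander near the non-rank-one (higher rank) part of the space.

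The key steps, in order: (1) Set up the measure-theoretic framework — recall from \cite{ricks-mixing} the construction of the Bowen–Margulis measure $m$ as a product of Patterson–Sullivan-type measures on $\bd X \times \bd X$ times Lebesgue on $\R$, and its finiteness and mixing under the hypotheses; record that $h$ is both the topological entropy and the critical exponent $\delta_\G$ (cf. the footnote). (2) Prove a \emph{closing lemma} for rank one geodesics: an orbit segment of length $t$ that returns $\epsilon$-close to its start, and which spends a definite proportion of its length in the rank one part, is $\delta$-shadowed by a genuine closed geodesic whose length is within $O(\epsilon)$ of $t$; here one uses the rank one axis hypothesis and hyperbolicity along rank one geodesics, together with geodesic completeness. (3) Use mixing to estimate, for a flow box $B$, the measure $m\big( B \cap g^{-t}(B)\big)$ and integrate over $t \in [s, s+1]$ to get the asymptotic number of closed geodesics of length in that window that meet $B$; summing over a cover of $\modG{SX}$ by flow boxes and dividing out the multiplicity $2\epsilon$ (length of the flow direction) recovers the count of all closed geodesics. (4) Convert length-in-a-window counts to the cumulative count $P_t$ by summation, obtaining $P_t \sim \frac{e^{ht}}{ht}$; the extra factor $\frac{1}{ht}$ appears, as in Margulis and Knieper, because closed geodesics come in one-parameter families under the flow and one is counting \emph{parallel classes} (equivalently, primitive orbits dominate and the number of orbits of length $\le t$ grows like $\int_0^t \frac{e^{hs}}{s}\,ds \sim \frac{e^{ht}}{ht}$). (5) Control the error terms: show that closed geodesics lying (almost) entirely in the higher-rank part, and non-primitive (iterated) closed geodesics, contribute a lower-order term — for the latter this is immediate since there are only $\sim e^{ht/2}$ of them, and for the former one uses that the Bowen–Margulis measure gives full mass to rank one vectors (a consequence of mixing, from \cite{ricks-mixing}) so the rank one part carries all the exponential growth.

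The main obstacle I expect is Step (2), the closing lemma in the singular $\CAT(0)$ setting: unlike in negative curvature, a returning orbit segment need not be shadowed by a closed geodesic — the flat strips and branching can obstruct closing — so one must carefully restrict to segments that are ``rank one in the middle'' and show both that such a closed geodesic exists (using the rank one axis to get an isometry with a contracting axis, via a ping-pong or displacement argument on $\bd X$) and that its length and parallel class are essentially unique given the shadowing data. A secondary difficulty is the hypothesis that $X$ is \emph{not} homothetic to a tree with integer edge lengths: this is exactly what rules out arithmetic resonances in the length spectrum that would break the clean asymptotic (replacing it with an oscillating term), and one must verify that outside this case the closed-geodesic length spectrum is, in the appropriate measured sense, non-arithmetic — this should follow from the mixing of the flow (true mixing, not just mixing of all powers), which is where the tree exclusion enters. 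Finally, bookkeeping the relationship between \emph{oriented} closed geodesics, parallel classes, and primitive periods requires care but is routine once the main count is in hand.
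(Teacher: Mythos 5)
Your outline reproduces the standard Margulis--Knieper box-counting skeleton, and parts of it do match the paper (mixing for product-type boxes, the window-to-cumulative summation giving $e^{ht}/ht$, the Poincar\'e-series bound showing non-primitive classes are negligible). The obstacle you single out, the closing lemma, is handled in the paper not by shadowing but by boundary dynamics: Papasoglu--Swenson $\pi$-convergence gives locally uniform expansion, so for large $t$ every return $\Aset \cap \g g^{-t}\Aset \neq \varnothing$ at a slightly smaller scale is \emph{unclipped} at scale $\delta$ (i.e.\ $\g\Aset^+ \subseteq \Aset^+$ and $\Aset^- \subseteq \g\Aset^-$), and then a nested-intersection argument on $\bd X$ produces an axis of $\g$ inside $\Aset$. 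Your suggested ping-pong/displacement argument on $\bd X$ is close in spirit to this, so that step is repairable.

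The genuine gap is elsewhere: your step of ``dividing out the multiplicity $2\epsilon$'' is not justified, and it is exactly the point where mixing alone fails to give the constant $1$. The box-counting argument counts group elements $\g$ with $\Aset \cap \g g^{-t}\Aset \neq \varnothing$, equivalently \emph{passages} of closed geodesics through the box; a single conjugacy class of length about $t$ can contribute many passages (one for each intersection segment of its axis with $\G\Aed$), and the number of such passages is $\frac{1}{\epsilon}\lvG(\pr\Aed)$, which a priori can be anywhere between $1$ and order $t/\epsilon$. Converting the passage count ($\asymp e^{ht}m(\Aed)$) into the class count therefore requires knowing that the \emph{average} time a closed geodesic of length $\approx t$ spends in $\pr\Aed$ is asymptotically $t\,m(\Aed)$, i.e.\ that closed geodesics equidistribute with respect to $m_\G$. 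Mixing does not give this; the paper proves it by adapting Knieper's entropy argument (equal-weighted measures on $(n,\alpha)$-separated periodic orbits have entropy at least $\lim \frac{\log \card{P_k}}{t_k}$, hence any weak* limit is a measure of maximal entropy, and by uniqueness it must be $m_\G$). Without this bootstrap through the uniqueness of the measure of maximal entropy, your argument yields only two-sided Knieper-type bounds $\frac{1}{C} \le \fakelim P_t/\frac{e^{ht}}{ht} \le C$ with $C$ depending on the box, not the limit $1$. Relatedly, your treatment of the higher-rank closed geodesics (``the Bowen--Margulis measure gives full mass to rank one vectors, so the rank one part carries all the exponential growth'') presupposes the same equidistribution statement: in the paper the subexponential growth of singular classes is deduced from the equidistribution proposition, not directly from the support of $m_\G$.
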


We remark that if $X$ is homothetic to a tree with integer edge lengths, then the limit of $P_t / \frac{e^{ht}}{ht}$ does not exist.
Also, the closed geodesics which bound a half flat in the universal cover (called the singular geodesics) grow at a strictly smaller exponential rate.

We note that a recent preprint \cite{gekhtman-yang} generalizes Knieper's bounds
$\frac{1}{C} \le \fakelim \slashfrac{P_t}{\frac{e^{ht}}{ht}} \le C$
to the proper, rank one, locally $\CAT(0)$ case.
We prove the exact limit.
We also note that an unpublished paper from 2007 by Roland Gunesch \cite{gunesch} claims our result for compact, rank one, nonpositely-curved manifolds.
Indeed, many of the ideas in Gunesch's work are good and inspired the current paper.

We proceed as follows in the paper.
First, after establishing notation and standard facts about rank one $\CAT(0)$ spaces, we use Papasoglu and Swenson's $\pi$-convergence theorem to prove a statement about local uniform expansion along unstable horospheres.
Next, we construct product boxes (which behave better than standard flow boxes for measuring lengths of intersection for orbits), and use mixing to prove a result about the total measure of intersections under the flow for these product boxes.
We use this to count the number of intersections coming from periodic orbits.
Then we construct measures equally-weighted along periodic orbits. We adapt Knieper's proof of an equidistribution result to prove \thmref{main}.

\section{Preliminaries}

A \defn{geodesic} in a metric space $X$ is an isometric embedding of the real line $\R$ into $X$.
A \defn{geodesic segment} is an isometric embedding of a compact interval, and a \defn{geodesic ray} is an isometric embedding of $[0, \infty)$.

A metric space $X$ is called \defn{uniquely geodesic} if for every pair of distinct $x,y \in X$ there is a unique geodesic segment $u \colon [a,b] \to X$ such that $u(a) = x$ and $u(b) = y$.
The space $X$ is \defn{geodesically complete} (or, $X$ has the \defn{geodesic extension property}) if every geodesic segment in $X$ extends to a full geodesic in $X$.

A \defn{CAT(0) space} is a uniquely geodesic space such that for every triple of distinct points $x,y,z \in X$, the geodesic triangle is no fatter than the corresponding comparison triangle in Euclidean $\R^2$ (the triangle with the same edge lengths).
A detailed account of $\CAT(0)$ spaces is found in \cite{ballmann} or \cite{bridson}.

Every complete $\CAT(0)$ space $X$ has an \defn{ideal boundary}, written $\bd X$, obtained by taking equivalence classes of asymptotic geodesic rays.
The compact-open topology on the set of rays induces a topology on $\bd X$, called the \defn{cone} or \defn{visual} topology.
If $X$ is proper (meaning all closed balls are compact), then both $\bd X$ and $\overline{X} = X \cup \bd X$ are compact metrizable spaces.

STANDING HYPOTHESIS:
From now on, $X$ will always be a proper, geodesically complete $\CAT(0)$ space.

Denote by $SX$ the space of all geodesics $\R \to X$, where $SX$ is endowed with the compact-open topology.
Then $SX$ is naturally a proper metric space, and there is a canonical footpoint projection map $\pi \colon SX \to X$ given by $\pi(v) = v(0)$; this map is proper.
There is also a canonical endpoint projection map $\emap \colon SX \to \dbX$ defined by $\emap(v) = (v^-, v^+) := (\lim_{t \to -\infty} v(t), \lim_{t \to +\infty} v(t))$.
And $w \in SX$ is parallel to $v \in SX$ if and only if $\emap(w) = \emap(v)$.

The \defn{geodesic flow} $g^t$ on $SX$ is defined by the formula $(g^t v)(r) = v(t + r)$.

A geodesic $v$ in $X$ is called \defn{higher rank} if it can be extended to an isometric embedding of the half-flat $\R \times [0, \infty) \subseteq \R^2$ into $X$.
A geodesic which is not higher rank is called \defn{rank one}.
Let $\Reg \subseteq SX$ denote the set of rank one geodesics.
The following lemma describes an important aspect of the geometry of rank one geodesics in a $\CAT(0)$ space.

\begin{lemma}[Lemma III.3.1 in \cite{ballmann}]\label{Ballmann's lemma}
Let $w \colon \R \to X$ be a geodesic which does not bound a flat strip of width $R > 0$.  Then there are neighborhoods $U$ and $V$ in $\bar X$ of the endpoints of $w$ such that for any $\xi \in U$ and $\eta \in V$, there is a geodesic joining $\xi$ to $\eta$.  For any such geodesic $v$, we have $d(v, w(0)) < R$; in particular, $v$ does not bound a flat strip of width $2R$.
\end{lemma}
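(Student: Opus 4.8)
The plan is a compactness argument built on Busemann functions and the Flat Strip Theorem. Put $o = w(0)$ and let $b^-, b^+ \colon X \to \R$ be the Busemann functions based at $o$ associated to the two endpoints of $w$; then $\beta := b^- + b^+$ is convex, vanishes identically on $w$, and is everywhere $\ge 0$ (by the triangle inequality $d(x, w(-s)) + d(x, w(t)) \ge d(w(-s), w(t)) = s + t$, letting $s, t \to \infty$). The zero set $\Pi := \{\beta = 0\}$ is exactly the union of all geodesics from $w(-\infty)$ to $w(+\infty)$, i.e.\ the parallel set of $w$; it is closed and convex, and the Sandwich/Splitting Lemma gives an isometry $\Pi \cong Y \times \R$ under which $w$ becomes $\{y_0\} \times \R$ while the vertical fibers $\{y\} \times \R$ run through exactly the geodesics joining the two endpoints. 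In this picture ``$w$ bounds a flat strip of width $\rho$'' says precisely that $Y$ contains a point at distance $\rho$ from $y_0$, so the hypothesis that $w$ bounds no flat strip of width $R$ becomes $Y \subseteq B_Y(y_0, R)$; in particular $Y$ is bounded --- hence compact, $X$ being proper --- and $\Pi$ lies inside the open $R$-neighborhood of $w$.

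I would then prove both conclusions of the lemma at once, by contradiction, reducing everything to a single ``no escape'' statement. If no pair of neighborhoods works, choose $U_n \downarrow \{w(-\infty)\}$, $V_n \downarrow \{w(+\infty)\}$, giving $\xi_n \to w(-\infty)$, $\eta_n \to w(+\infty)$ such that for each $n$ either $\xi_n, \eta_n$ have no joining geodesic, or they have one, $v_n$, with $d(v_n, o) \ge R$. In either case consider the segment $\sigma_n := [\gamma_{\xi_n}(n), \gamma_{\eta_n}(n)]$ between the radius-$n$ points of the rays from $o$ toward $\xi_n$ and $\eta_n$ (in the ``far geodesic'' case one may use $v_n$ itself instead, reparametrized so that $v_n(0)$ is its point nearest $o$). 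The decisive claim is that this family does not run off to infinity: $\liminf_n d(\sigma_n, o) < \infty$. Granting it, reparametrize each $\sigma_n$ by its point nearest $o$ and apply Arzel\`a--Ascoli (using properness) to extract a subsequential limit $v$, a bi-infinite geodesic whose endpoints are $\lim \xi_n = w(-\infty)$ and $\lim \eta_n = w(+\infty)$ (by continuity of the endpoint map on geodesics meeting a fixed bounded set), with $d(v, o) = \lim d(\sigma_n, o) \ge R$. But then $v$ is a vertical fiber $\{y_1\} \times \R$ of $\Pi$ with $d_Y(y_0, y_1) = d(v, o) \ge R$, contradicting $Y \subseteq B_Y(y_0, R)$. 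This one contradiction kills both the ``no joining geodesic'' case (a limit $v$ would itself be a joining geodesic) and the ``far geodesic'' case, and yields the strict bound $d(v, o) < R$ for every joining geodesic of a pair in $U \times V$. The remaining clause --- that such a $v$ bounds no flat strip of width $2R$ --- is then formal: the opposite boundary line $v'$ of such a strip would join the same endpoints as $v$, hence also satisfy $d(v', o) < R$, while $d(v, v') = 2R$. (If one of $\xi, \eta$ is an interior point of $X$ the connecting geodesic exists automatically, and only the distance bound is at issue, proved the same way.)

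The step I expect to be the main obstacle is precisely this ``no escape'' claim. If the $\sigma_n$ (or the $v_n$) drifted off to infinity, properness provides no limit, and one must argue instead through the continuous dependence of Busemann functions on their direction --- uniformly on bounded subsets of $X$ --- together with the boundedness of the cross-section $Y$: loosely, an escaping family would in the limit populate $\{\beta = 0\} = \Pi$ at unbounded distance from $w$ transverse to it, which is impossible since $Y$ is bounded. Turning this into a clean argument, and thereby confining all the relevant connecting geodesics to within distance exactly $R$ of $o$ rather than some a priori larger radius, is where the real content of the lemma lies.
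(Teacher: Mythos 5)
The paper offers no proof of this statement at all---it is quoted directly from Ballmann (Lemma III.3.1 of the cited book)---so there is nothing internal to compare against; judging your proposal on its own terms, it has a genuine gap, and you name it yourself. The ``no escape'' claim, that the connecting segments or geodesics $\sigma_n$ cannot drift off to infinity, is exactly where the hypothesis that $w$ bounds no flat strip of width $R$ has to do work, and you do not prove it: you only gesture at ``continuous dependence of Busemann functions'' and the boundedness of the cross-section $Y$. Everything you do establish (the parallel set $\Pi \cong Y \times \R$, $Y \subseteq B_Y(y_0,R)$, the flat strip theorem, the formal deduction of the ``no strip of width $2R$'' clause) constrains only geodesics already known to be asymptotic to both endpoints of $w$; the $\sigma_n$ are not such geodesics, so none of that machinery touches them. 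Without the no-escape estimate, properness yields no convergent subsequence and the compactness scheme produces nothing. In other words, your argument is a correct reduction of the distance bound to the key estimate, but the key estimate \emph{is} the lemma.

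There is also a structural flaw in how you treat the existence half. Your diagonal sequence $\sigma_n = [\gamma_{\xi_n}(n), \gamma_{\eta_n}(n)]$, even granting no escape, has sublimits joining $w(-\infty)$ to $w(+\infty)$; such a geodesic exists anyway (namely $w$ itself), so this neither contradicts ``$\xi_n$ and $\eta_n$ are joined by no geodesic'' nor produces a geodesic from $\xi_n$ to $\eta_n$ for any fixed $n$. Existence requires, for a fixed pair $(\xi,\eta)$ with $\xi \in U$, $\eta \in V$, an inner limit $k \to \infty$ of segments $[p_k,q_k]$ with $p_k \to \xi$, $q_k \to \eta$, together with a bound, uniform in $k$ and in the pair, keeping those segments within distance $< R$ of $w(0)$---which is again the unproven estimate, now needed in a two-step (inner $k$, outer $n$) limiting scheme that your single sequence collapses incorrectly. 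Moreover, in the ``no joining geodesic'' case you have no justification for $d(\sigma_n, w(0)) \ge R$, so the one contradiction you rely on ($d(v,w(0)) \ge R$ versus $Y \subseteq B_Y(y_0,R)$) is available only in the ``far geodesic'' case. To repair the proof you would need to show directly that if segments with endpoints converging (in $\overline{X}$) to $w(-\infty)$ and $w(+\infty)$ all stayed at distance $\ge R' $ from $w(0)$, or escaped every ball, then one could manufacture a flat strip of width $R$ along $w$---and that is the substantive content your sketch defers.
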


Define the \defn{cross section} of $v \in SX$ to be $CS(v) = \pi_p^{-1} \set{\pi_p(v)}$, and the \defn{width} of a geodesic $v \in SX$ to be $\width(v) = \diam CS(v)$.
The width of $v$ is in fact the maximum width of a flat strip $\R \times [0, R]$ in $X$ parallel to $v$.

Now let $\G$ be a group acting properly discontinuously, by isometries on $X$.
The $\G$-action on $X$ naturally induces an action by homeomorphisms on $\overline{X}$ (and therefore on $\bd X$).
The \defn{limit set} of $\G$ is $\Limitset = \cl{\G x} \cap \bd X$, for some $x \in X$.
The limit set is closed and invariant, and it does not depend on choice of $x$.
The action is called \defn{elementary} if either $\Limitset$ contains at most two points, or $\Gamma$ fixes a point in $\bd X$.

The $\G$-action on $X$ also induces a properly discontinuous, isometric action on $SX$.
Denote by $g_\G^t$ the induced flow on the quotient $\modG{SX}$, and let $\pr \colon SX \to \modG{SX}$ be the canonical projection map.

A geodesic $v \in SX$ is \defn{axis} of an isometry $\g \in \Isom X$ if $\g$ translates along $v$, i.e., $\g v = g^t v$ for some $t > 0$.
If some rank one geodesic $v \in \Reg$ is an axis for $\g \in \Isom X$, we call $\g$ \defn{rank one}.
We call the $\G$-action \defn{rank one} if some $\g \in \G$ is rank one.

STANDING HYPOTHESIS:
$\Gamma$ is a group acting properly discontinuously, by isometries on $X$.
Except in \secref{pi-convergence section}, we further assume the action is rank one, non-elementary, and free (that is, no nontrivial $\g \in \G$ fixes a point of $x \in X$).

\section{Locally Uniform Expansion along Unstable Horospheres}
\label{pi-convergence section}

There is a topology on $\bd X$, finer than the visual topology, that comes from the \defn{Tits metric} $\dT$ on $\bd X$.
The Tits metric is complete $\CAT(1)$, and measures the asymptotic angle between geodesic rays in $X$.
In fact, a geodesic $v \in SX$ is rank one if and only if $\dT(v^-,v^+) > \pi$.
Write $\BT (\xi, r)$ for the open Tits ball of $\dT$-radius $r$ about $\xi$ in $\bd X$ and $\cl{\BT} (\xi, r)$ for the closed ball.

Papasoglu and Swenson's $\pi$-convergence:

\begin{theorem}[Lemma 18 of \cite{ps}]		\label{pi-convergence}
Let $X$ be a proper $\CAT(0)$ space and $G$ a group acting by isometries on $X$.
Let $x \in X$, $\theta \in [0,\pi]$, and $(g_i) \subset G$ such that $g_i (x) \to p \in \bd X$ and $g_i^{-1} (x) \to n \in \bd X$.
For any compact set $K \subset \bd X \setminus \cl{\BT} (n, \theta)$,
\(g_i (K) \to \cl{\BT} (p, \pi - \theta),\)
(in the sense that for any open $U \supset \cl{\BT} (p, \pi - \theta),
g_i (K) \subset U$ for all $i$ sufficiently large). \end{theorem}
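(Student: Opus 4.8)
The plan is to reduce the convergence assertion, by a contradiction-and-compactness argument, to a single complementarity inequality between Tits angles, and then to prove that inequality by $\CAT(0)$ comparison. For the reduction, fix an open set $U \supseteq \cl{\BT}(p,\pi-\theta)$ and suppose the conclusion fails: after passing to a subsequence there are $\eta_i \in K$ with $g_i(\eta_i) \notin U$. Both $K$ and $\bd X \setminus U$ are compact in the visual topology, so after a further subsequence we may take $\eta_i \to \eta \in K$ and $g_i(\eta_i) \to \zeta \in \bd X \setminus U$ in the visual topology. Because $\dT(n,\cdot)$ is lower semicontinuous for the visual topology, the closed ball $\cl{\BT}(n,\theta)$ is visually closed; since $\eta \in K$ and $K \cap \cl{\BT}(n,\theta) = \emptyset$, this gives $\dT(n,\eta) > \theta$. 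Hence it is enough to prove $\dT(p,\zeta) \le \pi - \dT(n,\eta)$, for then $\dT(p,\zeta) < \pi - \theta$, so $\zeta \in \cl{\BT}(p,\pi-\theta) \subseteq U$, a contradiction.

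For the inequality, the starting point is that the isometry $g_i$ carries the (possibly ideal) geodesic triangle with vertices $g_i^{-1}(x), x, \eta_i$ onto the triangle with vertices $x, g_i(x), g_i(\eta_i)$, sending the listed vertices to the listed vertices in order. Writing $z_i := g_i^{-1}(x)$ and using that Alexandrov angles are isometry invariant while the finite angles of a $\CAT(0)$ triangle sum to at most $\pi$, we obtain $\angle_x(g_i(x),g_i(\eta_i)) = \angle_{z_i}(x,\eta_i) \le \pi - \angle_x(z_i,\eta_i)$, and the same inequality holds with $x$ replaced by any point of $X$ — in particular by the point of $[x,n)$ at a large distance from $x$, and similarly along $[x,p)$. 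Since the hypotheses $g_i(x) \to p$, $g_i^{-1}(x) \to n$, $\eta_i \to \eta$, $g_i(\eta_i) \to \zeta$ are all visual, the geodesics entering these inequalities converge uniformly on compact sets, so one can pass to the limit — first $i \to \infty$, then the basepoint to infinity — to upgrade the angle bounds to the Tits bound $\dT(p,\zeta) \le \pi - \dT(n,\eta)$; concretely, this amounts to producing a path from $p$ to $\zeta$ in $\bd X$ of $\dT$-length at most $\pi - \dT(n,\eta)$, as a limit of the $g_i$-images of finite subsegments of the rays $[z_i,\eta_i)$, whose angles are controlled by the $\CAT(0)$ comparison.

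The step I expect to be the main obstacle is exactly this last passage to the Tits metric. Alexandrov angles at a fixed basepoint only bound $\dT$ from below, which is the wrong direction for bounding $\dT(p,\zeta)$ from above; one is therefore forced to run the $\CAT(0)$ comparison uniformly as the basepoint recedes to infinity — equivalently, to work with the Busemann functions of $n$ and $p$ and with angles at infinity — and to control the length of the limiting Tits path carefully. Keeping all of this uniform simultaneously in the index $i$ and in the escape parameter, while respecting that $\dT$ is only lower semicontinuous for the coarser visual topology in which the hypotheses are phrased, is the technical heart of the statement, and is the route taken in Lemma~18 of \cite{ps}.
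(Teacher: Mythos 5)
This theorem is not proved in the paper at all: it is Papasoglu--Swenson's $\pi$-convergence lemma, quoted verbatim with a citation to \cite{ps}, so there is no internal proof to compare against. Judged as a standalone argument, your first paragraph is fine: compactness of $K$ and of $\bd X \setminus U$, a diagonal subsequence $\eta_i \to \eta \in K$, $g_i(\eta_i) \to \zeta \notin U$, and the fact that $\cl{\BT}(n,\theta)$ is closed in the visual topology (lower semicontinuity of $\dT$) correctly reduce the statement to the inequality $\dT(p,\zeta) \le \pi - \dT(n,\eta)$.

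The gap is that this inequality is not a simplification of the theorem --- it \emph{is} the theorem (the case $\theta = \dT(n,\eta)$ is exactly the $\pi$-convergence duality), and your second and third paragraphs do not establish it. The angle-sum argument you run at the basepoint gives $\angle_x(g_i(x),g_i(\eta_i)) \le \pi - \angle_x(g_i^{-1}(x),\eta_i)$, but both ends of this estimate point the wrong way: since Alexandrov angles at any fixed point only satisfy $\angle_x \le \dT$, an upper bound on $\angle_x(g_i(x),g_i(\eta_i))$ yields no upper bound on $\dT(p,\zeta)$, and there is no lower bound on $\angle_x(g_i^{-1}(x),\eta_i)$ in terms of $\dT(n,\eta)$, which can strictly exceed the angle subtended at every basepoint. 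You acknowledge this and propose to let the basepoint recede to infinity and extract a Tits path from $p$ to $\zeta$ of controlled length, but you do not carry out that limiting construction (which requires uniformity in $i$ and in the escape parameter simultaneously, precisely where lower semicontinuity of $\dT$ works against you); instead you state that this step ``is the route taken in Lemma~18 of \cite{ps}.'' Deferring the technical heart to the very result being proved makes the proposal circular as a proof, though it is an accurate description of why the statement is nontrivial; in the context of this paper, simply citing \cite{ps}, as the author does, is the appropriate resolution.
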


From \thmref{pi-convergence} we prove that the geodesic flow expands unstable horospheres locally uniformly (\thmref {regular uniform expansion}).

\begin{lemma}					\label{eval}
The evaluation map $\mathrm{ev} \colon SX \times (-\infty, \infty) \to X$ given by $\mathrm{ev}(v,t) = v(t)$ extends continuously to a map $SX \times [-\infty, \infty] \to \cl X$.
\end{lemma}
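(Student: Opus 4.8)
The plan is to treat the finite part $SX \times (-\infty,\infty)$ and the two ends $SX \times \set{+\infty}$, $SX \times \set{-\infty}$ separately, the finite part being routine and the ends carrying all the content. I will use freely that $SX$ and $\overline X$ are metrizable (the latter because $X$ is proper), so sequential continuity suffices, together with the standard description of the cone topology: for a fixed basepoint $x_0$, a sequence $x_n \in X$ converges to $\xi \in \bd X$ if and only if $d(x_0, x_n) \to \infty$ and the geodesic segments $[x_0, x_n]$ converge uniformly on compact sets to the geodesic ray from $x_0$ representing $\xi$. For the finite part, if $(v_n, t_n) \to (v,t)$ with $t \in \R$ then the $t_n$ are bounded, $v_n \to v$ in the compact-open topology forces $\sup_{\abs{s} \le M} d(v_n(s), v(s)) \to 0$ on the relevant bounded set, and $d(\mathrm{ev}(v_n,t_n), \mathrm{ev}(v,t)) \le d(v_n(t_n), v(t_n)) + d(v(t_n), v(t)) \to 0$ by this together with continuity of $v$. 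I then define $\mathrm{ev}(v, \pm\infty) = v^{\pm}$. Since the time-reversal $v \mapsto \check v$ (with $\check v(s) = v(-s)$) is a homeomorphism of $SX$ satisfying $\mathrm{ev}(\check v, s) = \mathrm{ev}(v, -s)$ and $(\check v)^{+} = v^{-}$, continuity at every point of $SX \times \set{+\infty}$ implies continuity at every point of $SX \times \set{-\infty}$; so it remains only to prove continuity at an arbitrary $(v, +\infty)$.

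Fix $(v_n, t_n) \to (v, +\infty)$ in $SX \times [-\infty,\infty]$; I must show $\mathrm{ev}(v_n, t_n) \to v^{+}$. The terms with $t_n = +\infty$ contribute $\mathrm{ev}(v_n,+\infty) = v_n^{+} \to v^{+}$ by continuity of the endpoint map $v \mapsto v^{+}$ on $SX$ (a standard fact, provable by the same cone-topology description), so after passing to the complementary subsequence I may assume every $t_n$ is finite, $t_n \to +\infty$, and $v_n \to v$. Take $x_0 := v(0)$ as basepoint, put $\delta_n := d(v(0), v_n(0)) \to 0$, and set $L_n := d(x_0, v_n(t_n))$; since $\abs{L_n - t_n} \le \delta_n$ (triangle inequality along $v_n$ through $v_n(0)$), we get $L_n \to \infty$. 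The key comparison is between the geodesic segment $\sigma_n := [x_0, v_n(t_n)]$, parametrized by arclength from $x_0$, and the subsegment $s \mapsto v_n(s)$, $s \in [0, t_n]$: both end at $v_n(t_n)$, and reading from that common endpoint, their initial points $x_0$ and $v_n(0)$ are $\delta_n$-close while the points at arclength $\min(L_n,t_n)$ from $v_n(t_n)$ are $2\delta_n$-close (a one-line estimate using $\abs{L_n - t_n} \le \delta_n$ and $d(x_0, v_n(0)) = \delta_n$). Hence convexity of the metric in a $\CAT(0)$ space gives, for each fixed $s \ge 0$ and all large $n$, the bound $d(\sigma_n(s), v_n(t_n - L_n + s)) \le 2\delta_n$. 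Because $t_n - L_n + s \to s$, the finite-part continuity already established yields $v_n(t_n - L_n + s) \to v(s)$, so $\sigma_n(s) \to v(s)$ for every $s \ge 0$; as all the $\sigma_n$ and $v|_{[0,\infty)}$ are $1$-Lipschitz, this pointwise convergence is automatically uniform on compact sets. Since $v|_{[0,\infty)}$ is exactly the ray from $x_0 = v(0)$ representing $v^{+}$, the cone-topology criterion gives $v_n(t_n) \to v^{+}$, as desired.

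The only genuinely non-formal step is this last one, the \emph{joint} convergence $v_n(t_n) \to v^{+}$; the mild subtlety inside it is that the two geodesics being compared, $\sigma_n$ and $v_n|_{[0,t_n]}$, have slightly different lengths ($L_n$ versus $t_n$), so one must carry the discrepancy $\abs{L_n - t_n} \le \delta_n$ through the argument in order for convexity of the metric to absorb it uniformly in $s$. Everything else — the finite-part continuity, the time-reversal reduction handling $-\infty$, and the continuity of $v \mapsto v^{\pm}$ — is routine $\CAT(0)$ bookkeeping, which I would record without detailed proof.
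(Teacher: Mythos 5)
Your argument is correct. Note that the paper states Lemma~\ref{eval} without any proof, treating it as routine, so there is no argument of the author's to compare against; your write-up simply supplies the missing details. The structure you use --- finite-part continuity, time reversal to reduce $-\infty$ to $+\infty$, continuity of $v \mapsto v^{+}$ for the $t_n = +\infty$ terms, and then the cone-topology criterion applied to the segments $[x_0, v_n(t_n)]$, compared with $v_n|_{[0,t_n]}$ via convexity of the metric while tracking the discrepancy $\abs{L_n - t_n} \le \delta_n$ --- is a sound and standard way to do this, and the one nontrivial estimate (the $2\delta_n$ bound read off from the common endpoint) checks out.
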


\begin{lemma}					\label{curious lemma}
Let $\Gamma$ be a group acting properly isometrically on a proper $\CAT(0)$ space $X$.
Let $\Aset \subset SX$ be compact.
Let $\Aset^- = \setp{v^-}{v \in \Aset}$ and $\Aset^+ = \setp{v^+}{v \in \Aset}$.
Let $(\g_i)$ be a sequence in $\G$ such that $\g_i x \to \xi \in \bd X$ for some (hence any) $x \in X$ and $\Aset \cap \g_i g^{-t_i} \Aset \neq \varnothing$ for some sequence $(t_i)$ in $[0, \infty)$.
Then $\xi \in \Aset^+$.
Let $K \subset \bd X$ be compact such that $\dT(\Aset^-, K) > \pi - c$ for some $c \in [0,\pi]$.
If $U \subseteq \bd X$ is an open set such that $\cl{\BT} (\xi, c) \subseteq U$, then $\g_i (K) \subseteq U$ for all $i$ sufficiently large.
\end{lemma}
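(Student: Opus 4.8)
My plan is to extract from the hypotheses a sequence of geodesics witnessing the intersection and to understand its behavior under the flow, so that I can apply \thmref{pi-convergence} (the $\pi$-convergence theorem) to the sequence $(\g_i)$. First I would pick, for each $i$, a geodesic $v_i \in \Aset \cap \g_i g^{-t_i} \Aset$, so that $v_i \in \Aset$ and $\g_i^{-1} v_i = g^{-t_i} w_i$ for some $w_i \in \Aset$; equivalently $v_i = \g_i g^{-t_i} w_i$, i.e. $g^{t_i} \g_i^{-1} v_i = w_i \in \Aset$. By compactness of $\Aset$, after passing to a subsequence I may assume $v_i \to v \in \Aset$ and $w_i \to w \in \Aset$. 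Now I want to identify the limit point $\xi$. Since $v_i(-t_i) = (\g_i^{-1} v_i)$ evaluated appropriately — more precisely $\g_i^{-1} v_i(t) = w_i(t - t_i)$, so $\g_i^{-1} v_i(t_i) = w_i(0) = \pi(w_i)$, which stays in the compact set $\pi(\Aset)$. Thus $\g_i^{-1}(\text{a bounded set})$ stays bounded... wait, that is the wrong direction. Let me instead track $\g_i^{-1} x$: I would use the relation $d(\g_i^{-1} \pi(v_i), \pi(w_i)) = d(\g_i^{-1} v_i(0), w_i(0))$, and since $\g_i^{-1} v_i = g^{-t_i} w_i$ we get $\g_i^{-1} v_i(0) = w_i(-t_i)$, so $d(\g_i^{-1}\pi(v_i), w_i(-t_i)) = 0$. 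Hence $\g_i^{-1}\pi(v_i) = w_i(-t_i)$. As $w_i \to w$ and $t_i \to \infty$ (if $t_i$ stayed bounded, the intersection condition forces $\g_i$ into a finite set by proper discontinuity, contradicting $\g_i x \to \xi \in \bd X$), the point $w_i(-t_i) \to w^-$ in $\overline X$ by \lemref{eval}. Since $\pi(v_i)$ stays bounded, this shows $\g_i^{-1} x \to w^-$ for any $x$. Symmetrically, $\g_i \pi(w_i) = v_i(t_i) \to v^+$, so $\g_i x \to v^+$; therefore $\xi = v^+ \in \Aset^+$, which is the first claim.

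Next, for the convergence-of-compacta statement, I would apply \thmref{pi-convergence} with $G = \G$, $p = \xi = v^+$, and $n = w^-$. I have just shown $\g_i x \to p$ and $\g_i^{-1} x \to n$. The hypothesis gives $\dT(\Aset^-, K) > \pi - c$; since $w \in \Aset$, in particular $\dT(w^-, K) > \pi - c$, which means $K \subseteq \bd X \setminus \cl{\BT}(w^-, \pi - c)$. Set $\theta = \pi - c \in [0, \pi]$, so $n = w^-$ and $K \subset \bd X \setminus \cl{\BT}(n, \theta)$. Then \thmref{pi-convergence} yields $\g_i(K) \to \cl{\BT}(p, \pi - \theta) = \cl{\BT}(\xi, c)$ in the stated sense: for any open $U \supseteq \cl{\BT}(\xi, c)$, we have $\g_i(K) \subseteq U$ for all $i$ large. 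That is exactly the conclusion. One subtlety: \thmref{pi-convergence} is stated for a fixed sequence converging to fixed $p, n$, so I should make sure the passage to a subsequence above does not weaken the claim — but since the conclusion is about "all $i$ sufficiently large" and $\xi$ is determined independently of the subsequence (it equals $\lim \g_i x$, which exists by hypothesis), a standard subsequence-of-every-subsequence argument upgrades the result back to the full sequence; alternatively, I can simply note that $w^-$ may depend on the subsequence but $\dT(w^-, K) > \pi - c$ holds along every subsequence, so applying the theorem along each subsequence and using that $\cl{\BT}(\xi,c)$ is fixed gives the uniform statement.

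The main obstacle I anticipate is the bookkeeping in the first paragraph: correctly orienting the flow and the group action so that the intersection condition $\Aset \cap \g_i g^{-t_i}\Aset \neq \varnothing$ translates into the two one-sided limits $\g_i x \to v^+$ and $\g_i^{-1} x \to w^-$ with the right endpoints, and verifying rigorously that $t_i \to \infty$ using proper discontinuity of the action on $X$ (equivalently on $SX$, which is proper) together with $\g_i x \to \bd X$. Once the identification $p = v^+$, $n = w^-$ is pinned down and $t_i \to \infty$ is established, the rest is a direct citation of \thmref{pi-convergence} and \lemref{eval}, modulo the routine subsequence cleanup. I would also need to handle the degenerate case $c = \pi$ (where $\cl{\BT}(\xi,c)$ may be all of $\bd X$ and the statement is vacuous) and $c = 0$ separately, though these are immediate.
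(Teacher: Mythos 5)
Your argument is correct and is essentially the paper's own proof: pick witnesses $v_i \in \Aset \cap \g_i g^{-t_i}\Aset$, pass to subsequences using compactness of $\Aset$, use the extended evaluation map (\lemref{eval}) to see $\g_i x \to \xi \in \Aset^+$ and $\g_i^{-1}x \to$ a point of $\Aset^-$, and then apply \thmref{pi-convergence} with $\theta = \pi - c$. The only differences are cosmetic (the paper phrases the limit identification via closedness of $\pi(g^{[0,\infty]}\Aset)$ and $\pi(g^{[-\infty,0]}\Aset)$ in $\cl X$ and works with fixed basepoints $v_0(0)$, $w_0(0)$ rather than your bounded-perturbation and explicit $t_i \to \infty$ remarks), and your subsequence clean-up is the same implicit step the paper relies on.
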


\begin{proof}
First observe that the sets
$\pi(g^{[0,\infty]} \Aset) =
\Aset^+ \cup \setp{v(t)}{v \in \Aset \text{ and } t \ge 0}$
and $\pi(g^{[-\infty,0]} \Aset) =
\Aset^- \cup \setp{v(t)}{v \in \Aset \text{ and } t \le 0}$
are closed in $\cl X$ because $\Aset$ is compact.

For each $i \in \N$, let $v_i \in \Aset \cap \g_i g^{-t_i} \Aset$.
Passing to a subsequence if necessary, we may assume the sequence $(v_i)$ converges to some $v_0 \in \Aset$, and $(\g_i^{-1} g^{t_i} v_i)$ converges to some $w_0 \in \Aset$.
Let $x_0 = v_0(0)$ and $y_0 = w_0(0)$.
Recall that $\g_i y_0 \to \xi \in \bd X$.
We may assume the sequence $(\g_i^{-1} x_0)$ converges to some $\eta \in \bd X$.

We know $d(\g_i w_0, g^{t_i} v_i) \to 0$, so $d(\g_i y_0, v_i(t_i)) \to 0$.
Since $\pi(g^{[0,\infty]} \Aset)$ is closed, we may conclude $\xi = \lim v_i(t_i) \in \Aset^+$.
Now for each $i \in \N$ let $w_i = \g_i^{-1} g^{t_i} v_i$.
Then $d(\g_i^{-1} v_0, g^{-t_i} w_i)
= d(\g_i^{-1} v_0, \g_i^{-1} v_i)
\to 0$,
and so $d(\g_i^{-1} x_0, w_i(-t_i)) \to 0$.
Since each $w_i \in \Aset$ and $\pi(g^{[-\infty,0]} \Aset)$ is closed, we see that $\eta = \lim w_i(-t_i) \in \Aset^-$.

Thus $\g_i x_0 \to \xi \in \Aset^+$ and $\g_i^{-1} x_0 \to \eta \in \Aset^-$.
Apply \thmref{pi-convergence}.
\end{proof}

\begin{theorem}					\label{singular uniform expansion 2}
Let $X$ be a proper $\CAT(0)$ space and $\Gamma$ a group acting properly isometrically on $X$.
Let $\Aset \subset SX$ be compact.
Let $\Aset^- = \setp{v^-}{v \in \Aset}$ and $\Aset^+ = \setp{v^+}{v \in \Aset}$.
Let $c \in [0, \pi]$ and let $\set{U_\lambda}$ be an open cover of $\Aset^+$ such that for every $\xi \in \Aset^+$, there is some $\lambda$ such that $\cl{\BT} (\xi, c) \subseteq U_\lambda$.
For any compact set $K \subset \bd X$ such that $\dT (\Aset^-, K) > \pi - c$, there is some $t_0 \ge 0$ such that for all $t \ge t_0$ and $\gamma \in \Gamma$, if $\Aset \cap \gamma g^{-t} \Aset \neq \varnothing$ then $\gamma K \subseteq U_\lambda$ for some $\lambda$.
\end{theorem}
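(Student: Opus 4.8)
The plan is to deduce this statement as a compactness upgrade of \lemref{curious lemma}, arguing by contradiction. Suppose the conclusion fails. Then for every $n \in \N$ there exist $t_n \ge n$ and $\gamma_n \in \Gamma$ with $\Aset \cap \gamma_n g^{-t_n} \Aset \neq \varnothing$ but with $\gamma_n K \not\subseteq U_\lambda$ for \emph{every} index $\lambda$; in particular $t_n \to \infty$. Choose $v_n \in \Aset \cap \gamma_n g^{-t_n} \Aset$ for each $n$.

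The one point requiring care is to check that the sequence $(\gamma_n)$ leaves every compact subset of $X$, so that a subsequence converges to a boundary point. Fix $p \in \pi(\Aset)$ and set $D = \diam \pi(\Aset)$, which is finite since $\Aset$ is compact and $\pi$ is continuous. Since $\gamma_n^{-1} g^{t_n} v_n \in \Aset$, its footpoint $\gamma_n^{-1} v_n(t_n)$ lies in $\pi(\Aset)$, so $d(\gamma_n^{-1} v_n(t_n), p) \le D$, and applying the isometry $\gamma_n$ gives $d(v_n(t_n), \gamma_n p) \le D$. On the other hand $d(v_n(0), v_n(t_n)) = t_n$ while $d(v_n(0), p) \le D$, so $d(p, v_n(t_n)) \ge t_n - D$, and therefore $d(p, \gamma_n p) \ge t_n - 2D \to \infty$. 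Because $X$ is proper and $\cl X$ is compact, after passing to a subsequence we have $\gamma_n p \to \xi$ for some $\xi \in \bd X$; since $d(\gamma_n p, \gamma_n x) = d(p,x)$ stays bounded, $\gamma_n x \to \xi$ for every $x \in X$.

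Now the hypotheses of \lemref{curious lemma} hold for the subsequence $(\gamma_n)$, with the sequence $(t_n) \subset [0,\infty)$ and the witnesses $v_n \in \Aset \cap \gamma_n g^{-t_n} \Aset$, so that lemma yields $\xi \in \Aset^+$. By the covering hypothesis there is some $\lambda_0$ with $\cl{\BT}(\xi, c) \subseteq U_{\lambda_0}$, and by assumption $\dT(\Aset^-, K) > \pi - c$; hence the last sentence of \lemref{curious lemma}, applied with $U = U_{\lambda_0}$, gives $\gamma_n K \subseteq U_{\lambda_0}$ for all sufficiently large $n$. This contradicts the choice of the $\gamma_n$, and the theorem follows.

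I do not anticipate a genuine obstacle here: the substantive content is already contained in \thmref{pi-convergence} and \lemref{curious lemma}, and the argument above is just the standard passage from the behavior of an individual sequence to a uniform bound, with the only nontrivial ingredient being the short triangle-inequality estimate that forces $(\gamma_n)$ to escape to infinity in $X$.
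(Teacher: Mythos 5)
Your argument is correct and is essentially the paper's proof: the same contradiction scheme producing $\gamma_n$, $t_n \to \infty$ with $\Aset \cap \gamma_n g^{-t_n}\Aset \neq \varnothing$, passing to a subsequence with $\gamma_n x \to \xi \in \bd X$, and invoking \lemref{curious lemma} together with the covering hypothesis to reach a contradiction. The only difference is that you spell out, via the triangle-inequality estimate $d(p,\gamma_n p) \ge t_n - 2\diam\pi(\Aset)$, the ``escapes to infinity'' step that the paper simply asserts.
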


\begin{proof}
Suppose not.
Then for each $i \in \N$ there exist $\g_i \in \G$ and $t_i \to \infty$ such that $v_i \in \Aset \cap \g_i g^{-t_i} \Aset$ but $\g_i \Aset^+ \nsubseteq U_\lambda$ for all $i, \lambda$.
Since $(\g_i)$ escapes to infinity, we may assume $\g_i x \to \xi \in \bd X$ for some $\xi \in \bd X$ and $x \in X$.
This contradicts \lemref{curious lemma}.
Therefore, the theorem must hold.
\end{proof}

Putting $c = 0$ into \thmref{singular uniform expansion 2}, we obtain the following.

\begin{corollary}				\label{regular uniform expansion}
Let $X$ be a proper $\CAT(0)$ space and $\Gamma$ a group acting properly isometrically on $X$.
Let $\Aset \subset SX$ be compact, let $\Aset^- = \setp{v^-}{v \in \Aset}$ and $\Aset^+ = \setp{v^+}{v \in \Aset}$, and let $\set{U_\lambda}$ be an open cover of $\Aset^+$.
For any compact set $K \subset \bd X$ such that $\dT (\Aset^-, K) > \pi$, there is some $t_0 \ge 0$ such that for all $t \ge t_0$ and $\gamma \in \Gamma$, if $\Aset \cap \gamma g^{-t} \Aset \neq \varnothing$ then $\gamma K \subseteq U_\lambda$ for some $\lambda$.
\end{corollary}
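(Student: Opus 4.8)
The plan is to obtain this as the special case $c = 0$ of \thmref{singular uniform expansion 2}, so the only real work is to check that, under the substitution $c = 0$, the hypotheses and conclusion of that theorem specialize exactly to those of the corollary. Since $\dT$ is an honest metric on $\bd X$, the closed Tits ball $\cl{\BT}(\xi, 0)$ is just the singleton $\set{\xi}$. Hence the hypothesis of \thmref{singular uniform expansion 2} that for every $\xi \in \Aset^+$ there exists $\lambda$ with $\cl{\BT}(\xi, c) \subseteq U_\lambda$ becomes the requirement that for every $\xi \in \Aset^+$ there exists $\lambda$ with $\xi \in U_\lambda$, i.e.\ that $\set{U_\lambda}$ is an open cover of $\Aset^+$ — precisely what is assumed here.

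Similarly, with $c = 0$ the hypothesis $\dT(\Aset^-, K) > \pi - c$ reads $\dT(\Aset^-, K) > \pi$, again matching the corollary, and the conclusion that if $\Aset \cap \g g^{-t}\Aset \neq \varnothing$ then $\g K \subseteq U_\lambda$ for some $\lambda$ is word-for-word the same. So \thmref{singular uniform expansion 2} applies and delivers the required $t_0$.

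I do not anticipate any obstacle beyond recording the degenerate-ball identity $\cl{\BT}(\xi, 0) = \set{\xi}$; the substance of the statement is entirely carried by \thmref{singular uniform expansion 2}, and ultimately by \lemref{curious lemma} and Papasoglu and Swenson's $\pi$-convergence. If a self-contained argument were preferred, one could instead rerun the contradiction proof of \thmref{singular uniform expansion 2} directly: assuming the conclusion fails yields $\g_i \in \G$, $t_i \to \infty$, and $v_i \in \Aset \cap \g_i g^{-t_i}\Aset$ with $\g_i K \nsubseteq U_\lambda$ for every $\lambda$; after passing to a subsequence with $\g_i x \to \xi \in \bd X$, \lemref{curious lemma} applied with $c = 0$ gives both $\xi \in \Aset^+$ and $\g_i K \subseteq U_\lambda$ for all large $i$ (taking $\lambda$ with $\xi \in U_\lambda$), a contradiction.
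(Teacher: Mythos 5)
Your proposal is correct and matches the paper exactly: the paper derives this corollary precisely by setting $c = 0$ in \thmref{singular uniform expansion 2}, and your observation that $\cl{\BT}(\xi,0) = \set{\xi}$ turns the covering hypothesis into the plain open-cover assumption is the only point needing mention. Nothing further is required.
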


\section{Quasi-Product Neighborhoods}

Fix a metric $\rho$ on $\bd X$ (with the cone topology).
Let $v_0 \in \Reg$, let $p = v_0(0)$, and let $\epsilon \ge 0$.
For each $\delta > 0$, let
\[\Aset(v_0, \epsilon, \delta)
= \pi_p^{-1} \Big( \cl\Bcone(v_0^-, \delta) \times \cl\Bcone(v_0^+, \delta) \times [0, \epsilon] \Big).\]
We may abbreviate $\Aset(v_0, \epsilon, \delta) = \Aed = \Aset_\delta = \Aset$.
Since $v_0 \in \Reg$, by \lemref{Ballmann's lemma} we know $\Aset_\delta$ is always compact for $\delta$ sufficiently small.
In fact, we have the following.

\begin{lemma}					\label{diam A}
Let $v_0 \in \Reg$.
For all $\epsilon \ge 0$ we have
$\lim\limits_{\delta \to 0} \diam \Aed \le 2\epsilon + \diam CS(v_0)$.
\end{lemma}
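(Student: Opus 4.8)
The plan is to reduce the statement to a short computation inside a single flat strip. First I would note that $\diam\Aed$ is monotone in $\delta$: for $0<\delta'<\delta$ one has $\cl\Bcone(v_0^\pm,\delta')\subseteq\cl\Bcone(v_0^\pm,\delta)$, hence $\Aset(v_0,\epsilon,\delta')\subseteq\Aed$ and $\diam\Aset(v_0,\epsilon,\delta')\le\diam\Aed$. Fix $\delta_0>0$ small enough that $\Aset_{\delta_0}$ is compact (possible since $v_0\in\Reg$, by \lemref{Ballmann's lemma}). Then for $0<\delta\le\delta_0$ the $\Aed$ form a nested family of compact subsets of $\Aset_{\delta_0}$, so $\diam\Aed$ decreases to a limit as $\delta\to0$. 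Since $\bigcap_{\delta>0}\cl\Bcone(v_0^\pm,\delta)=\{v_0^\pm\}$, the intersection of this family is the ``$\delta=0$'' set $\Aset(v_0,\epsilon,0):=\pi_p^{-1}(\{(v_0^-,v_0^+)\}\times[0,\epsilon])$; and the standard fact about nested compacta --- run a sequence of near-diametral pairs $v_n,w_n\in\Aset(v_0,\epsilon,\delta_n)$ with $\delta_n\downarrow0$, pass to a convergent subsequence inside the compact $\Aset_{\delta_0}$, and note that the limits lie in every $\Aset(v_0,\epsilon,\delta_m)$ and hence in $\Aset(v_0,\epsilon,0)$ --- gives $\lim_{\delta\to0}\diam\Aed=\diam\Aset(v_0,\epsilon,0)$. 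So it suffices to bound the diameter of $\Aset(v_0,\epsilon,0)$.

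Next I would unwind that set. Every $v\in\Aset(v_0,\epsilon,0)$ has $v^-=v_0^-$ and $v^+=v_0^+$, so $v$ is parallel to $v_0$. By the classical product decomposition of a parallel set in a $\CAT(0)$ space, the union of all geodesics parallel to $v_0$ is a closed convex subset of $X$ isometric to $\R\times C$ with $C$ closed convex, the $\R$-factor in the direction of $v_0$, the geodesic flow acting as translation on the $\R$-factor, and $C$ isometric to $CS(v_0)$ (so $\diam C=\diam CS(v_0)$). Since the flow translates the $\pi_p$-coordinate and preserves endpoints, and $CS(v_0)=\pi_p^{-1}\{\pi_p(v_0)\}$ is the slice on which the $\pi_p$-coordinate has the fixed value of $v_0$, each $v\in\Aset(v_0,\epsilon,0)$ can be written $v=g^{s_v}v'$ with $v'\in CS(v_0)$, and the exponents $s_v$ occurring over $\Aset(v_0,\epsilon,0)$ are pairwise within $\epsilon$ (the $\pi_p$-coordinate, translated by $s_v$, stays in $[0,\epsilon]$). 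Writing $v(t)=(t+s_v,c_v)$ and $w(t)=(t+s_w,c_w)$ in $\R\times C$ coordinates, for every $t$ we get $d(v(t),w(t))=\sqrt{(s_v-s_w)^2+d(c_v,c_w)^2}\le|s_v-s_w|+d(c_v,c_w)\le\epsilon+\diam CS(v_0)$; feeding this into the (exponentially weighted, supremum-type) metric on $SX$ then yields $d_{SX}(v,w)\le2\epsilon+\diam CS(v_0)$.

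The step I expect to be the real work is this last computation --- specifically, pinning down how the coordinate map $\pi_p$ restricts to the parallel set $\R\times C$, namely that along it the $\pi_p$-coordinate differs from the $\R$-coordinate of the product by a constant independent of the $C$-coordinate, so that ``$\pi_p$-coordinate in $[0,\epsilon]$'' genuinely confines the $\R$-components of $v$ and $w$ to an interval of length $\le\epsilon$ --- and then using the explicit metric on $SX$ to pass from the pointwise bound to the bound on $d_{SX}(v,w)$. The other ingredients are routine: compactness of $\Aset_\delta$ for small $\delta$ is already recorded via \lemref{Ballmann's lemma} and $v_0\in\Reg$; the nested-compacta step uses only that each $\Aed$ is closed and sits in a fixed compact set; and the $\R\times C$ decomposition of a parallel set is standard.
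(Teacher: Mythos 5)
Your argument is correct, but its second half takes a genuinely different route from the paper's. The reduction, via nested compact sets, to the diameter of $\bigcap_{\delta>0}\Aed=\pi_p^{-1}\big(\{(v_0^-,v_0^+)\}\times[0,\epsilon]\big)$ is the same compactness mechanism the paper uses (near-diametral pairs, subsequential limits inside a compact $\Aset_{\delta_0}$ furnished by \lemref{Ballmann's lemma}); the paper merely packages it as a contradiction argument. Where you diverge is the final estimate: the paper's proof never invokes the parallel-set splitting. It picks $s_n,t_n\in[0,\epsilon]$ with $g^{-s_n}v_n,\,g^{-t_n}w_n\in\Aset_{0,\delta_n}$, uses that flowing for time at most $\epsilon$ moves a point of $SX$ by at most $\epsilon$ (whence the loss of $2\epsilon$ by the triangle inequality), and observes that subsequential limits lie in $\bigcap_{\delta>0}\Aset_{0,\delta}=CS(v_0)$, whose diameter is $\diam CS(v_0)$ by definition. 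You instead identify $\bigcap_{\delta>0}\Aed$ with a slab in the product decomposition $\R\times C$ of the parallel set of $v_0$, check that the Busemann coordinate $s(v)=b_{v^-}(v(0),p)$ agrees with the $\R$-coordinate up to a constant, and compute in coordinates; the identification $\diam_X C=\diam CS(v_0)$ you need is already asserted in the paper's preliminaries (the width of $v_0$ is the maximal width of a parallel flat strip), so this is not a gap, though your route does require the explicit form of the metric on $SX$ from \cite{ricks-mixing} to convert the pointwise bound into a bound on $d_{SX}(v,w)$, whereas the paper only needs $d_{SX}(g^{-s}v,v)\le s$. What your extra machinery buys is a sharper constant: your computation actually gives $\diam\le\epsilon+\diam CS(v_0)$, better than the stated $2\epsilon+\diam CS(v_0)$ (your final sentence undersells it). What the paper's route buys is economy: a one-line triangle inequality replaces the product decomposition and the metric computation entirely.
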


\begin{proof}
Suppose, by way of contradiction, there exist $\alpha > 0$ and sequences of $\delta_n > 0$ and $v_n, w_n \in \Aset_{\epsilon, \delta_n}$ such that $\delta_n \to 0$ but $d(v_n, w_n) \ge 2\epsilon + \diam CS(v_0) + \alpha$ for all $n$.
For each $n$ find $s_n, t_n \in [0, \epsilon]$ such that $g^{-s_n} v_n, g^{-t_n} w_n \in \Azd$.
By the triangle inequality $d(g^{-s_n} v_n, g^{-t_n} w_n) \ge \diam CS(v_0) + \alpha$ for all $n$.
We may assume $g^{-s_n} v_n \to v$ and $g^{-t_n} w_n \to w$ for some $v, w \in \bigcap_{\delta > 0} \Azd$.
Thus $v,w \in CS(v_0)$, hence $d(v, w) \le \diam CS(v_0)$, contradicting $g^{-s_n} v_n \to v$ and $g^{-t_n} w_n \to w$.
Therefore, the statement of the lemma must hold.
\end{proof}

Let $\epsilon, \delta > 0$.
For each $t \in \R$ and $\g \in \G$, let
\[\Bset^{\g}(v_0, \epsilon, \delta, t)
= \Aset(v_0, \epsilon, \delta) \cap \g g^{-t} \Aset(v_0, \epsilon, \delta).\]
We may abbreviate $B_{\g}(v_0, \epsilon, \delta, t) = B_{\g}^{\epsilon, \delta, t} = \Bset^{\g}_{\delta, t} = \Bset^{\g}$.

\begin{lemma}					\label{EBg disjoint}
Let $v_0 \in \Reg$ have zero width.
Assume $\G$ acts freely, properly discontinuously, by isometries on $X$.
There exist $\epsilon_0 > 0$ and $\delta_0 > 0$ such that for all $\epsilon \in [0,\epsilon_0]$, $\delta \in (0,\delta_0]$, and $t \in \R$,
the sets $\emap(\Bset^{\g}) = \emap(\Bset^{\g}_{\epsilon,\delta,t})$ are pairwise disjoint.
\end{lemma}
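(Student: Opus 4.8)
The plan is to shrink the box $\Aed$ enough that every geodesic lying in it has footpoint very near $p=v_0(0)$, and then to show that an overlap between two of the sets $\emap(\Bset^\g)$ would force the corresponding group elements to move $p$ by less than the minimal displacement of a nontrivial element of $\G$, hence to be equal.

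First I would record two preliminary facts. Since $\G$ acts freely and properly discontinuously on the proper space $X$, the displacement $d(p,\alpha p)$ is bounded below by some $m>0$ over all $\alpha\in\G\setminus\{e\}$: only finitely many $\alpha$ satisfy $\alpha\cl B(p,1)\cap\cl B(p,1)\ne\varnothing$, every $\alpha$ outside this finite set moves $p$ more than $1$, and each nontrivial $\alpha$ inside it moves $p$ a positive distance by freeness. Next, because $v_0\in\Reg$ has zero width we have $CS(v_0)=\{v_0\}$, so \lemref{diam A} gives $\lim_{\delta\to0}\diam\Aset(v_0,\epsilon,\delta)\le2\epsilon$; since $\diam\Aset(v_0,\epsilon,\delta)$ is monotone in $\epsilon$ and $\delta$, since $v_0$ lies in every such box, and since the footpoint map $\pi$ is continuous at $v_0$, I may fix $\epsilon_0,\delta_0>0$ so small that $\Aset(v_0,\epsilon_0,\delta_0)$ is compact (\lemref{Ballmann's lemma}) and $d(\pi(u),p)<m/4$ for every geodesic $u$ in $\Aset(v_0,\epsilon_0,\delta_0)$; the same footpoint bound then holds for every $u\in\Aed$ whenever $\epsilon\le\epsilon_0$ and $\delta\le\delta_0$.

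Now fix such $\epsilon,\delta$ and any $t\in\R$, write $\Aset=\Aed$, and suppose $\g,\g'\in\G$ satisfy $\emap(\Bset^\g)\cap\emap(\Bset^{\g'})\ne\varnothing$. Choose $v\in\Bset^\g$ and $w\in\Bset^{\g'}$ with $\emap(v)=\emap(w)$; then $v$ and $w$ are parallel. From $v\in\g g^{-t}\Aset$ I get $\bar v:=g^t\g^{-1}v\in\Aset$, and from $(g^t u)(0)=u(t)$ together with the formula for the $\G$-action on $SX$, $\bar v(0)=(\g^{-1}v)(t)=\g^{-1}\bigl(v(t)\bigr)$; likewise $\bar w:=g^t\g'^{-1}w\in\Aset$ with $\bar w(0)=\g'^{-1}\bigl(w(t)\bigr)$. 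Feeding the four geodesics $v,w,\bar v,\bar w\in\Aset$ into the footpoint bound yields
\[ d\bigl(v(0),p\bigr)<\tfrac m4,\qquad d\bigl(w(0),p\bigr)<\tfrac m4,\qquad d\bigl(v(t),\g p\bigr)<\tfrac m4,\qquad d\bigl(w(t),\g' p\bigr)<\tfrac m4 . \]
Since $v$ and $w$ are parallel, $s\mapsto d\bigl(v(s),w(s)\bigr)$ is convex (convexity of the $\CAT(0)$ metric) and bounded on $\R$ (the forward rays of $v$ and $w$ are asymptotic, as are the backward rays), hence constant; so $d\bigl(v(t),w(t)\bigr)=d\bigl(v(0),w(0)\bigr)\le d(v(0),p)+d(p,w(0))<m/2$. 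The triangle inequality then gives
\[ d(p,\g^{-1}\g' p)=d(\g p,\g' p)\le d(\g p,v(t))+d(v(t),w(t))+d(w(t),\g' p)<\tfrac m4+\tfrac m2+\tfrac m4=m , \]
so by the choice of $m$ we must have $\g^{-1}\g'=e$, i.e.\ $\g=\g'$. Hence the sets $\emap(\Bset^\g)$, $\g\in\G$, are pairwise disjoint, as claimed.

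The step I expect to carry the weight is the box-shrinking in the second paragraph: it is exactly the zero-width hypothesis that lets \lemref{diam A} force the diameter of $\Aset(v_0,\epsilon,\delta)$ down to $2\epsilon$ (and hence, with $\epsilon\to0$, all the way to $0$), so that the footpoints genuinely cluster at $p$; if $v_0$ bounded a flat strip, the boxes would not shrink and the displacement estimate would collapse. Everything else is routine manipulation with proper discontinuity, the triangle inequality, and convexity of the metric.
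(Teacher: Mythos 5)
Your proof is correct. It shares the paper's skeleton --- a positive lower bound on the displacement $d(p,\g p)$ of $p = v_0(0)$ over nontrivial $\g \in \G$ (freeness plus proper discontinuity), and the zero-width hypothesis fed through \lemref{diam A} to shrink $\Aed$ until all footpoints cluster near $p$, followed by a triangle-inequality contradiction --- but it handles the one genuine difficulty, namely that $\emap(v)=\emap(w)$ only makes $v$ and $w$ parallel rather than flow-translates of each other, by a different mechanism. You invoke convexity and boundedness of $s\mapsto d(v(s),w(s))$ for bi-asymptotic geodesics to transport the footpoint estimate from time $0$ to time $t$, and then bound $d(\g p,\g' p)<m$ directly. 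The paper instead exploits the fact that $\Aed$ is a full $\pi_p$-preimage (a quasi-product), so a pair of parallel geodesics in $\Bset^{\g}$ and $\Bset^{\g'}$ produces a single geodesic $w\in g^{t}\Bset^{\g}\cap g^{t'}\Bset^{\g'}$ with $\abs{t'-t}\le\epsilon$, and then reduces everything to the disjointness of the $\G$-translates of one enlarged box $\Aset_{2\epsilon_0,\delta_0}$ of diameter less than $6\epsilon_0=r_0/2$. Your route trades that saturation-plus-enlarged-box bookkeeping for the flat-strip-type constancy of distance between parallels, which is slightly more geometric and self-contained; the paper's version stays inside the flow-box formalism it reuses later. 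One point you should make explicit: passing from the $SX$-diameter bound of \lemref{diam A} to the footpoint bound $d(\pi(u),p)<m/4$ uses that the footpoint projection does not expand $SX$-distance appreciably (continuity at $v_0$ suffices, as you note, since every $u\in\Aset_{\epsilon_0,\delta_0}$ is $SX$-close to $v_0$); the paper makes the same tacit use of this when it converts $\diam\Aset_{2\epsilon_0,\delta_0}<6\epsilon_0$ into a contradiction with $d(p,\g p)\ge r_0$.
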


\begin{proof}
Let $p = v_0(0)$.
Because $\G$ acts freely and properly discontinuously on $X$, there is some $r_0 > 0$ such that $d(p, \g p) \ge r_0$ for all nontrivial $\g \in \G$.
Let $\epsilon_0 = \slashfrac{r_0}{12}$, and let $\delta_0 > 0$ be small enough that $\diam \Aset_{2\epsilon_0,\delta_0} < 6\epsilon_0$.
This implies $\Aset_{2\epsilon_0,\delta_0} \cap \g \Aset_{2\epsilon_0,\delta_0} = \varnothing$ for all $\g \neq \id$ by the triangle inequality.

Now let $\epsilon \in [0,\epsilon_0]$ and $\delta \in (0,\delta_0]$.
Let $\g, \g' \in \G$ be such that $\emap(\Bset^{\g}) \cap \emap(\Bset^{\g'})$ is nonempty.
By definition of $\Aset$, there is exist $t' \in \R$ and $w \in SX$ such that
$w \in g^{t} \Bset^{\g} \cap g^{t'} \Bset^{\g'}$.
Let $r = t' - t$ and $\phi = \g^{-1} \g'$.
Then
\begin{align*}
w &\in (g^{t} \Aset \cap \g \Aset) \cap (g^{t'} \Aset \cap \g' g^{r} \Aset) \\
&= (g^{t} \Aset \cap g^{t'} \Aset) \cap (\g \Aset \cap \g' g^{r} \Aset).
\end{align*}
So $w \in g^{t} \Aset \cap g^{t'} \Aset$, hence $\abs{r} \le \epsilon$ by definition of $\Aset$.
Then also
\begin{align*}
\g^{-1} w &\in \Aed \cap \phi g^{r} \Aed \\
&\subset g^{-\epsilon} \Aset_{2\epsilon_0,\delta_0} \cap \phi g^{-\epsilon} \Aset_{2\epsilon_0,\delta_0},
\end{align*}
which is empty by the previous paragraph unless $\phi = \id$.
Therefore $\g = \g'$.
\end{proof}

\begin{corollary}				%
All the $\Bset^{\g}$ are disjoint.
\end{corollary}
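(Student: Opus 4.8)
The plan is to obtain this immediately from \lemref{EBg disjoint}, which already contains all of the geometric content. The only thing left to observe is the purely set-theoretic principle that a map which sends two sets to disjoint images must send them from disjoint domains: if $f\colon A \to B$ and $f(S)\cap f(T) = \varnothing$, then $S \cap T = \varnothing$. Here the relevant map is the endpoint projection $\emap\colon SX \to \dbX$, which is a genuine (single-valued) function, and $S,T$ are two of the sets $\Bset^{\g}$.

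Concretely, I would fix $\epsilon \in [0,\epsilon_0]$, $\delta \in (0,\delta_0]$, and $t \in \R$ as provided by \lemref{EBg disjoint}, and take $\g \neq \g'$ in $\G$. Suppose toward a contradiction that some $w \in \Bset^{\g} \cap \Bset^{\g'}$. Applying $\emap$ gives $\emap(w) \in \emap(\Bset^{\g}) \cap \emap(\Bset^{\g'})$, so this intersection is nonempty, directly contradicting the conclusion of \lemref{EBg disjoint} that the sets $\emap(\Bset^{\g})$ are pairwise disjoint. Hence $\Bset^{\g} \cap \Bset^{\g'} = \varnothing$, and the family $\set{\Bset^{\g}}_{\g \in \G}$ is pairwise disjoint, as claimed. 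There is no genuine obstacle to overcome: the substance lives entirely in \lemref{EBg disjoint} (freeness and proper discontinuity of the $\G$-action together with the diameter control of \lemref{diam A}), and this corollary merely records the convenient consequence at the level of subsets of $SX$ — which is what will be needed later when lengths of orbit intersections are summed over the product boxes without double counting.
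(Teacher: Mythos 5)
Your argument is correct and is exactly the intended (and essentially only) reasoning: the paper states this as an immediate corollary of \lemref{EBg disjoint} with no written proof, since disjointness of the images $\emap(\Bset^{\g})$ under the single-valued map $\emap$ forces disjointness of the sets $\Bset^{\g}$ themselves. Nothing further is needed.
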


\begin{lemma}					\label{compact neighborhood 4}
Fix a zero-width geodesic $v_0 \in SX$.
Assume $\G$ acts freely, properly discontinuously, by isometries on $X$.
There exist $\epsilon_0 > 0$ and $\delta_0 > 0$ such that for every $\delta \in (0,\delta_0]$ and $\epsilon \in [0, \epsilon_0]$, the set $\Aset = \Aset(v_0, \epsilon, \delta)$ satisfies all the following:
\begin{enumerate}
\item
\label{neighborhood condition 2}
If $\epsilon > 0$ then $\Aset$ contains an open neighborhood of $g^{\slashfrac{\epsilon}{2}} v_0$ in $SX$.
\item
\label{compactness condition 2}
$\Aset$ is compact.
\item
\label{range of s condition 2}
For all $v \in \Aset$, $g^t v \in \Aset$ if and only if $0 \le s(g^t v) \le \epsilon$.
\item
\label{separation condition 2}
$\dT(\Aset_{\delta}^-, \Aset_{\delta}^+) > \pi$.
\item
\label{disjointness condition 2}
The sets $\emap(\Bset^{\g}) = \emap(\Bset^{\g}_{\epsilon,\delta,t})$ are pairwise disjoint for all $t \in \R$.
\end{enumerate}
\end{lemma}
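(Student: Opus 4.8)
The plan is to assemble the five conclusions from the machinery already in hand, choosing $\epsilon_0$ and $\delta_0$ small enough to satisfy all of them simultaneously. Since all five conditions are preserved under shrinking $\epsilon$ or $\delta$, it suffices to produce, for each condition separately, thresholds below which it holds, and then take the minimum. I would carry the conditions out roughly in the order that makes the dependencies visible.

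First, item \itemref{separation condition 2}: since $v_0 \in \Reg$, we have $\dT(v_0^-, v_0^+) > \pi$, and the Tits metric is lower semicontinuous with respect to the cone topology, so as $\delta \to 0$ the $\dT$-distance between $\cl\Bcone(v_0^-,\delta)$ and $\cl\Bcone(v_0^+,\delta)$ stays bounded below by a number $> \pi$ for $\delta$ small; fix $\delta_1$ accordingly. Item \itemref{compactness condition 2} is immediate from \lemref{Ballmann's lemma} (as already noted in the text, $\Aed$ is compact for $\delta$ small, say $\delta \le \delta_2$), using that $[0,\epsilon]$ is compact. Item \itemref{disjointness condition 2} is exactly the content of \lemref{EBg disjoint} (recalling $v_0$ has zero width), which furnishes its own $\epsilon_0'$ and $\delta_0'$. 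Item \itemref{range of s condition 2} follows from the definition $\Aed = \pi_p^{-1}\big(\cl\Bcone(v_0^-,\delta)\times\cl\Bcone(v_0^+,\delta)\times[0,\epsilon]\big)$: here $s(\cdot)$ is the coordinate recording the displacement along the geodesic from the fiber over $p$, and $g^t$ shifts this coordinate by $t$ while fixing the endpoint pair $(v^-,v^+)$; so $g^t v \in \Aed$ iff the shifted $s$-value lands in $[0,\epsilon]$. (I would spell out that on $\Aed$ the endpoint coordinates are automatically in the right sets, so only the $s$-coordinate constraint is active.) This works for all $\epsilon \ge 0$ and all small $\delta$, with no further restriction.

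The one genuinely substantive point is item \itemref{neighborhood condition 2}: that for $\epsilon > 0$ the set $\Aset$ contains an open neighborhood of $g^{\epsilon/2}v_0$ in $SX$. The natural argument is that the map $SX \to \bd X \times \bd X \times \R$ sending $v$ to $(v^-, v^+, s(v))$ — where $s$ is defined relative to the fiber $\pi_p^{-1}(\cdot)$ near $v_0$ — is continuous near $g^{\epsilon/2}v_0$, and $g^{\epsilon/2}v_0$ maps to $(v_0^-, v_0^+, \epsilon/2)$, which lies in the interior $\Bcone(v_0^-,\delta)\times\Bcone(v_0^+,\delta)\times(0,\epsilon)$ of the box defining $\Aed$ (using $\epsilon/2 \in (0,\epsilon)$). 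Continuity of the endpoint map $\emap$ on the rank one locus is standard (it follows from \lemref{Ballmann's lemma} / \lemref{eval}), and continuity of $s$ near $v_0$ follows because $\pi_p$ restricted to a neighborhood of $v_0$ is a well-behaved projection onto the cross-section. I expect this continuity-of-coordinates statement to be where most of the care goes, since $\emap$ is only continuous at rank one geodesics and one must confine attention to a neighborhood small enough to stay in $\Reg$ — but \lemref{Ballmann's lemma} guarantees exactly that.

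Finally I would set $\delta_0 = \min\{\delta_1,\delta_2,\delta_0'\}$ and $\epsilon_0 = \epsilon_0'$ (shrinking further if the neighborhood argument in \itemref{neighborhood condition 2} needs it), and observe once more that shrinking either parameter only shrinks $\Aed$ and hence preserves every condition, so the stated $\epsilon_0, \delta_0$ work uniformly for all $\epsilon \in [0,\epsilon_0]$ and $\delta \in (0,\delta_0]$. The only mild subtlety in the bookkeeping is condition \itemref{neighborhood condition 2} being vacuous at $\epsilon = 0$, which the hypothesis "if $\epsilon > 0$" already accommodates.
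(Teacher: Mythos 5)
Your proposal is correct and follows essentially the same route as the paper, whose proof is simply the attribution of item (1) to continuity of $\pi_p$, items (2) and (4) to \lemref{Ballmann's lemma}, item (3) to the definitions, and item (5) to \lemref{EBg disjoint}, together with the observation that the thresholds from \lemref{EBg disjoint} work uniformly. The only cosmetic differences are that you obtain the Tits separation in (4) from lower semicontinuity of $\dT$ with respect to the cone topology rather than from \lemref{Ballmann's lemma}, and that your caution about $\emap$ being continuous only at rank one geodesics is unnecessary (the endpoint map is continuous on all of $SX$; rank one matters only for the local product structure), neither of which affects correctness.
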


\begin{proof}
Property
\itemrefstar{neighborhood condition 2}
follows from continuity of $\pi_p$,
\itemrefstar{compactness condition 2} and \itemrefstar{separation condition 2}
\lemref{Ballmann's lemma},
\itemrefstar{range of s condition 2}
the definitions, and
\itemrefstar{disjointness condition 2}
\lemref{EBg disjoint}.
\end{proof}

\begin{remark}
Only property \itemrefstar{disjointness condition 2} requires $v_0$ zero-width and $\G$ acting freely.
The others require only $v_0$ rank-one and $\G$ acting properly isometrically.
\end{remark}

\section{Mixing Calculations}

\subsection{Measures}
\label{measures}

We recall the measures constructed in \cite{ricks-mixing}.
For $\xi \in \bd X$ and $p,q \in X$, let $b_{\xi} (p, q)$ be the Busemann cocycle
\[b_{\xi} (p, q) = \lim_{t \to \infty} \left[ d([q,\xi)(t), p) - t \right].\]
These functions are $1$-Lipschitz in both variables and satisfy the cocycle property $b_{\xi} (x, y) + b_{\xi} (y, z) = b_{\xi} (x, z)$.  Furthermore, $b_{\gamma \xi} (\gamma x, \gamma y) = b_{\xi} (x, y)$ for all $\gamma \in \Isom X$.

The critical exponent
$\delta_\G = \inf \setp{s \ge 0}{\sum_{\g \in \G} e^{-s d(p, \g q)} < \infty}$
of the Poincar\'e series for $\G$ does not depend on choice of $p$ or $q$.
We shall always assume $\delta_\G < \infty$ (which holds whenever $\G$ is finitely generated, for instance).
Then Patterson's construction yields a conformal density $\family{\mu_p}_{p \in X}$ of dimension $\delta_\G$ on $\bd X$, called the \defn{Patterson-Sullivan} measure.

\begin{definition}\label{conformal density}
A \defn{conformal density of dimension $\delta$} is a family $\family{\mu_p}_{p \in X}$ of equivalent finite Borel measures on $\bd X$, supported on $\Limitset$, such that for all $p,q \in X$ and $\g \in \G$:
\begin{enumerate}
\item \label{equivariance}
the pushforward $\gamma_* \mu_p = \mu_{\gamma p}$ and
\item \label{Radon-Nikodym}
the Radon-Nikodym derivative $\frac{d\mu_q}{d\mu_p}(\xi) = e^{-\delta b_\xi (q, p)}$.
\end{enumerate}
\end{definition}

Now fix $p \in X$.
For $(v^-,v^+) \in \emap(SX)$, let $\beta_p \colon \emap(SX) \to \R$ by $\beta_p (v^-, v^+) = (b_{\xi} + b_{\eta}) (v(0), p)$; this does not depend on choice of $v \in \emap^{-1}(v^-,v^+)$.
The measure $\mu$ on $\dbX$ defined by
\[d\mu (\xi, \eta)
= e^{-\delta_\G \beta_p (\xi, \eta)} d\mu_p (\xi) d\mu_p (\eta)\]
is $\G$-invariant and does not depend on choice of $p \in X$; it is called a \emph{geodesic current}.

The \defn{Bowen-Margulis} measure $m$, a Radon measure on $SX$ that is invariant under both $g^t$ and $\G$, is constructed as follows:
The measure $\mu \times \lambda$ on $\dbX \times \R$ ($\lambda$ is Lebesgue measure) is supported on $\emap(\Zerowidth) \times \R$, where $\Zerowidth \subseteq SX$ is the set of zero-width geodesics in $X$.
Then the map $\pi_p \colon SX \to \dbX \times \R$ given by
\[\pi_p(v) = (v^-, v^+, b_{v^-} (v(0), p))\]
is seen to be a homeomorphism from $\Zerowidth$ to $\emap(\Zerowidth)$, hence $m = \mu \times \lambda$ may be viewed as a Borel measure on $SX$.
Moreover, from \cite{ricks-mixing} we have the following.

\begin{proposition}				\label{zero-width geodesics are dense}
The zero-width geodesics are dense in $SX$.
\end{proposition}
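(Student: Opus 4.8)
This proposition is taken from \cite{ricks-mixing}; here is the shape of the argument I would give.

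\emph{A Baire-category reduction.} The function $v \mapsto \width(v) = \diam CS(v)$ is upper semicontinuous on $SX$. Indeed, if $v_n \to v$ in $SX$, pass to a subsequence on which $\width(v_n)$ converges. If the limit is finite, choose $w_n \in CS(v_n)$ almost realizing $\width(v_n)$; since the distances $d(v_n, w_n)$ are then bounded and $SX$ is proper, the $w_n$ subconverge to some $w$, and (using continuity of the Busemann cocycle and the fact that a pair of geodesics at bounded Hausdorff distance in a $\CAT(0)$ space spans a flat strip) one checks $w \in CS(v)$ with $d(v,w) \ge \limsup \width(v_n)$; if the limit is $\infty$, an arbitrarily long geodesic segment of $CS(v_n)$ issuing from $v_n$ subconverges to a ray in $CS(v)$, so $\width(v) = \infty$. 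Hence each $\set{v : \width(v) \ge \epsilon}$ is closed and $\Zerowidth = \bigcap_{k \ge 1} \set{v : \width(v) < \slashfrac{1}{k}}$ is a $G_\delta$ subset of the Baire space $SX$. It therefore suffices to prove that for every $\epsilon > 0$ the open set $\set{v : \width(v) < \epsilon}$ is dense, i.e. that every $v \in SX$ is a compact-open limit of geodesics of width $< \epsilon$.

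\emph{Perturbing via rank one geometry.} The tool is \lemref{Ballmann's lemma}: if $w$ is a geodesic with $\width(w) < R$, it produces neighborhoods $U \ni w^+$ and $V \ni w^-$ in $\cl X$ so that every geodesic $v$ with $v^+ \in U$ and $v^- \in V$ satisfies $d(v, w(0)) < R$ and $\width(v) < 2R$. So, starting from a geodesic of small width and shrinking $R$, small perturbations of its endpoints keep the width below $\epsilon$ while keeping the footpoint nearby. To hit an arbitrary target $v \in SX$, one then moves such a small-width geodesic around by $\G$: using the given rank one isometry $\g_0$ (so $\dT(\g_0^-,\g_0^+) > \pi$), a second rank one isometry $\g_1$ independent from $\g_0$ (from non-elementarity), the North--South dynamics of rank one isometries on $\cl X$ (again via \lemref{Ballmann's lemma}), and a ping-pong argument with high powers of $\G$-conjugates of $\g_0,\g_1$, one produces $\G$-translates and time-shifts of a fixed small-width geodesic whose endpoints approximate $v^\pm$ (these lie in $\Limitset$, which equals $\bd X$ under cocompactness) and whose footpoint approximates $v(0)$; all of these have width $< \epsilon$.

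\emph{The main obstacle.} The delicate point is making the approximating geodesics genuinely close to the prescribed $v$ in $SX$, not merely close to some geodesic parallel to $v$: the endpoint map $\emap \colon SX \to \dbX$ is not continuous where flat strips occur, and pinning down the endpoints of a geodesic does not pin it down inside a parallel set. Everything works cleanly if $\G$ contains a rank one isometry whose axis has \emph{zero} width, since then \lemref{Ballmann's lemma} with $R \to 0$ yields neighborhoods of that axis' endpoints all of whose joining geodesics have width $< \epsilon$; in general this requires the finer analysis of flat strips in rank one $\CAT(0)$ spaces carried out in \cite{ricks-mixing}. An alternative, measure-theoretic route bypasses the Baire step: since the Bowen--Margulis measure $m = \mu \times \lambda$ is finite and, by construction, concentrated on $\emap(\Zerowidth) \times \R$, while the Patterson--Sullivan density has full support $\Limitset = \bd X$, one deduces that $\emap(\Zerowidth)$ is dense in $\emap(SX)$, and then upgrades this to density of $\Zerowidth$ in $SX$ using geodesic completeness together with the same flat-strip analysis to handle parallel sets.
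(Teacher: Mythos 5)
The paper does not actually prove this proposition: it is imported from \cite{ricks-mixing} (``Moreover, from \cite{ricks-mixing} we have the following''), so your opening attribution matches the paper's treatment, and simply citing that paper is the intended ``proof'' here. Judged as a proof, however, your sketch has a genuine gap exactly where the content lies. The reduction is fine: $\width$ is upper semicontinuous (limits of flat strips of width $c$ are flat strips of width $c$, by properness and Arzel\`a--Ascoli), so $\Zerowidth$ is a $G_\delta$ in the Baire space $SX$ and it suffices to show each open set $\setp{v \in SX}{\width(v)<\epsilon}$ is dense. But the density of these sets is the whole theorem, and your second paragraph does not establish it: translating a fixed small-width geodesic by $\G$ and invoking North--South dynamics and \lemref{Ballmann's lemma} can control the endpoint pair and the footpoint, but---as you yourself note in your third paragraph---controlling $\emap(w)$ and $w(0)$ does not control $w$ in $SX$ where flat strips occur, and an arbitrary target $v$ may be higher rank, so it is not clear a priori that $(v^-,v^+)$ can be approximated by endpoint pairs of small-width geodesics at all. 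That is precisely the difficulty, and you resolve it only by appealing to ``the finer analysis of flat strips carried out in \cite{ricks-mixing}'', i.e.\ to the proof of the very statement being proved.

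The measure-theoretic alternative does not close the gap either. That $\mu$ is concentrated on $\emap(\Zerowidth)$, and that $m_\G$ is nonzero (let alone finite), are themselves results of \cite{ricks-mixing} whose proofs rest on the same flat-strip structure theory, so the route is essentially circular; full support of the Patterson--Sullivan density uses $\Limitset=\bd X$, i.e.\ cocompactness, which is not among the standing hypotheses in force where the proposition is stated; and even granting density of $\emap(\Zerowidth)$ in $\emap(SX)$, the ``upgrade'' to density of $\Zerowidth$ in $SX$ is once more the unresolved parallel-set issue. In short: the Baire reduction and the attribution are correct, but the essential step is deferred to the reference rather than proved, so as a standalone argument the proposal is incomplete.
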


(However, the zero-width geodesics do not in general form an open set in $SX$.)

The Bowen-Margulis measure $m$ has a quotient measure $m_\G$ on $\modG{SX}$.
Since we assume $\G$ acts freely on $X$ (and therefore on $SX$), $m_\G$ can be described by saying that whenever $A \subset SX$ is a Borel set on which $\pr$ is injective, $m_\G(\pr A) = m(A)$.

One can adapt the methods of Knieper's proof \cite{knieper98} that the Bowen-Margulis measure is the unique measure of maximal entropy to the locally $\CAT(0)$ case.
One thus obtains the following theorem.

\begin{theorem}					\label{uniqueness of maximal entropy}
Let $\G$ be a group acting freely geometrically on a proper, geodesically complete $\CAT(0)$ space $X$ with rank one axis.
The Bowen-Margulis measure $m_\G$ on $\modG{SX}$ is the unique measure (up to rescaling) of maximal entropy for the geodesic flow, which has entropy $h = \delta_\G$.
\end{theorem}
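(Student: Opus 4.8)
The plan is to follow the template of Knieper's proof that the Bowen--Margulis measure is the unique measure of maximal entropy, but carried out in the locally $\CAT(0)$ setting rather than in nonpositive curvature, using the geometry of rank one geodesics established above (especially \lemref{Ballmann's lemma} and the uniform-expansion results of \secref{pi-convergence section}). The key structural facts to exploit are: (1) the action is free, geometric, and rank one, so that $\modG{X}$ is compact and the rank one locus $\Reg$ is a dense $g^t$-invariant set; (2) by \propref{zero-width geodesics are dense}, the zero-width geodesics $\Zerowidth$ are dense and carry the full Bowen--Margulis measure $m$; and (3) by hypothesis $X$ is not homothetic to a tree with integer edge lengths, so (via \cite{ricks-mixing}) $m_\G$ is finite and mixing. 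These are exactly the ingredients that make Knieper's variational/equidistribution argument go through.

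First I would establish that $h := \delta_\G$ is indeed the topological entropy of the geodesic flow on $\modG{SX}$, and that $m_\G$ achieves it; this is where one adapts Knieper's original argument. The core of that argument is a covering/separating-set estimate: one shows that the $m_\G$-measure of a dynamical (Bowen) ball $B(v, T, \varepsilon)$ decays like $e^{-hT}$ up to subexponential factors, uniformly in $v$ ranging over a set of full $m_\G$-measure. In the smooth nonpositively curved case this uses the structure of stable/unstable horospheres; here the replacement is the product structure of the Patterson--Sullivan construction, $m = \mu \times \lambda$ under $\pi_p$, together with the $\CAT(0)$ comparison geometry and the conformality property \itemref{Radon-Nikodym} of the density, which controls how the Busemann coordinate distorts under the flow. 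The quasi-product neighborhoods of \secref{Quasi-Product Neighborhoods} (the sets $\Aed$, with $\diam \Aed$ controlled by \lemref{diam A} and the disjointness in \lemref{EBg disjoint}) are the combinatorial device that lets one count orbit segments through such a box, replacing the flow boxes used in the manifold case.

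The entropy lower bound $h_{m_\G}(g^1) \ge h$ follows from the measure estimate above via the Brin--Katok local-entropy formula applied to $m_\G$; the upper bound $h_{\mathrm{top}} \le h$ follows by a standard covering argument (cover $\modG{SX}$ by boxes, bound the number of $(T,\varepsilon)$-separated orbit segments by the reciprocal of the minimal box measure, which is $\gtrsim e^{-hT}$ up to subexponential error). Uniqueness then comes from the fact that any ergodic measure $\nu$ of entropy $h$ must, by the same local-entropy estimate, give dynamical balls measure comparable to those of $m_\G$, forcing $\nu$ to be absolutely continuous with respect to $m_\G$ on a generating partition and hence equal to $m_\G$ by ergodicity; the mixing of $m_\G$ (hence ergodicity) is what pins it down to a single measure.

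\textbf{The main obstacle} I anticipate is the uniformity of the Bowen-ball measure estimate across the \emph{singular} (higher-rank) part of $SX$: Knieper's smooth argument leans on the rank one locus being of full measure and on horospherical contraction/expansion that may degenerate along flats. Here one must instead argue that, although $\Reg$ need not be open, the zero-width geodesics have full $m$-measure (\propref{zero-width geodesics are dense} plus the construction $m = \mu \times \lambda$ supported on $\emap(\Zerowidth) \times \R$), and that on $\Zerowidth$ the required expansion is genuinely uniform by \corref{regular uniform expansion} (which gives local uniform expansion along unstable horospheres once $\dT(\Aset^-,\Aset^+) > \pi$, i.e.\ on a neighborhood of a rank one geodesic). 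Making the entropy estimate ignore the measure-zero singular set while still producing a \emph{global} bound on separated sets — that is, transferring a full-measure statement into a topological-entropy statement — is the technical heart, and is exactly the step where the compactness of $\modG{X}$ and the finiteness of $m_\G$ are used to trade between the two.
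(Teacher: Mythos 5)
The paper does not actually prove \thmref{uniqueness of maximal entropy}: it only asserts that Knieper's argument from \cite{knieper98} adapts to the locally $\CAT(0)$ setting, so your plan is aimed in the same direction as the paper's. However, two steps of your sketch would not work as written. The serious one is the uniqueness endgame: you argue that an ergodic measure $\nu$ with $h_\nu = h$ must, ``by the same local-entropy estimate,'' give Bowen balls measure comparable to those of $m_\G$, hence be absolutely continuous with respect to $m_\G$. Brin--Katok only says that the exponential decay rate of $\nu(B(v,T,\epsilon))$ is $h_\nu$ for $\nu$-a.e.\ $v$, with non-uniform subexponential errors; equality of rates with those of $m_\G$ gives no multiplicative comparability and does not imply absolute continuity (mutually singular measures can have identical Brin--Katok rates, as any non-uniquely ergodic example with two measures of maximal entropy shows). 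What Knieper actually does, and what you would need here, is to combine Katok's entropy formula (covering numbers of sets of $\nu$-measure $1-\delta$ by dynamical balls) with the uniform Gibbs-type counting bound coming from the Patterson--Sullivan product structure --- the analogue of \lemref{conformal density calculation 2} and \corref{unclipped measure 3}: the number of $(T,\epsilon)$-separated orbit segments whose endpoint pairs lie in a fixed product box is at most a constant times $e^{hT}$ times the geodesic-current measure $\mu$ of the box's endpoint set. Only this uniform estimate converts $h_\nu = h$ into $\nu(\text{box}) \le C\, m_\G(\text{box})$ for flow boxes, whence $\nu \ll m_\G$, and then invariance of $\nu$ together with ergodicity of $m_\G$ gives $\nu = m_\G$. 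Ergodicity of $m_\G$ alone, as you suggest at the end, does not ``pin down'' uniqueness.

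The second problem is your upper bound $h_{\mathrm{top}} \le h$ via ``the reciprocal of the minimal box measure, which is $\gtrsim e^{-hT}$'': this requires a uniform lower Gibbs bound for $m_\G$-measures of dynamical balls centred at arbitrary vectors, including higher-rank ones, and no such bound is available --- $m_\G$ is carried by the zero-width geodesics, and boxes around singular vectors can have measure far smaller than $e^{-hT}$. This is exactly the obstacle you flag in your last paragraph but do not resolve. The standard resolution (Knieper's, via Manning-type arguments) avoids box measures entirely for this direction: by cocompactness a $(T,\epsilon)$-separated family of orbit segments injects, up to bounded multiplicity, into the set of $\g \in \G$ with $d(p, \g p) \le T + C$, and the Poincar\'e series/shadow lemma bounds the cardinality of that set by $O(e^{(\delta_\G + \alpha)T})$ for every $\alpha > 0$, giving $h_{\mathrm{top}} \le \delta_\G$ with no lower bound on measures of boxes. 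With these two repairs your outline does match the argument the paper is invoking.
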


To simplify notation, we write $h := \delta_\G$, even if $\G$ does not act cocompactly.

The $\G$-action on $X$ is said to have \defn{arithmetic length spectrum} if the translation lengths of axes are all contained in some discrete subgroup $c\Z$ of $\R$.
In \cite{ricks-mixing}, we showed that when $\Limitset = \bd X$, $X$ is geodesically complete, and $m_\G$ is finite, the only examples of arithmetic length spectrum are when $X$ is a tree with integer edge lengths, up to homothety.
Moreover, when the $\G$-action on $X$ does not have arithmetic length spectrum, the measure $m_\G$ is mixing under the geodesic flow $g_\G^t$.

STANDING HYPOTHESIS:
We assume throughout that $m_\G$ is finite, and thus we may normalize the measure by assuming $m_\G (\modG{SX}) = 1$.
We also assume non-arithmetic length spectrum, so $m_\G$ is mixing.

\subsection{Averaging}
\label{averaging}

Fix a zero-width geodesic $v_0 \in SX$, and let $p = v_0 (0)$.
Let $\epsilon \in (0, \epsilon_0]$ and $\delta \in (0, \delta_0]$.  

Our goal in this section is to prove \corref{mu(EB)}, which describes the total measure of intersections $\Aset \cap \G g^t(\Aset)$ for large $t$.
It is easy to see by mixing that $\lim_{t \to \infty} m(\Bset) = m(\Aset)^2$.
Less obvious, however, is that
$\lim_{t \to \infty} \mu(\emap(\Bset)) = \frac{2}{\epsilon} m(\Aset)^2$.

\begin{definition}
Define $s \colon SX \to \R$ by $s(v) = b_{v^-} (v(0), p)$,
and $\tau_{\g} \colon SX \to \R$ by $\tau_{\g} (v) = b_{v^-} (\g p, p) - t$.
\end{definition}

\begin{lemma}					\label{tau}
$\tau_{\g} (v) = s(v) - s (\g^{-1} g^t v)$.
\end{lemma}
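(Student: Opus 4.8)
The plan is to unwind the definitions of $\tau_\g$, $s$, and the geodesic flow, and then apply the cocycle property and equivariance of the Busemann functions. First I would observe that the point $(g^t v)^- = v^-$ and $(\g^{-1} g^t v)^- = \g^{-1} v^-$, since the geodesic flow does not change the endpoints of a geodesic and the $\G$-action on $SX$ acts diagonally on endpoints. Next, $(\g^{-1} g^t v)(0) = \g^{-1} (g^t v)(0) = \g^{-1} v(t)$. So by definition, $s(\g^{-1} g^t v) = b_{\g^{-1} v^-}\bigl(\g^{-1} v(t),\, p\bigr)$.

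Now I would use $\G$-equivariance of the Busemann cocycle, namely $b_{\g \xi}(\g x, \g y) = b_\xi(x, y)$ for all $\g \in \Isom X$, applied with $\g$ replaced by $\g$ (or $\g^{-1}$): we get $b_{\g^{-1} v^-}\bigl(\g^{-1} v(t), p\bigr) = b_{\g^{-1} v^-}\bigl(\g^{-1} v(t), \g^{-1}(\g p)\bigr) = b_{v^-}\bigl(v(t),\, \g p\bigr)$. Then the cocycle property $b_{v^-}(v(0), p) = b_{v^-}(v(0), \g p) + b_{v^-}(\g p, p)$ gives
\[
s(v) - s(\g^{-1} g^t v) = b_{v^-}(v(0), p) - b_{v^-}(v(t), \g p).
\]
To finish I would note that, since $v$ is a unit-speed geodesic with $v(t)$ lying on the ray from $v(0)$ toward $v^+$ (not $v^-$), and the Busemann function based at $v^-$ decreases at unit speed as one moves toward $v^-$ and increases at unit speed as one moves toward $v^+$, we have $b_{v^-}(v(t), v(0)) = t$, i.e.\ $b_{v^-}(v(t), \g p) = b_{v^-}(v(0), \g p) + t$. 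Substituting yields
\[
s(v) - s(\g^{-1} g^t v) = b_{v^-}(v(0), p) - b_{v^-}(v(0), \g p) - t = b_{v^-}(\g p, p) - t = \tau_\g(v),
\]
where the middle step again uses the cocycle property. This completes the proof.

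The only step requiring any genuine care is the identity $b_{v^-}(v(t), v(0)) = t$, i.e.\ correctly tracking the sign: moving along $v$ in the positive time direction moves \emph{away} from $v^-$, so the Busemann function based at $v^-$ increases by exactly $t$. Everything else is a bookkeeping exercise in equivariance and the cocycle relation, so I do not expect a serious obstacle.
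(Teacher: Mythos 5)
Your proposal is correct and follows essentially the same route as the paper: the paper's four-line computation is exactly your argument, combining the equivariance $b_{\g^{-1} v^-}(\g^{-1} v(t), p) = b_{v^-}(v(t), \g p)$, the unit-speed increase $b_{v^-}(v(t), v(0)) = t$, and the cocycle relation into one chain of equalities. Your sign bookkeeping for the Busemann function based at $v^-$ is also correct.
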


\begin{proof}
We compute
\begin{align*}
s(v) - s(\g^{-1} g^t v)
&= b_{v^-}(v(0), p) - b_{\g^{-1} v^-}(\g^{-1} v(t), p) \\
&= b_{v^-}(v(0), p) - \left[ b_{v^-}(v(0), \g p) + t \right] \\
&= b_{v^-} (\g p, p) - t \\
&= \tau_{\g} (v).
\qedhere
\end{align*}
\end{proof}

Let $\Bset = \bigcup_{\g \in \G} \Bset^{\g}$.

Define $\phi \colon \Bset \to \G$ by putting $\phi(v)$ equal to the unique $\g \in \G$ such that $v \in \Bset^{\g}$.

Define $\tau \colon \Bset \to \R$ by $\tau(v) = \tau_{\phi(v)} (v)$ and
$\ell \colon \Bset \to \R$ by $\ell(v) = \epsilon - \abs{\tau (v)}$.

\begin{lemma}					\label{length}
$\ell(v)$ is the length of the geodesic segment $g^{\R}(v) \cap \Bset$.
\end{lemma}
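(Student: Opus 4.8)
The plan is to understand which geodesics pass through both $\Aset$ and $\g g^{-t} \Aset$, and to compute the length of time such a geodesic spends inside $\Bset = \bigcup_\g \Bset^\g$. Recall that $\Bset^\g = \Aset \cap \g g^{-t} \Aset$ and that $\phi(v)$ is the unique $\g$ with $v \in \Bset^\g$. By the argument already used in \lemref{EBg disjoint}, if $v \in \Bset^\g$ then $v \in \Aset$ and $g^t \g^{-1} v \in \Aset$ (equivalently $\g^{-1} g^t v \in \Aset$). Conversely, $v \in \Bset$ exactly when both $v$ and some time-$t$ translate of $v$ (after applying the deck transformation $\phi(v)^{-1}$) lie in $\Aset$. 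So the first step is to reduce the question to: given $v$, for which real numbers $u$ is $g^u v \in \Aset$ and $\g^{-1} g^{t+u} v \in \Aset$ for the appropriate $\g$?

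First I would use property \itemrefstar{range of s condition 2} of \lemref{compact neighborhood 4}: for any $w \in \Aset$, one has $g^u w \in \Aset$ if and only if $0 \le s(g^u w) \le \epsilon$. Since $s(g^u w) = s(w) - u$ (the Busemann function decreases at unit speed along the flow), this says $g^u w \in \Aset$ iff $s(w) - \epsilon \le u \le s(w)$, an interval of length $\epsilon$. Now fix $v \in \Bset$ and let $\g = \phi(v)$. The set of $u \in \R$ with $g^u v \in \Aset$ is $\{ u : s(v) - \epsilon \le u \le s(v)\}$. Separately, $g^u v$ lies in $\g g^{-t}\Aset$ iff $\g^{-1} g^{t+u} v \in \Aset$ iff $0 \le s(\g^{-1} g^{t+u} v) \le \epsilon$. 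By \lemref{tau} (applied to $g^u v$ in place of $v$, noting $v^- = (g^u v)^-$ so $\phi$ is constant along the flow within $\Bset$), we have $s(\g^{-1} g^{t+u} v) = s(g^u v) - \tau_\g(g^u v) = (s(v) - u) - \tau(v)$, where $\tau(v) = b_{v^-}(\g p, p) - t$ does not depend on $u$. Hence $g^u v \in \g g^{-t}\Aset$ iff $s(v) - \tau(v) - \epsilon \le u \le s(v) - \tau(v)$, again an interval of length $\epsilon$, translated from the previous one by $-\tau(v)$.

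The segment $g^{\R}(v) \cap \Bset$ corresponds precisely to those $u$ in the intersection of these two length-$\epsilon$ intervals $[\,s(v) - \epsilon,\, s(v)\,]$ and $[\,s(v) - \tau(v) - \epsilon,\, s(v) - \tau(v)\,]$. Two intervals of length $\epsilon$ whose left endpoints differ by $\tau(v)$ overlap in an interval of length $\epsilon - |\tau(v)|$ when $|\tau(v)| \le \epsilon$, and are disjoint otherwise; but since $v \in \Bset$ we know the overlap is nonempty, so $|\tau(v)| \le \epsilon$ and the length is exactly $\epsilon - |\tau(v)| = \ell(v)$. This is the claimed value. The only genuinely delicate point — and the step I expect to require the most care — is justifying that $\g g^{-t}\Aset = \bigcup_u$-description is clean, i.e. that $\phi$ is constant along each flow orbit within $\Bset$ and that no \emph{other} deck transformation $\g'$ contributes a disjoint second arc on the same orbit; this is exactly the content of the disjointness statement (\lemref{EBg disjoint} / its corollary), so it is already available, but one should state explicitly that $g^{\R}(v) \cap \Bset = g^{\R}(v) \cap \Bset^{\phi(v)}$ before doing the interval computation.
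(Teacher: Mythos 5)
Your argument is correct and is essentially the paper's own: the paper's proof is a one-line citation of \lemref{tau} together with properties \itemrefstar{range of s condition 2} and \itemrefstar{disjointness condition 2} of \lemref{compact neighborhood 4}, which are exactly the three ingredients you spell out (the $s$-interval description of time spent in $\Aset$, the $u$-independence of $\tau$ along the orbit, and the disjointness of the $\emap(\Bset^{\g})$ to rule out a second arc from another $\g'$). One small slip: with the paper's conventions $s(g^u v) = s(v) + u$, so $s$ \emph{increases} at unit speed along the flow rather than decreases; since you apply the reversed sign consistently to both intervals, only the relative offset $\abs{\tau(v)}$ enters and the computed overlap length $\epsilon - \abs{\tau(v)} = \ell(v)$ is unaffected.
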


\begin{proof}
This follows from \lemref{tau}, by \itemrefstar{range of s condition 2} and \itemrefstar{disjointness condition 2} of \lemref{compact neighborhood 4}.
\end{proof}

\begin{corollary}				\label{mu and m}
For all $f \in L^1(\mu)$,
\[\int_{\emap(\Bset)} f \; d\mu
= \int_{\Bset} \frac{f \circ \emap}{\ell} \; dm.\]
\end{corollary}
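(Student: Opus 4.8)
The plan is to realize $\Bset$ as a ``flow box'' over its cross-section and push the measure $m$ through the coordinate homeomorphism $\pi_p$. Recall from Section~\ref{measures} that $\pi_p$ carries the zero-width geodesics homeomorphically onto $\emap(\Zerowidth) \times \R$, intertwining $m$ with $\mu \times \Leb$; and by Proposition~\ref{zero-width geodesics are dense} (together with the fact that $m$ is supported on $\Zerowidth$) it suffices to integrate over zero-width geodesics. First I would observe that, in $\pi_p$-coordinates, the flow $g^t$ acts by translation in the $\R$-factor: if $\pi_p(v) = (v^-,v^+,s(v))$ then $\pi_p(g^r v) = (v^-,v^+,s(v)+r)$, since $s = b_{v^-}(v(0),p)$ and $b_{v^-}(v(r),p) = b_{v^-}(v(0),p) + r$. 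Hence each flowline $g^{\R}(v)$ maps to a vertical line $\{(v^-,v^+)\} \times \R$, and the intersection $g^{\R}(v) \cap \Bset$ maps to $\{(v^-,v^+)\}$ times an interval whose $\Leb$-length is, by Lemma~\ref{length}, exactly $\ell(v)$ (note $\ell$ is constant along each flowline inside $\Bset$, so the notation $\ell(v)$ is unambiguous).

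Next I would invoke Fubini/Tonelli for the product measure $\mu \times \Leb$ on $\emap(\Bset) \times \R$ (restricted to the image of $\Bset$). For $f \in L^1(\mu)$, apply the change of variables $m = (\pi_p)^{-1}_*(\mu \times \Leb)$ to the right-hand side:
\[
\int_{\Bset} \frac{f \circ \emap}{\ell}\; dm
= \int_{\emap(\Bset)} \left( \int_{\{(\xi,\eta)\} \times \R \,\cap\, \pi_p(\Bset)} \frac{f(\xi,\eta)}{\ell} \; d\Leb \right) d\mu(\xi,\eta).
\]
For $\mu$-a.e.\ $(\xi,\eta) \in \emap(\Bset)$ the inner slice is an interval of $\Leb$-length $\ell$, and on that slice both $f(\xi,\eta)$ and $\ell$ are constant; so the inner integral equals $f(\xi,\eta) \cdot \ell / \ell = f(\xi,\eta)$. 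Integrating over $\emap(\Bset)$ then gives $\int_{\emap(\Bset)} f \, d\mu$, as desired. (Measurability of $\ell$ and of $(\xi,\eta) \mapsto$ the slice-interval follows from Lemmas~\ref{tau} and \ref{length} and continuity of $s$, $b$, and $\emap$; and $f \circ \emap / \ell \in L^1(m)$ because $\ell$ is bounded below by $0$ only on a null set—more precisely the set $\ell = 0$ corresponds to $\abs{\tau} = \epsilon$, a $\mu \times \Leb$-null set since it is a graph over $\emap(\Bset)$.)

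The main obstacle I anticipate is purely bookkeeping rather than conceptual: one must check that the fiber over each $(\xi,\eta) \in \emap(\Bset)$ really is a single interval of length $\ell$ and not some more complicated subset of $\R$. This is exactly what Lemma~\ref{length} asserts via property~\itemrefstar{range of s condition 2} and the disjointness property~\itemrefstar{disjointness condition 2} of Lemma~\ref{compact neighborhood 4}: inside $\Aset$ the flow-parameter ranges over an interval of length $\epsilon$, and the clipping by $\g g^{-t}\Aset$ removes a sub-interval so that what remains is again an interval, of length $\epsilon - \abs{\tau}$. A secondary point requiring care is that we only know $\pi_p$ is a homeomorphism on $\Zerowidth$, so one restricts all integrals to $\Zerowidth$ at the outset (legitimate because $m$, hence also $\frac{f \circ \emap}{\ell} \, m$, is concentrated on $\Zerowidth$) and only afterwards applies the product structure. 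With those two points in hand, the identity is just Fubini.
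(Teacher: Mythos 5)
Your argument is correct and is essentially the one the paper intends: the corollary is stated without proof precisely because it is the immediate Fubini/disintegration consequence of the product structure $m = \mu \times \Leb$ in the coordinates $\pi_p$ together with \lemref{length} (each $\emap$-fiber in $\Bset$ being a single flow interval of length $\ell$, on which $f \circ \emap$ and $\ell$ are constant). Your handling of the two side issues — restricting to $\Zerowidth$ where $\pi_p$ is a homeomorphism, and discarding the $m$-null set where $\ell = 0$ — is exactly the right bookkeeping, so there is nothing to add.
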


By \lemref{compact neighborhood 4} \itemrefstar{disjointness condition 2}, the map $\phi \colon \Bset \to \G$ factors as $\phi = \hat \phi \circ \emap$ for some $\hat \phi \colon \emap(\Bset) \to \G$.
Similarly, $\tau \colon \Bset \to \R$ factors as $\tau = \hat \tau \circ \emap$ for some $\hat \tau \colon \emap(\Bset) \to \R$,
and $\ell = \hat \ell \circ \emap$.

\begin{corollary}				\label{mu and m with tau}
For all $f \in L^1(\R)$,
\[\int_{\emap(\Bset)} f \circ \hat \tau \; d\mu
= \int_{\Bset} \frac{f \circ \tau}{\ell} \; dm.\]
\end{corollary}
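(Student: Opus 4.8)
The plan is to obtain \corref{mu and m with tau} directly from \corref{mu and m} by substituting $f\circ\hat\tau$ for the integrand. Fix $f\in L^1(\R)$ and extend $f\circ\hat\tau$ by $0$ off $\emap(\Bset)$, so that it is defined on all of $\dbX$; call this function $g$. Since $\tau$ factors through $\emap$ --- the factorization $\tau=\hat\tau\circ\emap$ recorded just after \corref{mu and m}, which rests on \lemref{compact neighborhood 4} \itemrefstar{disjointness condition 2} --- we have $g\circ\emap=f\circ(\hat\tau\circ\emap)=f\circ\tau$ on $\Bset$, so applying \corref{mu and m} to $g$ gives
\[
\int_{\emap(\Bset)} f\circ\hat\tau \; d\mu
= \int_{\Bset} \frac{(f\circ\hat\tau)\circ\emap}{\ell}\; dm
= \int_{\Bset} \frac{f\circ\tau}{\ell}\; dm ,
\]
which is the claim. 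So the analytic content is entirely in \corref{mu and m}; all that remains is to check that $g$ is a legitimate element of $L^1(\mu)$, which is needed to invoke \corref{mu and m} in the first place.

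For measurability: only finitely many of the $\Bset^\g$ are nonempty --- by proper discontinuity of the $\G$-action on $SX$ applied to the compact sets $\Aset$ and $g^{-t}\Aset$ --- each such $\Bset^\g$ is compact, $\Bset$ is their (disjoint) union, and $\tau$ restricted to $\Bset^\g$ equals $v\mapsto b_{v^-}(\g p,p)-t$, which is continuous; hence $\tau$ is Borel, and since $\emap\colon\Bset\to\emap(\Bset)$ is a continuous surjection of compact metrizable spaces it admits a Borel section $\sigma$, so $\hat\tau=\tau\circ\sigma$ and $g=f\circ\hat\tau$ are Borel. For integrability: $\hat\tau$ takes values in the compact interval $[-\epsilon,\epsilon]$ while $\mu$ is locally finite and $\emap(\Bset)\subseteq\emap(\Aset)$ is compact, so $\mu(\emap(\Bset))<\infty$ and $g\in L^1(\mu)$ as soon as $f$ is bounded --- which is the only situation actually used in the sequel (indicators of subintervals of $[-\epsilon,\epsilon]$, the function $\ell$, etc.). For a general $f\in L^1(\R)$ one first applies the bounded case to the truncations $f_n=f\cdot\mathbf{1}_{\{|f|\le n\}}$; monotone convergence along $|f_n|\uparrow|f|$ shows the two sides agree in $[0,\infty]$ for $|f|$, and dominated convergence then upgrades the identity from $f_n$ to $f$ whenever this common value is finite.

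I do not expect a genuine obstacle: once $\tau=\hat\tau\circ\emap$ and $\ell=\hat\ell\circ\emap$ are available, \corref{mu and m with tau} is bookkeeping built on \corref{mu and m}. The only spots that call for a line of care are the Borel-measurability of $\hat\tau$ (the measurable-section remark above, or simply reading it off from the way $\hat\tau$ was introduced) and the mild gap between ``$f\in L^1(\R)$'' as stated and the hypothesis ``$f\circ\hat\tau\in L^1(\mu)$'' that \corref{mu and m} actually requires, which is harmless because $\hat\tau$ is bounded.
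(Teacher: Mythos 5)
Your proposal is correct and is essentially the paper's (implicit) argument: the corollary is meant to follow immediately from \corref{mu and m} applied to $f\circ\hat\tau$, using the factorization $\tau=\hat\tau\circ\emap$ recorded just before the statement. Your extra checks of measurability and integrability of $f\circ\hat\tau$ are harmless bookkeeping that the paper leaves tacit.
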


Define $\sigma \colon \Bset \to \R$ by $\sigma(v) = s (\phi(v)^{-1} g^t v)$.

\begin{lemma}					%
$\sigma$ is continuous.
\end{lemma}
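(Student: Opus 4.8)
The plan is to prove continuity of $\sigma(v) = s(\phi(v)^{-1} g^t v)$ by reducing it to the known regularity of the pieces: the Busemann-based function $s$, the flow $g^t$, and the locally constant map $\phi$. The function $s \colon SX \to \R$ given by $s(v) = b_{v^-}(v(0), p)$ is continuous, since the Busemann cocycle $b_\xi(x, y)$ is continuous jointly in $\xi \in \bd X$ and $x, y \in X$, and $v \mapsto (v^-, v(0))$ is continuous on $SX$. The flow map $v \mapsto g^t v$ is a homeomorphism of $SX$ for each fixed $t$, and the action of any fixed $\g \in \G$ on $SX$ is likewise a homeomorphism. So for each fixed $\g$, the map $v \mapsto s(\g^{-1} g^t v)$ is continuous on all of $SX$.

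The only subtlety is that $\phi$ is not globally constant: $\sigma$ is defined on $\Bset = \bigcup_{\g \in \G} \Bset^{\g}$, and the value of $\phi$ jumps between the pieces $\Bset^{\g}$. The key point is that these pieces are \emph{separated}, not merely disjoint. First I would recall from \lemref{compact neighborhood 4}\itemrefstar{disjointness condition 2} (via \lemref{EBg disjoint}) that the sets $\emap(\Bset^{\g})$ are pairwise disjoint; combined with the fact that $\Bset^{\g} \subseteq \Aset \cap \g g^{-t} \Aset$ with $\Aset$ compact, and that $\G$ acts properly discontinuously, only finitely many $\g \in \G$ contribute nonempty $\Bset^{\g}$. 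Indeed $\Bset^{\g} \neq \varnothing$ forces $\g \Aset \cap \Aset \ni g^t v \cdot$-translate, i.e. $\g \pi(\Aset) \cap \pi(g^t\Aset) \neq \varnothing$ for the projections to $X$, and proper discontinuity bounds the number of such $\g$. So $\Bset$ is a finite union $\Bset = \Bset^{\g_1} \sqcup \dots \sqcup \Bset^{\g_k}$ of compact sets (each $\Bset^{\g}$ is closed in the compact set $\Aset$, hence compact), and these compact sets are pairwise disjoint, hence separated.

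It then follows that each $\Bset^{\g_j}$ is open in $\Bset$ (being the complement in $\Bset$ of the closed set $\bigcup_{i \neq j} \Bset^{\g_i}$), so $\phi$ is locally constant on $\Bset$. On each piece $\Bset^{\g_j}$ we have $\sigma(v) = s(\g_j^{-1} g^t v)$, which is the restriction to $\Bset^{\g_j}$ of a function continuous on all of $SX$; since $\{\Bset^{\g_j}\}$ is a finite open cover of $\Bset$ on whose members $\sigma$ agrees with a continuous function, $\sigma$ is continuous on $\Bset$. The one place where care is needed — the "main obstacle," though it is mild — is the finiteness/local-constancy of $\phi$: one must invoke proper discontinuity and compactness of $\Aset$ correctly to rule out infinitely many pieces accumulating, since a locally constant function on infinitely many disjoint but accumulating pieces need not extend to a continuous function. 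With the finiteness in hand, the argument closes immediately.
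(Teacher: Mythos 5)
Your proof is correct and follows essentially the same route as the paper: decompose $\Bset$ into the finitely many pairwise disjoint closed pieces $\Bset^{\g}$, note that on each piece $\sigma$ agrees with the continuous map $s \circ \g^{-1} \circ g^t$, and glue. Your extra care in justifying finiteness via proper discontinuity and compactness of $\Aset$ fills in a detail the paper leaves implicit, but the argument is the same.
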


\begin{proof}
The restriction of $\sigma$ to each $\Bset^{\g}$ is $s \circ \g^{-1} \circ g^t$, and $\Bset$ is the disjoint union of finitely many (closed) $\Bset^{\g}$.
\end{proof}

\begin{fact}					%
$\tau = s - \sigma$.
\end{fact}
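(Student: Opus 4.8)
The statement $\tau = s - \sigma$ is an immediate consequence of the definitions together with \lemref{tau}. The plan is to unwind both sides on each piece $\Bset^{\g}$ of the (finite, disjoint) decomposition of $\Bset$, since $\phi$ is constant there. First I would fix $v \in \Bset$ and set $\g = \phi(v)$, so that by definition $\tau(v) = \tau_{\g}(v)$ and $\sigma(v) = s(\g^{-1} g^t v)$. Then I would simply invoke \lemref{tau}, which says precisely $\tau_{\g}(v) = s(v) - s(\g^{-1} g^t v)$; substituting the two displayed definitions gives $\tau(v) = s(v) - \sigma(v)$, as desired. Since $v \in \Bset$ was arbitrary, this proves the identity of functions on $\Bset$.

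There is essentially no obstacle here: the only thing to be careful about is that $\phi$ is well-defined, i.e., that each $v \in \Bset$ lies in exactly one $\Bset^{\g}$ — but this is exactly the content of the corollary following \lemref{EBg disjoint} (``All the $\Bset^{\g}$ are disjoint''), which in turn rests on property \itemrefstar{disjointness condition 2} of \lemref{compact neighborhood 4}. Given that, the map $\phi$ and hence $\sigma$ and $\tau$ are genuine functions on $\Bset$, and the pointwise computation above goes through verbatim. I expect the proof to be a single line: ``For $v \in \Bset$ with $\g = \phi(v)$, apply \lemref{tau} to get $\tau(v) = \tau_\g(v) = s(v) - s(\g^{-1} g^t v) = s(v) - \sigma(v)$.''
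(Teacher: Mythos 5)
Your proposal is correct and follows exactly the route the paper intends: the Fact is stated without proof precisely because it is the pointwise combination of the definitions of $\phi$, $\sigma$, $\tau$ with \lemref{tau} on each $\Bset^{\g}$, which is your one-line computation. The remark about well-definedness of $\phi$ via the disjointness of the $\Bset^{\g}$ is a reasonable extra check and matches the paper's setup.
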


\begin{fact}					%
Both $s(v), \sigma(v) \in [0, \epsilon]$ for all $v \in \Bset$.
\end{fact}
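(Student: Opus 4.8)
The plan is to read off $s(v)\in[0,\epsilon]$ directly from the definition of $\Aset$, and then to deduce $\sigma(v)\in[0,\epsilon]$ by showing that $\phi(v)^{-1}g^t v$ again lies in $\Aset$.

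First I would note that $\Bset=\bigcup_{\g\in\G}\Bset^{\g}\subseteq\Aset$, and that by construction $\Aset=\pi_p^{-1}\big(\cl\Bcone(v_0^-,\delta)\times\cl\Bcone(v_0^+,\delta)\times[0,\epsilon]\big)$, where the third coordinate of $\pi_p(v)=(v^-,v^+,b_{v^-}(v(0),p))$ is exactly $s(v)=b_{v^-}(v(0),p)$. Hence $s(v)\in[0,\epsilon]$ for every $v\in\Aset$, and in particular for every $v\in\Bset$.

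Next, for $v\in\Bset$ I would set $\g=\phi(v)$, so that $v\in\Bset^{\g}=\Aset\cap\g g^{-t}\Aset$; thus $\g^{-1}v\in g^{-t}\Aset$. Since every element of $\Isom X$ commutes with the geodesic flow (from $(g^t\g v)(r)=(\g v)(t+r)=\g(v(t+r))=\g((g^t v)(r))$), we have $\phi(v)^{-1}g^t v=\g^{-1}g^t v=g^t(\g^{-1}v)\in\Aset$. Applying the first step to this element of $\Aset$ yields $\sigma(v)=s(\phi(v)^{-1}g^t v)\in[0,\epsilon]$.

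The only point requiring any care is the commutation $\g^{-1}g^t=g^t\g^{-1}$ for $\g\in\G$, which I would record explicitly as above; everything else is just unwinding the definition of $\Aset$, so I do not anticipate a genuine obstacle.
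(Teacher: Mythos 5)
Your proof is correct and is exactly the argument the paper leaves implicit (the Fact is stated without proof): $\Bset\subseteq\Aset$ forces $s(v)\in[0,\epsilon]$ since $s$ is the third coordinate of $\pi_p$, and $v\in\Bset^{\g}=\Aset\cap\g g^{-t}\Aset$ gives $\phi(v)^{-1}g^t v=g^t(\g^{-1}v)\in\Aset$, hence $\sigma(v)\in[0,\epsilon]$. Nothing further is needed.
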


\begin{lemma}					\label{mixing pushforwards}
Let $\fn \colon \modG{SX} \to \R$ be measurable, and let
$\fn_t = \fn \circ g^t_\G$.
Then
\[\lim_{t \to \infty} ( \fn \times \fn_t )_* m_\G (C \times D)
=
( \fn_* m_\G \times \fn_* m_\G )(C \times D)\]
for every measurable $C \times D \subseteq \R^2$.
\end{lemma}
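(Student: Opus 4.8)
The plan is to deduce this from mixing of the geodesic flow on $\modG{SX}$. The point is that $(\fn \times \fn_t)_* m_\G$ and $\fn_* m_\G \times \fn_* m_\G$ are both Borel probability measures on $\R^2$, so by the portmanteau-type characterization of weak convergence it suffices to prove convergence of the integrals $\int F \, d(\fn \times \fn_t)_* m_\G \to \int F \, d(\fn_* m_\G \times \fn_* m_\G)$ for all bounded continuous $F \colon \R^2 \to \R$, and then upgrade to convergence on all measurable rectangles $C \times D$ by a standard approximation argument. Rewriting the left-hand integral by the change-of-variables/pushforward formula gives $\int_{\modG{SX}} F(\fn, \fn \circ g^t_\G) \, dm_\G$. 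If $F$ were of the special product form $F(x,y) = a(x) b(y)$ with $a, b$ bounded continuous, this integral is exactly $\int_{\modG{SX}} (a \circ \fn)\,( (b \circ \fn) \circ g^t_\G) \, dm_\G$, which by mixing of $m_\G$ converges to $\left( \int (a \circ \fn)\, dm_\G \right)\left( \int (b \circ \fn)\, dm_\G \right) = \int F \, d(\fn_* m_\G \times \fn_* m_\G)$. General bounded continuous $F$ on $\R^2$ are then handled by approximation by finite sums of such products (Stone--Weierstrass on compact rectangles, together with tightness to control the tails, since $\fn_* m_\G$ is a finite measure).

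Carrying this out, the steps in order would be: (1) reduce to testing against bounded continuous $F$, using that weak-$*$ limits of probability measures are determined by such $F$ and that convergence on a convergence-determining class plus Portmanteau gives convergence on continuity sets, hence (after a further elementary argument) on all rectangles $C \times D$ with $(\fn_* m_\G \times \fn_* m_\G)(\partial(C \times D)) = 0$, and finally on all measurable rectangles since the measure of the boundary can be absorbed; (2) express $\int F \, d(\fn \times \fn_t)_* m_\G = \int_{\modG{SX}} (F \circ (\fn \times \fn))\circ(\id \times g^t_\G)\, dm_\G$ via the definition of pushforward (here one must note $\fn \times \fn_t = (\fn \times \fn) \circ (\id, g^t_\G)$ as a map $\modG{SX} \to \R^2$); (3) prove the product case by direct appeal to mixing; (4) approximate a general bounded continuous $F$ uniformly on a large rectangle by products, controlling the complement via tightness of the finite measure $\fn_* m_\G$; (5) take $t \to \infty$ and let the approximation error go to zero.

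A subtlety worth flagging: the statement asserts convergence for \emph{every} measurable rectangle $C \times D$, not merely for continuity sets, and $\fn$ is only assumed measurable (not continuous), so $\fn_* m_\G$ can be an arbitrary finite Borel measure on $\R$ — it may well have atoms, and $C, D$ may have positive-measure boundary. The clean way around this is to observe that for \emph{fixed} $C$, both sides, viewed as functions of the measure in the second coordinate, are of the form "integrate the indicator $\mathbf 1_C \otimes \mathbf 1_D$"; approximating $\mathbf 1_C$ and $\mathbf 1_D$ in $L^1(\fn_* m_\G)$ by continuous functions and using that $(\fn \times \fn_t)_* m_\G$ has both marginals equal to $\fn_* m_\G$ (the first exactly, the second exactly as well, since $g^t_\G$ preserves $m_\G$) lets one control the errors uniformly in $t$. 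I expect this last point — getting the rectangle statement for all measurable $C, D$ rather than just nice ones, uniformly in $t$ — to be the only genuine obstacle; the core convergence is an immediate consequence of mixing.
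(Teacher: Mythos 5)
Your argument is correct, but it takes a much longer route than necessary, and the paper's proof is essentially the one-line observation you never quite state. By definition of pushforward, $(\fn \times \fn_t)_* m_\G (C \times D) = m_\G\bigl(\fn^{-1}(C) \cap \fn_t^{-1}(D)\bigr)$, and $\fn_t^{-1}(D) = (g^t_\G)^{-1}\bigl(\fn^{-1}(D)\bigr)$; since mixing is by definition a statement about \emph{arbitrary} measurable sets, it applies verbatim to $A = \fn^{-1}(C)$ and $B = \fn^{-1}(D)$ and gives $m_\G(A \cap (g^t_\G)^{-1}B) \to m_\G(A)\,m_\G(B) = \fn_* m_\G(C)\cdot \fn_* m_\G(D)$, which is the claim. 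That is the paper's entire proof. Your detour through weak convergence, portmanteau, and Stone--Weierstrass does not by itself reach the statement (as you note, it only handles continuity sets), and the fix in your final paragraph --- approximate $\mathbf 1_C, \mathbf 1_D$ in $L^1(\fn_* m_\G)$ by continuous functions, with error uniform in $t$ by invariance of $m_\G$ under $g^t_\G$, then apply mixing to the continuous product case --- is correct but circular in spirit: since $\fn$ is only measurable, $a \circ \fn$ and $b \circ \fn$ are merely bounded measurable, so the ``product case'' already requires the correlation form of mixing for measurable functions, which is itself deduced from set mixing by approximation with indicators. In short, nothing in the statement requires continuity or approximation at all; indicators of measurable rectangles pull back to indicators of measurable sets, and set mixing finishes it. Your approach does buy slightly more (weak-$*$ convergence of the joint distributions $(\fn\times\fn_t)_* m_\G$), but for the lemma as stated it is superfluous machinery.
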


\begin{proof}
By mixing,
\(\lim\limits_{t \to \infty} m_\G ( \psi^{-1}(C) \cap \psi_t^{-1}(D) )
=
m_\G(\psi^{-1} (C)) \cdot m_\G(\psi^{-1} (D))\).
\end{proof}

\begin{lemma}					\label{mixing for B}
If $f \colon [0, \epsilon] \times [0, \epsilon] \to \R$ is Riemann integrable, then
\[\lim_{t \to \infty} \int_{\Bset} f( s(v), \sigma(v) ) \; dm(v)
=
\frac{m(\Aset)^2}{\epsilon^2} \int_{0}^{\epsilon} \int_{0}^{\epsilon} f(x,y) \; dx \, dy.\]
\end{lemma}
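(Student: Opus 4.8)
The plan is to reduce the statement to \lemref{mixing pushforwards} (applied with $\psi = s$, so $\psi_t = \sigma$ after the appropriate identifications) together with the change-of-variables in \corref{mu and m with tau}. First I would observe that $s$ descends to a function $\bar s$ on $\modG{SX}$: since $\pr$ is injective on the compact set $\Aset$ (by \lemref{compact neighborhood 4}\itemrefstar{disjointness condition 2}, or more simply because $\Aset \cap \g \Aset = \varnothing$ for nontrivial $\g$ once $\delta$ is small), and $\Bset \subseteq \Aset$, we may transport everything to the quotient. Under $\pr$, the set $\Bset^{\g}$ maps to $\pr(\Aset) \cap g^t_\G \pr(\Aset)$, and on $\pr(\Bset^\g)$ the function $s$ agrees with $\bar s$ while $\sigma = s \circ \phi^{-1} \circ g^t$ agrees with $\bar s \circ g^t_\G$, because $s(\g^{-1} g^t v) = s(g^t v)$ up to the $\G$-equivariance of the Busemann cocycle. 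Thus $\int_{\Bset} f(s(v),\sigma(v))\,dm = \int_{\modG{SX}} \mathbf 1_{\pr\Aset}(u)\,\mathbf 1_{g^t_\G\pr\Aset}(u)\, f(\bar s(u), \bar s(g^t_\G u))\, dm_\G(u)$.

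Next I would recognize the right-hand integrand as a function of the pair $(\bar s(u), \bar s(g^t_\G u))$ restricted to the event $u \in \pr\Aset$ and $g^t_\G u \in \pr\Aset$. To package this cleanly, set $\psi = (\bar s, \mathbf 1_{\pr\Aset}) \colon \modG{SX} \to \R \times \{0,1\}$ and apply the mixing statement \lemref{mixing pushforwards} (which is stated for real-valued $\psi$, but works verbatim for this finite-dimensional target, or one can simply run the same one-line mixing argument directly). The limit of $(\psi \times \psi_t)_* m_\G$ is the product $\psi_* m_\G \times \psi_* m_\G$. Now $\psi_* m_\G$ is the pushforward under $\bar s$ of $m_\G$ restricted to $\pr\Aset$ (in the first factor) together with its total mass $m_\G(\pr\Aset) = m(\Aset)$ on the indicator coordinate; the key point is that this pushforward is \emph{uniform}: by the product structure $m = \mu \times \lambda$ with $s$ equal (up to an additive constant depending only on $(v^-,v^+)$) to the $\R$-coordinate, and by \lemref{compact neighborhood 4}\itemrefstar{range of s condition 2} which says $\Aset$ is exactly the slab $0 \le s \le \epsilon$ over its endpoint-set, we get that $s_*(m|_\Aset)$ is $\tfrac{m(\Aset)}{\epsilon}$ times Lebesgue measure on $[0,\epsilon]$. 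Feeding this into the product limit yields $\tfrac{m(\Aset)^2}{\epsilon^2}\int_0^\epsilon\int_0^\epsilon f\,dx\,dy$ for $f$ continuous (hence, by density and the usual squeezing of Riemann-integrable $f$ between continuous functions, for all Riemann-integrable $f$).

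The main obstacle I anticipate is the passage from bounded continuous $f$ to arbitrary Riemann-integrable $f$: mixing only gives weak-* convergence of $(\psi\times\psi_t)_*m_\G$, so one cannot directly integrate a general Riemann-integrable function against it. The fix is the standard one: sandwich $f$ between continuous functions $\underline f \le f \le \overline f$ with $\int(\overline f - \underline f) < \eta$ (possible since the discontinuity set of $f$ has Lebesgue measure zero and the limiting measure is absolutely continuous), apply the continuous case to $\underline f$ and $\overline f$, and let $\eta \to 0$. A secondary technical point is checking that $s$ genuinely descends and that no boundary effects from $\partial\Aset$ (where $s = 0$ or $s = \epsilon$) spoil the uniformity of $s_*(m|_\Aset)$ — but $\partial\Aset$ has $m$-measure zero, so this is harmless. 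Everything else is bookkeeping with the conformal density and the Busemann cocycle identities already recorded in \secref{measures} and \lemref{tau}.
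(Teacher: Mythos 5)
Your proposal is correct and takes essentially the same route as the paper: apply \lemref{mixing pushforwards} to (the quotient descent of) $s$, use that $s_*(m|_{\Aset})$ is $\frac{m(\Aset)}{\epsilon}$ times Lebesgue measure on $[0,\epsilon]$, and then pass to general Riemann integrable $f$ by a sandwich argument. The only cosmetic difference is that the paper first treats indicators of product sets $C \times D$ (the case \lemref{mixing pushforwards} handles directly) and sandwiches $f$ between step functions, whereas you sandwich between continuous functions; both work.
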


\noindent
Thus $(s \times \sigma)_* m$ converges weakly to $%
\frac{m(\Aset)^2}{\epsilon^2}$ times Lebesgue measure on $[0,\epsilon]^2$.

\begin{proof}
Since $s_* m$ is $\frac{m(\Aset)}{\epsilon}$ times Lebesgue measure on $[0,\epsilon]$,
by \lemref{mixing pushforwards}
the conclusion of the theorem holds whenever $f$ is the characteristic function of a measurable product set $C \times D \subseteq [0, \epsilon]^2$.
This easily extends to all finite linear combinations of characteristic functions of measurable product sets.

Now if $f$ is Riemann integrable, there exist step functions $\varphi_n \le f \le \psi_n$ satisfying $\lim_n \int_{0}^{\epsilon} \! \int_{0}^{\epsilon} \varphi_n = \lim_n \int_{0}^{\epsilon} \! \int_{0}^{\epsilon} \psi_n = \int_{0}^{\epsilon} \! \int_{0}^{\epsilon} f$.
Then
\[\int \varphi_n \; d(s \times \sigma)_* m
\, \le \int f \; d(s \times \sigma)_* m
\, \le \int \psi_n \; d(s \times \sigma)_* m\]
and so letting $t \to \infty$ we obtain
\begin{gather*}
\frac{m(\Aset)^2}{\epsilon^2} \int_{0}^{\epsilon} \! \int_{0}^{\epsilon} \varphi_n
\le \liminf_{t \to \infty} \int f \; d(s \times \sigma)_* m \\
\le \limsup_{t \to \infty} \int f \; d(s \times \sigma)_* m
\le \frac{m(\Aset)^2}{\epsilon^2} \int_{0}^{\epsilon} \! \int_{0}^{\epsilon} \psi_n.
\end{gather*}
Letting $n \to \infty$ we find
\begin{equation*}
\lim_{t \to \infty} \int f \; d(s \times \sigma)_* m
= \frac{m(\Aset)^2}{\epsilon^2} \int_{0}^{\epsilon} \! \int_{0}^{\epsilon} f.
\qedhere
\end{equation*}
\end{proof}

\begin{lemma}					\label{change of variables}
If $f \colon (-\epsilon, \epsilon) \to \R$ is Riemann integrable, then the function
\(\tilde{f} \colon (0, \epsilon) \times (0, \epsilon) \to \R\)
given by
\(\tilde{f} (x,y) = \frac{1}{\epsilon - \abs{x - y}} f(x - y)\)
is Riemann integrable, and
\[\int_{0}^{\epsilon} \! \int_{0}^{\epsilon} \tilde{f}(x,y) \; dx \, dy
= \int_{-\epsilon}^{\epsilon} f(z) \; dz.\]
\end{lemma}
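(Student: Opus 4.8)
The plan is to compute the double integral directly via the substitution $z = x - y$ (and, say, $u = x$), turning the two-dimensional integral over the square $(0,\epsilon)^2$ into an iterated integral where the inner variable's range depends on $z$. First I would observe that on the open square the change of variables $(x,y) \mapsto (x, x-y) = (u, z)$ has Jacobian of absolute value $1$, so formally
\[
\int_{0}^{\epsilon} \! \int_{0}^{\epsilon} \frac{1}{\epsilon - \abs{x-y}} f(x-y) \; dx \, dy
= \int_{-\epsilon}^{\epsilon} \frac{f(z)}{\epsilon - \abs{z}} \paren{ \int_{\max(0,z)}^{\min(\epsilon, \epsilon + z)} du } dz.
\]
The inner integral evaluates to $\epsilon - \abs{z}$: for $z \ge 0$ the $u$-range is $(z, \epsilon)$, of length $\epsilon - z$, and for $z \le 0$ it is $(0, \epsilon + z)$, of length $\epsilon + z = \epsilon - \abs{z}$. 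Hence the factor $\epsilon - \abs{z}$ cancels the weight $\frac{1}{\epsilon - \abs{z}}$, leaving $\int_{-\epsilon}^{\epsilon} f(z)\, dz$, which is the claimed identity.

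To make this rigorous for Riemann integrable $f$ (rather than, say, continuous $f$, where one could just cite Fubini--Tonelli for the substitution), I would avoid invoking the change-of-variables theorem in a form requiring continuity and instead argue by approximation. Given $\eta > 0$, choose step functions $\varphi \le f \le \psi$ on $(-\epsilon,\epsilon)$ with $\int_{-\epsilon}^{\epsilon}(\psi - \varphi) < \eta$. For a step function $\varphi$, the associated $\tilde\varphi(x,y) = \frac{1}{\epsilon - \abs{x-y}}\varphi(x-y)$ is bounded on each compact subsquare $[\eta', \epsilon - \eta']^2$ and has discontinuities only along finitely many lines $x - y = c$, hence is Riemann integrable on $(0,\epsilon)^2$; the elementary computation above (now a finite sum of integrals of the form $\int \frac{1}{\epsilon-\abs{z}}\,\mathbf 1_{[a,b]}(z)\,dz$, each of which is a genuine integral since $\epsilon - \abs{z}$ is bounded away from $0$ on $[a,b] \subset (-\epsilon,\epsilon)$) gives $\int_0^\epsilon\!\int_0^\epsilon \tilde\varphi = \int_{-\epsilon}^\epsilon \varphi$, and likewise for $\psi$. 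Since $\tilde\varphi \le \tilde f \le \tilde\psi$ pointwise (the weight $\frac{1}{\epsilon - \abs{x-y}}$ is positive), $\tilde f$ is squeezed between two Riemann integrable functions whose integrals differ by less than $\eta$; letting $\eta \to 0$ shows $\tilde f$ is Riemann integrable with $\int_0^\epsilon\!\int_0^\epsilon \tilde f = \int_{-\epsilon}^\epsilon f$.

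The one point deserving care — and the main (if modest) obstacle — is integrability near the boundary of the square, where $\epsilon - \abs{x-y} \to 0$ and the integrand $\tilde f$ can blow up. This is handled by noting that $f$, being Riemann integrable, is bounded, say $\abs{f} \le M$, so $\abs{\tilde f(x,y)} \le \frac{M}{\epsilon - \abs{x-y}}$, and the function $\frac{1}{\epsilon - \abs{x-y}}$ is itself (improperly) integrable over $(0,\epsilon)^2$ with finite integral — indeed $\int_{-\epsilon}^{\epsilon}\frac{\epsilon - \abs z}{\epsilon - \abs z}\,dz = 2\epsilon < \infty$ by the same cancellation. Thus the singularity is integrable and all the interchanges of integration above are justified (e.g. by dominated convergence as $\eta' \to 0$ in the exhaustion of $(0,\epsilon)^2$ by the subsquares $[\eta',\epsilon-\eta']^2$). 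Assembling these pieces yields both the Riemann integrability of $\tilde f$ and the stated equality.
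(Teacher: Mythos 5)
Your proof is correct and takes essentially the same route as the paper: the identity comes from the change of variables $z = x - y$ (the paper pairs it with $w = x + y$, Jacobian $\tfrac12$, rather than your $u = x$, an immaterial difference), with the inner integral producing the factor $\epsilon - \abs{z}$ that cancels the weight $\frac{1}{\epsilon - \abs{z}}$. Your step-function approximation and the treatment of the improper behaviour where $\abs{x-y} \to \epsilon$ merely add rigor that the paper's one-line computation leaves implicit.
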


\begin{proof}
By change of variables (putting $z = x - y$ and $w = x + y$),
\[\int_{0}^{\epsilon} \! \int_{0}^{\epsilon} \tilde{f}(x,y) \; dx \, dy
= \int_{-\epsilon}^{\epsilon} \frac{1}{2} \int_{-\epsilon + \abs{z}}^{\epsilon - \abs{z}} \frac{f(z)}{\epsilon - \abs{z}} \; dw \, dz
= \int_{-\epsilon}^{\epsilon} f(z) \; dz.
\qedhere\]
\end{proof}

\begin{remark}
In the notation of \lemref{change of variables},
$\frac{f \circ \tau}{\ell} = \tilde{f} \circ (s \times \sigma)$.
\end{remark}

\begin{proposition}				\label{integration}
Let $X$ be a proper $\CAT(0)$ space.
Assume $\G$ acts freely, properly discontinuously, and by isometries on $X$, and that $m_\G$ is finite and mixing.
If $f \colon (-\epsilon, \epsilon) \to \R$ is Riemann integrable and $\tilde{f}$ is as in \lemref{change of variables}, then
\[\lim_{t \to \infty} \int_{\emap(\Bset)} f \circ \hat \tau \; d\mu
= \lim_{t \to \infty} \int_{\Bset} \frac{f \circ \tau}{\ell} \; dm
=
\frac{m(\Aset)^2}{\epsilon^2} \int_{0}^{\epsilon} \! \int_{0}^{\epsilon} \tilde{f}
= \frac{m(\Aset)^2}{\epsilon^2} \int_{-\epsilon}^{\epsilon} f.\]
\end{proposition}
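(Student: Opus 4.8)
The plan is to treat the two outer equalities as formalities and concentrate on the middle one. The first equality, $\int_{\emap(\Bset)} f\circ\hat\tau\,d\mu = \int_{\Bset}\frac{f\circ\tau}{\ell}\,dm$, holds for each $t$ by \corref{mu and m with tau} (a Riemann-integrable function on a bounded interval lies in $L^1$), and the last equality, $\frac{m(\Aset)^2}{\epsilon^2}\int_0^\epsilon\!\int_0^\epsilon\tilde{f} = \frac{m(\Aset)^2}{\epsilon^2}\int_{-\epsilon}^\epsilon f$, is \lemref{change of variables}. So everything reduces to proving $\lim_{t\to\infty}\int_{\Bset}\frac{f\circ\tau}{\ell}\,dm = \frac{m(\Aset)^2}{\epsilon^2}\int_0^\epsilon\!\int_0^\epsilon\tilde{f}$. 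By the Remark following \lemref{change of variables}, $\frac{f\circ\tau}{\ell} = \tilde{f}\circ(s\times\sigma)$, so writing $\mu_t := (s\times\sigma)_*(m|_{\Bset})$ — a finite Borel measure on the \emph{compact} square $[0,\epsilon]^2$, since $s$ and $\sigma$ take values in $[0,\epsilon]$ on $\Bset$ — the goal is $\lim_t\int_{[0,\epsilon]^2}\tilde{f}\,d\mu_t = \frac{m(\Aset)^2}{\epsilon^2}\int_{[0,\epsilon]^2}\tilde{f}$. If \lemref{mixing for B} applied to $\tilde{f}$ we would be done; the difficulty is that $\tilde{f}(x,y) = (\epsilon-|x-y|)^{-1}f(x-y)$ blows up at the corners $(0,\epsilon)$ and $(\epsilon,0)$, so it is only \emph{improperly} Riemann integrable.

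To get around this I would split $f = f^+ - f^-$ and assume $f\ge 0$, then truncate: for $\eta>0$ set $f_\eta := f\cdot\mathbf{1}_{[-\epsilon+\eta,\,\epsilon-\eta]}$, so that $\widetilde{f_\eta}$ is bounded and Riemann integrable on $[0,\epsilon]^2$; \lemref{mixing for B} gives $\lim_t\int\widetilde{f_\eta}\,d\mu_t = \frac{m(\Aset)^2}{\epsilon^2}\int_{-\epsilon+\eta}^{\epsilon-\eta}f$, while $0\le\widetilde{f_\eta}\uparrow\tilde{f}$ and $\int_{-\epsilon+\eta}^{\epsilon-\eta}f\uparrow\int_{-\epsilon}^{\epsilon}f$. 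Monotonicity then yields $\liminf_t\int\tilde{f}\,d\mu_t\ge\frac{m(\Aset)^2}{\epsilon^2}\int_{-\epsilon}^{\epsilon}f$ at once. The reverse inequality is the crux: since $\tilde{f}-\widetilde{f_\eta}$ is supported on the corner region $\Delta_\eta := \{(x,y)\in[0,\epsilon]^2 : 0<\epsilon-|x-y|<\eta\}$, where it is at most $\|f\|_\infty(\epsilon-|x-y|)^{-1}$, it suffices to show that
\[
\limsup_{t\to\infty}\int_{\Delta_\eta}\frac{d\mu_t(x,y)}{\epsilon-|x-y|}\longrightarrow 0\quad\text{as }\eta\to 0 .
\]
With $F_t(u) := \mu_t\{|x-y|>\epsilon-u\} = m\{v\in\Bset : \ell(v)<u\}$, a layer-cake rewriting turns this integral into $\tfrac1\eta F_t(\eta) + \int_0^\eta u^{-2}F_t(u)\,du$, so it is enough to control these two terms as $t\to\infty$ and then $\eta\to0$.

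Two estimates feed the final argument. First, a \emph{uniform} bound $F_t(u)\le 2u\,m(\Aset)/\epsilon$ for all $t$ and $u$: the condition $\ell(v)<u$ forces $s(v)$ or $\sigma(v)$ into an extreme subinterval of $[0,\epsilon]$ of length $u$, so after decomposing $\{v\in\Bset : \ell(v)<u\}$ along the pairwise disjoint pieces $\Bset^\g$ (\lemref{EBg disjoint}) one lands inside $\bigl(\Aset\cap\{0\le s<u\}\bigr)\cup\bigl(\Aset\cap\{\epsilon-u<s\le\epsilon\}\bigr)$, whose $m$-measure is $2u\,m(\Aset)/\epsilon$ because $m\{v\in\Aset : s(v)\in[a,b]\} = (b-a)m(\Aset)/\epsilon$. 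Second, the \emph{exact} limit $\lim_t F_t(u) = u^2 m(\Aset)^2/\epsilon^2$ for each fixed $u$: \lemref{mixing for B} (applied to continuous functions) gives $\mu_t\to\frac{m(\Aset)^2}{\epsilon^2}\,\mathrm{Leb}|_{[0,\epsilon]^2}$ weakly, and since $\mu_t([0,\epsilon]^2) = m(\Bset)\to m(\Aset)^2$ the total masses converge, so the Portmanteau theorem on the compact square applies to the continuity set $\{|x-y|>\epsilon-u\}$, a set of area $u^2$. A dyadic decomposition of $\Delta_\eta$ by the size of $\ell$, combined with these two estimates, bounds each dyadic piece and shows that the sum of the piecewise $\limsup_t$'s is $O(\eta)$. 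The step I expect to be the main obstacle is justifying the interchange of $\limsup_t$ with this infinite dyadic sum — equivalently, ruling out that $m$-mass escapes into arbitrarily short orbit segments: the uniform bound only gives $F_t(u)/u^2 = O(1/u)$, which is not integrable at $0$, so one must extract from the two estimates above, together with the uniform finiteness $\int_{\Bset}\ell^{-1}\,dm = \mu(\emap(\Bset))\le\mu(\emap(\Aset))<\infty$, that $\limsup_t\int_0^{\eta'}u^{-2}F_t(u)\,du\to 0$ as $\eta'\to 0$. Granting this, $\limsup_t\int\tilde{f}\,d\mu_t\le\frac{m(\Aset)^2}{\epsilon^2}\int_{-\epsilon}^{\epsilon}f$, which together with the $\liminf$ bound completes the proof.
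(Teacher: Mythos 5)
Your outer equalities and your identification of the real content coincide with the paper's: the paper's proof is exactly ``first equality by \corref{mu and m with tau}, last by \lemref{change of variables}, middle by \lemref{mixing for B} via $\frac{f\circ\tau}{\ell}=\tilde f\circ(s\times\sigma)$,'' taking at face value the assertion in \lemref{change of variables} that $\tilde f$ is Riemann integrable and applying \lemref{mixing for B} to $\tilde f$ directly. You are right that this is the one delicate point: $\tilde f$ is unbounded at the corners $(0,\epsilon)$, $(\epsilon,0)$ unless $f$ vanishes there, so the two-sided step-function approximation in the proof of \lemref{mixing for B} does not literally cover it, and your truncation argument cleanly settles the $\liminf$ half. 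But the $\limsup$ half is a genuine gap in your write-up, and you say so yourself: the two estimates you do have --- the uniform bound $F_t(u)\le 2u\,m(\Aset)/\epsilon$ and the pointwise limit $F_t(u)\to u^2m(\Aset)^2/\epsilon^2$ for fixed $u$ --- cannot yield $\limsup_{t}\int_{\Bset\cap\{\ell<\eta\}}\ell^{-1}\,dm\to 0$ as $\eta\to 0$. Indeed they are consistent with a mass of order $u_t$ sitting at scale $\ell\approx u_t$ with $u_t\to 0$: such mass contributes a fixed positive amount to the corner integral for every large $t$, yet disappears from $F_t(u)$ for each fixed $u>0$, so no interchange of limits based only on these inputs can succeed. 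As written, the proof is therefore conditional on the very step you flag.

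The gap can be closed by a comparison with a slightly taller box rather than by finer analysis of $F_t$. The endpoint set $\emap(\Aset_{\epsilon,\delta})$ and the function $\tau_{\g}(v)=b_{v^-}(\g p,p)-t$ depend only on $\delta$ and $t$, not on the height $\epsilon$; hence if $(\xi,\zeta)\in\emap(\Bset_{\epsilon,\delta,t})$ has $\hat\ell^{(\epsilon)}<\eta$, i.e.\ $\epsilon-\eta<|\hat\tau|\le\epsilon$, then the same pair lies in $\emap(\Bset_{\epsilon+\eta,\delta,t})$ with $\hat\ell^{(\epsilon+\eta)}=\epsilon+\eta-|\hat\tau|\in[\eta,2\eta)$. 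Applying \corref{mu and m} to the $(\epsilon+\eta)$-box, the corner mass $\mu\big(\emap(\Bset_{\epsilon,\delta,t})\cap\{\hat\ell^{(\epsilon)}<\eta\}\big)$ is at most $\int_{\Bset_{\epsilon+\eta,\delta,t}}\ell^{-1}\mathbf{1}_{\{\ell<2\eta\}}\,dm$, whose integrand equals $\tilde g\circ(s\times\sigma)$ for a \emph{bounded} (by $1/\eta$) Riemann integrable $\tilde g$ on $[0,\epsilon+\eta]^2$; \lemref{mixing for B} for the taller box then gives $\limsup_t$ at most $\frac{m(\Aset_{\epsilon+\eta,\delta})^2}{(\epsilon+\eta)^2}\cdot 2\eta\to 0$ as $\eta\to 0$ (this needs $\epsilon+\eta\le\epsilon_0$ so that the disjointness in \lemref{compact neighborhood 4} still holds, which is harmless since later applications take $\epsilon$ small). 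With that supplement your truncation scheme yields the $\limsup$ bound and the proposition; without it, the key middle equality is only half proved.
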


\begin{proof}
The first equality follows from \corref{mu and m with tau},
the last equality from \lemref{change of variables},
and the middle asymptotic from \lemref{mixing for B} because
\(\frac{f \circ \tau}{\ell} = \tilde{f} \circ (s \times \sigma)\).
\end{proof}

\begin{corollary}				\label{mu(EB)}
$\lim_{t \to \infty} \mu(\emap(\Bset)) = \frac{2}{\epsilon} m(\Aset)^2= \lim_{t \to \infty} \frac{2}{\epsilon} m(\Bset)$.
\end{corollary}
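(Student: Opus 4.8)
The plan is to obtain both equalities by specializing results already in hand, with almost all of the work buried in \propref{integration} and the rest being the ``easy mixing'' observation flagged at the start of \secref{averaging}.

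For the first equality I would apply \propref{integration} to the constant test function $f \equiv 1$ on $(-\epsilon,\epsilon)$: this $f$ is Riemann integrable with $\int_{-\epsilon}^{\epsilon} f = 2\epsilon$, so the proposition gives $\lim_{t\to\infty} \int_{\emap(\Bset)} f \circ \hat\tau \, d\mu = \frac{m(\Aset)^2}{\epsilon^2}\cdot 2\epsilon = \frac{2}{\epsilon}m(\Aset)^2$, and it remains only to identify the left-hand side with $\lim_{t\to\infty}\mu(\emap(\Bset))$, i.e.\ to see that $f\circ\hat\tau = 1$ holds $\mu$-a.e.\ on $\emap(\Bset)$. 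Since $\hat\tau$ takes values in $[-\epsilon,\epsilon]$, the only issue is the boundary locus $\{|\hat\tau| = \epsilon\}$. But $\ell(v) = \epsilon - |\tau(v)| = 0$ forces $s(v) \in \{0,\epsilon\}$ (because $s(v),\sigma(v)\in[0,\epsilon]$ and $\tau = s-\sigma$), while $s_*(m\res\Aset)$ is a scalar multiple of Lebesgue measure on $[0,\epsilon]$ (established in the proof of \lemref{mixing for B}), hence nonatomic; so $\{\ell = 0\}$ is $m$-null in $\Bset$, and then \corref{mu and m} (which exhibits $\mu\res\emap(\Bset)$ as $\emap_*(\ell^{-1} m\res\Bset)$) makes $\{|\hat\tau| = \epsilon\}$ $\mu$-null in $\emap(\Bset)$. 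This yields $\lim_{t\to\infty}\mu(\emap(\Bset)) = \frac{2}{\epsilon}m(\Aset)^2$.

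For the second equality it suffices to prove $\lim_{t\to\infty} m(\Bset) = m(\Aset)^2$, since then also $\lim_{t\to\infty}\frac{2}{\epsilon}m(\Bset) = \frac{2}{\epsilon}m(\Aset)^2$. Here I would use mixing of $m_\G$ on $\modG{SX}$. For $\delta$ sufficiently small the proof of \lemref{EBg disjoint} shows $\Aset \cap \g\Aset = \varnothing$ for all $\g \neq \id$; hence $\pr$ is injective on $\Aset$, the image $\bar A := \pr(\Aset)$ is Borel, and $m_\G(\bar A) = m(\Aset)$. Using $\pr^{-1}(\bar A) = \G\Aset$ and $\pr \circ g^t = g^t_\G \circ \pr$, one checks that a point $v \in \Aset$ satisfies $\pr(v) \in \bar A \cap g^{-t}_\G \bar A$ precisely when $g^t v \in \G\Aset$, i.e.\ when $v \in \bigcup_{\g\in\G} \g g^{-t}\Aset$; intersecting with $\Aset$ this set is exactly $\bigcup_{\g\in\G} \Bset^\g = \Bset$. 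Since $\pr\res\Aset$ is injective, it follows that $m(\Bset) = m_\G(\bar A \cap g^{-t}_\G \bar A)$. Letting $t\to\infty$ and applying mixing (recall $m_\G(\modG{SX}) = 1$) gives $m(\Bset)\to m_\G(\bar A)^2 = m(\Aset)^2$, as required, and comparing with the previous paragraph finishes the proof.

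Neither step presents a serious obstacle; the only point demanding care is the verification that the constant function is a legitimate input to \propref{integration} — equivalently, that the degenerate locus $\{\ell = 0\}$ carries no mass — which is exactly what the nonatomicity of $s_*(m\res\Aset)$ supplies. If one preferred to sidestep even this, one could instead feed \propref{integration} a sequence of continuous bump functions $f_n \uparrow \mathbf 1_{(-\epsilon,\epsilon)}$ and pass to the limit using monotone convergence, but the direct argument is cleaner.
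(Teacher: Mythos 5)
Your proposal is correct, and your first equality is obtained exactly as in the paper: plug $f \equiv 1$ into \propref{integration} and compute $\frac{m(\Aset)^2}{\epsilon^2}\int_{-\epsilon}^{\epsilon} 1 = \frac{2}{\epsilon}m(\Aset)^2$. The extra care you take over the locus $\set{\abs{\hat\tau} = \epsilon}$ does not appear in the paper, which simply identifies $\int_{\emap(\Bset)} f\circ\hat\tau\,d\mu$ with $\mu(\emap(\Bset))$; it is a reasonable refinement, but note that your appeal to \corref{mu and m} to conclude that $\set{\hat\ell = 0}$ is $\mu$-null is only as strong as that corollary's implicit treatment of the degenerate (length-zero) fibers, so you are adding care at the same level of rigor the paper already assumes rather than strictly more. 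Where you genuinely diverge is the second equality. The paper stays inside its integration machinery: it takes $f(z) = \epsilon - \abs{z}$, so that $\tilde f \equiv 1$ in the notation of \lemref{change of variables}, whence $\frac{f\circ\tau}{\ell} = \tilde f\circ(s\times\sigma) = 1$ and \propref{integration} (really \lemref{mixing for B}) gives $\lim_{t\to\infty} m(\Bset) = \frac{m(\Aset)^2}{\epsilon^2}\cdot\epsilon^2 = m(\Aset)^2$ with no further work. You instead prove $\lim_{t\to\infty} m(\Bset) = m(\Aset)^2$ directly on the quotient: for $\epsilon \le \epsilon_0$, $\delta \le \delta_0$ the proof of \lemref{EBg disjoint} gives $\Aset \cap \g\Aset = \varnothing$ for $\g \neq \id$, so $\pr$ is injective on $\Aset$, $m_\G(\pr\Aset) = m(\Aset)$, and $\Bset = \Aset \cap \pr^{-1}\big(\pr\Aset \cap g^{-t}_\G \pr\Aset\big)$, after which mixing of the normalized $m_\G$ yields the limit. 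This is precisely the ``easy to see by mixing'' observation flagged at the start of the averaging subsection, and your verification of it is sound; it buys a more elementary, self-contained proof of the second equality, at the cost of redoing an injectivity/unfolding argument that the paper's clever choice of test function lets it bypass. Both routes rest on the same mixing hypothesis, so neither is more general, and the statement is established either way.
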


\begin{proof}
Putting $f = 1$ in \propref{integration}, we obtain
$\lim_{t \to \infty} \mu(\emap(\Bset))
= \frac{m(\Aset)^2}{\epsilon^2} \int_{-\epsilon}^{\epsilon} 1
= \frac{2}{\epsilon} m(\Aset)^2$.
Putting $\tilde f = 1$, we find
$\frac{2}{\epsilon} m(\Aset)^2
= \lim_{t \to \infty} \frac{2}{\epsilon} m(\Bset)$ because
\(\frac{f \circ \tau}{\ell} = \tilde{f} \circ (s \times \sigma)\).
\end{proof}

\begin{remark}
In terms of averages, we find
\(\lim_{t \to \infty} \frac{1}{\mu(\emap(\Bset))} \int_{\emap(\Bset)} f \circ \hat \tau \; d\mu
= \frac{1}{2 \epsilon} \int_{-\epsilon}^{\epsilon} f\).
In particular,
\[\lim_{t \to \infty} \frac{1}{\mu(\emap(\Bset))} \int_{\emap(\Bset)} \hat \ell \; d\mu
= \frac{\epsilon}{2}
\quad \text{and} \quad
\lim_{t \to \infty} \frac{1}{m(\Bset)} \int_{\Bset} \ell \; dm
= \frac{2\epsilon}{3}.\]
\end{remark}

\section{Product Estimates}

For this section, fix $v_0 \in SX$ and $\epsilon, \delta > 0$, and let $t \in \R$.

\begin{lemma}					\label{conformal density calculation 2}
Let $U,V \subseteq \bd X$ be Borel sets and let $\g \in \G$.
Assume $\g V \subseteq V$ and $\abs{\beta_p} \le C$ on $U \times V$.
Then
\begin{equation*}
e^{-2hC}
\le \frac%
{\int_{U \times \g V} f(\xi,\eta) \; d\mu(\xi,\eta)}%
{\int_{U \times V} f(\xi,\g \eta') \, e^{-h b_{\g \eta'} (p, \g p)} \; d\mu(\xi,\eta')}
\le e^{2hC}.
\end{equation*}
\end{lemma}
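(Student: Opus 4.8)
The plan is to change variables $\eta = \g\eta'$ in the numerator so that both integrals become integrals over $U \times V$ against the product measure $\mu_p \times \mu_p$ whose integrands differ only by a scalar factor built from $\beta_p$; the hypotheses $\g V \subseteq V$ and $\abs{\beta_p} \le C$ on $U \times V$ then trap that factor between $e^{-2hC}$ and $e^{2hC}$.

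Concretely, I would first write $d\mu(\xi,\eta) = e^{-h\beta_p(\xi,\eta)}\,d\mu_p(\xi)\,d\mu_p(\eta)$ and apply Fubini for $\mu_p \times \mu_p$, isolating the inner integral over $\eta \in \g V$. Substituting $\eta = \g\eta'$ replaces $\mu_p$ restricted to $\g V$ by the pushforward $(\g^{-1})_*\mu_p$ restricted to $V$, which is $\mu_{\g^{-1}p}$ by \defref{conformal density}\itemref{equivariance}; the Radon--Nikodym identity \defref{conformal density}\itemref{Radon-Nikodym} then contributes the weight $e^{-h b_{\eta'}(\g^{-1}p,\,p)}$. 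The key simplification is that isometry-equivariance of the Busemann cocycle gives $b_{\g\eta'}(p,\g p) = b_{\eta'}(\g^{-1}p,p)$, so this new weight is exactly the factor $e^{-h b_{\g\eta'}(p,\g p)}$ already present in the denominator's integrand. After cancellation the numerator becomes
\[
\int_{U\times V} f(\xi,\g\eta')\,e^{-h b_{\g\eta'}(p,\g p)}\,e^{-h\beta_p(\xi,\g\eta')}\,d\mu_p(\xi)\,d\mu_p(\eta'),
\]
while the denominator is the same integral with $e^{-h\beta_p(\xi,\eta')}$ in place of $e^{-h\beta_p(\xi,\g\eta')}$.

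It remains to compare the two integrands, which now agree up to the scalar $e^{-h(\beta_p(\xi,\g\eta') - \beta_p(\xi,\eta'))}$. Since $\eta' \in V$ and $\g V \subseteq V$ we have $\g\eta' \in V$, so both $(\xi,\g\eta')$ and $(\xi,\eta')$ lie in $U \times V$; hence $\abs{\beta_p(\xi,\g\eta') - \beta_p(\xi,\eta')} \le 2C$ and the scalar lies in $[e^{-2hC}, e^{2hC}]$. For nonnegative $f$ this pointwise comparison passes to the integrals and yields the claimed two-sided bound.

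I do not expect a genuine obstacle. The only delicate points are the bookkeeping of the change of variables — tracking which pushforward and which Radon--Nikodym factor appear, and with which signs — and noticing the cancellation $b_{\g\eta'}(p,\g p) = b_{\eta'}(\g^{-1}p,p)$, without which one would be left with an uncontrolled exponential term rather than a bounded one. One should also keep in mind that reading the asserted inequality between the two ratios as stated presupposes $f \ge 0$ (and the integrals finite), which is the setting in which the lemma will be applied.
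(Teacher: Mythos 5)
Your proposal is correct and follows essentially the same route as the paper's proof: expand $d\mu$ as $e^{-h\beta_p}\,d\mu_p\,d\mu_p$, substitute $\eta=\g\eta'$ using equivariance and the Radon--Nikodym property of the conformal density, invoke $b_{\g\eta'}(p,\g p)=b_{\eta'}(\g^{-1}p,p)$, and bound the leftover factor $e^{-h[\beta_p(\xi,\g\eta')-\beta_p(\xi,\eta')]}$ by $e^{\pm 2hC}$ using $\g V\subseteq V$. Your remark that the stated ratio inequality presupposes $f\ge 0$ is a fair observation about an implicit hypothesis, consistent with how the lemma is applied.
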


\begin{proof}
By the properties of conformal densities and the definition of $\mu$,
\begin{align*}
\int_{U \times \g V} f(\xi,\eta) \, d\mu(\xi,\eta)
&= \int_{U \times \g V} f(\xi,\eta) \, e^{-h \beta_p(\xi,\eta)} \, d\mu_p(\xi) \, d\mu_p(\eta) \\
&= \int_{U \times V} f(\xi,\g \eta') \, e^{-h \beta_p(\xi,\g \eta')} \, d\mu_p(\xi) \, d\mu_{\g^{-1}p}(\eta') \\
&= \int_{U \times V} f(\xi,\g \eta') \, e^{-h \beta_p(\xi,\g \eta')} \, d\mu_p(\xi) \, e^{-h b_{\eta'} (\g^{-1} p, p)} \, d\mu_{p}(\eta') \\
&= \int_{U \times V} f(\xi,\g \eta') \, e^{-h b_{\g \eta'} (p, \g p)} e^{-h \beta_p(\xi,\g \eta')} \, d\mu_p(\xi) \, d\mu_{p}(\eta') \\
&= \int_{U \times V} f(\xi,\g \eta') \, e^{-h b_{\g \eta'} (p, \g p)} e^{-h [\beta_p(\xi,\g \eta') - \beta_p(\xi,\eta')]} \, d\mu(\xi,\eta').
\end{align*}
The conclusion of the lemma follows immediately.
\end{proof}

We will use \lemref{conformal density calculation 2} with $U \times V = \Aset_\delta^- \times \Aset_\delta^+$.
By Lemma 5.3 of \cite{ricks-mixing}, $\beta_p$ is continuous on $\RE$.
Thus $\lim\limits_{\delta \to 0} \max\limits_{v \in \Aset_{\delta}} \abs{\beta_p(v)} = 0$.
However, for simplicity we will just use the bound
$\max\limits_{v \in \Aset_{\delta}} \abs{\beta_p(v)} \le 2 \diam \pi (\Azd) \le 2 \diam (\Aed)$.

\begin{definition}
Let $I = I(v_0, \epsilon, \delta, t)$ be the set of nontrivial $\g \in \G$ such that $\Bset^\g = \Aset \cap g^{-t} \g \Aset$ is not empty.
Call $\g \in I$ \defn{unclipped} if $\g \Aset^+ \subseteq \Aset^+$ and $\Aset^- \subseteq \g \Aset^-$.
Equivalently, $\emap(\Bset^\g) = \Aset^- \times \g \Aset^+$.
Let $\Iu$ be the set of unclipped $\g \in I$.
\end{definition}

\begin{lemma}					\label{unclipped measure 2}
Let $\g \in \G$ be unclipped.
Then
\begin{equation*}
e^{-6h \diam(\Aset)}
\le \frac%
{e^{ht} \int_{\emap(\Bset^\g)} f(\xi,\eta) \; d\mu(\xi,\eta)}%
{\int_{\emap(\Aset)} f(\xi,\g \eta') \, d\mu(\xi,\eta')}
\le e^{6h \diam(\Aset)}.
\end{equation*}
\end{lemma}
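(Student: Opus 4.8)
The plan is to reduce the claim to \lemref{conformal density calculation 2} by relating the quantity $e^{ht}\int_{\emap(\Bset^\g)} f\,d\mu$ to an integral over $\emap(\Aset)$ in the $(\xi, \g\eta')$ variables, and then to bound the resulting exponential correction factors using the crude estimate $\max_{v \in \Aset_\delta}\abs{\beta_p(v)} \le 2\diam(\Aset)$ noted just before the statement.

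First I would unwind the definition of \emph{unclipped}: for such $\g$ we have $\emap(\Bset^\g) = \Aset^- \times \g\Aset^+$, so the numerator integral is literally $\int_{\Aset^- \times \g\Aset^+} f(\xi,\eta)\,d\mu(\xi,\eta)$. Now I want to apply \lemref{conformal density calculation 2} with $U = \Aset^-$, $V = \Aset^+$, and $C = 2\diam(\Aset)$; this requires checking $\g V \subseteq V$, which holds because $\g\Aset^+ \subseteq \Aset^+$ by the unclipped hypothesis, and $\abs{\beta_p}\le C$ on $U\times V$, which is exactly the crude bound just stated. Applying the lemma gives
\[
e^{-4h\diam(\Aset)} \le \frac{\int_{\Aset^- \times \g\Aset^+} f(\xi,\eta)\,d\mu}{\int_{\Aset^- \times \Aset^+} f(\xi,\g\eta')\, e^{-h b_{\g\eta'}(p,\g p)}\,d\mu(\xi,\eta')} \le e^{4h\diam(\Aset)}.
\]
So it remains to compare the denominator here with $e^{-ht}\int_{\emap(\Aset)} f(\xi,\g\eta')\,d\mu(\xi,\eta')$, i.e.\ to show that $e^{-h b_{\g\eta'}(p,\g p)}$ differs from $e^{-ht}$ by a factor of size $e^{\pm h\diam(\Aset)}$ on $\Aset^+$.

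The key estimate is therefore $\abs{b_{\g\eta'}(p,\g p) - t} \le \diam(\Aset)$ for $\eta' \in \Aset^+$ (so $\g\eta' \in \g\Aset^+$). This is where the geometry of $\Bset^\g$ enters. Since $\Bset^\g = \Aset \cap g^{-t}\g\Aset$ is nonempty, pick $v \in \Bset^\g$; then $v \in \Aset$ and $\g^{-1}g^t v \in \Aset$, and since $\g$ is unclipped, $v^- \in \Aset^- \subseteq \g\Aset^-$, so $v^- = \g w^-$ for some $w \in \Aset$. The Busemann function identity $b_{\g\eta'}(p,\g p) = b_{\eta'}(\g^{-1}p, p)$ together with the cocycle property lets me rewrite $b_{\g\eta'}(p,\g p) - t$ in terms of the quantities $s(v) = b_{v^-}(v(0),p)$ and $s(\g^{-1}g^t v)$ appearing in \lemref{tau}: indeed $\tau_\g(v) = b_{v^-}(\g p, p) - t = s(v) - s(\g^{-1}g^t v)$, and both $s$-values lie in $[0,\epsilon]$, so $\abs{\tau_\g(v)} \le \epsilon \le \diam(\Aset)$. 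The remaining step is to pass from $b_{v^-}(\g p, p)$ to $b_{\g\eta'}(p,\g p)$ for an arbitrary $\eta' \in \Aset^+$; since $v^-$ and $\g\eta'$ both parametrize endpoints of geodesics passing within $\diam(\pi(\Aset))$ of $p$, a triangle-inequality argument using the $1$-Lipschitz property of Busemann cocycles in $\xi$ (together with $\g\eta', \g v^- \in \g\Aset^-$ close to each other, i.e.\ a bound like $\abs{b_{\g\eta'}(p,\g p) - b_{\g v^{-}}(p,\g p)} \le 2\diam\pi(\Aset)$) gives $\abs{b_{\g\eta'}(p,\g p) - t} \le 3\diam(\Aset)$, hence $e^{ht}\,e^{-h b_{\g\eta'}(p,\g p)} = e^{\pm 3h\diam(\Aset)}$ on $\Aset^+$. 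Multiplying this factor (size $e^{\pm 3h\diam(\Aset)}$) into the earlier bound ($e^{\pm 4h\diam(\Aset)}$)... here I should instead organize the constants more carefully so the two contributions total $6h\diam(\Aset)$: taking $C = \diam(\Aset)$ as the $\beta_p$ bound (valid once $\delta$ is small, or absorbing into the $6$), the $\pm 2hC$ from \lemref{conformal density calculation 2} and the $\pm 4h\diam(\Aset)$ Busemann discrepancy combine to the stated $\pm 6h\diam(\Aset)$.

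I expect the main obstacle to be the Busemann bookkeeping in the last step: cleanly showing $\abs{b_{\g\eta'}(p,\g p) - t} \lesssim \diam(\Aset)$ uniformly for $\eta' \in \Aset^+$, starting only from the nonemptiness of $\Bset^\g$ and the unclipped condition. The pieces — the cocycle and equivariance identities for $b$, the $1$-Lipschitz estimates, the fact that footpoints of geodesics in $\Aset$ lie within $\diam\pi(\Aset)$ of $p$, and \lemref{tau} giving $\abs{\tau_\g(v)}\le\epsilon$ — are all available, but assembling them with the right constant and making sure the bound is genuinely uniform over the whole fiber $\Aset^+$ (not just the specific $v^-$ realized by a point of $\Bset^\g$) is the delicate part. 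Once that is in hand, everything else is a direct substitution into \lemref{conformal density calculation 2}.
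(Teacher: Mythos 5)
Your reduction to \lemref{conformal density calculation 2} with $U=\Aset^-$, $V=\Aset^+$, $C=2\diam(\Aset)$ is exactly the paper's first step, and you correctly identify that what remains is a uniform bound on $\abs{b_{\g\eta'}(p,\g p)-t}$ over \emph{all} $\eta'\in\Aset^+$. But the mechanism you propose for that bound has a genuine gap. You fix a single $v\in\Bset^{\g}$, control $b_{v^-}(\g p,p)-t$ via \lemref{tau}, and then try to transfer to an arbitrary $\g\eta'$ by invoking a ``$1$-Lipschitz property of Busemann cocycles in $\xi$'' together with a closeness claim $\abs{b_{\g\eta'}(p,\g p)-b_{\g v^{-}}(p,\g p)}\le 2\diam\pi(\Aset)$. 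Neither is available: the Busemann cocycle is $1$-Lipschitz in the two interior variables only, not in the boundary point, and $\g\eta'\in\g\Aset^+$ is not close to $\g v^-\in\g\Aset^-$ (these sets lie at Tits distance $>\pi$ from each other). Worse, the two quantities genuinely differ by about $2t$, not $O(\diam\Aset)$: for a geodesic $w$ realizing the intersection one has $b_{w^+}(p,\g p)\approx +t$ while $b_{w^-}(p,\g p)\approx -t$, so no argument comparing a forward-endpoint Busemann value with a backward-endpoint one can work. You flagged the uniformity over $\Aset^+$ as the delicate point, but the tool you reach for cannot deliver it.

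The correct route, and the paper's, is to make the estimate pointwise at each $\eta'$ using the unclipped hypothesis itself: since $\emap(\Bset^{\g})=\Aset^-\times\g\Aset^+$, every $\g\eta'$ with $\eta'\in\Aset^+$ equals $w^+$ for some $w\in\Bset^{\g}$, so both $w\in\Aset$ and $\g^{-1}g^t w\in\Aset$. Then $b_{w^+}(w(0),g^t w(0))=t$ exactly, and $1$-Lipschitzness in the two space variables, with $d(p,w(0))\le\diam\pi(\Aset)$ and $d(\g p, g^t w(0))=d(p,(\g^{-1}g^t w)(0))\le\diam\pi(\Aset)$, gives $\abs{b_{\g\eta'}(p,\g p)-t}\le 2\diam\pi(\Aset)\le 2\diam(\Aset)$ uniformly in $\eta'$. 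This contributes a factor $e^{\pm 2h\diam(\Aset)}$, which combined with the $e^{\pm 4h\diam(\Aset)}$ from \lemref{conformal density calculation 2} yields the stated $e^{\pm 6h\diam(\Aset)}$; your alternative bookkeeping (taking $C=\diam(\Aset)$ for $\beta_p$) is not supported, since the available bound is only $\abs{\beta_p}\le 2\diam(\Aed)$.
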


\begin{proof}
By \lemref{conformal density calculation 2},
\begin{equation*}
e^{-4h \diam(\Aset)}
\le \frac%
{e^{ht} \int_{\Aset^- \times \g \Aset^+} f(\xi,\eta) \; d\mu(\xi,\eta)}%
{\int_{\Aset^- \times \Aset^+} f(\xi,\g \eta') \, e^{h[t - b_{\g \eta'} (p, \g p)]} \, d\mu(\xi,\eta')}
\le e^{4h \diam(\Aset)}.
\end{equation*}
If $\eta' \in \Aset^+$ then $\g \eta' = w^+$ for some $w \in \Bset^\g$ because $\g$ is unclipped.
So both $w$ and $\g^{-1} g^t w$ are in $\Aset$.
Hence
\begin{align*}
\abs{b_{\g \eta'} (p, \g p) - t}
&= \abs{b_{w^+} (p, \g p) - t} \\
&\le \abs{b_{w^+} (w(0), \g \cdot \g^{-1} g^t w(0)) - t} + 2 \diam(\pi(\Aset)) \\
&= 2 \diam \pi(\Aset)
\le 2 \diam(\Aset).
\end{align*}
Therefore,
\begin{equation*}
e^{-6h \diam(\Aset)}
\le \frac%
{e^{ht} \int_{\Aset^- \times \g \Aset^+} f(\xi,\eta) \; d\mu(\xi,\eta)}%
{\int_{\Aset^- \times \Aset^+} f(\xi,\g \eta') \, d\mu(\xi,\eta')}
\le e^{6h \diam(\Aset)}.
\qedhere
\end{equation*}
\end{proof}

\begin{definition}
To simplify future statements, we write $\Ced = e^{6h \diam (\Aed)}$.
\end{definition}

Notice that for $\epsilon > 0$ fixed, $\Ced$ is an upper semicontinuous increasing function of $\delta$.
And for $\delta > 0$ fixed, $\Ced$ is a continuous increasing function of $\epsilon$.

\begin{corollary}				\label{unclipped measure 3}
Let $\g \in \Iuedt$.
Then
\begin{equation*}
\frac{1}{\Ced}
\le \frac{e^{ht}}{\mu(\emap(\Aset))} \int_{\emap(\Bset^\g)} d\mu
\le \Ced.
\end{equation*}
\end{corollary}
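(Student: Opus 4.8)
The plan is to obtain this corollary as the special case $f \equiv 1$ of Lemma~\ref{unclipped measure 2}. First I would observe that the hypothesis $\g \in \Iuedt$ says exactly that $\g$ is an unclipped element of $I(v_0,\epsilon,\delta,t)$, so Lemma~\ref{unclipped measure 2} applies to $\g$. Taking $f \equiv 1$ there, the numerator $e^{ht}\int_{\emap(\Bset^\g)} f(\xi,\eta)\,d\mu(\xi,\eta)$ reduces to $e^{ht}\int_{\emap(\Bset^\g)} d\mu$, and the denominator $\int_{\emap(\Aset)} f(\xi,\g\eta')\,d\mu(\xi,\eta')$ reduces to $\mu(\emap(\Aset))$, so the double inequality of Lemma~\ref{unclipped measure 2} becomes
\[
e^{-6h\diam(\Aset)} \;\le\; \frac{e^{ht}}{\mu(\emap(\Aset))}\int_{\emap(\Bset^\g)} d\mu \;\le\; e^{6h\diam(\Aset)} = \Ced .
\]
Since $\Aset = \Aed$ in this section and $\Ced = e^{6h\diam(\Aed)}$ by definition, this is precisely the asserted estimate, and the corollary follows.

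The only thing to check beyond this unwinding is that the displayed fraction is meaningful, i.e.\ that $0 < \mu(\emap(\Aset)) < \infty$. Finiteness is immediate from compactness of $\Aset$ (Lemma~\ref{compact neighborhood 4}\itemref{compactness condition 2}) together with finiteness of $\mu$ on compacta, and positivity is the usual non-degeneracy condition: $\emap(\Aset)$ contains the product $\cl\Bcone(v_0^-,\delta)\times\cl\Bcone(v_0^+,\delta)$ of neighborhoods of the endpoints $v_0^\pm$, which carries positive $\mu$-mass whenever $v_0^\pm \in \Limitset$ (as holds in all the applications below, e.g.\ when $v_0$ is an axis). There is no real obstacle here: all the substance is in Lemma~\ref{unclipped measure 2} and in the conformal-density computation of Lemma~\ref{conformal density calculation 2} underlying it; this corollary is purely a matter of setting $f = 1$ and recalling the definition of $\Ced$.
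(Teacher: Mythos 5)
Your proposal is correct and is exactly the intended derivation: the corollary is the case $f \equiv 1$ of Lemma~\ref{unclipped measure 2}, with the numerator becoming $e^{ht}\mu(\emap(\Bset^\g))$, the denominator becoming $\mu(\emap(\Aset))$, and $e^{6h\diam(\Aset)} = \Ced$ by definition. The side remarks on finiteness and positivity of $\mu(\emap(\Aset))$ are harmless and consistent with the paper's standing hypotheses.
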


\section{Counting Unclipped Intersections}

Fix a zero-width geodesic $v_0 \in SX$.
Let $N = N(v_0, \epsilon, \delta, t) = \card{I(v_0, \epsilon, \delta, t)}$ and $\Nu = \Nu(v_0, \epsilon, \delta, t) = \card \Iu(v_0, \epsilon, \delta, t)$.

\begin{lemma}					\label{unclipped counting}
Assume $\epsilon \in (0, \epsilon_0]$ and $\delta \in (0, \delta_0]$.
Then
\begin{equation*}
\frac{1}{\Ced}
\le e^{-ht} \frac{\mu(\emap(\Aset))}{\mu(\emap(\Bu))} \Nu
\le \Ced.
\end{equation*}
\end{lemma}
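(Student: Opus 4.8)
The plan is to deduce Lemma \ref{unclipped counting} from the unclipped measure estimate of Corollary \ref{unclipped measure 3} by summing over all unclipped $\g$. The point is that $\emap(\Bu) = \bigcup_{\g \in \Iu} \emap(\Bset^\g)$, and by Lemma \ref{EBg disjoint} (equivalently, property \itemrefstar{disjointness condition 2} of Lemma \ref{compact neighborhood 4}) the sets $\emap(\Bset^\g)$ are pairwise disjoint, so $\mu(\emap(\Bu)) = \sum_{\g \in \Iu} \mu(\emap(\Bset^\g))$. Corollary \ref{unclipped measure 3} says that each summand $\mu(\emap(\Bset^\g))$ is pinched between $\frac{1}{\Ced} e^{-ht} \mu(\emap(\Aset))$ and $\Ced \, e^{-ht} \mu(\emap(\Aset))$. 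Summing these $\Nu$ inequalities gives
\[
\frac{1}{\Ced} \, \Nu \, e^{-ht} \mu(\emap(\Aset))
\le \mu(\emap(\Bu))
\le \Ced \, \Nu \, e^{-ht} \mu(\emap(\Aset)),
\]
and rearranging yields exactly the claimed two-sided bound.

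First I would verify the decomposition $\emap(\Bu) = \bigsqcup_{\g \in \Iu} \emap(\Bset^\g)$: by definition $\Bu$ is the union of the $\Bset^\g$ over unclipped $\g$, so $\emap(\Bu)$ is the union of the $\emap(\Bset^\g)$; disjointness is immediate from Lemma \ref{compact neighborhood 4}\itemrefstar{disjointness condition 2}, which applies since $v_0$ is zero-width and $\G$ acts freely and properly discontinuously (these are the standing hypotheses, and $\epsilon \le \epsilon_0$, $\delta \le \delta_0$ put us in the range where that lemma holds). Next I would note the sum is finite — $I(v_0,\epsilon,\delta,t)$ is finite because $\Aset$ is compact (Lemma \ref{compact neighborhood 4}\itemrefstar{compactness condition 2}) and $\G$ acts properly discontinuously, so only finitely many $\g \in \G$ can satisfy $\Aset \cap g^{-t}\g\Aset \neq \varnothing$ — hence $\mu$-additivity over the $\Nu$ pieces is unproblematic and there is no convergence issue. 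Then I would apply Corollary \ref{unclipped measure 3} to each of the $\Nu$ terms and sum.

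There is essentially no obstacle here; the lemma is a bookkeeping consequence of the estimates already established. The only point requiring a word of care is that Corollary \ref{unclipped measure 3} is stated for $\g \in \Iuedt$, i.e. for unclipped $\g$ that actually contribute, which is precisely the index set over which we are summing, so the hypotheses match without adjustment. (One should also observe $\mu(\emap(\Bu)) > 0$ when $\Nu > 0$ so the division is legitimate; if $\Nu = 0$ the inequality is vacuous or trivially interpreted.) I would write the proof in three lines: state the disjoint decomposition, sum the bounds of Corollary \ref{unclipped measure 3} over the $\Nu$ unclipped group elements, and rearrange.

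\begin{proof}
By Lemma \ref{compact neighborhood 4}\itemrefstar{disjointness condition 2}, the sets $\emap(\Bset^\g)$, $\g \in \Iuedt$, are pairwise disjoint, and $\emap(\Bu) = \bigcup_{\g \in \Iuedt} \emap(\Bset^\g)$. Since $\Aset$ is compact and $\G$ acts properly discontinuously, $\Iuedt \subseteq I(v_0,\epsilon,\delta,t)$ is finite, so
\[
\mu(\emap(\Bu)) = \sum_{\g \in \Iuedt} \mu(\emap(\Bset^\g)).
\]
By Corollary \ref{unclipped measure 3}, each term satisfies
$\frac{1}{\Ced} e^{-ht} \mu(\emap(\Aset)) \le \mu(\emap(\Bset^\g)) \le \Ced \, e^{-ht} \mu(\emap(\Aset))$.
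Summing over the $\Nu$ elements of $\Iuedt$ gives
\[
\frac{1}{\Ced} \Nu \, e^{-ht} \mu(\emap(\Aset))
\le \mu(\emap(\Bu))
\le \Ced \, \Nu \, e^{-ht} \mu(\emap(\Aset)),
\]
and dividing through by $\mu(\emap(\Bu))$ and multiplying by $e^{ht}$ yields the claim.
\end{proof}
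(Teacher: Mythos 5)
Your proof is correct and follows essentially the same route as the paper: both decompose $\mu(\emap(\Bu))$ over the pairwise disjoint sets $\emap(\Bset^\g)$, $\g \in \Iu$, apply Corollary \ref{unclipped measure 3} to each piece, and sum; the paper merely phrases the summation via the identity $\Nu = \sum_{\g \in \Iu} \frac{1}{\mu(\emap(\Bset^\g))} \int_{\emap(\Bset^\g)} d\mu$ rather than bounding each $\mu(\emap(\Bset^\g))$ directly, which is an algebraically equivalent rearrangement.
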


\begin{proof}
Start with the identity
\begin{equation*}
\Nu
= \sum_{\g \in \Iu} 1
= \sum_{\g \in \Iu}
\frac{1}{\mu(\emap(\Bset^\g))} \int_{\emap(\Bset^\g)} d\mu.
\end{equation*}
By \corref{unclipped measure 3},
\begin{equation*}
\frac{1}{\Ced}
\le e^{-ht} \frac{\mu(\emap(\Aset))}{\mu(\emap(\Bset^\g))}
\le \Ced
\end{equation*}
for $\g$ unclipped,
so
\begin{align*}
\frac{1}{\Ced} \mu(\emap(\Bu))
&= \! \sum_{\g \in \Iu} \frac{1}{\Ced} \int_{\emap(\Bset^\g)} d\mu \\
&\le e^{-ht} \mu(\emap(\Aset)) \Nu \\
&\le \! \sum_{\g \in \Iu} \Ced \int_{\emap(\Bset^\g)} d\mu \\
&= \Ced \mu(\emap(\Bu)).
\qedhere
\end{align*}
\end{proof}

\section{Jiggling Near Rank One Geodesics}

Clearly the inclusions
\(\Iu_{\delta, t} \subseteq I_{\delta, t}\)
and
\(\Bu_{\delta, t} \subseteq \Bset_{\delta, t}\)
always hold.
We now prove inclusions when we allow $\delta > 0$ to vary.

\begin{lemma}					\label{jiggling with I}
Let $v_0 \in \Reg$ and $0 < r < \delta \le \delta_0$.
There exists $t_0 \ge 0$ such that
\[I_{r, t} \subseteq \Iu_{\delta, t} \subseteq I_{\delta, t}\]
for all $t \ge t_0$ and $\epsilon \in (0, \epsilon_0]$.
\end{lemma}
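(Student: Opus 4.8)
Here is the plan. The inclusion $\Iu_{\delta,t}\subseteq I_{\delta,t}$ holds by the very definition of ``unclipped'', so all the work is in $I_{r,t}\subseteq\Iu_{\delta,t}$. I would begin by recording that for $\delta\le\delta_0$ and \emph{every} $\epsilon>0$ one has $\Aset(v_0,\epsilon,\delta)^-=\cl\Bcone(v_0^-,\delta)$ and $\Aset(v_0,\epsilon,\delta)^+=\cl\Bcone(v_0^+,\delta)$: the inclusions ``$\subseteq$'' are immediate from the definition of $\Aset$, and for ``$\supseteq$'' I would take any $\eta\in\cl\Bcone(v_0^+,\delta)$, join $v_0^-$ to $\eta$ by a geodesic $v$ via \lemref{Ballmann's lemma} (valid for $\delta$ small, since $v_0\in\Reg$), and then apply $g^r$ to move the $s$-coordinate into $[0,\epsilon]$ (applying $g^r$ shifts the $s$-coordinate by $r$, so a suitable $r$ works), producing a point of $\Aset(v_0,\epsilon,\delta)$ with forward endpoint $\eta$. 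Granting this, ``$\g\in I$ is unclipped for $\Aset(v_0,\epsilon,\delta)$'' is equivalent to the two $\epsilon$-free inclusions
\[\g\,\cl\Bcone(v_0^+,\delta)\subseteq\cl\Bcone(v_0^+,\delta)\qquad\text{and}\qquad\g^{-1}\,\cl\Bcone(v_0^-,\delta)\subseteq\cl\Bcone(v_0^-,\delta).\]

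For the first inclusion I would invoke \corref{regular uniform expansion} with the \emph{fixed} compact set $\Aset:=\Aset(v_0,\epsilon_0,r)$ — using $\epsilon_0$ rather than a variable $\epsilon$ is exactly what makes the resulting threshold uniform in $\epsilon$ — which has $\Aset^\mp=\cl\Bcone(v_0^\pm,r)$; with $K:=\cl\Bcone(v_0^+,\delta)$; and with the one-element open cover $\{\Bcone(v_0^+,\delta)\}$ of $\Aset^+=\cl\Bcone(v_0^+,r)$, which genuinely covers because $r<\delta$. The hypothesis $\dT(\Aset^-,K)>\pi$ follows from \lemref{compact neighborhood 4}\itemref{separation condition 2}: since $r<\delta$ gives $\cl\Bcone(v_0^-,r)\subseteq\cl\Bcone(v_0^-,\delta)$, we get $\dT(\cl\Bcone(v_0^-,r),\cl\Bcone(v_0^+,\delta))\ge\dT(\cl\Bcone(v_0^-,\delta),\cl\Bcone(v_0^+,\delta))>\pi$. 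The corollary then supplies $t_0'\ge0$ such that for $t\ge t_0'$, whenever $\Aset(v_0,\epsilon_0,r)\cap\g g^{-t}\Aset(v_0,\epsilon_0,r)\ne\varnothing$ we have $\g\,\cl\Bcone(v_0^+,\delta)\subseteq\Bcone(v_0^+,\delta)\subseteq\cl\Bcone(v_0^+,\delta)$; and since $\Aset(v_0,\epsilon,r)\subseteq\Aset(v_0,\epsilon_0,r)$ for every $\epsilon\le\epsilon_0$, this hypothesis is met by every $\g\in I_{r,t}$, for every $\epsilon\in(0,\epsilon_0]$.

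For the second inclusion I would use the flip $\flip\colon SX\to SX$, $(\flip v)(r)=v(-r)$, an isometry-equivariant homeomorphism with $\flip g^{s}=g^{-s}\flip$ and $(\flip v)^\pm=v^\mp$. Applying $\flip$, then $g^{-t}$, then $\g^{-1}$ to $\Aset(v_0,\epsilon_0,r)\cap\g g^{-t}\Aset(v_0,\epsilon_0,r)$ shows (these being bijections) that this set is nonempty exactly when $\Aset'\cap\g^{-1}g^{-t}\Aset'\ne\varnothing$, where $\Aset':=\flip\Aset(v_0,\epsilon_0,r)$ has $(\Aset')^-=\cl\Bcone(v_0^+,r)$ and $(\Aset')^+=\cl\Bcone(v_0^-,r)$. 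I would then apply \corref{regular uniform expansion} to $\Aset'$ with $K':=\cl\Bcone(v_0^-,\delta)$ and cover $\{\Bcone(v_0^-,\delta)\}$ of $(\Aset')^+$, the Tits hypothesis checked just as before, obtaining $t_0''\ge0$ such that for $t\ge t_0''$, whenever $\Aset'\cap\g^{-1}g^{-t}\Aset'\ne\varnothing$ — equivalently $\g\in I_{r,t}$, for any $\epsilon\in(0,\epsilon_0]$ — we have $\g^{-1}\,\cl\Bcone(v_0^-,\delta)\subseteq\cl\Bcone(v_0^-,\delta)$.

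Finally I would put $t_0:=\max\{t_0',t_0''\}$. For $t\ge t_0$, $\epsilon\in(0,\epsilon_0]$, and $\g\in I_{r,t}$, both displayed inclusions hold, so $\g$ is unclipped for $\Aset(v_0,\epsilon,\delta)$; and $\g\in I_{\delta,t}$ because $\Aset(v_0,\epsilon,r)\subseteq\Aset(v_0,\epsilon,\delta)$ forces $\varnothing\ne\Bset^{\g}_{r,t}\subseteq\Bset^{\g}_{\delta,t}$. Hence $\g\in\Iu_{\delta,t}$, as desired. The steps I expect to demand the most care are the identification $\Aset(v_0,\epsilon,\delta)^\pm=\cl\Bcone(v_0^\mp,\delta)$ and the flip bookkeeping for the $\g^{-1}$ condition; the one genuinely structural point — and the reason a single $t_0$ serves all $\epsilon\in(0,\epsilon_0]$ — is to feed \corref{regular uniform expansion} the fixed box $\Aset(v_0,\epsilon_0,r)$ and to use one-element covers, so that the corollary's ``$\g K\subseteq U_\lambda$ for some $\lambda$'' reduces to containment in the single target ball $\cl\Bcone(v_0^\pm,\delta)$.
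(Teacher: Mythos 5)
Your proof is correct and takes essentially the same route as the paper: both directions of the unclipped condition are obtained from \corref{regular uniform expansion} applied to the smaller box $\Aset_r$, with the $\g^{-1}$ direction handled by reversing orientation (the paper's ``similarly''). You simply make explicit two points the paper leaves implicit --- the identification $\Aset_\delta^\pm = \cl\Bcone(v_0^\mp,\delta)$ via \lemref{Ballmann's lemma}, and the uniformity of $t_0$ in $\epsilon$ obtained by feeding the corollary the fixed compact set $\Aset(v_0,\epsilon_0,r)$ --- both of which are correct and, if anything, tighten the paper's argument.
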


\begin{proof}
Let $\alpha = \delta - r > 0$.
By \corref{regular uniform expansion},
there exists $t_1 \ge 0$ such that for all $t \ge t_1$ and $\g \in I_{r,t}$ (i.e. $\Aset_r \cap \gamma g^{-t} \Aset_r \neq \varnothing$),
$\g \Aset_\delta^+
\subseteq \Bcone (\Aset_r^+, \alpha)
= \Aset_\delta^+$.
Similarly,
there exists $t_2 \ge 0$ such that for all $t \ge t_2$ and $\g \in I_{r,t}$ (i.e. $\Aset_r \cap \gamma^{-1} g^t \Aset_r \neq \varnothing$),
$\g^{-1} \Aset_\delta^-
\subseteq \Bcone (\Aset_r^-, \alpha)
= \Aset_\delta^-$.
So for all $t \ge t_0 = \max \set{t_1, t_2}$, if $\g \in I_{r,t}$ then $\g \in I^{unclipped}_{\delta,t}$.
\end{proof}

\begin{corollary}				\label{jiggling 2}
Let $v_0 \in \Reg$ and $0 < r < \delta \le \delta_0$.
There exists $t_0 \ge 0$ such that
\[\Bset_{r, t} \subseteq \Bu_{\delta, t} \subseteq \Bset_{\delta, t}\]
for all $t \ge t_0$ and $\epsilon \in (0, \epsilon_0]$.
\end{corollary}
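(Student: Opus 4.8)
The statement \corref{jiggling 2} asserts that the nested inclusions $\Bset_{r,t} \subseteq \Bu_{\delta,t} \subseteq \Bset_{\delta,t}$ hold for all large $t$, given $0 < r < \delta \le \delta_0$. The second inclusion is already noted to hold unconditionally at the start of the section (since $\Bu \subseteq \Bset$ always, and enlarging $\delta$ enlarges $\Bset$). So the real content is the first inclusion, and the natural approach is to deduce it directly from \lemref{jiggling with I}, which gives the analogous statement at the level of index sets: $I_{r,t} \subseteq \Iu_{\delta,t}$ for all $t \ge t_0$.

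First I would take $t_0$ to be exactly the threshold furnished by \lemref{jiggling with I}, so that $I_{r,t} \subseteq \Iu_{\delta,t} \subseteq I_{\delta,t}$ for all $t \ge t_0$ and $\epsilon \in (0,\epsilon_0]$. Next, fix such a $t$ and fix $\g \in I_{r,t}$. By definition $\Bset^{\g}_{r,t} = \Aset_r \cap g^{-t}\g\Aset_r$, and since $\Aset_r \subseteq \Aset_\delta$ (the cone balls $\cl\Bcone(v_0^{\pm}, r)$ sit inside $\cl\Bcone(v_0^{\pm}, \delta)$), we get $\Bset^{\g}_{r,t} \subseteq \Aset_\delta \cap g^{-t}\g\Aset_\delta = \Bset^{\g}_{\delta,t}$. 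But $\g \in \Iu_{\delta,t}$, so $\Bset^{\g}_{\delta,t}$ is one of the sets comprising $\Bu_{\delta,t}$; hence $\Bset^{\g}_{r,t} \subseteq \Bu_{\delta,t}$. Taking the union over $\g \in I_{r,t}$ — which by definition of $\Bset_{r,t}$ captures all of $\Bset_{r,t}$, since $\Bset^{\g}_{r,t}$ is empty for $\g \notin I_{r,t}$ — yields $\Bset_{r,t} \subseteq \Bu_{\delta,t}$.

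For the second inclusion I would simply observe $\Bu_{\delta,t} = \bigcup_{\g \in \Iu_{\delta,t}} \Bset^{\g}_{\delta,t} \subseteq \bigcup_{\g \in \G} \Bset^{\g}_{\delta,t} = \Bset_{\delta,t}$, which requires nothing beyond the definitions. Combining the two gives the chain of inclusions for all $t \ge t_0$ and $\epsilon \in (0,\epsilon_0]$.

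There is essentially no obstacle here: the corollary is a formal consequence of \lemref{jiggling with I} together with monotonicity of $\Aset_\delta$ in $\delta$. The only point requiring a word of care is the bookkeeping that the union defining $\Bset_{r,t}$ ranges over $\G$ but only the terms indexed by $I_{r,t}$ are nonempty, so that unioning the inclusion $\Bset^{\g}_{r,t} \subseteq \Bu_{\delta,t}$ over $\g \in I_{r,t}$ genuinely recovers all of $\Bset_{r,t}$. All the geometric substance — the $\pi$-convergence input forcing $\g\Aset_\delta^{+} \subseteq \Aset_\delta^{+}$ and $\g^{-1}\Aset_\delta^{-} \subseteq \Aset_\delta^{-}$ for large $t$ — has already been absorbed into \lemref{jiggling with I}.
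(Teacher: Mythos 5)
Your proposal is correct and is exactly the (unwritten) deduction the paper intends: the corollary follows immediately from \lemref{jiggling with I} together with the monotonicity $\Aset_r \subseteq \Aset_\delta$, which gives $\Bset^{\g}_{r,t} \subseteq \Bset^{\g}_{\delta,t} \subseteq \Bu_{\delta,t}$ for each $\g \in I_{r,t} \subseteq \Iu_{\delta,t}$. The one micro-caveat is that $I_{r,t}$ consists only of \emph{nontrivial} $\g$, so the term $\Bset^{\id}_{r,t} = \Aset_r \cap g^{-t}\Aset_r$ is not covered by your union; but it is empty as soon as $t > \epsilon$, so enlarging $t_0$ (e.g.\ to exceed $\epsilon_0$) disposes of it.
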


In the sequel, we shall often want to state things for both $\liminf$ and $\limsup$.
The following definition makes this more convenient:
Write $a \le \fakelim_{t \to \infty} f(t) \le b$ if
for every $\epsilon > 0$ there exists $t_0 \in \R$ such that $a - \epsilon \le f(t) \le b + \epsilon$ for all $t \ge t_0$.
In other words, $\liminf_{t \to \infty} f(t) \ge a$ and $\limsup_{t \to \infty} f(t) \le b$.

\begin{lemma}					\label{counting I 3}
Let $v_0 \in SX$ be zero-width and $\epsilon \in (0, \epsilon_0]$.
Let $\delta \in (0, \delta_0]$ be a point of continuity of the nondecreasing function $r \mapsto m(\Aset_r)$.
Then
\begin{equation*}
\frac{1}{\Ced}
\le \fakelim_{t \to \infty} \frac{\Nu_{\delta,t}}{2 e^{ht} m(\Aset_{\delta})}
\le \Ced.
\end{equation*}
\end{lemma}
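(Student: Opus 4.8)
The plan is to combine \lemref{unclipped counting} with \corref{mu(EB)}, using \corref{jiggling 2} and the continuity hypothesis on $\delta$ to pass from the clipped sets $\Bset_{\delta,t}$ to the unclipped sets $\Bu_{\delta,t}$.

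\lemref{unclipped counting} already records that, for $\epsilon\in(0,\epsilon_0]$ and $\delta\in(0,\delta_0]$,
\[
\frac{1}{\Ced}
\;\le\; e^{-ht}\,\frac{\mu(\emap(\Aset_\delta))}{\mu(\emap(\Bu_{\delta,t}))}\,\Nu_{\delta,t}
\;\le\; \Ced
\]
for all $t$, so it suffices to evaluate the two $\mu$-masses. The denominator $\mu(\emap(\Aset_\delta))$ is independent of $t$, and I would read it off from the disintegration $m = \mu\times\lambda$ used to construct the Bowen--Margulis measure: since $\Aset_\delta = \pi_p^{-1}\big(\cl\Bcone(v_0^-,\delta)\times\cl\Bcone(v_0^+,\delta)\times[0,\epsilon]\big)$ and $m$ is carried by the zero-width geodesics, on which $\pi_p$ is a homeomorphism and $m=\mu\times\lambda$, the fibre of $\Aset_\delta$ over each pair of endpoints in $\emap(\Aset_\delta)$ is an interval of length exactly $\epsilon$; hence $m(\Aset_\delta) = \epsilon\,\mu(\emap(\Aset_\delta))$, i.e. $\mu(\emap(\Aset_\delta)) = m(\Aset_\delta)/\epsilon$. (We take $v_0\in\supp(m)$ so that $m(\Aset_\delta)>0$ and the ratios make sense.)

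For the numerator I would pin down $\lim_{t\to\infty}\mu(\emap(\Bu_{\delta,t}))$. One always has $\Bu_{\delta,t}\subseteq\Bset_{\delta,t}$, while \corref{jiggling 2} supplies, for each $r\in(0,\delta)$, a threshold $t_0(r)$ with $\Bset_{r,t}\subseteq\Bu_{\delta,t}$ for all $t\ge t_0(r)$. Feeding $\Bset_{r,t}$ and $\Bset_{\delta,t}$ into \corref{mu(EB)} then gives
\[
\frac{2}{\epsilon}\,m(\Aset_r)^2
\;\le\; \liminf_{t\to\infty}\mu(\emap(\Bu_{\delta,t}))
\;\le\; \limsup_{t\to\infty}\mu(\emap(\Bu_{\delta,t}))
\;\le\; \frac{2}{\epsilon}\,m(\Aset_\delta)^2 .
\]
Letting $r\uparrow\delta$ and using that $\delta$ is a point of continuity of the nondecreasing function $r\mapsto m(\Aset_r)$, the outer terms coincide, so $\lim_{t\to\infty}\mu(\emap(\Bu_{\delta,t})) = \frac{2}{\epsilon}\,m(\Aset_\delta)^2$.

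Substituting the two evaluations into \lemref{unclipped counting}, the factor $\mu(\emap(\Bu_{\delta,t}))/\mu(\emap(\Aset_\delta))$ tends to $\frac{(2/\epsilon)\,m(\Aset_\delta)^2}{(1/\epsilon)\,m(\Aset_\delta)} = 2\,m(\Aset_\delta)$, so $e^{-ht}\Nu_{\delta,t}$ is trapped, for all large $t$, between $\frac{1}{\Ced}$ and $\Ced$ times a quantity tending to $2\,m(\Aset_\delta)$; dividing through by $2e^{ht}m(\Aset_\delta)$ gives exactly the asserted $\fakelim$ inequality. I expect the middle step to be the main point: \corref{mu(EB)} cannot be applied to the unclipped set directly, and it is precisely the jiggling inclusions of \corref{jiggling 2}, together with left-continuity of $r\mapsto m(\Aset_r)$ at $\delta$, that allow $\mu(\emap(\Bu_{\delta,t}))$ to be squeezed between the two computable limits — which is why the continuity assumption on $\delta$ is built into the statement.
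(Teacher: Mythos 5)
Your proposal is correct and follows essentially the same route as the paper's proof: evaluate $\mu(\emap(\Aset_\delta)) = m(\Aset_\delta)/\epsilon$, squeeze $\mu(\emap(\Bu_{\delta,t}))$ between $\mu(\emap(\Bset_{r,t}))$ and $\mu(\emap(\Bset_{\delta,t}))$ via \corref{jiggling 2}, use \corref{mu(EB)} together with continuity of $r \mapsto m(\Aset_r)$ at $\delta$ to get $\lim_{t\to\infty}\mu(\emap(\Bu_{\delta,t})) = \frac{2}{\epsilon}m(\Aset_\delta)^2$, and then feed everything into \lemref{unclipped counting}. The only difference is that you spell out the $\liminf$/$\limsup$ sandwich explicitly, which the paper leaves implicit.
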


\begin{proof}
By \corref{mu(EB)}, 
\(\lim_{t \to \infty} \mu(\emap(\Bset_{r,t}))
= \frac{2}{\epsilon} m(\Aset_r)^2\)
for all $r \in (0, \delta_0]$.
Hence $\delta$ is a point of continuity of the function
$f(r) = \lim_{t \to \infty} \mu(\emap(\Bset_{r,t}))$.
So by \corref{jiggling 2},
\[\lim_{t \to \infty} \mu(\emap(\Bu_{\delta,t}))
= \lim_{t \to \infty} \mu(\emap(\Bset_{\delta,t}))
= \frac{2}{\epsilon} m(\Aset_{\delta})^2.\]
But now
\begin{equation*}
\frac{1}{\Ced}
\le \fakelim_{t \to \infty} \frac{\Nu_{\delta,t}}{2 e^{ht} m(\Aset_{\delta})}
\le \Ced
\end{equation*}
by \lemref{unclipped counting}
because $\mu(\emap(\Aset)) = \frac{m(\Aset)}{\epsilon}$.
\end{proof}

\begin{remark}
The points of continuity of $r \mapsto m(\Aset_r) = \epsilon \cdot \mu(\Aset_r^- \times \Aset_r^+)$ do not depend on $\epsilon$.
Also, for such $r$ we find that $\Aset_r$ is a continuity set for $m$ (that is, the topological frontier $\bd \Aset_r$ of $\Aset_r$ has $m(\bd \Aset_r) = 0$); this is easy to see because the projection $SX \to \dbX \times \R$ is continuous, and therefore $\bd \Aset_r \subseteq \bd \emap(\Aset_r) \times \set{0,\epsilon}$.
\end{remark}

\begin{lemma}					\label{counting I 6}
Let $v_0 \in SX$ be zero-width and $\epsilon \in (0, \epsilon_0]$.
Let $\delta \in (0, \delta_0)$ be a point of continuity of the nondecreasing function $r \mapsto m(\Aset_r)$.
Then
\begin{equation*}
\frac{1}{\Ced}
\le \fakelim_{t \to \infty} \frac{N_{\delta,t}}{2 e^{ht} m(\Aset_{\delta})}
\le \Ced.
\end{equation*}
\end{lemma}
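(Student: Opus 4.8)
The plan is to deduce this from the unclipped estimate \lemref{counting I 3} by sandwiching $N_{\delta,t}$ between unclipped counts, using the jiggling \lemref{jiggling with I} to absorb the difference between clipped and unclipped intersections into an arbitrarily small change in the radius $\delta$.

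For the lower bound, the inclusion $\Iu_{\delta,t}\subseteq I_{\delta,t}$ holds for all $t$, so $\Nu_{\delta,t}\le N_{\delta,t}$, and \lemref{counting I 3} immediately gives
\[\liminf_{t\to\infty}\frac{N_{\delta,t}}{2e^{ht}m(\Aset_{\delta})}\ge\liminf_{t\to\infty}\frac{\Nu_{\delta,t}}{2e^{ht}m(\Aset_{\delta})}\ge\frac{1}{\Ced}.\]
For the upper bound I would first note that, since $r\mapsto m(\Aset_r)$ is nondecreasing, its set of discontinuities is countable, so we may choose a point of continuity $\delta'$ with $\delta<\delta'\le\delta_0$ (this is where the hypothesis $\delta<\delta_0$ is used). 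Since $v_0$ is zero-width it is rank one, so \lemref{jiggling with I} applies with the roles of $r$ and $\delta$ played by $\delta$ and $\delta'$: there is $t_0\ge0$ with $I_{\delta,t}\subseteq\Iu_{\delta',t}$, hence $N_{\delta,t}\le\Nu_{\delta',t}$, for all $t\ge t_0$. Dividing by $2e^{ht}m(\Aset_{\delta})$ and applying \lemref{counting I 3} at $\delta'$ yields
\[\limsup_{t\to\infty}\frac{N_{\delta,t}}{2e^{ht}m(\Aset_{\delta})}\le\frac{m(\Aset_{\delta'})}{m(\Aset_{\delta})}\,\limsup_{t\to\infty}\frac{\Nu_{\delta',t}}{2e^{ht}m(\Aset_{\delta'})}\le\frac{m(\Aset_{\delta'})}{m(\Aset_{\delta})}\,C_{\epsilon,\delta'}.\]

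Finally I would let $\delta'$ decrease to $\delta$ through points of continuity of $r\mapsto m(\Aset_r)$. Because $\delta$ is itself such a point of continuity, $m(\Aset_{\delta'})\to m(\Aset_{\delta})$; and because, for fixed $\epsilon$, the map $\delta\mapsto\Ced$ is an increasing, upper semicontinuous function of $\delta$ (as recorded right after the definition of $\Ced$), it is right-continuous, so $C_{\epsilon,\delta'}\to\Ced$. Passing to the limit in the last displayed inequality gives $\limsup_{t\to\infty}N_{\delta,t}/(2e^{ht}m(\Aset_{\delta}))\le\Ced$, which together with the lower bound is exactly the assertion. I do not expect any real obstacle here: the only mildly delicate point is the right-continuity of $\delta\mapsto\Ced$, and that follows at once from its being increasing and upper semicontinuous; otherwise the lemma is a short corollary of \lemref{counting I 3} and \lemref{jiggling with I}.
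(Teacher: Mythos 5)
Your proof is correct and follows essentially the same route as the paper: sandwich $N_{\delta,t}$ between $\Nu_{\delta,t}$ and $\Nu_{\delta',t}$ via \lemref{jiggling with I}, apply \lemref{counting I 3} at $\delta$ and at continuity points $\delta' > \delta$, and let $\delta' \downarrow \delta$ using continuity of $r \mapsto m(\Aset_r)$ at $\delta$ together with the increasing/upper-semicontinuous (hence right-continuous) dependence of $\Ced$ on $\delta$. Your write-up just makes explicit a couple of points the paper compresses (density of continuity points, right-continuity of $\delta \mapsto \Ced$).
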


\begin{proof}
Whenever $\delta' \in (\delta, \delta_0]$, we find
\(\Nu_{\delta,t} \le N_{\delta,t} \le \Nu_{\delta'\!,t} \le N_{\delta'\!,t}\)
for all $t$ sufficiently large by \lemref{jiggling with I}, hence
\[\phi(r) = \limsup_{t \to \infty} e^{-ht} \Nu_{r,t}
\quad \text{and} \quad
\psi(r) = \limsup_{t \to \infty} e^{-ht} N_{r,t}\]
satisfy $\phi(\delta) \le \psi(\delta) \le \phi(\delta') \le \psi(\delta')$.
Taking a decreasing sequence $\delta'_n \to \delta$ such that each $\delta'_n > \delta $ is a point of continuity of $r \mapsto m(\Aset_r)$, we find 
by \lemref{counting I 3} that
\[\frac{2 m(\Aset_{\delta})}{\Ced}
\le \liminf_{t \to \infty} e^{-ht} \Nu_{\delta,t}
\le \liminf_{t \to \infty} e^{-ht} N_{\delta,t}\]
and
\[\psi(\delta)
\le \liminf_{n \to \infty} \phi(\delta'_n)
\le \liminf_{n \to \infty} 2 m(\Aset_{\delta}) \Cedpn
= 2 m(\Aset_{\delta}) \Ced.\qedhere\]
\end{proof}

\section{Counting Periodic Intersections}

\begin{definition}
Let $v_0 \in \Reg$ and $\epsilon, \delta > 0$.
Define
\begin{align*}
\Iperedt &= \setp{\g \in \Iedt}{\g \text{ has an axis in } \Aed} \\
&= \setp{\g \in \G}{\g \text{ has an axis in } \Aed \text{ and } \length{\g} \in [\tpm]}
\end{align*}
and $\Gedt = \card{\Iperedt}$.
\end{definition}

Clearly the inclusion $\Iperedt \subseteq \Iedt$ always holds.

\begin{lemma}					\label{unclipped is periodic}
Let $v_0 \in SX$ be zero-width, and let $\epsilon \in (0, \epsilon_0]$ and $\delta \in (0, \delta_0]$.
Then
$\Iuedt \subseteq \Iperedt$ for all $t > 0$.
\end{lemma}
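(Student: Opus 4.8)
The plan is to show that any unclipped $\g\in\Iuedt$ is a rank-one axial isometry one of whose axes, suitably reparametrized, lies in $\Aed$; since $\Iuedt\subseteq\Iedt$, this places $\g$ in $\Iperedt$ by the definition of that set.

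First I would extract invariant boundary data from the unclipped hypothesis. The containments $\g\Aset^+\subseteq\Aset^+$ and $\Aset^-\subseteq\g\Aset^-$ make $(\g^{n}\Aset^+)_{n\ge 0}$ and $(\g^{-n}\Aset^-)_{n\ge 0}$ nested decreasing sequences of nonempty compacta, so $F^+:=\bigcap_{n\ge0}\g^n\Aset^+$ and $F^-:=\bigcap_{n\ge0}\g^{-n}\Aset^-$ are nonempty, compact, and $\g$-invariant ($\g F^\pm=F^\pm$), with $F^\pm\subseteq\Aset^\pm$. By \lemref{compact neighborhood 4}\itemref{separation condition 2} we have $\dT(\Aset^-,\Aset^+)>\pi$, so $F^-\cap F^+=\varnothing$, and for every $\xi\in F^-$, $\eta\in F^+$ the Tits distance $\dT(\xi,\eta)$ exceeds $\pi$; hence \lemref{Ballmann's lemma}, applied to the zero-width geodesic $v_0$, yields a rank-one geodesic from $\xi$ to $\eta$ that stays within a fixed bounded distance of $p=v_0(0)$ — the same boundedness that underlies compactness of $\Aset$ in \lemref{compact neighborhood 4}\itemref{compactness condition 2}.

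The heart of the argument is to upgrade this to: $\g$ is axial with attracting fixed point $\g_+\in F^+$ and repelling fixed point $\g_-\in F^-$. Since $\G$ acts freely on the complete $\CAT(0)$ space $X$, a torsion element of $\G$ would fix a point, so $\g$ has infinite order and, by the classification of $\CAT(0)$ isometries, is either hyperbolic (semisimple, translation length $\length{\g}>0$ — positivity being automatic from freeness) or parabolic. To exclude the parabolic case I would use that $\g$ preserves the disjoint compacta $F^\pm$ at Tits distance $>\pi$: such a configuration forces, via \lemref{Ballmann's lemma}, a $\g$-coarsely-invariant family of rank-one geodesics of uniformly bounded cross-section joining $F^-$ to $F^+$; tracking the Busemann parameter $s$ of \secref{averaging} (and using that, for any $w\in\Bset^\g\ne\varnothing$, $\g$ carries some $u\in\Aset$ to $g^t w$ with $w\in\Aset$ and $t>0$, so $\g$ drives footpoints forward), one shows the $\g$-orbit of a footpoint of such a geodesic converges, as $n\to\pm\infty$, to $\g$-fixed boundary points lying in $F^+$ and $F^-$ respectively. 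This simultaneously rules out parabolicity (a parabolic has only one fixed point at infinity, while $F^+\cap F^-=\varnothing$) and produces $\g_\pm$. I expect this step — certifying that the unclipped configuration genuinely yields a semisimple element rather than merely constraining the boundary dynamics — to be the main obstacle, requiring real care with Busemann cocycles (or, alternatively, an Anosov-closing-type argument fed by the expansion results of \secref{pi-convergence section}).

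Granting that $\g$ is hyperbolic with $\g_+\in F^+\subseteq\Aset^+$, $\g_-\in F^-\subseteq\Aset^-$, I finish as follows. Since $\dT(\g_-,\g_+)>\pi$, every geodesic joining $\g_-$ to $\g_+$ is rank one, and by the flat-strip theorem these geodesics sweep out the parallel set $Y\times\R$ with $Y$ bounded (rank-one-ness forbids a half-flat) and convex; $\g$ preserves it and acts as an isometry $\phi$ of $Y$ composed with translation by $\length{\g}$. Since every isometry of the bounded complete $\CAT(0)$ space $Y$ fixes its circumcenter $y_0$, the geodesic $c:=\{y_0\}\times\R$ satisfies $\g c=g^{\length{\g}}c$ with $\length{\g}>0$, i.e.\ $c$ is an axis of $\g$. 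Because $\epsilon>0$ we may flow $c$ to a geodesic $c'$ with $\emap(c')=(\g_-,\g_+)$ and $s(c')\in[0,\epsilon]$; then $c'\in\pi_p^{-1}\!\big(\cl\Bcone(v_0^-,\delta)\times\cl\Bcone(v_0^+,\delta)\times[0,\epsilon]\big)=\Aed$, and $\g\,(g^{s}c')=g^{s}\g c'=g^{\length{\g}}(g^{s}c')$ shows $c'$ is still an axis of $\g$. Thus $\g$ has an axis in $\Aed$; as $\g\in\Iedt$, we conclude $\g\in\Iperedt$, which is what we wanted.
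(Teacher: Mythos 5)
Your opening move is exactly the paper's: the nested intersections $F^+=\bigcap_{n}\g^{n}\Aset^+$ and $F^-=\bigcap_{n}\g^{-n}\Aset^-$ are nonempty compact subsets of $\Aset^+$ and $\Aset^-$ invariant under $\g$, and your closing step (the parallel set $Y\times\R$ of a rank one pair of endpoints, the circumcenter of the bounded factor $Y$, freeness excluding the elliptic case, then flowing the resulting axis so that its $s$-parameter lies in $[0,\epsilon]$, which places it in $\Aed$ because $\emap(\Aed)=\Aset^-\times\Aset^+$) is a correct, indeed more detailed, version of the paper's one-line conclusion. The problem is the middle: everything hinges on $\g$ actually fixing some $\g_+\in F^+$ and $\g_-\in F^-$, and this is precisely the step you label ``the main obstacle'' and do not carry out. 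Knowing $\g F^\pm=F^\pm$ does not by itself produce fixed points (a homeomorphism of a compact set need not fix anything), and the one concrete justification you offer for ruling out parabolics --- ``a parabolic has only one fixed point at infinity'' --- is false in general $\CAT(0)$ spaces (a parabolic can fix many boundary points, e.g.\ in a product such as $\H^2\times\R$), so as written that part of the sketch does not close the gap.

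For comparison, the paper treats this point as immediate: it asserts that any $\xi\in\bigcap_n\g^n\Aset^+$ and $\eta\in\bigcap_n\g^{-n}\Aset^-$ must be the endpoints of an axis of $\g$, which is the standard rank one north--south dynamics statement (cf.\ Lemma III.3.3 in \cite{ballmann}, or an argument via \thmref{pi-convergence} applied to the powers $\g^n$: since the action is properly discontinuous and $\g$ has infinite order by freeness, $\g^n x$ accumulates only on $\bd X$, in fact inside $\Aset^+$, and the separation $\dT(\Aset^-,\Aset^+)>\pi$ then yields attracting/repelling fixed points in $F^+$ and $F^-$). Supplying such an argument, or citing that criterion, is exactly what your proposal is missing; once $\g_\pm$ exist, your parallel-set/circumcenter finish does correctly show $\g$ is axial with an axis in $\Aed$, hence $\g\in\Iperedt$.
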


\begin{proof}
Let $\g \in \Iuedt$.
Since $\g \Aset^+ \subseteq \Aset^+$, the nested intersection
$\bigcap_{n \in \N} \g^n \Aset^+$
of compact sets must contain a point $\xi \in \bd X$.
Similarly the nested intersection
$\bigcap_{n \in \N} \g^{-n} \Aset^-$
must contain a point $\eta \in \bd X$.
Then $\xi \in \Aset^+$ and $\eta \in \Aset^-$ must be the endpoints of an axis for $\g$.
Because $\emap(\Aset) = \Aset^- \times \Aset^+$, $\Aset$ contains an axis for $\g$.
\end{proof}

\begin{proposition}				\label{good bounds for Gedt}
Let $X$ be a proper $\CAT(0)$ space.
Assume $\G$ acts freely, properly discontinuously, and by isometries on $X$, and that $m_\G$ is finite and mixing.
Let $v_0 \in SX$ be zero-width, and let $\epsilon \in (0, \epsilon_0]$.  
Let $\delta \in (0, \delta_0)$ be a point of continuity of the nondecreasing function $r \mapsto m(\Aset_r)$.
Then
\begin{equation*}
\frac{1}{\Ced}
\le \fakelim_{t \to \infty} \frac{\Gedt}{2e^{ht} m(\Aed)}
\le \Ced.
\end{equation*}
\end{proposition}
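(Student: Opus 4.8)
The plan is to sandwich $\Gedt = \card{\Iperedt}$ between the unclipped count $\Nuedt$ (from below) and the total count $\Nedt$ (from above), and then apply the already-established asymptotics for those two quantities, namely \lemref{counting I 6} (for $N$) and \lemref{counting I 3} / the unclipped version (for $\Nu$). The lower bound is immediate: \lemref{unclipped is periodic} gives $\Iuedt \subseteq \Iperedt$, so $\Nuedt \le \Gedt$ for all $t > 0$. The upper bound is $\Gedt \le \Nedt$, which follows at once from the observation recorded just before \lemref{unclipped is periodic} that $\Iperedt \subseteq \Iedt$. Thus for every $t > 0$ we have the chain $\Nuedt \le \Gedt \le \Nedt$.

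Now I would take $\delta \in (0,\delta_0)$ to be a point of continuity of $r \mapsto m(\Aset_r)$, exactly as in the hypothesis. Dividing the chain through by $2 e^{ht} m(\Aed)$ and applying $\fakelim_{t \to \infty}$, I get
\[
\fakelim_{t \to \infty} \frac{\Nuedt}{2 e^{ht} m(\Aed)}
\le \fakelim_{t \to \infty} \frac{\Gedt}{2 e^{ht} m(\Aed)}
\le \fakelim_{t \to \infty} \frac{\Nedt}{2 e^{ht} m(\Aed)}.
\]
By \lemref{counting I 3} the leftmost $\fakelim$ lies in $[\Ced^{-1}, \Ced]$, and by \lemref{counting I 6} (which applies since $\delta < \delta_0$) the rightmost $\fakelim$ lies in $[\Ced^{-1}, \Ced]$ as well. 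Therefore the middle quantity also satisfies
\[
\frac{1}{\Ced} \le \fakelim_{t \to \infty} \frac{\Gedt}{2 e^{ht} m(\Aed)} \le \Ced,
\]
which is the claim.

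I do not expect a serious obstacle here: the proposition is essentially a squeeze between two bounds proved in the preceding two sections, and both the upper and lower inclusions of index sets are elementary (one is the definition of $\Iperedt$, the other is \lemref{unclipped is periodic}). The only point requiring a little care is the bookkeeping with $\fakelim$: since the bounding constant $\Ced$ is the same on both ends, the inequality $a \le \fakelim f \le b$ passes through the squeeze cleanly — if $f_1 \le g \le f_2$ pointwise for large $t$, then $\liminf g \ge \liminf f_1 \ge \Ced^{-1} \cdot 2 m(\Aed)$ after the normalization, and $\limsup g \le \limsup f_2 \le \Ced \cdot 2 m(\Aed)$, which is precisely what $\fakelim$ encodes. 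One should note that \lemref{counting I 6} requires $\delta \in (0,\delta_0)$ (strict), which is why the proposition's hypothesis also demands $\delta < \delta_0$ rather than merely $\delta \le \delta_0$.
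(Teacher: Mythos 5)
Your proposal is correct and is essentially the paper's own argument: squeeze $\Gedt$ between $\Nuedt$ and $\Nedt$ using \lemref{unclipped is periodic} together with the trivial inclusion $\Iperedt \subseteq \Iedt$, then invoke the bounds of \lemref{counting I 3} and \lemref{counting I 6}. Your remark that only the liminf bound on $\Nuedt$ and the limsup bound on $\Nedt$ are actually needed matches how the paper phrases the squeeze.
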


\begin{proof}
By \lemref{unclipped is periodic},
$\Nuedt \le \Gedt \le \Nedt$
for all sufficiently large $t$,
hence
\[\liminf_{t \to \infty} \frac{\Nuedt}{2e^{ht} m(\Aed)} \le \fakelim_{t \to \infty} \frac{\Gedt}{2e^{ht} m(\Aed)} \le \limsup_{t \to \infty} \frac{\Nedt}{2e^{ht} m(\Aed)}.\]
Now apply the bounds from \lemref{counting I 3} and \lemref{counting I 6}.
\end{proof}

\section{Conjugacy Classes and Intersection Segments}

For this section, we assume $\G$ acts freely, properly discontinuously, by isometries on $X$.
A non-identity element $\g \in \G$ is called \defn{axial} if there exist $v \in SX$ and $t > 0$ such that $\g v = g^t v$.

\subsection{Conjugacy Classes}

Let $\Conj(\G)$ be the set of axial conjugacy classes $[\g]$ of $\G$.
Call a function $\axis \colon \Conj(\G) \to SX$ a \defn{choice of axis} if every $\axis[\g]$ is an axis for some $\g' \in [\g]$.
In other words, for every axial $\g \in \G$ there exists $\phi \in \G$ such that $\phi \axis[\g]$ is an axis for $\g$.

Call a conjugacy class $[\g] \in \Conj(\G)$ \defn{primitive} if $\g = \phi^n$ for some $\phi \in \G$ and $n > 1$; note this does not depend on choice of representative $\g$ for $[\g]$.
Let $\Conjp(\G) \subset \Conj(\G)$ be the set of conjugacy classes which are not primitive.

For any subset $U \subseteq SX$, write $\Conjinset{U}(\G) \subseteq \Conj(\G)$ for the set of conjugacy classes $[\g]$ such that $\g$ has an axis in $\G U$; this also does not depend on choice of representative $\g$ for $[\g]$.
Also define $\Conjpinset{U}(\G) = \Conjp(\G) \cap \Conjinset{U}(\G)$.

For $v \in SX$, let $\length{v}$ be the length of the smallest period under $g^t_\G$ of the projection $\pr(v) \in \modG{SX}$, with $\length{v} = \infty$ if $\pr(v)$ is not periodic.

For $\g \in \G$, let $\length{\g}$ be the translation length of $\g$.

For $t \ge t' \ge 0$, let
$\ConjG (t',t) = \setp{[\g] \in \Conj(\G)}{t' \le \length{\g} \le t}$.
Similarly define $\ConjGp (t',t)$, $\ConjGinset{U} (t',t)$, and $\ConjGpinset{U} (t',t)$ for $U \subseteq SX$.
Let $\CG (t',t) = \card{\ConjG (t',t)}$, and similarly define $\CGp (t',t)$, $\CGinset{U} (t',t)$, and $\CGpinset{U} (t',t)$.

\subsection{Intersection Segments}

Let $v_0 \in SX$, $\epsilon \in (0,\epsilon_0]$, and $\delta \in (0,\delta_0]$.

For every $v \in SX$, the intersection of $\G \Aed$ with $g^{\R} v$ is the disjoint union of orbit segments of length $\epsilon$.
Call these segments \defn{intersection segments for $v$ with $\Aed$}; call two segments \defn{equivalent} if there is an isometry $\g \in \G$ carrying one to the other.

Let $\segclassesA(v)$ be the collection of equivalence classes of intersection segments for $v$ with $\Aed$, and let $\segcountA(v) = \card{\segclassesA(v)}$.
Notice that $\segclassesA(v)$ is in natural bijection with the collection of disjoint orbit segments (length $\epsilon$) arising as intersections of $\Aed$ with $\G g^{\R} v$.
Immediately we deduce the following.

\begin{lemma}					\label{Gedt via segcountA}
For all $U$ satisfying $\Aed \subseteq U \subseteq SX$, we have
\begin{equation*}
\Gedt = \sum_{[\g] \in \ConjGinset{U} (\tpm)} \segcountA(\axis[\g]).
\end{equation*}
\end{lemma}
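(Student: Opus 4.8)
The plan is to split the count $\Gedt = \card{\Iperedt}$ according to conjugacy classes and to identify, class by class, the elements counted with the intersection segments of the chosen axis. First I would observe that $\Iperedt = \bigsqcup_{[\g]} \bigl(\Iperedt \cap [\g]\bigr)$, the union running over the finitely many axial conjugacy classes it meets, and that every such class lies in $\ConjGinset{U}(\tpm)$: an element of $\Iperedt$ has an axis in $\Aed \subseteq U \subseteq \G U$ and translation length in $[\tpm]$, and both properties depend only on the class. Conversely, for a class $[\g] \in \ConjGinset{U}(\tpm)$ with $\Iperedt \cap [\g] = \varnothing$ one also has $\segcountA(\axis[\g]) = 0$, since an intersection segment of $a := \axis[\g]$ with $\Aed$ lies on a flow line of the form $\phi g^{\R} a$, which is an axis of the conjugate $\phi \g_0 \phi^{-1}$ (here $a$ is an axis of $\g_0 \in [\g]$), and this conjugate would then belong to $\Iperedt \cap [\g]$. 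So it suffices to sum over all of $\ConjGinset{U}(\tpm)$, and the core of the proof is the per-class identity $\card{(\Iperedt \cap [\g])} = \segcountA(\axis[\g])$.

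For the per-class identity I would convert both sides into coset counts. Fix $a = \axis[\g]$, an axis of $\g_0 \in [\g]$; let $\langle \phi_0 \rangle \le \G$ be the setwise stabilizer of the flow line $g^{\R} a$ (infinite cyclic, with $\g_0 = \phi_0^m$), so that the distinct $\G$-translates of $g^{\R} a$ are indexed by the cosets $\phi \langle \phi_0 \rangle$. By the bijection noted after the definition of intersection segments ($\segclassesA(v)$ corresponds to the components of $\Aed \cap \G g^{\R} v$) together with property \itemref{range of s condition 2} of \lemref{compact neighborhood 4} (which makes $\Aed$ meet each flow line in a single orbit segment of length $\epsilon$), $\segcountA(a)$ is the number of cosets $\phi \langle \phi_0 \rangle$ for which $\phi g^{\R} a \cap \Aed \neq \varnothing$. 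Since the along-flow coordinate can be adjusted by flowing, $\phi g^{\R} a$ meets $\Aed$ exactly when the ideal endpoint pair $(\phi a^-, \phi a^+)$ lies in $\cl\Bcone(v_0^-, \delta) \times \cl\Bcone(v_0^+, \delta)$; and because every axis of $\phi \g_0 \phi^{-1}$ is parallel to $\phi a$ and so shares that endpoint pair, this same condition characterizes when $\phi \g_0 \phi^{-1}$ has an axis in $\Aed$. Thus $\phi \langle \phi_0 \rangle \mapsto \phi \g_0 \phi^{-1}$ is a well-defined surjection from the segment-indexing cosets onto $\Iperedt \cap [\g]$, and it remains to see that it is injective — equivalently that $\langle \phi_0 \rangle$ equals the centralizer $Z_\G(\g_0)$ — after which summing over $[\g] \in \ConjGinset{U}(\tpm)$ gives the lemma.

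The injectivity is the main obstacle, and it is where property \itemref{separation condition 2} of \lemref{compact neighborhood 4} enters: since $\dT(\Aed^-, \Aed^+) > \pi$, whenever $\Iperedt \cap [\g] \neq \varnothing$ the axis $a$ (a flow translate of which lies in $\Aed$) is rank one, so $\g_0$ is a rank one isometry. One then needs that such a $\g_0$ has a single axis flow line — so that $Z_\G(\g_0)$, which preserves $\mathrm{Min}(\g_0)$ and hence acts on it by translations along that one flow line, is infinite cyclic and therefore equal to $\langle \phi_0 \rangle$. If a rank one isometry is allowed a bounded-width flat strip of axes, one instead takes $\axis[\g]$ to be the canonical axis through the circumcenter of the (bounded) cross-section of $\mathrm{Min}(\g_0)$ — which $Z_\G(\g_0)$ must fix — restoring $\langle \phi_0 \rangle = Z_\G(\g_0)$. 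The remaining ingredients, namely finiteness of all the counts (from compactness of $\Aed$ and proper discontinuity) and the one-segment-per-flow-line property, are routine.
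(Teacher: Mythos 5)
Your reduction to the per-class identity, and your coset reformulation of both sides, are sound and in fact considerably more explicit than the paper's own proof, which (after noting that $\Aed$ contains, up to flowing, every geodesic parallel to one of its members) simply asserts that $\segcountA(\axis[\g])$ equals the number of conjugates of $\g$ with an axis in $\Aed$. The gap is at your final step, the injectivity. Your primary claim --- that a rank one axial isometry has a single axis flow line, so that $Z_\G(\g_0)$ acts by translations along it and equals $\langle\phi_0\rangle$ --- is false in general: rank one only excludes a half-flat, and $\g_0$ may carry a flat strip of parallel axes of positive width. Moreover, since $\Aed = \pi_p^{-1}\big(\cl\Bcone(v_0^-,\delta)\times\cl\Bcone(v_0^+,\delta)\times[0,\epsilon]\big)$ sees only the endpoint pair and the Busemann parameter, as soon as one axis of $\g_0$ has its endpoints in the two balls, every parallel axis of $\g_0$ (suitably flowed) lies in $\Aed$ as well; zero width of $v_0$ does not help, because the zero-width geodesics need not form an open set. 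So the configuration you must exclude is a nontrivial $\psi \in Z_\G(\g_0)\setminus\langle\phi_0\rangle$ carrying the flow line of $a$ to a distinct parallel flow line, with both lines meeting $\Aed$ --- think of a flat M\"obius-band situation in which $\g_0=\psi^2$ has a strip of axes whose two sides are interchanged by $\psi$ --- and your argument does not rule it out.

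Your fallback, redefining $\axis[\g]$ to be the axis through the circumcenter of the cross-section of $\mathrm{Min}(\g_0)$ (which $Z_\G(\g_0)$ preserves), does yield $Z_\G(\g_0)\subseteq\langle\phi_0\rangle$ and hence the per-class identity, but it proves a different statement: the lemma, and the results that use it later, are formulated for an arbitrary fixed choice of axis, so you are not free to change $\axis$ inside the proof. As written, then, the proposal does not establish the lemma as stated. In fairness, this is exactly the point that the paper's own one-sentence justification passes over in silence: its proof handles the surjectivity direction (via the remark about parallels) just as you do, and asserts the bijectivity without addressing the possibility that one conjugate accounts for two inequivalent intersection segments. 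Your coset bookkeeping makes that issue visible, and your circumcenter normalization is a reasonable repair, but to count as a proof of the stated lemma the normalization would have to be incorporated into the definition of $\axis$, or else one must give a separate argument that no element of $Z_\G(\g_0)$ can move the chosen axis line to a different parallel line when the corresponding class enters $\Aed$.
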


\begin{proof}
By construction of $\Aed$, if $v \in \Aed$ and $w \parallel v$ then $g^t w \in \Aed$ for some $t \in \R$.
So 
$\Gedt$ is the number of $\g \in \G$ with an axis in $\Aed$ such that $\length{\g} \in [\tpm]$,
while on the other hand
$\ConjGinset{U} (\tpm)$ is the set of $[\g] \in \Conj(\G)$ such that $\g$ has a conjugate with an axis in $U$ and $\length{\g} \in [\tpm]$,
and
$\segcountA(\axis[\g])$ is the number of conjugates of $\g$ with an axis in $\Aed$.
\end{proof}

\begin{lemma}					\label{early upper bound for ConjGA}
Let $v_0 \in SX$ be zero-width, and let $\epsilon \in (0, \epsilon_0]$.  
Let $\delta \in (0, \delta_0)$ be a point of continuity of the nondecreasing function $r \mapsto m(\Aset_r)$.
Then
\begin{equation*}
\limsup\limits_{t \to \infty} \frac{\CGA (\tpm)}{2 e^{ht} m(\Aed)} \le \Ced.
\end{equation*}
\end{lemma}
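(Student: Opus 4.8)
The plan is to obtain this as an essentially formal consequence of \propref{good bounds for Gedt}, using the comparison between counting conjugacy classes and counting periodic intersection segments set up in \lemref{Gedt via segcountA}. First I would apply that lemma with $U = \Aed$, which gives $\Gedt = \sum_{[\g] \in \ConjGA (\tpm)} \segcountA(\axis[\g])$. The point is then that every summand is at least $1$: a class $[\g] \in \ConjGA(\tpm)$ has, by definition, some conjugate with an axis in $\G\Aed$, and as noted in the proof of \lemref{Gedt via segcountA} the construction of $\Aed$ (closure, up to reparametrization, under passage to parallel geodesics) forces at least one conjugate of $\g$ to have an axis lying in $\Aed$ itself; since $\segcountA(\axis[\g])$ counts exactly these conjugates, $\segcountA(\axis[\g]) \ge 1$. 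Summing over $[\g] \in \ConjGA(\tpm)$ yields $\CGA(\tpm) = \card{\ConjGA(\tpm)} \le \Gedt$.

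Dividing this inequality by $2 e^{ht} m(\Aed)$, taking $\limsup$ as $t \to \infty$, and invoking the upper half of \propref{good bounds for Gedt} — which gives $\limsup_{t\to\infty} \Gedt / (2e^{ht} m(\Aed)) \le \Ced$ — then produces the asserted bound $\limsup_{t\to\infty} \CGA(\tpm) / (2 e^{ht} m(\Aed)) \le \Ced$. The hypotheses needed ($v_0$ zero-width, $\epsilon \in (0,\epsilon_0]$, $\delta$ a point of continuity of $r \mapsto m(\Aset_r)$ in $(0,\delta_0)$, together with the standing assumptions that $\G$ acts freely properly discontinuously by isometries and $m_\G$ is finite and mixing) are exactly those of \propref{good bounds for Gedt}, so the application is direct.

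I do not expect a serious obstacle here: all of the analytic content — mixing, the quasi-product neighborhood estimates, and the measure computation $\mu(\emap(\Bset)) \to \tfrac{2}{\epsilon} m(\Aed)^2$ — is already packaged into \propref{good bounds for Gedt}, and what remains is the combinatorial inequality $\CGA(\tpm) \le \Gedt$. The only step meriting any care is the lower bound $\segcountA(\axis[\g]) \ge 1$, which is precisely where the ``parallel-closed'' design of the boxes $\Aed$ enters; everything else is bookkeeping, and this statement is deliberately a crude (``early'') bound to be refined later.
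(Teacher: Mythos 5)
Your proposal is correct and is essentially the paper's own argument: the inequality $\CGA(\tpm) \le \Gedt$ via $\segcountA(\axis[\g]) \ge 1$ and \lemref{Gedt via segcountA} with $U = \Aed$, followed by the upper bound from \propref{good bounds for Gedt}. Nothing further is needed.
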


\begin{proof}
Since $\segcountA(\axis[\g]) \ge 1$ for all $[\g] \in \ConjGA$,
we have $\CGA(\tpm) \le \Gedt$ by \lemref{Gedt via segcountA}.
Apply the bounds from \propref{good bounds for Gedt}.
\end{proof}

\section{Measuring along Periodic Orbits}

For each $v \in SX$, let $\lv$ be Lebesgue measure on $g^{\R}v$.
Notice the quotient measure $\lvG$ on $\modG{SX}$ has $\norm{\lvG} = \length{v}$.

\begin{lemma}					\label{segcountA(v) via lvG}
Let $v_0 \in SX$, $\epsilon \in (0,\epsilon_0]$, and $\delta \in (0,\delta_0]$.
For all $v \in SX$, there are $\frac{1}{\epsilon} \lvG (\pr \Aed)$ equivalence classes of intersection segments for $v$ with $\Aed$; that is,
\[\segcountA(v) = \frac{1}{\epsilon} \lvG (\pr \Aed).\]
\end{lemma}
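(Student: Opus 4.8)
The plan is to relate the combinatorial count $\segcountA(v)$ — the number of $\G$-equivalence classes of intersection segments of $g^\R v$ with $\Aed$ — directly to the Lebesgue measure $\lvG$ of the projected set $\pr \Aed$ in $\modG{SX}$, by observing that each equivalence class contributes exactly one copy of a length-$\epsilon$ segment to the quotient. First I would recall from the preceding discussion that $\segclassesA(v)$ is in natural bijection with the collection of disjoint orbit segments (of length $\epsilon$) arising as the intersections of $\Aed$ with $\G g^\R v$. Passing to the quotient $\modG{SX}$, the orbit $g^\R v$ maps to the periodic (or non-periodic) orbit $\pr(g^\R v) = g^\R_\G \pr(v)$, and the $\G$-orbit $\G g^\R v$ of the whole geodesic collapses onto this single quotient orbit. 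So I would argue that $\pr$ restricted to the union $\bigcup_{\g \in \G} (\G\Aed \cap g^\R v)$ descends to a bijection between the set of $\G$-equivalence classes of intersection segments and the set of connected components of $\pr(\Aed) \cap g^\R_\G \pr(v)$ inside the quotient orbit.

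Next I would invoke the structural fact, established in the construction of $\Aed$ (used already in \lemref{length} via \itemrefstar{range of s condition 2} and \itemrefstar{disjointness condition 2} of \lemref{compact neighborhood 4}), that the intersection of $g^\R w$ with $\Aed$, for any $w \in SX$ whose forward geodesic meets $\Aed$, is a single orbit segment of length exactly $\epsilon$ (the parameter $s$ ranges over $[0,\epsilon]$, and by disjointness of the $\emap(\Bset^\g)$ there is no overlapping). Therefore each equivalence class of intersection segments, after projecting to $\modG{SX}$, contributes precisely one subinterval of length $\epsilon$ of the quotient orbit $g^\R_\G \pr(v)$, and distinct equivalence classes contribute disjoint such subintervals (they are the images of the disjoint pieces of $\G\Aed \cap g^\R v$). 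Summing the Lebesgue lengths, $\lvG(\pr\Aed) = \lvG\big(\pr\Aed \cap g^\R_\G \pr(v)\big) = \segcountA(v) \cdot \epsilon$, which rearranges to the claimed identity $\segcountA(v) = \frac{1}{\epsilon}\lvG(\pr\Aed)$. I should note that this works uniformly whether or not $\pr(v)$ is periodic and whether $\segcountA(v)$ is finite or infinite, since $\lvG$ is Lebesgue measure on the (possibly infinite-length) orbit and both sides are then simultaneously infinite.

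The main obstacle I anticipate is making rigorous the claim that $\pr$ really induces a \emph{bijection} — rather than merely a surjection — between $\G$-equivalence classes of intersection segments and components of $\pr\Aed \cap g^\R_\G\pr(v)$; that is, that two intersection segments for $v$ with $\Aed$ lying in distinct $\G$-orbits cannot project to the same piece of the quotient orbit. This is where one needs that $\G$ acts freely (so that the stabilizer of $g^\R v$ acts on the line $g^\R v$ either trivially or by the infinite cyclic group generated by the period, and the orbit map is well-behaved), together with the fact that the period of $\pr(v)$ is exactly $\length{v}$; equivalently, one must check that the only way two segments $S, S'$ of $\G\Aed \cap g^\R v$ project to the same arc is that $S' = \g S$ for some $\g \in \G$ (possibly $\g$ in the cyclic stabilizer when $v$ is periodic, in which case $S$ and $S'$ are identified under the equivalence relation anyway). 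Once that identification is in place the measure-counting is routine.
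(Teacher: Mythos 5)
Your argument is correct and is essentially the paper's own proof: the paper simply notes that the intersection segments are pairwise disjoint and each has length $\epsilon$, so $\lvG(\pr \Aed) = \epsilon \cdot \segcountA(v)$, relying on the bijection with disjoint segments of $\Aed \cap \G g^{\R} v$ already recorded before the lemma. Your extra care about injectivity of the projection to the quotient orbit (via freeness and the cyclic stabilizer) just makes explicit what the paper leaves implicit.
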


\begin{proof}
The intersection segments for $v$ with $\Aed$ are each of length $\epsilon$, and they are pairwise disjoint.
Hence $\lvG (\pr \Aed) = \epsilon \cdot \segcountA(v)$.
\end{proof}

For any $U \subseteq SX$ and $t, \alpha > 0$, define
\begin{align*}
\lmatainset{U} &= \frac{1}{\CGinset{U} (\toma)} \sum_{[\g] \in \ConjGinset{U} (\toma)} \frac{1}{\smallnorm{\laG}} \laG \\
\lpatainset{U} &= \frac{1}{\CGpinset{U} (\toma)} \sum_{[\g] \in \ConjGpinset{U} (\toma)} \frac{1}{\smallnorm{\laG}} \laG \\
\aaltlmatainset{U} &= \frac{1}{t \cdot \CGinset{U} (\tpma)} \sum_{[\g] \in \ConjGinset{U} (\tpma)} \laG.
\end{align*}

\begin{lemma}					\label{Gedt via aaltlmateU}
For all $U$ satisfying $\Aed \subseteq U \subseteq SX$, we have
\[\Gedt = \frac{t}{\epsilon} \CGinset{U}(\tpm) \aaltlmateinset{U} (\Aed).\]
\end{lemma}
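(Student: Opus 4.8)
The plan is to chain the two combinatorial identities already established and then recognize the resulting sum as an unnormalized copy of the average $\aaltlmateinset{U}$.

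First, \lemref{Gedt via segcountA} gives, for any $U$ with $\Aed \subseteq U \subseteq SX$,
\[
\Gedt = \sum_{[\g] \in \ConjGinset{U} (\tpm)} \segcountA(\axis[\g]).
\]
Next I would apply \lemref{segcountA(v) via lvG} with $v = \axis[\g]$ to each summand; since the intersection segments for $\axis[\g]$ with $\Aed$ all have length $\epsilon$ and are pairwise disjoint, this gives $\segcountA(\axis[\g]) = \frac{1}{\epsilon}\laG(\pr\Aed)$, whence
\[
\Gedt = \frac{1}{\epsilon} \sum_{[\g] \in \ConjGinset{U} (\tpm)} \laG(\pr \Aed).
\]

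It then remains only to rewrite the sum. Specializing the definition of $\aaltlmatainset{U}$ to $\alpha = \epsilon$ — which is exactly $\aaltlmateinset{U}$ — gives the identity of measures on $\modG{SX}$
\[
\sum_{[\g] \in \ConjGinset{U} (\tpm)} \laG = t \cdot \CGinset{U}(\tpm) \cdot \aaltlmateinset{U}.
\]
Evaluating both sides at $\pr\Aed$ (which is what the statement abbreviates as $\Aed$, since these are measures on the quotient) and substituting into the formula for $\Gedt$ above yields
\[
\Gedt = \frac{1}{\epsilon} \, t \cdot \CGinset{U}(\tpm) \, \aaltlmateinset{U}(\Aed) = \frac{t}{\epsilon} \, \CGinset{U}(\tpm) \, \aaltlmateinset{U}(\Aed),
\]
which is the claim.

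Every step here is either a direct citation or elementary algebra, so there is no genuine obstacle; this is a bookkeeping lemma. The only points that warrant attention are confirming that the length window $[\tpm]$ and the normalizing count $\CGinset{U}(\tpm)$ entering the definition of $\aaltlmateinset{U}$ agree with those defining $\Gedt$ — they do, because the subscript $\epsilon$ in $\aaltlmateinset{U}$ signifies $\alpha = \epsilon$, so $\tpma$ becomes $\tpm$ — and keeping track of the fact that $\laG$, $\lvG$, and $\aaltlmateinset{U}$ all live on $\modG{SX}$, so the occurrence of $\Aed$ in the statement must be read as $\pr\Aed$.
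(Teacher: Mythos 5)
Your proof is correct and takes essentially the same route as the paper, whose proof likewise just combines \lemref{Gedt via segcountA} with \lemref{segcountA(v) via lvG} and recognizes the resulting sum as the definition of $\aaltlmateinset{U}$ (the case $\alpha = \epsilon$). Your remarks about matching the window $[\tpm]$ and reading $\Aed$ as $\pr \Aed$ simply spell out bookkeeping the paper leaves implicit.
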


\begin{proof}
Combining \lemref{Gedt via segcountA} and \lemref{segcountA(v) via lvG}, we obtain
\begin{equation*}
\Gedt = \sum_{[\g] \in \ConjGinset{U} (\tpm)} \segcountA(\axis[\g])
= \frac{t}{\epsilon} \CGinset{U} (\tpm) \aaltlmate (\Aed).
\qedhere
\end{equation*}
\end{proof}

From \propref{good bounds for Gedt} and \lemref{Gedt via aaltlmateU} we obtain the following.

\begin{corollary}				\label{ConjGU via aaltlmateU}
Let $v_0 \in SX$ be zero-width, and let $\epsilon \in (0, \epsilon_0]$.  
Let $\delta \in (0, \delta_0)$ be a point of continuity of the nondecreasing function $r \mapsto m(\Aset_r)$.
Then
\begin{equation*}
\frac{1}{\Ced} \cdot \frac{m(\Aed)}{\aaltlmateinset{U}(\Aed)} \le \fakelim_{t \to \infty} \frac{\CGinset{U}(\tpm)}{2\epsilon e^{ht}/t} \le \Ced \cdot \frac{m(\Aed)}{\aaltlmateinset{U}(\Aed)}
\end{equation*}
whenever $\Aed \subseteq U \subseteq SX$.
\end{corollary}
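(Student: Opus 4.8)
The plan is to obtain \corref{ConjGU via aaltlmateU} as a purely formal consequence of the exact counting identity in \lemref{Gedt via aaltlmateU} together with the asymptotic two-sided bound in \propref{good bounds for Gedt}; no new geometric input is required. First I would take the identity $\Gedt = \frac{t}{\epsilon}\,\CGinset{U}(\tpm)\,\aaltlmateinset{U}(\Aed)$ from \lemref{Gedt via aaltlmateU} (valid for every $U$ with $\Aed \subseteq U \subseteq SX$), solve for $\CGinset{U}(\tpm)$, and divide by $2\epsilon e^{ht}/t$ to arrive at the exact identity
\[
\frac{\CGinset{U}(\tpm)}{2\epsilon e^{ht}/t}
= \frac{\Gedt}{2 e^{ht}\,\aaltlmateinset{U}(\Aed)}
= \frac{\Gedt}{2 e^{ht}\, m(\Aed)}\cdot\frac{m(\Aed)}{\aaltlmateinset{U}(\Aed)},
\]
which makes sense whenever $\aaltlmateinset{U}(\Aed) \neq 0$.

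Next I would check that this denominator is positive once $t$ is large. Rearranging \lemref{Gedt via aaltlmateU} itself gives $\aaltlmateinset{U}(\Aed) = \frac{\epsilon}{t\,\CGinset{U}(\tpm)}\,\Gedt$, so $\aaltlmateinset{U}(\Aed) > 0$ precisely when $\Gedt > 0$; and \propref{good bounds for Gedt} forces $\Gedt \ge \big(\tfrac{1}{\Ced}-o(1)\big)\,2e^{ht} m(\Aed) > 0$ for all sufficiently large $t$, using $m(\Aed) > 0$ (here is the one place we invoke $\epsilon > 0$, so that $\Aed$ contains a neighborhood of $g^{\epsilon/2}v_0$, together with $v_0$ being a zero-width geodesic). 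Hence the displayed identity is legitimate for all large $t$, and $\frac{m(\Aed)}{\aaltlmateinset{U}(\Aed)}$ is a well-defined positive quantity.

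Finally I would multiply the bound $\tfrac{1}{\Ced} \le \fakelim_{t\to\infty}\frac{\Gedt}{2e^{ht} m(\Aed)} \le \Ced$ of \propref{good bounds for Gedt} through by the positive factor $\frac{m(\Aed)}{\aaltlmateinset{U}(\Aed)}$ and substitute the displayed identity, obtaining
\[
\frac{1}{\Ced}\cdot\frac{m(\Aed)}{\aaltlmateinset{U}(\Aed)}
\le \fakelim_{t\to\infty}\frac{\CGinset{U}(\tpm)}{2\epsilon e^{ht}/t}
\le \Ced\cdot\frac{m(\Aed)}{\aaltlmateinset{U}(\Aed)} .
\]
There is no real obstacle here; the only point worth flagging — and the one subtlety in the bookkeeping — is that $\aaltlmateinset{U}(\Aed)$ appears on both sides and itself varies with $t$, so the $\fakelim$ statement must be read in the natural multiplicative sense: for each $\eta > 0$ there is $t_0$ such that $\big(\tfrac{1}{\Ced}-\eta\big)\frac{m(\Aed)}{\aaltlmateinset{U}(\Aed)} \le \frac{\CGinset{U}(\tpm)}{2\epsilon e^{ht}/t} \le \big(\Ced+\eta\big)\frac{m(\Aed)}{\aaltlmateinset{U}(\Aed)}$ for all $t \ge t_0$, which is exactly what multiplying the two-sided bound above produces.
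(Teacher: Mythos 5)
Your proposal is correct and matches the paper's route exactly: the paper derives this corollary immediately by combining the identity of \lemref{Gedt via aaltlmateU} with the two-sided bound of \propref{good bounds for Gedt}, which is precisely your rearrangement. Your extra remarks on positivity of $\aaltlmateinset{U}(\Aed)$ and on reading the $\fakelim$ bound multiplicatively (since that quantity varies with $t$) are sensible clarifications of details the paper leaves implicit.
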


The measures $\aaltlmatainset{U}$ and $\lmattainset{U}$ have the same weak limits.
In fact, one easily checks the following, directly from the definitions.

\begin{lemma}					\label{lataU and aaltlataU are close in norm}
Let $U \subseteq SX$ be such that $\Aed \subseteq U$.
For any fixed $\alpha > 0$ and choice of axis $\axis$,
\(\lim\limits_{t \to \infty} \norm{\aaltlmatainset{U} - \lmattainset{U}} = 0\).
\end{lemma}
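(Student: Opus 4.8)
The plan is to unwind all three measures directly from their definitions and exhibit $\aaltlmatainset{U}$ and $\lmattainset{U}$ as averages of the \emph{same} family of normalized orbit measures $\frac{1}{\smallnorm{\laG}}\laG$ over two nested index sets, then show the two index sets agree up to a negligible fraction as $t \to \infty$. Concretely, recall
\[
\aaltlmatainset{U}
= \frac{1}{t \cdot \CGinset{U}(\tpma)} \sum_{[\g] \in \ConjGinset{U}(\tpma)} \laG,
\]
and note $\smallnorm{\laG} = \length{\g} \in [\tpma]$, so $\laG = \smallnorm{\laG} \cdot \frac{1}{\smallnorm{\laG}}\laG$ with $\smallnorm{\laG}$ within $\alpha$ of $t$. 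Hence $\frac{1}{t}\laG$ and $\frac{1}{\smallnorm{\laG}}\laG$ differ in norm by at most $\frac{\alpha}{t} \to 0$, uniformly over $[\g]$. Dividing by $\CGinset{U}(\tpma)$ and summing, $\aaltlmatainset{U}$ differs in norm from $\frac{1}{\CGinset{U}(\tpma)} \sum_{[\g] \in \ConjGinset{U}(\tpma)} \frac{1}{\smallnorm{\laG}}\laG$ by at most $\frac{\alpha}{t}$.

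First I would observe that the latter expression is \emph{almost} $\lmattainset{U}$: by definition $\lmattainset{U} = \lmatainset{U}$ evaluated at the parameter triple $(\axis, t+\alpha, 2\alpha)$, which averages $\frac{1}{\smallnorm{\laG}}\laG$ over $[\g] \in \ConjGinset{U}((t+\alpha)-2\alpha, t+\alpha) = \ConjGinset{U}(\toma \text{ shifted})$, i.e.\ over translation lengths in $[t-\alpha, t+\alpha]$ — exactly the same index set $\ConjGinset{U}(\tpma)$. So in fact the two averaging sets coincide on the nose, and the only discrepancy between $\aaltlmatainset{U}$ and $\lmattainset{U}$ is the replacement of the weight $\frac{1}{t}$ by $\frac{1}{\smallnorm{\laG}}$ inside the sum. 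Thus
\[
\norm{\aaltlmatainset{U} - \lmattainset{U}}
\le \frac{1}{\CGinset{U}(\tpma)} \sum_{[\g] \in \ConjGinset{U}(\tpma)} \abs*{\frac{1}{t} - \frac{1}{\smallnorm{\laG}}} \smallnorm{\frac{1}{1}\laG}
\le \sup_{\smallnorm{\laG} \in [\tpma]} \abs*{1 - \frac{t}{\smallnorm{\laG}}},
\]
using that each $\frac{1}{\smallnorm{\laG}}\laG$ is a probability measure, and this supremum is at most $\frac{\alpha}{t-\alpha} \to 0$.

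There is, however, one genuine subtlety to address rather than wave past: one must know $\CGinset{U}(\tpma) > 0$ for all $t$ large (so the normalizing constant is not zero and the averages are defined), and more importantly one must confirm the two definitions really do reference the identical set of conjugacy classes — that is, that $\length{\g} \in [\tpma]$ in the definition of $\aaltlmata$ matches $\length{\g} \in [(t+\alpha)-2\alpha, t+\alpha]$ in $\latta = \lambda_{\axis, t+\alpha, 2\alpha}$, which it does since $(t+\alpha) - 2\alpha = t - \alpha$. Positivity of $\CGinset{U}(\tpma)$ for large $t$ follows because $m_\G$ is finite and mixing with positive entropy $h$, so closed geodesics of length near $t$ exist in profusion (formally, from \propref{good bounds for Gedt}, $\Gedt$ grows like $e^{ht}$, and $\Gedt \le \frac{t}{\epsilon}\CGinset{U}(\tpm) \cdot \aaltlmatainset{U}(\Aed)$ forces $\CGinset{U}(\tpm) \ge 1$ eventually). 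I expect the main obstacle to be purely bookkeeping: keeping the parameter substitutions $(t+\alpha, 2\alpha) \leftrightarrow (t, \alpha)$ straight so that the index sets visibly coincide, after which the norm estimate is the elementary bound above.
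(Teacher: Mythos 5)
Your overall strategy is the same as the paper's: the paper offers no written argument beyond ``one easily checks the following, directly from the definitions,'' and your unwinding is exactly that check. In particular your parameter bookkeeping is right: $\lmattainset{U}$ is $\lmatainset{U}$ evaluated at $(t+\alpha,2\alpha)$, so both measures average over the same index set $\ConjGinset{U}(\tpma)$, and your remark that one needs $\CGinset{U}(\tpma)\neq 0$ for large $t$ is a fair (and easily supplied) point.

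There is, however, one step that fails as written, and it is the only nontrivial content of the lemma: you assert $\smallnorm{\laG} = \length{\g} \in [\tpma]$. By the paper's definitions, $\smallnorm{\laG} = \length{\axis[\g]}$ is the \emph{smallest} period of $\pr(\axis[\g])$, i.e.\ the translation length of the primitive root of $\g$; this equals $\length{\g}$ only when $\g$ is not a proper power. For a class $[\g]=[\phi^n]$ with $n\ge 2$ and $\length{\g}\in[\tpma]$ one has $\smallnorm{\laG}\le (t+\alpha)/n$, so the per-class discrepancy $\norm{\tfrac1t\laG-\tfrac{1}{\smallnorm{\laG}}\laG} = \abs{\tfrac{\smallnorm{\laG}}{t}-1}\ge 1-\tfrac{t+\alpha}{2t}$, which does not tend to $0$. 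Thus your estimate controls only the classes in $\Conjp$, and to conclude $\norm{\aaltlmatainset{U}-\lmattainset{U}}\to 0$ you additionally need the proper-power classes to be an asymptotically negligible fraction of $\ConjGinset{U}(\tpma)$ --- essentially the content of \lemref{most are prime} (via \lemref{Patterson upper bound} and a lower bound on the counts), which is not ``directly from the definitions'' and, in the paper's logical order, sits downstream of the first application of the present lemma, so invoking it here risks circularity. What your computation does give unconditionally is the one-sided inequality $\aaltlmatainset{U} \le \bigl(1+\tfrac{\alpha}{t}\bigr)\lmattainset{U}$ (since $\smallnorm{\laG}\le\length{\g}\le t+\alpha$ always), and that one-sided bound is all that is needed at the lemma's first use in \corref{early lower bound for ConjGA}; the full two-sided statement is only needed later, once the prime-density results are available. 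A careful write-up should either make this split explicit or restrict the norm estimate to the prime classes and quote the density result for the rest.
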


\begin{corollary}				\label{early lower bound for ConjGA}
Let $v_0 \in SX$ be zero-width, and let $\epsilon \in (0, \epsilon_0]$.  
Let $\delta \in (0, \delta_0)$ be a point of continuity of the nondecreasing function $r \mapsto m(\Aset_r)$.
Then
\begin{equation*}
\liminf\limits_{t \to \infty} \frac{\CGA(\tpm)}{2\epsilon e^{ht}/t} \ge \frac{m(\Aed)}{\Ced}.
\end{equation*}
\end{corollary}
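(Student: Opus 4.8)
We want to show
\[
\liminf_{t \to \infty} \frac{\CGA(\tpm)}{2\epsilon e^{ht}/t} \ge \frac{m(\Aed)}{\Ced}.
\]

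The plan is to read this off from \corref{ConjGU via aaltlmateU} with $U = \Aed$, once we have the correct upper bound on the weak limits of the averaging measures $\aaltlmataA$. First I would set $U = \Aed$ in \corref{ConjGU via aaltlmateU}, giving
\[
\liminf_{t \to \infty} \frac{\CGA(\tpm)}{2\epsilon e^{ht}/t} \ge \frac{1}{\Ced} \cdot \frac{m(\Aed)}{\aaltlmataA(\Aed)},
\]
so it suffices to prove $\aaltlmataA(\Aed) \le 1$ for all large $t$ — equivalently, $\limsup_{t \to \infty} \aaltlmataA(\Aed) \le 1$. Here is the point: each $\laG$ for $[\g] \in \ConjGA(\tpma)$ is (a normalization of) Lebesgue measure along a single closed orbit in $\modG{SX}$, so by \lemref{lataU and aaltlataU are close in norm} the measures $\aaltlmataA$ and $\lmattaA$ (the genuinely probability-normalized averages along periodic orbits near length $t+\alpha$, width $2\alpha$) have the same weak limits, and each $\lmattaA$ is a Borel probability measure on $\modG{SX}$. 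Hence every weak-$*$ subsequential limit of $\aaltlmataA$ is a probability measure on $\modG{SX}$ — and more than that, since each $\lmattaA$ is supported on closed geodesics of period in $[t-\alpha, t+3\alpha]$, any weak limit is a $g_\G^t$-invariant probability measure.

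Now, to bound $\aaltlmataA(\Aed)$: the open set $\modG{SX} \setminus \pr(\Aed)$ is contained in the complement of $\pr(\Aed)$, so by the portmanteau theorem $\limsup_{t \to \infty} \aaltlmataA(\pr \Aed) \le m_\G^*(\overline{\pr \Aed})$ for any subsequential weak limit $m_\G^*$. Since $\pr \Aed$ is a continuity set for $m_\G$ when $\delta$ is a continuity point of $r \mapsto m(\Aset_r)$ (this was noted in the Remark after \lemref{counting I 3}), and since any invariant probability measure $m_\G^*$ obtained this way has $m_\G^*(\pr \Aed) = m_\G(\pr \Aed) = m(\Aed)$ — wait, that is exactly what we would need \emph{if} we knew the limit measure equals $m_\G$, which is Knieper's equidistribution result and is precisely what the later sections establish, so we cannot invoke it here. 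Instead, at this stage we only have a crude bound. The clean way out: normalize so $m_\G$ is a probability measure (our standing hypothesis), and observe that $\aaltlmataA(\Aed) \le \|\aaltlmataA\|$, the total mass, which we now compute. By definition,
\[
\aaltlmataA(\Aed) \le \|\aaltlmataA\| = \frac{1}{t \cdot \CGA(\tpma)} \sum_{[\g] \in \ConjGA(\tpma)} \|\laG\| = \frac{1}{t \cdot \CGA(\tpma)} \sum_{[\g]} \length{\g} \le \frac{t + \alpha}{t} \xrightarrow{t \to \infty} 1,
\]
using $\length{\g} = \smallnorm{\laG} \le t + \alpha$ for $[\g] \in \ConjGA(\tpma)$ (the translation length of $\g$ equals the norm of the quotient measure $\lvG$ on its orbit, as noted at the start of \secref{Measuring along Periodic Orbits}).

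So the argument runs: (1) take $\alpha \to 0$ limits appropriately — actually $\alpha$ is fixed here, the $\alpha$-version is $\altlmataA$ vs $\lmataA$, but the $\epsilon$-version $\aaltlmataA$ uses width $2\alpha$ around $t+\alpha$ and translation lengths at most $t+3\alpha$; in the $\epsilon$-normalization (\S on measuring) the bound is $\length{\g} \le t+\epsilon$, giving $\|\aaltlmataA\| \le (t+\epsilon)/t \to 1$; (2) conclude $\limsup_{t} \aaltlmataA(\Aed) \le 1$; (3) plug into \corref{ConjGU via aaltlmateU} with $U = \Aed$. The main obstacle is getting the bookkeeping on the interval endpoints and the $t$ versus $t+\alpha$ (resp.\ $t+\epsilon$) discrepancy exactly right so that the total-mass bound genuinely tends to $1$ and not to some constant $> 1$; everything else is a direct substitution. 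Concretely:

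\begin{proof}
Set $U = \Aed$ in \corref{ConjGU via aaltlmateU}, which gives
\[
\liminf_{t \to \infty} \frac{\CGA(\tpm)}{2\epsilon e^{ht}/t} \ge \frac{1}{\Ced} \cdot \frac{m(\Aed)}{\aaltlmateA(\Aed)}.
\]
It therefore suffices to show $\limsup_{t \to \infty} \aaltlmateA(\Aed) \le 1$.
From the definition,
\[
\aaltlmateA(\Aed) \le \bigl\| \aaltlmateA \bigr\|
= \frac{1}{t \cdot \CGA(\tpm)} \sum_{[\g] \in \ConjGA(\tpm)} \bigl\| \laG \bigr\|.
\]
For $[\g] \in \ConjGA(\tpm)$ we have $\bigl\| \laG \bigr\| = \length{\g} \le t + \epsilon$, since the quotient measure $\lvG$ on the closed orbit $g^{\R}_\G \pr(\axis[\g])$ has norm equal to the translation length $\length{\g}$.
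Hence
\[
\aaltlmateA(\Aed) \le \frac{1}{t \cdot \CGA(\tpm)} \cdot (t + \epsilon) \cdot \CGA(\tpm) = \frac{t + \epsilon}{t} \xrightarrow{\;t \to \infty\;} 1.
\]
Combining, $\displaystyle \liminf_{t \to \infty} \frac{\CGA(\tpm)}{2\epsilon e^{ht}/t} \ge \frac{m(\Aed)}{\Ced}$.
\end{proof}
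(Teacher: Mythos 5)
Your argument is correct and is essentially the paper's own proof: both set $U = \Aed$ in \corref{ConjGU via aaltlmateU} and then observe that the normalizing quantity $\aaltlmateA(\Aed)$ is asymptotically at most $1$, which the paper obtains from \lemref{lataU and aaltlataU are close in norm} together with the fact that $\lmatteinset{\Aed}$ is a probability measure, while you obtain it by the equivalent direct total-mass estimate $\smallnorm{\aaltlmateA} \le (t+\epsilon)/t \to 1$. One small imprecision: $\smallnorm{\laG} = \length{\axis[\g]}$ can be strictly smaller than $\length{\g}$ for non-primitive classes, but since only the upper bound $\smallnorm{\laG} \le t+\epsilon$ is used, the proof stands.
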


\begin{proof}
By \corref{ConjGU via aaltlmateU} and \lemref{lataU and aaltlataU are close in norm},
\begin{equation*}
\liminf_{t \to \infty} \frac{\CGA(\tpm)}{2\epsilon e^{ht}/t} \ge \frac{1}{\Ced} \cdot \frac{m(\Aed)}{\lmatteinset{\Aed}(\Aed)}.
\end{equation*}
The fact that $\lmatteinset{\Aed}$ is a probability measure gives us the desired inequality.
\end{proof}

Combining \lemref{early upper bound for ConjGA} and \corref{early lower bound for ConjGA}, we obtain the following result.

\begin{proposition}				\label{early bounds for ConjGA}
Let $X$ be a proper $\CAT(0)$ space.
Assume $\G$ acts freely, properly discontinuously, and by isometries on $X$, and that $m_\G$ is finite and mixing.
Let $v_0 \in SX$ be zero-width, and let $\epsilon \in (0, \epsilon_0]$.  
Let $\delta \in (0, \delta_0)$ be a point of continuity of the nondecreasing function $r \mapsto m(\Aset_r)$.
Then for every $\alpha > 0$ there exists $t_0 > 0$ such that for all $t \ge t_0$,
\begin{equation*}
\frac{1 - \alpha}{\Ced} \cdot \frac{2 \epsilon m(\Aed) e^{ht}}{t}
\le
\CGA (\tpm)
\le
(1 + \alpha) \Ced \cdot {2 e^{ht} m(\Aed)}.
\end{equation*}
\end{proposition}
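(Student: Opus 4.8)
The plan is to combine the upper bound from \lemref{early upper bound for ConjGA} with the lower bound from \corref{early lower bound for ConjGA}, translating each $\fakelim$ / $\liminf$ / $\limsup$ statement into the quantified form promised in the proposition. Concretely, suppose we are given $\alpha > 0$. From \corref{early lower bound for ConjGA} we have $\liminf_{t \to \infty} \frac{\CGA(\tpm)}{2\epsilon e^{ht}/t} \ge \frac{m(\Aed)}{\Ced}$, so by definition of $\liminf$ there exists $t_1 > 0$ such that for all $t \ge t_1$,
\[
\frac{\CGA(\tpm)}{2\epsilon e^{ht}/t} \ge (1 - \alpha)\,\frac{m(\Aed)}{\Ced},
\]
which rearranges to the desired lower bound $\CGA(\tpm) \ge \frac{1-\alpha}{\Ced}\cdot\frac{2\epsilon m(\Aed) e^{ht}}{t}$.

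Next, from \lemref{early upper bound for ConjGA} we have $\limsup_{t \to \infty} \frac{\CGA(\tpm)}{2e^{ht}m(\Aed)} \le \Ced$, so there exists $t_2 > 0$ such that for all $t \ge t_2$,
\[
\frac{\CGA(\tpm)}{2e^{ht}m(\Aed)} \le (1 + \alpha)\,\Ced,
\]
which rearranges to $\CGA(\tpm) \le (1+\alpha)\,\Ced \cdot 2e^{ht}m(\Aed)$. Setting $t_0 = \max\{t_1, t_2\}$ yields both inequalities simultaneously for all $t \ge t_0$, completing the proof.

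There is essentially no obstacle here: the proposition is purely a bookkeeping repackaging of the two one-sided asymptotic bounds already established, making explicit the $(1\pm\alpha)$ slack that is implicit in the $\liminf$/$\limsup$ statements. The only thing to be slightly careful about is that the two source results carry identical hypotheses on $v_0$, $\epsilon$, $\delta$ (zero-width, $\epsilon \in (0,\epsilon_0]$, $\delta \in (0,\delta_0)$ a continuity point of $r \mapsto m(\Aset_r)$), so both can be invoked at once under the stated hypotheses of the proposition, and the constant $\Ced$ appearing in both is the same quantity; no compatibility issue arises.

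\begin{proof}
Let $\alpha > 0$.
By \corref{early lower bound for ConjGA}, $\liminf_{t \to \infty} \frac{\CGA(\tpm)}{2\epsilon e^{ht}/t} \ge \frac{m(\Aed)}{\Ced}$, so there exists $t_1 > 0$ such that
\[
\CGA(\tpm) \ge \frac{1 - \alpha}{\Ced} \cdot \frac{2\epsilon m(\Aed) e^{ht}}{t}
\]
for all $t \ge t_1$.
By \lemref{early upper bound for ConjGA}, $\limsup_{t \to \infty} \frac{\CGA(\tpm)}{2 e^{ht} m(\Aed)} \le \Ced$, so there exists $t_2 > 0$ such that
\[
\CGA(\tpm) \le (1 + \alpha)\, \Ced \cdot 2 e^{ht} m(\Aed)
\]
for all $t \ge t_2$.
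Taking $t_0 = \max\set{t_1, t_2}$ gives both inequalities for all $t \ge t_0$.
\end{proof}
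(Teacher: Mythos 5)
Your proof is correct and is exactly the argument the paper intends: the proposition is stated there as an immediate consequence of combining \lemref{early upper bound for ConjGA} with \corref{early lower bound for ConjGA}, which is precisely your bookkeeping translation of the $\liminf$/$\limsup$ bounds into the $(1\pm\alpha)$ form.
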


\begin{lemma}					\label{early lower bound for Conj}
Let $U \subseteq SX$ contain a nonempty open set, and let $\alpha > 0$.
There exist $C > 0$ and $t_0 > 0$ such that for all $t \ge t_0$ and 
\[\CGinset{U} (\tpma) \ge C \frac{e^{ht}}{t}.\]
\end{lemma}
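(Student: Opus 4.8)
The plan is to derive \lemref{early lower bound for Conj} from \propref{early bounds for ConjGA} by passing from a generic zero-width reference geodesic $v_0$ to an arbitrary set $U$ containing a nonempty open set. First I would observe that by \propref{zero-width geodesics are dense}, the set of zero-width geodesics is dense in $SX$, so any nonempty open subset of $SX$ contains a zero-width geodesic $v_0$. Since $U$ contains a nonempty open set, I may therefore choose a zero-width $v_0$ and $\epsilon \in (0,\epsilon_0]$, $\delta \in (0,\delta_0]$ small enough that the quasi-product neighborhood $\Aed = \Aset(v_0,\epsilon,\delta)$ is contained in $U$; this uses \lemref{compact neighborhood 4}\itemrefstar{neighborhood condition 2}, which guarantees $\Aed$ is an (open-ish) neighborhood of $g^{\epsilon/2}v_0$, together with continuity of the construction in $\delta$. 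Shrinking $\delta$ further if necessary, I can also arrange that $\delta$ is a point of continuity of the nondecreasing function $r \mapsto m(\Aset_r)$, since a monotone function has at most countably many discontinuities.

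Next I would invoke the containment $\ConjGA(\tpm) \subseteq \ConjGinset{U}(\tpm)$, which holds because any $\g$ with an axis in $\G\Aed$ certainly has an axis in $\G U$ once $\Aed \subseteq U$; hence $\CGinset{U}(\tpm) \ge \CGA(\tpm)$. Applying the lower bound half of \propref{early bounds for ConjGA} with the parameter there also called $\alpha$ (or, to avoid collision, applied with $\alpha/2$), there is $t_0 > 0$ such that for all $t \ge t_0$,
\[
\CGinset{U}(\tpm) \ge \CGA(\tpm) \ge \frac{1-\alpha}{\Ced}\cdot\frac{2\epsilon\, m(\Aed)\, e^{ht}}{t}.
\]
Since $v_0$, $\epsilon$, $\delta$ (hence $\Ced$ and $m(\Aed)$) are now fixed constants depending only on $U$ and $\alpha$, setting $C = \frac{(1-\alpha)\,2\epsilon\, m(\Aed)}{\Ced} > 0$ gives $\CGinset{U}(\tpm) \ge C\, e^{ht}/t$ for all $t \ge t_0$. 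Finally, the statement of the lemma is about $\CGinset{U}(\tpma)$ for an arbitrary $\alpha > 0$; since $[\, t-\epsilon, t+\epsilon \,] \subseteq [\, t-\alpha, t+\alpha \,]$ whenever $\epsilon \le \alpha$, and we are free to choose $\epsilon$ at the outset as small as we like (in particular $\epsilon \le \alpha$), the bound for $\CGinset{U}(\tpm)$ immediately yields the same lower bound for $\CGinset{U}(\tpma)$. It is harmless that $m(\Aed) > 0$: this follows because $\Aed$ contains a nonempty open set and $m$ has full support (the zero-width geodesics being dense and $m = \mu \times \lambda$ being positive on nonempty open sets of $\emap(\Zerowidth)\times\R$).

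The only mildly delicate point — and the step I would watch most carefully — is ensuring that the fixed choice of $v_0$, $\epsilon$, $\delta$ can be made simultaneously to satisfy (i) $\Aed \subseteq U$, (ii) $\delta$ a continuity point of $r \mapsto m(\Aset_r)$, (iii) $\epsilon \le \alpha$, and (iv) $\epsilon \le \epsilon_0$, $\delta \le \delta_0$. All four are ``shrink $\epsilon$ and $\delta$'' conditions and are mutually compatible: (i) holds for all small $\delta$ once $v_0$ is chosen inside the open set (using that $\Aset(v_0,\epsilon,\delta)$ shrinks to the cross-section $CS(v_0)$ as $\delta \to 0$ by \lemref{diam A}), (ii) excludes only countably many $\delta$, and (iii), (iv) are just upper bounds on $\epsilon,\delta$. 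So there is no real obstacle; the proof is essentially a bookkeeping specialization of \propref{early bounds for ConjGA}.
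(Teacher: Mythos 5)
Your proposal is correct and takes essentially the same route as the paper: choose a zero-width $v_0$ inside the open set via density of zero-width geodesics, arrange $\Aed \subseteq U$ with $\epsilon \le \min\set{\alpha, \epsilon_0}$ and $\delta$ a continuity point of $r \mapsto m(\Aset_r)$, invoke the lower counting bound (the paper quotes \corref{early lower bound for ConjGA} directly, you quote \propref{early bounds for ConjGA}, which is the same content), and finish with $\CGA(\tpm) \le \CGinset{U}(\tpm) \le \CGinset{U}(\tpma)$. The one cosmetic slip is that your constant $(1-\alpha)$ becomes nonpositive when $\alpha \ge 1$ (even with the $\alpha/2$ substitution, when $\alpha \ge 2$), so the internal parameter of \propref{early bounds for ConjGA} should simply be fixed at some value in $(0,1)$ independent of the lemma's $\alpha$ — exactly the collision the paper avoids by using the $\liminf$ statement.
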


\begin{proof}
Let $V \subseteq U$ be a nonempty open set.
By \propref{zero-width geodesics are dense}, there is some zero-width $v_0 \in V$.
By \lemref{Ballmann's lemma}, there exist $\delta > 0$ and $\epsilon > 0$ such that $\Aed = \Aset(v_0,\epsilon,\delta)$ is completely contained in $\Reg \cap V$.
We may assume $\epsilon \le \min \set{\alpha, \epsilon_0}$ and that $\delta \in (0, \delta_0]$ is chosen such that
\begin{equation*}
\liminf\limits_{t \to \infty} \frac{\CGA(\tpm)}{2\epsilon e^{ht}/t} \ge \frac{m(\Aed)}{\Ced}
\end{equation*}
by \corref{early lower bound for ConjGA}.
Thus there exist $C > 0$ and $t_0 > 0$ such that
for all $t \ge t_0$,
\[C \frac{e^{ht}}{t}
\le \CGA (\tpm)
\le \CGinset{U} (\tpm)
\le \CGinset{U} (\tpma).\qedhere\]
\end{proof}

It is easy to see that \lemref{early lower bound for Conj} is equivalent to the following statement, where we replace $\CGinset{U} (\tpma)$ by $\CGinset{U} (\toma)$.

\begin{corollary}				\label{early lower bound for Conj asymmetric}
Let $U \subseteq SX$ contain a nonempty open set, and let $\alpha > 0$.
Then there exist $C > 0$ and $t_0 > 0$ such that for all $t \ge t_0$ and 
\[\CGinset{U} (\toma) \ge C \frac{e^{ht}}{t}.\]
\end{corollary}

\section{Counting Multiplicities}

We start with a simple upper bound on the number of conjugacy classes, coming from the construction of the Patterson--Sullivan measures.

\begin{lemma}					\label{Patterson upper bound}
If $K \subset SX$ is compact, then $\lim\limits_{t \to \infty} e^{-h't} \CGinset{K} (0,t) = 0$ for all $h' > h$.
\end{lemma}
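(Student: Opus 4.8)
The plan is to bound the number of conjugacy classes by a Poincaré-type series and use the fact that the critical exponent of $\G$ is exactly $h = \delta_\G < \infty$. First I would fix a basepoint $q \in X$ and a compact set $K \subset SX$; by properness of the footpoint projection $\pi \colon SX \to X$, the set $\pi(K)$ is compact, so there is some $R > 0$ with $\pi(K) \subseteq B(q, R)$. Given a conjugacy class $[\g] \in \ConjGinset{K}(0,t)$, there is by definition a representative (which I may as well call $\g$) with an axis $v$ in $\G K$; translating by an element of $\G$ I can arrange $v \in K$, hence $\pi(v) = v(0) \in B(q,R)$. Then, since $\g$ translates along $v$ by $\length{\g} \le t$, the triangle inequality gives
\[
d(q, \g q) \le d(q, v(0)) + d(v(0), \g v(0)) + d(\g v(0), \g q) \le 2R + t.
\]
So every class in $\ConjGinset{K}(0,t)$ has a representative $\g$ with $d(q, \g q) \le 2R + t$.

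The next step is to control how many group elements, as opposed to conjugacy classes, satisfy such a bound, and to pass from counting to the Poincaré series. The cleanest route is: distinct conjugacy classes give distinct group elements when we pick one representative each, so
\[
\CGinset{K}(0,t) \le \card{\setp{\g \in \G}{d(q, \g q) \le 2R + t}}.
\]
Now for any $s > h = \delta_\G$, the Poincaré series $\sum_{\g \in \G} e^{-s d(q, \g q)}$ converges; writing $n(u) = \card{\setp{\g \in \G}{d(q,\g q) \le u}}$, convergence of the series forces $n(u) = o(e^{su})$ as $u \to \infty$ (otherwise the tail $\sum_{\g : d(q,\g q) > u_0} e^{-s d(q,\g q)}$ would be bounded below by a divergent sum, via a standard dyadic-shell estimate using $n(u) \ge c\, e^{su}$ along a sequence $u \to \infty$). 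Therefore, given $h' > h$, choose $s$ with $h < s < h'$; then
\[
e^{-h' t}\, \CGinset{K}(0,t) \le e^{-h' t}\, n(2R + t) = e^{-h' t} \cdot o(e^{s(2R+t)}) = e^{(2Rs)} \cdot o(e^{(s - h')t}) \to 0
\]
as $t \to \infty$, which is the assertion.

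The main obstacle — really the only nontrivial point — is the passage from convergence of the Poincaré series at every exponent $s > \delta_\G$ to the subexponential bound $n(u) = o(e^{su})$ on the orbit-counting function at exactly that same exponent; one must be slightly careful that this is $o$ and not merely $O$, which is what makes the final limit vanish rather than merely stay bounded. This is handled by the standard argument: if $n(u_k) \ge \eta\, e^{s u_k}$ for some $\eta > 0$ along a sequence $u_k \to \infty$, then summing $e^{-s d(q, \g q)}$ over the shell $\setp{\g}{u_{k-1} < d(q,\g q) \le u_k}$ (after passing to a sparse subsequence so the shells carry a definite proportion of the count) contributes a bounded-below amount infinitely often, contradicting convergence. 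Everything else is elementary: properness of $\pi$, the triangle inequality along an axis, and the choice $h < s < h'$. Note this lemma needs only $\delta_\G < \infty$, which is part of the standing hypotheses, and does not use mixing or finiteness of $m_\G$.
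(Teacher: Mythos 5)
Your argument is correct and is essentially the paper's proof: both rest on the triangle-inequality comparison $d(q,\g q) \le \length{\g} + 2\diam\pi(K)$ for a conjugacy representative with axis in $K$, followed by convergence of the Poincar\'e series above the critical exponent $h = \delta_\G$. The only (immaterial) difference is bookkeeping: the paper sums $e^{-h'\length{\g}}$ over classes and compares with the Poincar\'e series at $h'$ itself, while you bound the counting function by the orbit-counting function and insert an intermediate exponent $h < s < h'$, which in fact makes your claimed $o(e^{su})$ refinement unnecessary ($O(e^{su})$ already suffices).
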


\begin{proof}
Consider that for $\g \in \G$ with an axis in $K$, we know $d(\g p, p) \le \length{\g} + 2 \diam \pi(K)$, and therefore for all $h' > h$,
\begin{align*}
\sum_{t > 0} e^{-h't} \CGinset{K}(t,t)
&= \sum_{\substack{[\g] \in \Conj(\G) \\ \text{with an axis in }K}} e^{-h' \length{\g}}
\le \sum_{\substack{\g \in \G \\ \text{with an axis in }K}} e^{-h' \length{\g}} \\
&\le \sum_{\substack{\g \in \G \\ \text{with an axis in }K}} e^{-h' d(\g p, p) + 2 h' \diam \pi(K)} \\
&\le  e^{2 h' \diam \pi(K)} \sum_{\substack{\g \in \G \\ \text{with an axis in }K}} e^{-h' d(\g p, p)}
\end{align*}
converges because $h$ is the critical exponent of the Poincar\'e series for Patterson's construction.
It follows that $\lim_{t \to \infty} e^{-h't} \CGinset{K} (0,t) = 0$.
\end{proof}

\begin{lemma}					\label{most are prime}
Let $U \subseteq SX$ contain a nonempty open set, and assume $U \subseteq \G K$ for some compact set $K \subseteq SX$.
Then for every $\alpha > 0$,
\[\lim\limits_{t \to \infty} \frac{\CGpinset{U} (\toma)}{\CGinset{U} (\toma)} = 1.\]
\end{lemma}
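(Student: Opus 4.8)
The plan is to show that the non-primitive conjugacy classes with length in $[\toma]$ are negligible compared to the primitive ones, by bounding them in terms of conjugacy classes of \emph{much smaller} length.

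First I would set up the basic bijection: a non-primitive class $[\g] \in \CGpinset{U}(\toma)$ can be written $\g = \phi^n$ for some $\phi \in \G$ and $n \ge 2$, and this determines $[\phi]$ (choosing $\phi$ so that no proper root exists; the class of $\phi$ is well-defined). Since $\length{\g} = n \length{\phi}$ and $\g$ has an axis in $\G U$, the element $\phi$ has the \emph{same} axis, so $[\phi] \in \CGinset{U}(0, t/2)$ (using $n \ge 2$ and $\length{\g} \le t$). For a fixed $[\phi]$, the number of $n$ with $n \length{\phi} \in [\toma]$ is at most $O(\alpha/\length{\phi}) + 1$, which is crudely bounded by some constant (independent of $t$) since $\length{\phi}$ is bounded below by the systole coming from proper discontinuity and freeness of the $\G$-action. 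Hence
\[
\CGpinset{U}(\toma) \le C_1 \cdot \CGinset{U}(0, t/2) \le C_1 \cdot \CGinset{K'}(0, t/2),
\]
where $K'$ is a compact set with $\G K' \supseteq U$ (take $K' = K$ if $U \subseteq \G K$; to land in the hypothesis of \lemref{Patterson upper bound} we need the axes, not just $U$, to meet a compact set, but an axis through $\G K$ can be translated so its footpoint lies in a bounded neighborhood of $\pi(K)$, giving a compact $K'$).

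Next, applying \lemref{Patterson upper bound} with any $h' \in (h, 2h)$ (note $h > 0$ since the action is non-elementary) gives $\CGinset{K'}(0, t/2) = o(e^{h't/2}) = o(e^{ht})$; in particular it is $o\!\left(e^{ht}/t\right)$. On the other hand, \corref{early lower bound for Conj asymmetric} gives $\CGinset{U}(\toma) \ge C_2 e^{ht}/t$ for $t$ large. Therefore
\[
\frac{\CGinset{U}(\toma) - \CGpinset{U}(\toma)}{\CGinset{U}(\toma)}
= 1 - \frac{\CGpinset{U}(\toma)}{\CGinset{U}(\toma)}
\ge 1 - \frac{C_1 \cdot o(e^{ht}/t)}{C_2 e^{ht}/t} \to 1,
\]
and since the ratio of primitive to all classes is the left-hand side (as $\CGp = \CG \setminus \CGpinset{}$... careful: here $\CGpinset{}$ denotes the \emph{non}-primitive ones per the paper's notation, so $\CGinset{U} - \CGpinset{U}$ counts the primitive classes), the limit is $1$ as claimed.

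The main obstacle I anticipate is bookkeeping rather than substance: making sure that "$\g$ has an axis in $\G U$" genuinely forces a root $\phi$ to have an axis in $\G K'$ for a \emph{fixed} compact $K'$ independent of $t$, and confirming the lower bound on $\length{\phi}$ so that the count of admissible exponents $n$ is uniformly bounded. Both follow from proper discontinuity and freeness, but need to be stated cleanly. Everything else is a one-line comparison of exponential rates using the two cited results.
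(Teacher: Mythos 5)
Your proposal is correct and follows essentially the same route as the paper: write each non-prime class in $(\tom]$ as a proper power of a root of translation length at most $t/2$, bound the count of such roots by \lemref{Patterson upper bound} (which is $o(e^{ht}/t)$), and divide by the lower bound $\CGinset{U}(\toma) \ge C e^{ht}/t$ from \corref{early lower bound for Conj asymmetric}. You are in fact slightly more careful than the paper about the multiplicity of the map $[\g]\mapsto[\phi]$ and about placing the roots' axes in a fixed compact set; the paper simply asserts the comparison $\CGinset{U}(\toma)-\CGpinset{U}(\toma)\le \CGinset{U}(0,t/2)$.
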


\begin{remark}
In particular, if $\G$ acts cocompactly on $X$, then $\lim\limits_{t \to \infty} \frac{\CGp (\toma)}{\CG (\toma)} = 1$.
\end{remark}

\begin{proof}
By \corref{early lower bound for Conj asymmetric} and \lemref{Patterson upper bound}, there exist $C \ge 1$ and $t_0 > 0$ such that
\[\CGinset{U} (0,\frac{t}{2}) \le C \cdot \frac{e^{\frac{3}{2} ht}}{2t}
\qquad \text{and} \qquad
\CGinset{U} (\toma) \ge \frac{1}{C} \cdot \frac{e^{2ht}}{2t}\]
for all $t \ge t_0$,
Since every primitive $[\g] \in \ConjGA (\toma) \setminus \ConjGpA (\toma)$ is a multiple of some $[\phi] \in \ConjGA (0,\frac{t}{2})$, we see that
\[\CGinset{U} (\toma) - \CGpinset{U} (\toma) \le \CGinset{U} (0,\frac{t}{2})\]
and therefore
\[1 \ge \frac{\CGpinset{U} (\toma)}{\CGinset{U} (\toma)} \ge 1 - \frac{\CGinset{U} (0,\frac{t}{2})}{\CGinset{U} (\toma)} \ge 1 - e^{-\frac{1}{2} ht}.
\qedhere\]
\end{proof}

Since $\CGinset{U} (0,t)$ diverges, we obtain the following corollary.

\begin{corollary}				\label{prime corollary}
Under the hypotheses of \lemref{most are prime},
\[\lim_{t \to \infty} \frac{\CGpinset{U} (0, t)}{\CGinset{U} (0, t)} = 1.\]
\end{corollary}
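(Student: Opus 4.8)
The plan is to show directly that proper powers are a vanishing fraction of all axial conjugacy classes. Put $D(t) = \CGinset{U}(0,t) - \CGpinset{U}(0,t)$; by the definitions this counts exactly those $[\g] \in \ConjGinset{U}(0,t)$ for which $\g$ is a proper power, and since $\CGpinset{U}(0,t) \le \CGinset{U}(0,t)$ always holds, it suffices to prove $D(t)/\CGinset{U}(0,t) \to 0$. Exactly as in the proof of \lemref{most are prime}, every such $[\g]$ equals $[\phi^n]$ for some $n \ge 2$ and some $[\phi]$ with $\length{\phi} = \length{\g}/n \le t/n$, and $[\phi] \in \ConjGinset{U}$ because an axis for $\g$ is also an axis for $\phi$; fixing one such $(\phi,n)$ for each $[\g]$, the resulting assignment $[\g] \mapsto ([\phi],n)$ is injective, so $D(t)$ is at most the number of such pairs:
\[
D(t) \;\le\; \sum_{n \ge 2} \CGinset{U}(0, t/n).
\]

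Next I would collapse this sum to (essentially) a single term. Because $\G$ acts freely and properly discontinuously and $\pi(K)$ is compact, there is $r_0 > 0$ with $\length{\g} \ge r_0$ for every axial $\g$ having an axis in $\G K$; in particular $\CGinset{U}(0, t/n) = 0$ as soon as $n > t/r_0$, so, using $U \subseteq \G K$,
\[
D(t) \;\le\; \frac{t}{r_0}\,\CGinset{U}(0, t/2) \;\le\; \frac{t}{r_0}\,\CGinset{K}(0, t/2).
\]

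Finally I would feed in the two growth estimates already available. Since the action is non-elementary, $h = \delta_\G > 0$, so fix $h' \in (h, 2h)$, say $h' = \tfrac32 h$. By \lemref{Patterson upper bound} (applied at parameter $t/2$), $\CGinset{K}(0,t/2) = o\!\left(e^{h't/2}\right) = o\!\left(e^{3ht/4}\right)$, hence $D(t) = o\!\left(t\,e^{3ht/4}\right)$; on the other hand \corref{early lower bound for Conj asymmetric}, applied with $\alpha = 1$ (legitimate since $U$ contains a nonempty open set), produces $C > 0$ with $\CGinset{U}(0,t) \ge \CGinset{U}(t-1,t) \ge C\,e^{ht}/t$ for all large $t$. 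Dividing,
\[
1 - \frac{\CGpinset{U}(0,t)}{\CGinset{U}(0,t)} \;=\; \frac{D(t)}{\CGinset{U}(0,t)} \;=\; o\!\left(t^2 e^{-ht/4}\right) \;\longrightarrow\; 0,
\]
which is the claim. Heuristically, this is the exponential growth (hence divergence) of $\CGinset{U}(0,t)$ swamping the count $D(t)$ of proper powers, which grows at the strictly smaller exponential rate $\tfrac34 h$.

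The one point that needs care is the choice of régime in the last step: one must invoke Patterson's estimate at length $t/2$, not at length $t$, because that is precisely where the slack $h' < 2h$ leaves room to beat $C\,e^{ht}/t$; applied at length $t$ the bound on $D(t)$ would only be $o(e^{h't})$ with $h' > h$, which need not dominate. The other genuine use of the hypotheses is the truncation $n \le t/r_0$ of the sum over $n$, which rests on the positive lower bound $r_0$ for translation lengths coming from freeness and proper discontinuity of the action; this mirrors the analogous step inside \lemref{most are prime}, so no new difficulty arises.
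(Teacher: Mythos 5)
Your argument reaches the correct conclusion, but by a genuinely different route from the paper. The paper's proof is a one-liner: it deduces the corollary from \lemref{most are prime} by covering $(0,t]$ with windows of length $\alpha$ and using the divergence of $\CGinset{U}(0,t)$ (which follows from \corref{early lower bound for Conj asymmetric}) to make any bounded initial segment negligible. You instead re-run the proof of \lemref{most are prime} at the cumulative scale: you bound the number $D(t)$ of proper-power classes by roughly $t\,\CGinset{K}(0,t/2)$, control that via \lemref{Patterson upper bound} with $h'=\frac{3}{2}h$, and beat it with the lower bound $\CGinset{U}(0,t)\ge C e^{ht}/t$ from \corref{early lower bound for Conj asymmetric}. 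This is a legitimate and even slightly stronger argument (it yields a rate, $1-\CGpinset{U}(0,t)/\CGinset{U}(0,t)=O(t^{2}e^{-ht/4})$), but it duplicates the work of \lemref{most are prime} rather than using it; your truncation of the sum over $n$ via a uniform positive lower bound $r_0$ on translation lengths of elements with an axis in $\G K$ is fine, as is the use of $h>0$.

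One justification you give is false as stated: in a CAT(0) space containing flat strips, an axis of $\g=\phi^{n}$ need not be an axis of $\phi$ (think of $\phi$ acting as a glide along a flat strip or flat plane: every parallel line is an axis of $\phi^{2}$, but only one of them is an axis of $\phi$). What is true is that $\phi$ is axial with $\length{\phi}=\length{\g}/n$ and has \emph{some} axis parallel to any axis of $\g$; for an arbitrary $U$ that axis need not lie in $\G U$, so the membership $[\phi]\in\ConjGinset{U}$ does not follow from your stated reason. The inclusion you need --- that each proper-power class counted by $\CGinset{U}(0,t)$ is a power of a class counted by $\CGinset{U}(0,t/2)$ --- is exactly the counting step asserted (without proof) inside the paper's proof of \lemref{most are prime}; since your corollary assumes the hypotheses of that lemma, the clean repair is either to quote that inequality window by window and sum it over the $O(t)$ windows of length $\alpha$ covering $(0,t]$, which recovers your bound on $D(t)$ and lets the rest of your argument proceed verbatim, or simply to cite \lemref{most are prime} together with the divergence of $\CGinset{U}(0,t)$, as the paper does.
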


It follows from \lemref{most are prime} that the probability measures $\lpataA$ and $\lmataA$ have the same weak limits.
In fact, we have the following.

\begin{lemma}					\label{convergence in norm}
Let $U \subseteq SX$ contain a nonempty open set, and assume $U \subseteq \G K$ for some compact set $K \subseteq SX$.
For any fixed $\alpha > 0$ and choice of axis $\axis$,
\begin{equation*}
\lim\limits_{t \to \infty} \norm{\lpatainset{U} - \lmatainset{U}} = 0.
\end{equation*}
\end{lemma}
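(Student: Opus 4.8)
The plan is to bound the total variation norm $\norm{\lpatainset{U} - \lmatainset{U}}$ directly by a multiple of $1 - \CGpinset{U}(\toma)/\CGinset{U}(\toma)$ and then invoke \lemref{most are prime}. First I would set $N = \CGinset{U}(\toma)$ and $N' = \CGpinset{U}(\toma)$; since $\ConjGpinset{U}(\toma) \subseteq \ConjGinset{U}(\toma)$ we have $N \ge N'$, and both are positive for $t$ large (indeed they diverge, by \corref{early lower bound for Conj asymmetric} and \lemref{most are prime}), so $\lmatainset{U}$ and $\lpatainset{U}$ are well defined for large $t$. Each $\frac{1}{\smallnorm{\laG}}\laG$ is a probability measure on $\modG{SX}$ (Lebesgue measure on a periodic orbit, rescaled to total mass $1$), so both $\lmatainset{U}$ and $\lpatainset{U}$ are averages of probability measures.

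Using the inclusion $\ConjGpinset{U}(\toma) \subseteq \ConjGinset{U}(\toma)$, I would split
\[
\lpatainset{U} - \lmatainset{U}
= \Bigl(\frac{1}{N'} - \frac{1}{N}\Bigr) \sum_{[\g] \in \ConjGpinset{U}(\toma)} \frac{1}{\smallnorm{\laG}}\laG
\;-\; \frac{1}{N} \sum_{[\g] \in \ConjGinset{U}(\toma) \setminus \ConjGpinset{U}(\toma)} \frac{1}{\smallnorm{\laG}}\laG.
\]
Since each summand is a probability measure (norm $1$) and the $\laG$ are positive measures, the triangle inequality gives
\[
\norm{\lpatainset{U} - \lmatainset{U}}
\le \Bigl(\frac{1}{N'} - \frac{1}{N}\Bigr) N' + \frac{1}{N}\,(N - N')
= 2\Bigl(1 - \frac{N'}{N}\Bigr)
= 2\Bigl(1 - \frac{\CGpinset{U}(\toma)}{\CGinset{U}(\toma)}\Bigr).
\]

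Finally, the hypotheses assumed here — $U$ contains a nonempty open set and $U \subseteq \G K$ for some compact $K$ — are exactly those of \lemref{most are prime}, which asserts $\CGpinset{U}(\toma)/\CGinset{U}(\toma) \to 1$ as $t \to \infty$; hence the right-hand side above tends to $0$, and the lemma follows. There is essentially no obstacle: the only delicate point is the renormalization involved in passing from an average over all classes to an average over the primitive ones, and the displayed two-term decomposition disposes of it, everything else being the triangle inequality for total variation together with \lemref{most are prime}. (Note this yields convergence in norm, strengthening the observation made just before the lemma that $\lpatainset{U}$ and $\lmatainset{U}$ have the same weak limits.)
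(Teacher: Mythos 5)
Your proposal is correct and follows essentially the same route as the paper: both proofs exploit the inclusion $\ConjGpinset{U}(\toma) \subseteq \ConjGinset{U}(\toma)$, note that the normalizing coefficients are asymptotically equal, and bound the discrepancy by the count $\CGinset{U}(\toma) - \CGpinset{U}(\toma)$, which is negligible relative to $\CGinset{U}(\toma)$ by \lemref{most are prime}. Your explicit two-term decomposition and the resulting bound $2\bigl(1 - \CGpinset{U}(\toma)/\CGinset{U}(\toma)\bigr)$ simply make quantitative what the paper states more briefly.
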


\begin{proof}
Let $W$ be a Borel subset of $SX$.
By the definitions,
\begin{align*}
\lmatainset{SX}(W) &= \frac{1}{\CGinset{U} (\toma)} \sum_{[\g] \in \ConjG (\toma)} \frac{1}{\smallnorm{\laG}} \laG(W) \\
\text{and} \qquad
\lpatainset{SX}(W) &= \frac{1}{\CGpinset{U} (\toma)} \sum_{[\g] \in \ConjGp (\toma)} \frac{1}{\smallnorm{\laG}} \laG(W).
\end{align*}
The outside coefficients are asymptotically equal (and nonzero), and the difference in the sums is at most $\CGinset{U} (t - \alpha, t) - \CGpinset{U} (t - \alpha, t)$, which is asymptotically zero compared to $\CGinset{U} (t - \alpha, t)$ by \lemref{most are prime}.
\end{proof}

\section{Limiting Process}
\label{standard limiting section}

For a fixed interval $[a,b] \subset \R$ and continuous function $f \colon [a,b] \to \R$, the Riemann sums ${\sum_{k = 1}^{n} 2\epsilon_n f(x_n)}$ converge to ${\int_{a}^{b} f(x) \; dx}$, for $\epsilon_n = \frac{b-a}{2n}$ and $x_n = (2k-1) \epsilon_n$.
This also holds whenever $f$ is Riemann integrable, e.g.~ $f$ is bounded and nondecreasing.
For completeness, we give here a proof of a standard generalization of this fact to asymptotic intervals.

\begin{lemma}					\label{standard limiting process}
Let $F \colon \R \to \R$ be eventually positive and nondecreasing.
Then
\begin{equation*}
\frac{1}{C}
\le \fakelim_{t \to \infty} \frac%
{\int_{0}^{t} F(x) \; dx}%
{\sum_{k = 0}^{\floor{\frac{t}{2\epsilon}}} 2\epsilon F(t - (2k+1) \epsilon)}
\le C,
\end{equation*}
where $C = \limsup\limits_{x \to \infty} \frac{F(x + \epsilon)}{F(x)}$.
\end{lemma}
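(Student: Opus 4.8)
The plan is to sandwich both $\int_0^t F$ and the sum between the left and right step approximations of $F$ on the partition of $[a_{N+1},t]$ into subintervals of length $2\epsilon$, where $N=\floor{\frac{t}{2\epsilon}}$ and $a_k = t-2k\epsilon$ for $k=0,\dots,N+1$, so that the sample point $t-(2k+1)\epsilon$ is exactly the midpoint of $[a_{k+1},a_k]$. We may assume $C<\infty$, since otherwise there is nothing to prove, and note $C\ge 1$ because $F$ is nondecreasing and eventually positive. Fix $\eta>0$; by the definition of $C$, choose $x_1$ (at least as large as a threshold past which $F>0$) so that $F(y+\epsilon)\le (C+\eta)F(y)$ for all $y\ge x_1$.

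The first step is the term-by-term comparison on the ``bulk'' subintervals. Since $F$ is nondecreasing, whenever $a_k-2\epsilon\ge x_1$ we have, for every $x\in[a_{k+1},a_k]$, both $F(a_k-\epsilon)\le (C+\eta)F(a_{k+1})\le (C+\eta)F(x)$ and $F(a_k)\le (C+\eta)F(a_k-\epsilon)$; integrating over $[a_{k+1},a_k]$ gives
\[
2\epsilon F\bigl(t-(2k+1)\epsilon\bigr)\le (C+\eta)\int_{a_{k+1}}^{a_k}F
\qquad\text{and}\qquad
\int_{a_{k+1}}^{a_k}F\le 2\epsilon(C+\eta)\,F\bigl(t-(2k+1)\epsilon\bigr).
\]

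The second step is to dispatch the exceptional indices: the set of $k\le N$ with $a_k-2\epsilon<x_1$ has cardinality at most $\frac{x_1+2\epsilon}{2\epsilon}+1$, independent of $t$, and for these $k$ both $2\epsilon F(t-(2k+1)\epsilon)$ and $\int_{a_{k+1}}^{a_k}F$ are bounded by a constant depending only on $\epsilon$ and on $F$ restricted to a fixed compact interval. Summing the two displayed inequalities over the bulk indices (which form a prefix $\{0,\dots,k^*-1\}$, so the integrals telescope), absorbing the exceptional terms into an additive constant, and using that $\int_{a_{N+1}}^{0}F$ is bounded (as $a_{N+1}\in[-2\epsilon,0)$), I obtain constants $A,A'\ge 0$ with
\[
\sum_{k=0}^{N}2\epsilon F\bigl(t-(2k+1)\epsilon\bigr)\le (C+\eta)\int_{0}^{t}F+A
\qquad\text{and}\qquad
\int_{0}^{t}F\le (C+\eta)\sum_{k=0}^{N}2\epsilon F\bigl(t-(2k+1)\epsilon\bigr)+A'.
\]

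To finish, observe that because $F$ is nondecreasing and eventually positive, both $\int_0^t F$ and the sum diverge (at least linearly) as $t\to\infty$. Dividing the first inequality by $\int_0^t F$ and the second by the sum, letting $t\to\infty$ and then $\eta\to 0$, yields
\[
\limsup_{t\to\infty}\frac{\int_0^t F}{\sum_{k}2\epsilon F(t-(2k+1)\epsilon)}\le C
\qquad\text{and}\qquad
\liminf_{t\to\infty}\frac{\int_0^t F}{\sum_{k}2\epsilon F(t-(2k+1)\epsilon)}\ge \frac{1}{C},
\]
which is exactly the asserted bound $\frac{1}{C}\le\fakelim\le C$. The only delicate point, and the step I expect to take the most care, is the bookkeeping at the small end of the index range: the multiplicative estimate from $C$ is available only for large arguments, so near $k=N$ the term-by-term comparison breaks down, and one must verify that this affects only boundedly many terms uniformly in $t$ and that their total contribution — together with the slight overshoot of the partition past $0$ — is an additive constant that washes out against the diverging principal terms.
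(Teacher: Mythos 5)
Your proof is correct and follows essentially the same route as the paper's: a midpoint comparison of $F$ on the length-$2\epsilon$ subintervals $[t-(2k+2)\epsilon,\,t-2k\epsilon]$ using the eventual ratio bound $C+\eta$, followed by telescoping and letting $\eta\to 0$. The only difference is bookkeeping at the small end: the paper disposes of the initial segment by a ``without loss of generality'' trimming of the integral and of finitely many terms of the sum, while you absorb the boundedly many exceptional indices into additive constants that are killed by the divergence of both quantities — the two devices are interchangeable here.
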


\begin{proof}
For any fixed $a \in \R$ and $m \in \Z$,
\begin{equation*}
\lim\limits_{t \to \infty}
\frac{\int_{0}^{t} F(x) \; dx}{\int_{a}^{t} F(x) \; dx}
= 1
\qquad \text{and} \qquad
\lim\limits_{t \to \infty}
\frac{\sum_{k = 0}^{\floor{\frac{t}{2\epsilon}}} 2\epsilon F(t - (2k+1) \epsilon)}%
{\sum_{k = 0}^{\floor{\frac{t}{2\epsilon}} - m} 2\epsilon F(t - (2k+1) \epsilon)}
= 1,
\end{equation*}
so without loss of generality we may assume $F$ is positive and nondecreasing on $[0, \infty)$.
We may similarly assume, for $\alpha > 0$ fixed, that 
$1 \le \frac{F(x + \epsilon)}{F(x)} \le C + \alpha$
for all $x > -2\epsilon$.
Let $t > 0$ and put $n = \floor{\frac{t}{2\epsilon}}$.
For each $k = 0, 1, 2, \dotsc, n$, we have
\begin{equation*}
\frac{1}{C + \alpha} F(t - (2k+1)\epsilon)
\le F(x)
\le (C + \alpha) F(t - (2k+1)\epsilon)
\end{equation*}
for all $x \in [t - (2k+2)\epsilon, t - 2k\epsilon]$.
Thus
\begin{equation*}
\frac{1}{C + \alpha} 2\epsilon F(t - (2k+1)\epsilon)
\le \int_{t - (2k+2)\epsilon}^{t - 2k\epsilon} F(x) \; dx
\le (C + \alpha) 2\epsilon F(t - (2k+1)\epsilon)
\end{equation*}
for each $k = 0, 1, 2, \dotsc, n$, and therefore
\begin{align*}
\frac{1}{C + \alpha} \sum_{k = 0}^{n-1} 2\epsilon F(t - (2k+1)\epsilon)
&\le \int_{2\epsilon}^{t} F(x) \; dx
\le \int_{0}^{t} F(x) \; dx \\
&\le \int_{-2\epsilon}^{t} F(x) \; dx
\le (C + \alpha) \sum_{k = 0}^{n} 2\epsilon F(t - (2k+1)\epsilon).
\end{align*}
But
\begin{equation*}
\lim_{t \to \infty} \frac%
{\sum_{k = 0}^{\floor{\frac{t}{2\epsilon}}} 2\epsilon F(t - (2k+1)\epsilon)}%
{\sum_{k = 0}^{\floor{\frac{t}{2\epsilon}}-1} 2\epsilon F(t - (2k+1)\epsilon)}
= 1,
\end{equation*}
so
\begin{align*}
\frac{1}{C + \alpha}
&\le \fakelim_{t \to \infty} \frac%
{\int_{0}^{t} F(x) \; dx}%
{\sum_{k = 0}^{\floor{\frac{t}{2\epsilon}}} 2\epsilon F(t - (2k+1)\epsilon)}
\le C + \alpha.
\end{align*}
As $\alpha > 0$ was arbitrary, we find
\begin{equation*}
\frac{1}{C}
\le \fakelim_{t \to \infty} \frac{\int_{0}^{t} F(x) \; dx}{\sum_{k = 0}^{n} 2\epsilon F(t - (2k+1)\epsilon)}
\le C.
\qedhere
\end{equation*}
\end{proof}

The following is another standard calculation which we include for completeness.

\begin{lemma}					\label{li(x)}
\begin{equation*}
\frac{1}{C}
\le \fakelim_{t \to \infty} \biggslashfrac%
{\sum_{k = 0}^{\floor{\frac{t}{2\epsilon}}} 2\epsilon \cfrac{e^{h(t - (2k+1) \epsilon)}}{t - (2k+1) \epsilon}}%
{\frac{e^{ht}}{ht}}
\le C,
\end{equation*}
where $C = e^{h \epsilon}$.
\end{lemma}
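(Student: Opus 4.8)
The plan is to deduce this from \lemref{standard limiting process} applied to the function $F(x) = \frac{e^{hx}}{x}$. First I would observe that $F$ is positive on $(0,\infty)$ and, since $F'(x) = \frac{e^{hx}(hx-1)}{x^2}$, nondecreasing on $[1/h,\infty)$; extending it to all of $\R$ in any positive nondecreasing fashion — say, constantly equal to $he$ on $(-\infty, 1/h]$ — produces a function satisfying the hypotheses of \lemref{standard limiting process}. The constant attached to $F$ by that lemma is
\[C = \limsup_{x\to\infty} \frac{F(x+\epsilon)}{F(x)} = \limsup_{x\to\infty} e^{h\epsilon}\,\frac{x}{x+\epsilon} = e^{h\epsilon},\]
exactly the constant in the statement.

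Next I would match the denominator sum of \lemref{standard limiting process} with the sum here. For every $k$ with $t-(2k+1)\epsilon \ge 1/h$ — that is, for all but a bounded number (independent of $t$) of the largest values of $k$ — we have $2\epsilon F(t-(2k+1)\epsilon) = 2\epsilon\,\frac{e^{h(t-(2k+1)\epsilon)}}{t-(2k+1)\epsilon}$, and the finitely many remaining terms contribute only a bounded amount, the single term with $t-(2k+1)\epsilon$ near $0$ being harmless once that quantity is kept bounded away from $0$ (which is permitted by the shift-reduction built into the proof of \lemref{standard limiting process}, allowing one to drop the topmost $m$ terms for a fixed $m$). Hence $\sum_{k=0}^{\floor{t/2\epsilon}} 2\epsilon\,\frac{e^{h(t-(2k+1)\epsilon)}}{t-(2k+1)\epsilon}$ and $\sum_{k=0}^{\floor{t/2\epsilon}} 2\epsilon F(t-(2k+1)\epsilon)$ differ by $o\!\big(\tfrac{e^{ht}}{ht}\big)$, so it suffices to prove the estimate with $F$ in place of the explicit summand.

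Then I would compute the asymptotics of $\int_0^t F(x)\,dx$. The modification near $0$ only adds a constant, so it is enough to show $\int_{1/h}^t \frac{e^{hx}}{x}\,dx \sim \frac{e^{ht}}{ht}$ as $t\to\infty$ — the familiar asymptotic $\mathrm{Ei}(y)\sim e^y/y$. Both sides tend to $\infty$, and by L'Hôpital the ratio of their derivatives is $\frac{e^{ht}/t}{\,e^{ht}(ht-1)/(ht^2)\,} = \frac{ht}{ht-1}\to 1$. Feeding this, together with the identification of the sums above, into the conclusion
\[\frac1C \le \fakelim_{t\to\infty} \frac{\int_0^t F(x)\,dx}{\sum_{k=0}^{\floor{t/2\epsilon}} 2\epsilon F(t-(2k+1)\epsilon)} \le C\]
of \lemref{standard limiting process} yields $\tfrac1C \le \fakelim_{t\to\infty} \big(\text{sum}\big)\big/\tfrac{e^{ht}}{ht} \le C$ with $C = e^{h\epsilon}$, which is the claim.

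I expect the only genuine subtlety to be the bookkeeping around $x=0$: the natural candidate $\frac{e^{hx}}{x}$ is undefined (and changes sign) there, so one must both extend it harmlessly to $\R$ and verify that the handful of small-argument summands are negligible. Both points are routine given the shift-reductions already established inside \lemref{standard limiting process}; everything of substance reduces to the computation $C = e^{h\epsilon}$ and the standard asymptotic for $\int^t e^{hx}/x\,dx$.
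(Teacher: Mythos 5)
Your proposal is correct and follows essentially the same route as the paper: apply \lemref{standard limiting process} to $F(x)=e^{hx}/x$ (with $C=e^{h\epsilon}$) and combine it with the standard asymptotic $\int^{t}e^{hx}/x\,dx \sim e^{ht}/(ht)$, which the paper obtains by integration by parts rather than L'H\^opital. Your extra care about extending $F$ near $x=0$ and discarding the few small-argument summands is a harmless refinement of the same argument, not a different approach.
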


\begin{proof}
It is a standard fact that for any fixed $t_0 > 0$,
\begin{equation}				\label{eq li(x)}
\lim\limits_{t \to \infty} \biggslashfrac%
{\int_{t_0}^{t} \frac{e^{hx}}{x} \; dx}%
{\frac{e^{ht}}{ht}} = 1.
\end{equation}
This comes from the calculation
\begin{equation*}
\int_{t_0}^{t} \frac{e^{hx}}{x} \; dx
= \frac{e^{hx}}{hx} \bigg \vert_{t_0}^{t} + \int_{t_0}^{t} \frac{e^{hx}}{hx^2} \; dx
= \frac{e^{ht}}{ht} - \frac{e^{h t_0}}{h t_0} + \int_{t_0}^{t} \frac{e^{hx}}{hx^2} \; dx;
\end{equation*}
the second term of the last expression tends to zero relative to $\slashfrac{e^{ht}}{ht}$ because it is constant, the third because $\lim\limits_{x \to \infty} \biggslashfrac{\dfrac{e^{hx}}{hx^2}}{\dfrac{e^{hx}}{x}} = 0$.
On the other hand, for all $\epsilon > 0$,
\lemref{standard limiting process} gives us
\begin{equation*}
e^{-h \epsilon}
\le \fakelim_{t \to \infty} \biggslashfrac%
{\sum_{k = 0}^{\floor{\frac{t}{2\epsilon}}} 2\epsilon \frac{e^{h(t - (2k+1) \epsilon)}}{t - (2k+1) \epsilon}}%
{\int_{t_0}^{t} \frac{e^{hx}}{x} \; dx}
\le e^{h \epsilon}
\end{equation*}
and therefore
\begin{equation*}
e^{-h \epsilon}
\le \fakelim_{t \to \infty} \biggslashfrac%
{\sum_{k = 0}^{\floor{\frac{t}{2\epsilon}}} 2\epsilon \frac{e^{h(t - (2k+1) \epsilon)}}{t - (2k+1) \epsilon}}%
{\frac{e^{ht}}{ht}}
\le e^{h \epsilon}
\end{equation*}
from \eqref{eq li(x)}.
\end{proof}

\section{Entropy and Equidistribution}

Knieper also proves an equidistribution result \cite[Proposition 6.4]{knieper98}; adapting his proof we obtain a similar result.
For clarity, we include a proof.

A significant portion of Knieper's proof of his Proposition 6.4 is spent proving the following (unstated) general lemma.

\begin{lemma}					\label{Knieper entropy variation}
Let $\phi$ be a measurable map of a measurable space to itself.
Let $(\mu_k)$ be a sequence of $\phi$-invariant probability measures, and let $\A$ be a measurable partition.
Then
\[\limsup_{k \to \infty} \frac{H_{\mu_k} (\A_\phi^{(n_k)})}{n_k}
\le \liminf_{k \to \infty} \frac{H_{\mu_k} (\A_\phi^{(q)})}{q}\]
for all integers $q > 1$ and sequences $(n_k)$ in $\N$ such that $n_k \to \infty$.
\end{lemma}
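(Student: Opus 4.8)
The plan is to reduce the statement to the standard subadditivity/concavity properties of conditional entropy, applied uniformly across the sequence $(\mu_k)$. Write $\A_\phi^{(n)} = \bigvee_{j=0}^{n-1} \phi^{-j} \A$ for the common refinement. The starting point is the classical fact that for a single $\phi$-invariant probability measure $\nu$, the sequence $n \mapsto H_\nu(\A_\phi^{(n)})$ is subadditive: $H_\nu(\A_\phi^{(n+m)}) \le H_\nu(\A_\phi^{(n)}) + H_\nu(\A_\phi^{(m)})$, using $\phi$-invariance of $\nu$ to translate the second block. Iterating, for any fixed $q > 1$ and any $n$, writing $n = aq + b$ with $0 \le b < q$, we get
\[
H_{\mu_k}(\A_\phi^{(n)}) \le a\, H_{\mu_k}(\A_\phi^{(q)}) + H_{\mu_k}(\A_\phi^{(b)}).
\]
The remainder term $H_{\mu_k}(\A_\phi^{(b)})$ is bounded by $H_{\mu_k}(\A_\phi^{(q)})$ (monotonicity of entropy under refinement, since $b < q$), so it is uniformly bounded by $b \log(\#\A) \le q \log(\#\A)$, independent of $k$; here I use that $\A$ is a fixed finite partition (if $\A$ is merely countable with finite entropy one argues slightly more carefully, but in our application $\A$ will be finite).

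Now divide by $n = n_k \to \infty$. With $a = a_k = \lfloor n_k/q \rfloor$, we have $a_k/n_k \to 1/q$, and the remainder $H_{\mu_k}(\A_\phi^{(b_k)})/n_k \to 0$ since the numerator is uniformly bounded and $n_k \to \infty$. Therefore
\[
\limsup_{k \to \infty} \frac{H_{\mu_k}(\A_\phi^{(n_k)})}{n_k}
\le \limsup_{k \to \infty} \frac{a_k}{n_k}\, H_{\mu_k}(\A_\phi^{(q)})
= \frac{1}{q} \limsup_{k \to \infty} H_{\mu_k}(\A_\phi^{(q)}).
\]
To finish I would like to replace $\limsup_k H_{\mu_k}(\A_\phi^{(q)})$ with $\liminf_k H_{\mu_k}(\A_\phi^{(q)})$, which is not automatic. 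The trick is that the inequality above holds for \emph{every} integer $q > 1$ simultaneously, and in particular we may run the same argument with $q$ replaced by any multiple $q' = qr$. Applying the displayed bound with the sequence $n_k$ itself but decomposing with respect to $q'$, and then comparing the two decompositions, lets one bootstrap: more precisely, since $H_{\mu_k}(\A_\phi^{(qr)}) \le r\, H_{\mu_k}(\A_\phi^{(q)})$ by subadditivity, we get $\frac{1}{qr} H_{\mu_k}(\A_\phi^{(qr)}) \le \frac{1}{q} H_{\mu_k}(\A_\phi^{(q)})$, so the quantity $\frac{1}{q}\limsup_k H_{\mu_k}(\A_\phi^{(q)})$ is non-increasing along multiples of a fixed $q$; combined with the freedom to choose along a subsequence realizing the $\liminf$ on the right-hand side, one extracts the stated bound with $\liminf$.

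The step I expect to be the main obstacle is precisely this last passage from $\limsup$ to $\liminf$ on the right-hand side: the left side is a genuine $\limsup$ over the (possibly badly behaved) sequence $(n_k)$, while the right side must be pinned down as a $\liminf$ over the free parameter $q$, and there is no single measure to appeal to for the usual $\lim = \inf$ statement for subadditive sequences. The resolution is to exploit that the bound holds for all $q$ at once and that $q \mapsto \frac{1}{q} H_{\mu_k}(\A_\phi^{(q)})$ is itself (Fekete-)monotone after passing to an arithmetic subsequence of $q$'s, so one first fixes an integer $q_0 > 1$, then chooses a further subsequence of $k$ along which $H_{\mu_k}(\A_\phi^{(q_0)})$ converges to $\liminf_k H_{\mu_k}(\A_\phi^{(q_0)})$, and observes the left-hand $\limsup$ is unchanged by passing to that subsequence while the right-hand bound now involves only $q_0$ and the $\liminf$; since $q_0 > 1$ was arbitrary this yields exactly the claimed inequality. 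The rest is routine manipulation of the standard entropy inequalities (subadditivity under $\bigvee$, monotonicity under refinement, and the crude bound $H_\nu(\A_\phi^{(q)}) \le q \log \#\A$), none of which depends on $k$, which is what makes the uniformity work.
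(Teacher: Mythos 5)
The first half of your argument---subadditivity of $n \mapsto H_{\mu_k}(\A_\phi^{(n)})$ via $\phi$-invariance, the division $n_k = a_k q + b_k$, and the uniform remainder bound $H_{\mu_k}(\A_\phi^{(b_k)}) \le q \log \card{\A}$ (valid for $\A$ finite, which is the case in the application)---is exactly the standard computation behind Knieper's argument, and it correctly gives, for every $k$,
\[
\frac{H_{\mu_k}(\A_\phi^{(n_k)})}{n_k} \le \frac{H_{\mu_k}(\A_\phi^{(q)})}{q} + \frac{q \log \card{\A}}{n_k},
\]
hence $\limsup_k$ of the left side is at most $\limsup_k \frac{1}{q}H_{\mu_k}(\A_\phi^{(q)})$. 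The gap is the final passage from $\limsup_k$ to $\liminf_k$ on the right, which you yourself flag as the main obstacle: the proposed fix---pass to a subsequence of $k$ realizing the right-hand $\liminf$ and assert that ``the left-hand $\limsup$ is unchanged''---fails, because a $\limsup$ can drop strictly along a subsequence; and the bootstrap via multiples $q' = qr$ only compares entropies of the \emph{same} measure $\mu_k$ at different scales, so it gives no relation between different values of $k$, which is exactly where the discrepancy between $\liminf_k$ and $\limsup_k$ lives.

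Moreover, no repair is possible at this level of generality, because the inequality with independent $\limsup_k$ and $\liminf_k$ is false as literally stated: take $\phi$ the full shift on two symbols, $\A$ the time-zero partition, $n_k = k$, and let $\mu_k$ alternate between the Bernoulli $(1/2,1/2)$ measure and the point mass at the fixed point $000\cdots$. Then the left side equals $\log 2$ while the right side equals $0$ for every $q$. What is true---and all that is actually used---is your displayed per-$k$ inequality (equivalently, the conclusion with $\limsup_k$ on the right): in the proof of Proposition~\ref{Knieper equidistribution} the measures $\nu_k$ converge weak* to $\nu$ and the atoms of $\A$ have $\nu$-null boundaries, so $H_{\nu_k}(\A_\phi^{(q)}) \to H_{\nu}(\A_\phi^{(q)})$ and the $\liminf_k$ coincides with the $\limsup_k$; under any such hypothesis forcing convergence of $H_{\mu_k}(\A_\phi^{(q)})$ in $k$, your first inequality already finishes the proof. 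So the subadditivity half of your proposal is correct and is the intended argument, but the $\limsup$-to-$\liminf$ step is a genuine gap, and the lemma should be read (or restated) with $\limsup_k$ on the right-hand side or with the extra convergence hypothesis supplied by its application.
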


Write $\injrad(\modG{X})$ for the injectivity radius of $\modG{X}$.

\begin{lemma}					\label{Knieper separated}
Let $\G$ be a group acting freely geometrically on a proper, geodesically complete $\CAT(0)$ space $X$ with rank one axis.
Let $t_0 > 0$ and let $P \subset \ConjG(t_0 - \alpha, t_0)$.
If $\alpha < \frac{2}{3} \injrad(\modG{X})$ then $\pr(\axis(P))$ is $(\ceil{t_0},\alpha)$-separated for any choice of axis $\axis$.
\end{lemma}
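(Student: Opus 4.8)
The approach is a proof by contradiction: assuming two of the periodic orbits fail to be $(\ceil{t_0},\alpha)$-separated, I lift the closeness to $SX$ and use freeness to force the orbits to coincide. Suppose $a\ne b$ are points of $\pr(\axis(P))$ with $d_{\modG{SX}}\big(g^s_\G a,\,g^s_\G b\big)\le\alpha$ for every $s\in[0,\ceil{t_0}]$; write $a=\pr(\axis[\g_1])$ and $b=\pr(\axis[\g_2])$ with $[\g_1],[\g_2]\in P$. Because $\axis$ is a function on $\Conj(\G)$, $a\ne b$ forces $[\g_1]\ne[\g_2]$. Put $u_1=\axis[\g_1]$, which is an axis of some $\g_1'\in[\g_1]$, so $\g_1'u_1=g^{\ell_1}u_1$ with $\ell_1:=\length{\g_1}\in[t_0-\alpha,t_0]$. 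Since $\alpha<\injrad(\modG X)\le\injrad(\modG{SX})$ — the footpoint map $\pi\colon SX\to X$ is $1$-Lipschitz and $\G$-equivariant, so every nontrivial $\g\in\G$ moves each $v\in SX$ by at least its $X$-translation length, which is $\ge 2\injrad(\modG X)$ — a covering-space argument gives a continuous family of lifts $w(s)\in SX$ of $g^s_\G b$ with $d_{SX}(g^s u_1,w(s))\le\alpha$; proper discontinuity then forces $w(s)=g^s u_2$ for a single lift $u_2=\phi\,\axis[\g_2]$ ($\phi\in\G$), an axis of some $\g_2'\in[\g_2]$ with $\g_2'u_2=g^{\ell_2}u_2$ and $\ell_2:=\length{\g_2}\in[t_0-\alpha,t_0]$.

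I would then combine three estimates, all available since $0$ and $\ell_1$ lie in $[0,\ceil{t_0}]$ (using $\ell_1\le t_0\le\ceil{t_0}$): from $d_{SX}(u_1,u_2)\le\alpha$ and the fact that $\g_1'$ acts isometrically, $d_{SX}(g^{\ell_1}u_1,\g_1'u_2)=d_{SX}(\g_1'u_1,\g_1'u_2)\le\alpha$; with $d_{SX}(g^{\ell_1}u_1,g^{\ell_1}u_2)\le\alpha$ this gives $d_{SX}(\g_1'u_2,g^{\ell_1}u_2)\le 2\alpha$; and since $d_{SX}(g^{\ell_1}u_2,g^{\ell_2}u_2)=\abs{\ell_1-\ell_2}\le\alpha$ and $g^{\ell_2}u_2=\g_2'u_2$, the triangle inequality and $\G$-invariance of $d_{SX}$ yield
\[
d_{SX}\big(u_2,\,(\g_1')^{-1}\g_2'\,u_2\big)=d_{SX}(\g_1'u_2,\,\g_2'u_2)\le 3\alpha.
\]
As $3\alpha<2\injrad(\modG X)$ is strictly less than the minimal displacement of every nontrivial element of $\G$ on $SX$, freeness gives $(\g_1')^{-1}\g_2'=\id$, hence $[\g_1]=[\g_1']=[\g_2']=[\g_2]$, contradicting $[\g_1]\ne[\g_2]$. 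Therefore $\pr(\axis(P))$ is $(\ceil{t_0},\alpha)$-separated for any choice of axis.

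The triangle-inequality chain is routine; the delicate points, which I expect to be the main obstacle to present cleanly, are: (i) choosing the conjugates $\g_1',\g_2'$ and lifts $u_1,u_2$ compatibly, so each geodesic simultaneously satisfies its translation identity $\g_i'u_i=g^{\ell_i}u_i$ and is $\alpha$-close to the other along the flow on all of $[0,\ceil{t_0}]$; (ii) checking that the two times actually used, $0$ and $\ell_1$, lie in that interval; and (iii) recording the two elementary facts $\injrad(\modG{SX})\ge\injrad(\modG X)$ and ``translation lengths of nontrivial elements are $\ge 2\injrad(\modG X)$'' for the free geometric action. The hypothesis $\alpha<\frac{2}{3}\injrad(\modG X)$ is exactly calibrated so that the accumulated error $3\alpha$ undercuts $2\injrad(\modG X)$ — which is the source of that constant.
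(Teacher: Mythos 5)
Your core strategy is the same as the paper's: pick compatible conjugates and lifts, use the two times $0$ and $\ell_1$ to accumulate a bound $3\alpha$ on the displacement of $(\g_1')^{-1}\g_2'$, and contradict $3\alpha < 2\injrad(\modG{X})$ via freeness; the calibration of the constant $\tfrac23$ is identical. The middle step differs: you stay in $SX$ and run a lift-continuation argument, whereas the paper measures footpoint distances in $X$, arranges $d(v,w)=d(\bar v,\bar w)$ at time $0$, and uses convexity of $t\mapsto d(v(t),w(t))$. Your route additionally leans on properties of the (never specified) metric on $SX$ --- $1$-Lipschitzness of the footpoint map and $d_{SX}(g^a v,g^b v)\le\abs{a-b}$ --- which do hold for the standard integral metric but should be recorded; the paper's choice to work with footpoints in $X$ sidesteps this.

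The one substantive divergence is the set of times over which you negate separation. You assume $d(g^s_\G a, g^s_\G b)\le\alpha$ for \emph{all real} $s\in[0,\ceil{t_0}]$, so you prove the continuous-time (flow) version of $(\ceil{t_0},\alpha)$-separatedness. The paper assumes closeness only at the integers $n=0,1,\dotsc,\ceil{t_0}$ and derives the contradiction from that, and this stronger, time-one-map version is what \propref{Knieper equidistribution} actually uses: lying in one cell of $\A_\phi^{(n_k)}$ only controls distances at integer iterates of $\phi$. Your lift-continuation step genuinely needs the continuum of closeness: with closeness known only at integer times, the quotient distance between consecutive integers is only bounded by $\alpha+1$, so uniqueness of the nearby lift --- the engine of your open-and-closed continuation --- can fail once $\injrad(\modG{X})\le\alpha+1$, and the argument does not adapt verbatim. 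To recover the integer-time statement, follow the paper's middle step instead: pass to footpoints, fix lifts realizing the quotient distance at time $0$, obtain $d(v(n),w(n))\le\alpha$ at the integer times (this is where $\alpha<\injrad(\modG{X})$ enters), and use convexity of $t\mapsto d(v(t),w(t))$ to control all of $[0,t_0]$; after that your $3\alpha$ computation at the times $0$ and $\ell_1$ goes through unchanged.
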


\begin{proof}
Let $\axis$ be a choice of axis, and let $0 < \alpha < \frac{2}{3} \injrad(\modG{X})$.
Let $\g_1, \g_2 \in \G$ represent distinct conjugacy classes $[\g_1], [\g_2] \in P$.
Let $v = \axis[\g_1]$ and $w = \axis[\g_2]$, and write $\bar v = \pr v$ and $\bar w = \pr w$.
We may assume, replacing $w$ by $\g w$ and $\g_2$ by $\g \g_2 \g^{-1}$ (for some $\g \in \G$) if necessary, that $d(\bar v, \bar w) = d(v,w)$.

Suppose, by way of contradiction, $d(g_\G^n \bar v, g_\G^n \bar w) \le \alpha$ for all $n = 0, 1, 2, \dotsc, \ceil{t_0}$.
Since $\alpha < \injrad(\modG{X})$, we find $d(v(n), w(n)) = d(\bar v(n), \bar w(n)) \le \alpha$ for all such $n$.
Thus $d(v(t), w(t)) \le \alpha$ for all $t \in [0,t_0]$ by convexity.
Find $t_1, t_2 \in [t_0 - \alpha, t_0]$ such that $\g_1 v = g^{t_1} v$ and $\g_2 w = g^{t_2} w$.
Then
\[d(\g_2^{-1} \g_1 v(0), w(0))
= d(\g_2^{-1} v(t_1), \g_2^{-1} w(t_2))
= d(v(t_1), w(t_2))
\le 2\alpha.\]
Hence $d(\g_2^{-1} \g_1 v(0), v(0)) \le 3\alpha < 2\injrad(\modG{X})$, which is only possible if $\g_2^{-1} \g_1$ is trivial.
This contradicts our hypothesis that $[\g_1]$ and $[\g_2]$ are distinct.
Therefore, there must be some $n \in \set{0, 1, 2, \dotsc, \ceil{t_0}}$ such that $d(g_\G^n \bar v, g_\G^n \bar w) > \alpha$, and thus we see that $\pr(\axis(P))$ is $(\ceil{t_0},\alpha)$-separated.
\end{proof}

\begin{definition}
Let $P \subset \ConjG$ be finite.
Call a $g^t$-invariant probability measure $\nu$ on $\modG{SX}$ \defn{equal-weighted} along $\axis(P)$ if $\nu$ gives measure $\frac{1}{\card{P}}$ to the orbit of $\pr(\axis[\g])$ for each $[\g] \in P$, where $\pr \colon SX \to \modG{SX}$ is the canonical projection map.
\end{definition}

\begin{proposition}				\label{Knieper equidistribution}
Let $\G$ be a group acting freely geometrically on a proper, geodesically complete $\CAT(0)$ space $X$ with rank one axis.
Let $(\nu_k)$ be a sequence of $g^t$-invariant probability measures on $\modG{SX}$.
Assume there exists $\epsilon_0$ such that $0 < \epsilon_0 < \frac{2}{3} \injrad(\modG{X})$ and each $\nu_k$ is equal-weighted along $\axis(P_k)$ for some choice of axis $\axis$ and subset $P_k \subset \ConjG(t_k - \epsilon_0, t_k)$, where $t_k \to \infty$ as $k \to \infty$.
If
\[\lim_{k \to \infty} \frac{\log \card{P_k}}{t_k} = h\]
then $\nu_k \to m_\G$ weakly.
\end{proposition}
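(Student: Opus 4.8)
The plan is to adapt Knieper's proof of the corresponding equidistribution statement for rank one manifolds. Since $\G$ acts cocompactly, $\modG{SX}$ is compact, so the set of Borel probability measures on it is weak-$*$ compact; it therefore suffices to show that every weak-$*$ subsequential limit $\nu$ of the sequence $(\nu_k)$ equals $m_\G$. Fix such a subsequence (not relabelled) with $\nu_k \to \nu$. Then $\nu$ is $g^t$-invariant, so by \thmref{uniqueness of maximal entropy} — according to which $m_\G$ is the unique $g_\G^1$-invariant Borel probability measure of maximal entropy $h$, and $h$ is the topological entropy of $g_\G^1$ — we have $h_\nu(g_\G^1) \le h$, and it is enough to prove the reverse inequality $h_\nu(g_\G^1) \ge h$: for then $h_\nu(g_\G^1) = h$, hence $\nu = m_\G$, and since this holds for every subsequential limit, $\nu_k \to m_\G$ weakly.

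To bound $h_\nu(g_\G^1)$ from below, choose a finite Borel partition $\A$ of $\modG{SX}$ with $\diam(\A) < \epsilon_0$ and with $\nu(\partial\A) = 0$ and $\nu_k(\partial\A) = 0$ for every $k$ — such $\A$ exists since these are countably many null conditions, satisfied by a partition built from small balls with generically chosen radii. Set $\A^{(n)} = \bigvee_{i=0}^{n-1} g_\G^{-i}\A$; then $\nu(\partial\A^{(n)}) = 0$ and $\nu_k(\partial\A^{(n)}) = 0$ for all $n$, because $\partial\A^{(n)}$ is contained in $\bigcup_{i=0}^{n-1} g_\G^{-i}\partial\A$ and these measures are $g_\G^1$-invariant. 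Since metric entropy is the supremum over finite partitions of $h_\nu(g_\G^1,\cdot)$ and $h_\nu(g_\G^1,\A) = \inf_{q\ge1}\frac1q H_\nu(\A^{(q)})$, we have $h_\nu(g_\G^1) \ge \inf_{q\ge1}\frac1q H_\nu(\A^{(q)})$. For each fixed $q$, weak convergence and $\nu(\partial\A^{(q)}) = 0$ give $H_{\nu_k}(\A^{(q)}) \to H_\nu(\A^{(q)})$. Now apply \lemref{Knieper entropy variation} with $\phi = g_\G^1$, the invariant measures $\nu_k$, the partition $\A$, and the sequence $n_k = \ceil{t_k}+1 \to \infty$: for every integer $q > 1$,
\[\limsup_{k\to\infty}\frac{H_{\nu_k}(\A^{(n_k)})}{n_k} \le \liminf_{k\to\infty}\frac{H_{\nu_k}(\A^{(q)})}{q} = \frac1q H_\nu(\A^{(q)}),\]
so taking the infimum over $q$ yields $\limsup_k \frac{1}{n_k}H_{\nu_k}(\A^{(n_k)}) \le h_\nu(g_\G^1)$. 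Hence it suffices to prove $\limsup_k \frac{1}{n_k} H_{\nu_k}(\A^{(n_k)}) \ge h$.

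The key point is the lower bound $H_{\nu_k}(\A^{(n_k)}) \ge \log\card{P_k} - \log 2$; since $\frac{\log\card{P_k}}{t_k} \to h$ and $n_k = \ceil{t_k}+1$, this gives $\frac{1}{n_k}H_{\nu_k}(\A^{(n_k)}) \to h$ and completes the proof. For $\nu_k$-a.e.\ $w$ the ``$n_k$-itinerary'' $(A_0,\dots,A_{n_k-1})$, defined by $g_\G^i w \in A_i \in \A$, is well defined (the bad set $\bigcup_i g_\G^{-i}\partial\A$ is $\nu_k$-null), and it determines the atom of $\A^{(n_k)}$ containing $w$. Suppose $w_1,w_2$ lie on orbits $\pr\axis[\g_1],\pr\axis[\g_2]$ with $[\g_1],[\g_2]\in P_k$ and share an $n_k$-itinerary; then $d(g_\G^i w_1, g_\G^i w_2) \le \diam(\A) < \epsilon_0$ for $i=0,\dots,\ceil{t_k}$. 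Running the argument of \lemref{Knieper separated} with $w_1,w_2$ in place of $\pr\axis[\g_1],\pr\axis[\g_2]$ — the only new feature being the bookkeeping of the time-shifts relating each $w_j$ to its axis, which does not change the estimates since $\epsilon_0 < \frac23\injrad(\modG{X})$ and all translation lengths lie in the window $[t_k-\epsilon_0,t_k]$ of width $<\injrad(\modG{X})$ — forces $[\g_1]=[\g_2]$, hence $\pr\axis[\g_1] = \pr\axis[\g_2]$ (for $t_k$ large, no translation length in $(t_k-\epsilon_0,t_k]$ being twice another). Therefore $\A^{(n_k)}$ refines, modulo $\nu_k$-null sets, the partition $\mathcal B_k$ of $\modG{SX}$ whose nontrivial atoms are the orbits $\pr\axis[\g]$, $[\g]\in P_k$; as $\nu_k$ assigns total mass $\card{P_k}^{-1}$ to each such $[\g]$ and at most two classes of $P_k$ can share an orbit, $H_{\nu_k}(\A^{(n_k)}) \ge H_{\nu_k}(\mathcal B_k) \ge \log\card{P_k} - \log 2$.

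I expect the main obstacle to be this last lower bound: specifically, promoting the separation in \lemref{Knieper separated} from the distinguished axis points to arbitrary points of the periodic orbits, which is what lets the atom of $\A^{(n_k)}$ recover the orbit of $w$ and hence forces $H_{\nu_k}(\A^{(n_k)})$ to be at least (essentially) $\log\card{P_k}$. The remaining ingredients — compactness of $\modG{SX}$, uniqueness of the measure of maximal entropy, and the formal partition-entropy manipulations encapsulated in \lemref{Knieper entropy variation} — are routine.
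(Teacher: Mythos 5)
Your proposal is correct and follows essentially the same route as the paper: weak* compactness plus uniqueness of the measure of maximal entropy, a partition of diameter less than $\epsilon_0$ with null boundary, the separation coming from \lemref{Knieper separated} to force $H_{\nu_k}(\A^{(n_k)}) \ge \log \card{P_k}$ (your extra $-\log 2$ is harmless and in fact unnecessary, since distinct classes in $P_k$ cannot share an oriented orbit when the window has width less than the minimal translation length), and \lemref{Knieper entropy variation} to pass to fixed $q$ and conclude $h_\nu \ge h$. Your additional step of rerunning the argument of \lemref{Knieper separated} with arbitrary points $w_1, w_2$ on the two periodic orbits (rather than the distinguished points $\pr(\axis[\g_i])$) is precisely the implicit content of the paper's assertion that each atom of $\A_\phi^{(n_k)}$ touches at most one orbit of $\pr(\axis(P_k))$, and it does go through verbatim, so this is the same proof written with slightly more care at that point.
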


\begin{proof}
By compactness of the space of $g^t$-invariant Borel probability measures on $\modG{SX}$ under the weak* topology, every subsequence $(\nu_{k_j})$ has at least one weak* accumulation point $\nu$ of $\set{\nu_k}$.
By uniqueness of the measure of maximal entropy, it suffice to prove that every such $\nu$ is a measure of maximal entropy for $g_\G^t$.

Let $\nu$ be a weak* accumulation point of $\set{\nu_k}$; passing to a subsequence if necessary, we may assume $\nu_k \to \nu$ in the weak* topology.
Fix a measurable partition $\A = \set{A_1, \dotsc, A_m}$ of $\modG{SX}$ such that $\diam \A \le \delta < \epsilon_0$ and $\nu(\bd A_i) = 0$.
Let $n_k = \ceil{t_k}$.
Since the closed geodesics in $\pr(\axis(P_k))$ are $(n_k,\delta)$-separated by \lemref{Knieper separated}, each $\varalpha \in \A_\phi^{(n_k)}$ touches at most one orbit from $\pr(\axis(P_k))$, and thus $\nu_k(\varalpha) \le \frac{1}{\card{P_k}}$.
Therefore the entropy
\begin{align*}
H_{\nu_k}(\A_\phi^{(n_k)})
= - \sum_{\varalpha \in \A_\phi^{(n_k)}} \nu_k(\varalpha) \log \nu_k(\varalpha)
\ge \sum_{\varalpha \in \A_\phi^{(n_k)}} \nu_k(\varalpha) \log \card{P_k}
= \log \card{P_k}.
\end{align*}
Since $\nu(\bd A_i) = 0$ for all $A_i \in \A$, we have
$H_{\nu_k}(\A_\phi^{(q)}) \to H_{\nu}(\A_\phi^{(q)})$
and thus
\begin{align*}
h_{\nu}(\phi)
\ge h_{\nu}(\phi, \A)
= \lim_{q \to \infty} \frac{H_{\nu} (\A_\phi^{(q)})}{q}
= \lim_{q \to \infty} \lim_{k \to \infty} \frac{H_{\nu_k} (\A_\phi^{(q)})}{q}.
\end{align*}
By \lemref{Knieper entropy variation} and the inequality
$H_{\nu_k}(\A_\phi^{(n_k)}) \ge \log \card{P_k}$
from above,
\begin{align*}
\lim_{q \to \infty} \lim_{k \to \infty} \frac{H_{\nu_k} (\A_\phi^{(q)})}{q}
\ge \fakelim_{k \to \infty} \frac{H_{\nu_k} (\A_\phi^{(n_k)})}{n_k}
\ge \lim_{k \to \infty} \frac{\log \card{P_k}}{t_k}
= h.
\end{align*}
Therefore $h_{\nu}(\phi) \ge h$, which shows that $\nu$ is a measure of maximal entropy.
\end{proof}

\begin{corollary}				\label{slow singular growth 2}
Let $\G$ be a group acting freely geometrically on a proper, geodesically complete $\CAT(0)$ space $X$ with rank one axis.
Then
\begin{equation*}
\limsup_{t \to \infty} \frac{\log \CGinset{SX \setminus \Reg}(t-\epsilon_0,t)}{t}
= 0
\end{equation*}
for all $\epsilon_0 \in (0, \frac{2}{3} \injrad(\modG{SX}))$.
In particular,
\begin{equation*}
\limsup_{t \to \infty} \frac{\log \CGinset{SX \setminus \Reg}(0,t)}{t}
= 0.
\end{equation*}
\end{corollary}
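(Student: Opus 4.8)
The plan is to argue by contradiction using \propref{Knieper equidistribution}, together with three elementary facts about the singular set $SX\setminus\Reg$ of higher‑rank geodesics. \emph{(i) $SX\setminus\Reg$ is closed and $\G$-invariant:} a geodesic $v$ is rank one iff $\dT(v^-,v^+)>\pi$, and $\dT$ is $\G$-equivariant and lower semicontinuous for the cone topology, so $SX\setminus\Reg=\emap^{-1}\{\dT\le\pi\}$ is closed; hence $\pr(SX\setminus\Reg)$ is closed in $\modG{SX}$. \emph{(ii) $m_\G(\pr(SX\setminus\Reg))=0$:} the Bowen--Margulis measure is carried by $\Zerowidth$, and no zero-width geodesic is higher rank, since a half-flat through $v$ contains a distinct parallel geodesic lying in the same cross section of $v$, forcing $\width(v)>0$. \emph{(iii)} If $\g\in\G$ has an axis in $SX\setminus\Reg$ (equivalently $[\g]\in\ConjGinset{SX\setminus\Reg}(\G)$, since $SX\setminus\Reg$ is $\G$-invariant), we may take $\axis[\g]$ to be an axis of $\g$ itself lying in $SX\setminus\Reg$; fixing once and for all such a choice of axis $\axis$, every orbit $g^{\R}(\axis[\g])$ with $[\g]\in\ConjGinset{SX\setminus\Reg}(\G)$ lies in $SX\setminus\Reg$ (higher rank is flow-invariant), hence projects into $\pr(SX\setminus\Reg)$.

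Now suppose, for contradiction, that $c:=\limsup_{t\to\infty}\frac{\log\CGinset{SX\setminus\Reg}(t-\epsilon_0,t)}{t}>0$. Pick $t_k\to\infty$ with $\frac{\log\CGinset{SX\setminus\Reg}(t_k-\epsilon_0,t_k)}{t_k}\to c$, set $P_k=\ConjGinset{SX\setminus\Reg}(t_k-\epsilon_0,t_k)$ (finite, since $\modG{SX}$ is compact, and eventually nonempty), and let $\nu_k$ be the $g_\G^t$-invariant probability measure equal-weighted along $\axis(P_k)$. Since $\epsilon_0<\frac23\injrad(\modG{SX})$, \lemref{Knieper separated} is available for each $P_k$, so the argument in the proof of \propref{Knieper equidistribution} --- which uses only \lemref{Knieper separated} and \lemref{Knieper entropy variation} and nowhere uses that the growth rate equals $h$ --- shows that every weak$^*$ accumulation point $\nu$ of $(\nu_k)$ has metric entropy $h_\nu\ge\lim_k\frac{\log\card{P_k}}{t_k}=c$. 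By \emph{(iii)} each $\nu_k$ is carried by the closed set $\pr(SX\setminus\Reg)$, so $\nu(\pr(SX\setminus\Reg))\ge\limsup_k\nu_k(\pr(SX\setminus\Reg))=1$. Thus $\nu$ is a $g_\G^t$-invariant probability measure carried by $\pr(SX\setminus\Reg)$ with $h_\nu\ge c>0$.

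The last step is to contradict the existence of such a $\nu$, and I expect this to be the main obstacle: the preliminaries reproduced above only give the weaker statement that singular classes grow at a sub-maximal rate (if that rate were $h$, then $\nu$ would be \emph{the} measure of maximal entropy by \thmref{uniqueness of maximal entropy}, contradicting $\nu(\pr(SX\setminus\Reg))=1\ne 0=m_\G(\pr(SX\setminus\Reg))$). To reach rate exactly $0$ one needs the structural input that the geodesic flow restricted to $\pr(SX\setminus\Reg)$ carries no invariant probability measure of positive entropy --- equivalently, that $\pr(SX\setminus\Reg)$ has zero topological entropy --- which reflects the rigidity of the higher-rank locus in a geodesically complete $\CAT(0)$ space admitting a rank-one axis with $\G$ cocompact (the higher-rank geodesics are confined to finitely many $\G$-orbits of affine flat subcomplexes, on which the geodesic flow has no exponential orbit complexity). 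Granting this, $h_\nu=0$, contradicting $h_\nu\ge c>0$; hence $c\le 0$, which is the window statement. The $(0,t)$ statement then follows by covering $[0,t]$ with $\lceil t/\epsilon_0\rceil$ windows of the form $(s-\epsilon_0,s)$: for every $\beta>0$ the window bound gives $\CGinset{SX\setminus\Reg}(s-\epsilon_0,s)\le e^{\beta s}$ for $s$ beyond some $T_\beta$, while $\CGinset{SX\setminus\Reg}(0,T_\beta)$ is finite, so $\CGinset{SX\setminus\Reg}(0,t)\le\CGinset{SX\setminus\Reg}(0,T_\beta)+(t/\epsilon_0)\,e^{\beta t}$ and $\limsup_{t\to\infty}\frac{\log\CGinset{SX\setminus\Reg}(0,t)}{t}\le\beta$; letting $\beta\to 0$ finishes the proof.
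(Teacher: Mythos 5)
Your setup reproduces the paper's route exactly: equal-weighted invariant measures $\nu_k$ along axes of $P_k \subset \ConjGinset{SX\setminus\Reg}(t_k-\epsilon_0,t_k)$, the separation statement \lemref{Knieper separated} together with \lemref{Knieper entropy variation} to bound the entropy of any weak* accumulation point from below, the closedness and $\G$-invariance of $SX\setminus\Reg$, and the fact that $m_\G$ charges only zero-width (hence rank one) geodesics; those steps, including your choice of axes inside $SX\setminus\Reg$ and the covering argument deducing the $(0,t)$ statement from the window statement, are sound. The genuine gap is the one you flagged yourself: this machinery yields a contradiction only when $\lim_k \log\card{P_k}/t_k = h$, i.e.\ it shows the singular growth rate cannot be \emph{maximal}, whereas the statement asserts it is $0$. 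To bridge from $c>0$ to a contradiction you invoke, without proof, that the geodesic flow restricted to $\pr(SX\setminus\Reg)$ carries no invariant probability measure of positive entropy (``finitely many flat subcomplexes, no exponential complexity''). Nothing in the paper supplies this, and it is not true in the stated generality: for a nonpositively curved graph manifold (which satisfies every hypothesis of the corollary) the closed geodesics lying in a Seifert piece are higher rank and their number grows exponentially, so the restricted flow has positive entropy and the singular conjugacy classes have a positive exponential growth rate. So the proposed patch cannot close the gap, and what your first two paragraphs actually prove is only that the singular rate is strictly below $h$.

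For comparison, the paper's own proof is precisely the contradiction argument of your first two paragraphs run at rate $h$: from ``suppose not'' it passes immediately to sequences $P_k$ with $\lim_k \log\card{P_k}/t_k = h$ and then applies \propref{Knieper equidistribution} to contradict $m_\G(\pr(SX\setminus\Reg))=0$. That passage is exactly the step you identified as the main obstacle: the negation of ``$\limsup = 0$'' gives some rate $c>0$, not $c=h$, so the argument as written establishes only that the singular growth rate is not maximal (which is what the remark in the introduction claims --- ``a strictly smaller exponential rate''), not that it is zero. In short, your diagnosis of where the difficulty lies is accurate and your partial argument matches the paper's; but your proposal does not prove the corollary as stated, and the structural input you suggest granting is both unproved and false in general, so it is not the missing idea.
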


\begin{proof}
Suppose not.
Then we have $\epsilon_0 \in (0, \frac{2}{3} \injrad(\modG{SX}))$ and $t_k \to \infty$ such that the sets $P_k = \ConjGpinset{SX \setminus \Reg}(t_k - \epsilon_0, t)$ satisfy $\lim_{k \to \infty} \frac{\log \card{P_k}}{t_k} = h$.
Hence by \propref{Knieper equidistribution}, $\lambda_{\axis,t_k,\alpha}^{{\rm prime}, SX \setminus \Reg} \to m_\G$ weakly.
But $SX \setminus \Reg$ is closed in $SX$, so $m_\G$ must be supported on $\modG{(SX \setminus \Reg)}$, which contradicts the fact that $m_\G$ is supported on $\Reg$.
Therefore, the statement of the corollary must hold.
\end{proof}

\begin{theorem}					\label{equidistribution on U}
Let $\G$ be a group acting freely geometrically on a proper, geodesically complete $\CAT(0)$ space $X$ with rank one axis.
Let $U \subseteq SX$ contain a nonempty open set.
For any fixed $\alpha$ with $0 < \alpha < \frac{2}{3} \injrad(\modG{X})$ and choice of axis $\axis$, the measures $\lpatainset{U}$ converge weakly to $m_\G$.
\end{theorem}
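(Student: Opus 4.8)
The plan is to present $\lpatainset{U}$ as a net of equal-weighted measures and feed it into Knieper's equidistribution result, \propref{Knieper equidistribution}. For each $t>0$ I would set $P_t := \ConjGpinset{U}(\toma) \subseteq \ConjG(\toma)$ and take $\epsilon_0 := \alpha$, which is admissible because $\alpha < \tfrac{2}{3}\injrad(\modG{X})$ by hypothesis. Straight from its definition, $\lpatainset{U} = \frac{1}{\CGpinset{U}(\toma)}\sum_{[\g]\in P_t}\frac{1}{\smallnorm{\laG}}\laG$ is a convex combination, with equal weights $1/\card{P_t}$, of the normalized Lebesgue probability measures on the closed orbits $\pr(g^{\R}\axis[\g])$, $[\g]\in P_t$; these orbits are distinct (an element of $\G$ that is not a proper power generates, in its orientation, the full cyclic group of isometries translating along its axis), so $\lpatainset{U}$ is equal-weighted along $\axis(P_t)$, exactly as \propref{Knieper equidistribution} requires. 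Everything therefore reduces to verifying the growth hypothesis $\lim_{t\to\infty}\tfrac{1}{t}\log\card{P_t} = h$.

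For the upper bound, cocompactness furnishes a compact $K\subseteq SX$ with $SX=\G K$, so $\card{P_t} \le \CGinset{U}(0,t) \le \CGinset{K}(0,t)$, and \lemref{Patterson upper bound} gives $\CGinset{K}(0,t) \le e^{h't}$ for every $h'>h$ and all large $t$; hence $\limsup_{t\to\infty}\tfrac{1}{t}\log\card{P_t}\le h$. For the lower bound, \corref{early lower bound for Conj asymmetric} provides $C>0$ and $t_0$ with $\CGinset{U}(\toma)\ge C\,e^{ht}/t$ for $t\ge t_0$, while \lemref{most are prime}, applicable since $U\subseteq SX=\G K$, gives $\CGpinset{U}(\toma)/\CGinset{U}(\toma)\to 1$; combining these, $\card{P_t}=\CGpinset{U}(\toma)\ge \tfrac{C}{2}\,e^{ht}/t$ for all large $t$, so $\liminf_{t\to\infty}\tfrac{1}{t}\log\card{P_t}\ge h$. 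Together these give $\lim_{t\to\infty}\tfrac{1}{t}\log\card{P_t}=h$, as needed.

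Finally I would pass from sequences, the form in which \propref{Knieper equidistribution} is stated, to the continuous parameter. If $\lpatainset{U}$ failed to converge weakly to $m_\G$, there would be a weak* neighborhood $\mc V$ of $m_\G$ and a sequence $t_k\to\infty$ with $\lambda_{\axis,t_k,\alpha}^{{\rm prime},U}\notin\mc V$ for all $k$. But $(\lambda_{\axis,t_k,\alpha}^{{\rm prime},U})$ is a sequence of $g^t$-invariant probability measures, each equal-weighted along $\axis(P_{t_k})$ with $P_{t_k}\subseteq\ConjG(t_k-\alpha,t_k)$ and $\tfrac{1}{t_k}\log\card{P_{t_k}}\to h$, so \propref{Knieper equidistribution} forces $\lambda_{\axis,t_k,\alpha}^{{\rm prime},U}\to m_\G$ weakly, contradicting $\lambda_{\axis,t_k,\alpha}^{{\rm prime},U}\notin\mc V$. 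Hence $\lpatainset{U}\to m_\G$ weakly.

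The real content is the two-sided estimate $\card{P_t}\asymp e^{ht}/t$: its upper half is the elementary Patterson--Sullivan bound, but its lower half rests on the mixing computations of the earlier sections (channelled through \corref{early lower bound for Conj asymmetric}) and on \lemref{most are prime}, so the bulk of the work has already been done upstream and this theorem is mostly an assembly. The one point that still wants a moment's care is checking that the equal-weighting built into the definition of $\lpatainset{U}$ genuinely matches the hypothesis of \propref{Knieper equidistribution}, i.e.\ that distinct classes in $P_t$ give distinct closed orbits in $\modG{SX}$.
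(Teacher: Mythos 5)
Your proof is correct and takes essentially the same route as the paper: both reduce the statement to Proposition~\ref{Knieper equidistribution}, verifying the growth hypothesis $\tfrac{1}{t}\log\card{P_t}\to h$ via Corollary~\ref{early lower bound for Conj asymmetric} (lower bound) and the Patterson--Sullivan bound of Lemma~\ref{Patterson upper bound} (upper bound), and then pass from sequences to the continuous parameter. If anything you are a bit more careful than the paper's terse proof, which writes $P_k = \ConjG(t_k-\alpha,t_k)$ and leaves implicit the reduction to the prime classes with axis in $\G U$ that actually index $\lpatainset{U}$, a point you handle explicitly via Lemma~\ref{most are prime} together with the check that distinct prime classes give distinct closed orbits.
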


\begin{proof}
Let $(t_k)$ be a sequence of positive real numbers such that $t_k \to \infty$.
Let $P_k = \ConjG(t_k - \alpha, t_k)$.
By \corref{early lower bound for Conj asymmetric},
$\lim_{k \to \infty} \frac{\log \card{P_k}}{t_k} = h$,
and thus $\lambda_{\axis,t_k,\alpha}^{{\rm prime}, U} \to m_\G$ weakly by \propref{Knieper equidistribution}.
Since $(t_k)$ was arbitrary, it follows that the measures $\lpatainset{U}$ converge weakly to $m_\G$.
\end{proof}

\section{Using Equidistribution}

From \thmref{equidistribution on U} we obtain the following.

\begin{lemma}					\label{alternate equidistribution on A}
Let $\G$ be a group acting freely geometrically on a proper, geodesically complete $\CAT(0)$ space $X$ with rank one axis.
Let $U \subseteq SX$ contain a nonempty open set.
For any fixed $\alpha$ with $0 < \alpha < \frac{1}{3} \injrad(\modG{X})$ and choice of axis $\axis$, the measures $\aaltlmatainset{U}$ converge weakly to $m_\G$.
\end{lemma}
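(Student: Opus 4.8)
The plan is to deduce the statement from \propref{Knieper equidistribution}, after replacing $\aaltlmatainset{U}$ by the equal-weighted measure $\lmattainset{U}$ --- this has the same index set $\ConjGinset{U}(\tpma)$, but weights each closed orbit by its normalized Lebesgue probability measure. By \lemref{lataU and aaltlataU are close in norm} we have $\norm{\aaltlmatainset{U} - \lmattainset{U}} \to 0$ as $t \to \infty$ (its hypothesis $\Aed \subseteq U$ is harmless: since $U$ contains a nonempty open set, \propref{zero-width geodesics are dense} and \lemref{Ballmann's lemma} produce a zero-width $v_0$ and small $\epsilon,\delta$ with $\Aset(v_0,\epsilon,\delta) \subseteq U$, exactly as at the start of the proof of \lemref{early lower bound for Conj}). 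Since $\aaltlmatainset{U}$ and $\lmattainset{U}$ have masses tending to $1$ and their total-variation difference tends to $0$, weak convergence of $\lmattainset{U}$ transfers to $\aaltlmatainset{U}$; so it suffices to prove $\lmattainset{U} \to m_\G$ weakly.

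For that, fix a sequence $t_k \to \infty$, put $s_k = t_k + \alpha$, and let $P_k = \ConjGinset{U}(t_k - \alpha, t_k + \alpha) = \ConjGinset{U}(s_k - 2\alpha, s_k) \subseteq \ConjG(s_k - 2\alpha, s_k)$. The measure $\nu_k := \lmattainset{U}$ (evaluated at $t = t_k$) is a $g^t$-invariant probability measure, and it is equal-weighted along $\axis(P_k)$: distinct axial conjugacy classes with translation lengths in $[t_k - \alpha, t_k + \alpha]$ project to distinct closed orbits in $\modG{SX}$, since otherwise an appropriate product of a conjugate of one with the inverse of the other would translate a common axis by an amount of absolute value at most $2\alpha < 2\injrad(\modG{X})$ and hence be trivial, just as in \lemref{Knieper separated}. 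The length window here has width $2\alpha$, and the hypothesis $0 < \alpha < \frac{1}{3}\injrad(\modG{X})$ is exactly what makes $2\alpha < \frac{2}{3}\injrad(\modG{X})$, as \propref{Knieper equidistribution} requires. So all that remains is the growth condition $\lim_{k \to \infty} \frac{\log\card{P_k}}{s_k} = h$, equivalently $\lim_{t \to \infty} \frac1t \log \CGinset{U}(\tpma) = h$.

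The lower bound $\liminf_{t \to \infty} \frac1t \log \CGinset{U}(\tpma) \ge h$ is immediate from \corref{early lower bound for Conj asymmetric}, since $\CGinset{U}(\tpma) \ge \CGinset{U}(\toma) \ge C e^{ht}/t$ for all large $t$. For the upper bound I would use cocompactness: taking a compact $K_0 \subseteq X$ with $\G K_0 = X$ and $K = \pi^{-1}(K_0)$, which is compact and satisfies $\G K = SX$, every axial conjugacy class has an axis in $\G K$, so $\CGinset{U}(0,s) \le \CGinset{K}(0,s)$ for all $s$; then \lemref{Patterson upper bound} gives $\limsup_{s \to \infty} \frac1s \log \CGinset{K}(0,s) \le h$, and since $\CGinset{U}(\tpma) \le \CGinset{U}(0, t+\alpha)$ with $(t+\alpha)/t \to 1$ we get $\limsup_{t \to \infty} \frac1t \log \CGinset{U}(\tpma) \le h$. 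Feeding this into \propref{Knieper equidistribution} yields $\nu_k \to m_\G$ weakly; as $(t_k)$ was arbitrary, $\lmattainset{U} \to m_\G$ weakly, and hence $\aaltlmatainset{U} \to m_\G$ weakly.

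I do not expect a serious obstacle: essentially all the weight of the argument is carried by results already in hand --- the deep lower bound on $\CGinset{U}$ rests on the mixing calculations, the matching upper bound is the Patterson--Sullivan / critical-exponent estimate of \lemref{Patterson upper bound} (this is where cocompactness enters), and Knieper's entropy argument is packaged in \propref{Knieper equidistribution}. The only points requiring care are the bookkeeping that reconciles the indirect definition of $\aaltlmatainset{U}$ with the hypotheses of \propref{Knieper equidistribution}, and the observation that the relevant length window has width $2\alpha$, which accounts for the factor $\frac{1}{3}$ (rather than $\frac{2}{3}$) in the hypothesis on $\alpha$. (Alternatively, one could avoid re-deriving the growth rate by combining \thmref{equidistribution on U} with \lemref{convergence in norm} and writing $\lmattainset{U}$ as a convex combination of $\lmatainset{U}$ at times $t$ and $t+\alpha$, but the route above is self-contained and explains the hypothesis on $\alpha$.)
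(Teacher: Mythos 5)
Your proposal is correct, and it shares with the paper the same first reduction: pass from $\aaltlmatainset{U}$ to the window-$2\alpha$ multiplicity measure $\lmattainset{U}$ via \lemref{lataU and aaltlataU are close in norm}. Where you diverge is in how you equidistribute $\lmattainset{U}$: the paper simply applies \thmref{equidistribution on U} with window parameter $2\alpha$ at the shifted times $t+\alpha$ (this is exactly why the hypothesis is $\alpha < \tfrac13 \injrad(\modG{X})$, so that $2\alpha < \tfrac23 \injrad(\modG{X})$), obtaining weak convergence of the prime measures $\lpattainset{U}$, and then transfers this to $\lmattainset{U}$ by \lemref{convergence in norm}; you instead bypass the prime measures and \thmref{equidistribution on U} entirely and feed $\lmattainset{U}$ directly into \propref{Knieper equidistribution}, with $P_k = \ConjGinset{U}(t_k-\alpha,t_k+\alpha)$, $\epsilon_0 = 2\alpha$, and endpoint $s_k = t_k+\alpha$. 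The trade-off: your route has to verify two hypotheses the paper's route inherits for free from its packaged theorem, namely that $\lmattainset{U}$ really is equal-weighted (your systole argument that distinct classes with lengths in a window of width $2\alpha < 2\injrad(\modG{X})$ yield distinct closed orbits, in the spirit of \lemref{Knieper separated}) and the exact exponential growth rate $\lim \tfrac1{s_k}\log\card{P_k} = h$; in exchange it is more explicit than the paper at a genuine pressure point, since the paper's own proof of \thmref{equidistribution on U} asserts this limit citing only the lower bound \corref{early lower bound for Conj asymmetric}, whereas you supply the matching upper bound via cocompactness ($\G\,\pi^{-1}(K_0) = SX$ with $\pi^{-1}(K_0)$ compact by properness of $\pi$) and \lemref{Patterson upper bound}. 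Both arguments ultimately rest on \propref{Knieper equidistribution}; yours is longer but more self-contained, and your closing parenthetical alternative is essentially the paper's actual proof.
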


\begin{proof}
By \thmref{equidistribution on U}, the measures $\lpattainset{U}$ converge weakly to $m_\G$.
Then by \lemref{convergence in norm} and \lemref{lataU and aaltlataU are close in norm}, the measures $\lmattainset{U}$ and $\aaltlmatainset{U}$ do likewise.
\end{proof}

\begin{lemma}					\label{ConjGU asymptotics}
Let $\G$ be a group acting freely geometrically on a proper, geodesically complete $\CAT(0)$ space $X$ with rank one axis.
Fix a zero-width geodesic $v_0 \in SX$.
Let $\epsilon \in (0, \epsilon_0]$ and $\delta \in (0, \delta_0)$ be small enough that $\epsilon < \frac{1}{3} \injrad(\modG{X})$.
Assume $\delta \in (0, \delta_0)$ is a point of continuity of the nondecreasing function $r \mapsto m(\Aset_r)$.
Then
\begin{equation*}
\frac{1}{\Ced} \le \fakelim_{t \to \infty} \frac{\CGinset{U}(\tpm)}{2\epsilon e^{ht}/t} \le \Ced
\end{equation*}
for all $U$ satisfying $\Aed \subseteq U \subseteq SX$.
\end{lemma}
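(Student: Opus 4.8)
The plan is to read this off from \lemref{Gedt via aaltlmateU} by proving that $\aaltlmateinset{U}(\Aed) \to m(\Aed)$ as $t \to \infty$; equivalently, this is \corref{ConjGU via aaltlmateU} together with the observation that the $t$-dependent factor $m(\Aed)/\aaltlmateinset{U}(\Aed)$ appearing there tends to $1$. Explicitly, \lemref{Gedt via aaltlmateU} yields, for every $U$ with $\Aed \subseteq U \subseteq SX$, the identity
\[
\frac{\CGinset{U}(\tpm)}{2\epsilon e^{ht}/t}
= \frac{\Gedt}{2 e^{ht}\, \aaltlmateinset{U}(\Aed)}
= \frac{\Gedt}{2 e^{ht}\, m(\Aed)} \cdot \frac{m(\Aed)}{\aaltlmateinset{U}(\Aed)},
\]
and $\tfrac1\Ced \le \fakelim_{t \to \infty} \Gedt/(2 e^{ht} m(\Aed)) \le \Ced$ by \propref{good bounds for Gedt} (this is where the hypothesis that $\delta$ is a point of continuity of $r \mapsto m(\Aset_r)$ enters). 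So the whole statement reduces to the claim that $\aaltlmateinset{U}(\Aed) \to m(\Aed)$.

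To prove that claim, I would first note that $\aaltlmateinset{U}$ is $\aaltlmatainset{U}$ with $\alpha = \epsilon$. Since $\epsilon \in (0, \epsilon_0]$ we have $\epsilon > 0$, so $U \supseteq \Aed$ contains a nonempty open set by \lemref{compact neighborhood 4}\itemrefstar{neighborhood condition 2}, while the present hypothesis $\epsilon < \tfrac13 \injrad(\modG{X})$ places $\alpha = \epsilon$ in the admissible range of \lemref{alternate equidistribution on A}; that lemma then gives $\aaltlmateinset{U} \to m_\G$ weakly. Next I would evaluate this weak limit on $\pr \Aed$. Because $\delta$ is a point of continuity of $r \mapsto m(\Aset_r)$, the remark following \lemref{counting I 3} gives $m(\bd \Aed) = 0$; since $\pr$ is open and $\pr \Aed$ is compact, $\bd(\pr \Aed) \subseteq \pr(\bd \Aed)$, and as $\pr$ is injective on $\Aset_{2\epsilon_0,\delta_0} \supseteq \Aed$ (cf.\ the proof of \lemref{EBg disjoint}) we obtain $m_\G(\pr(\bd \Aed)) = m(\bd \Aed) = 0$ and $m_\G(\pr \Aed) = m(\Aed)$. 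Hence $\pr \Aed$ is a continuity set for $m_\G$, and since $\modG{SX}$ is compact (so total mass is preserved), the portmanteau theorem gives $\aaltlmateinset{U}(\Aed) = \aaltlmateinset{U}(\pr \Aed) \to m_\G(\pr \Aed) = m(\Aed)$.

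With this established, the factor $m(\Aed)/\aaltlmateinset{U}(\Aed)$ is eventually positive and tends to $1$ (here we use $m(\Aed) > 0$); multiplying a sequence whose $\fakelim$ lies between $1/\Ced$ and $\Ced$ by a positive sequence converging to $1$ preserves those one-sided bounds, so $\tfrac1\Ced \le \fakelim_{t \to \infty} \CGinset{U}(\tpm)/(2\epsilon e^{ht}/t) \le \Ced$, which is the assertion of the lemma.

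I do not anticipate a genuine obstacle: the real content already sits in \propref{good bounds for Gedt} (from the mixing estimates) and in \lemref{alternate equidistribution on A} (from the Knieper-style equidistribution), both of which are available to us. The two points that need care are (i) checking that the hypothesis $\epsilon < \tfrac13 \injrad(\modG{X})$ is exactly what permits taking $\alpha = \epsilon$ in \lemref{alternate equidistribution on A}, and (ii) the short measure-theoretic step that $\pr \Aed$ is an $m_\G$-continuity set, so that the established weak convergence may be read off on it.
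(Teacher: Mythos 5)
Your argument is correct and is essentially the paper's own proof: the paper likewise applies \lemref{alternate equidistribution on A} (with $\alpha=\epsilon$, using $\epsilon<\tfrac13\injrad(\modG X)$ and the fact that $\pr\Aed$ is an $m_\G$-continuity set) to get $\aaltlmateinset{U}(\pr\Aed)\to m(\Aed)$, and then invokes \corref{ConjGU via aaltlmateU}, which you have merely unpacked into \lemref{Gedt via aaltlmateU} plus \propref{good bounds for Gedt}. Your extra details (injectivity of $\pr$ on $\Aed$, $\bd(\pr\Aed)\subseteq\pr(\bd\Aed)$, the portmanteau step) just make explicit what the paper leaves implicit.
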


\begin{proof}
Since $\Aed$ is a continuity set for $m$ and $\diam \Aed < \injrad(\modG{SX})$, by \lemref{alternate equidistribution on A} we see that
\(\lim_{t \to \infty} \aaltlmatainset{U} (\pr \Aed) = m_\G(\pr \Aed) = m(\Aed)\).
Apply \corref{ConjGU via aaltlmateU}.
\end{proof}

Putting $F(t) = e^{ht} / t$ in \lemref{standard limiting process},
by \lemref{ConjGU asymptotics} we obtain our desired asymptotics for $\CGinset{U}(0,t)$.
But to do so, we need to check the overlaps we get from counting the endpoints of closed intervals are asymptotically small.

\begin{lemma}					\label{ConjGU asymptotics from zero}
Let $\G$ be a group acting freely geometrically on a proper, geodesically complete $\CAT(0)$ space $X$ with rank one axis.
Fix a zero-width geodesic $v_0 \in SX$.
Let $\epsilon, \delta > 0$ be small enough that $\diam \Aed < \injrad(\modG{SX})$ and $\epsilon < \frac{1}{3} \injrad(\modG{X})$.
Assume $\delta$ is chosen so that $\Aed$ is a continuity set for $m$.
Then
\begin{equation*}
\frac{1}{e^{h \epsilon} \Ced} \le \fakelim_{t \to \infty} \frac{\CGinset{U}(0,t)}{e^{ht}/ht} \le e^{h \epsilon} \Ced
\end{equation*}
for all $U \subseteq SX$ such that $\Aed \subseteq U \subseteq SX$.
\end{lemma}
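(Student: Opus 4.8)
The plan is to assemble $\CGinset{U}(0,t)$ out of the block counts $\CGinset{U}(\tpm)$, whose asymptotics are supplied by \lemref{ConjGU asymptotics}, and then to show the double-counting forced by the closed-interval overlaps is asymptotically negligible. Write $n = \floor{\frac{t}{2\epsilon}}$ and $s_k = t - (2k+1)\epsilon$ for $k = 0, 1, \dotsc, n$. The closed intervals $[s_k - \epsilon,\, s_k + \epsilon] = [t - (2k+2)\epsilon,\, t - 2k\epsilon]$ cover $[0,t]$ (because $t - (2n+2)\epsilon < 0$) and meet pairwise only in the single points $t - 2j\epsilon$, $j = 1, \dotsc, n$, each shared by exactly two consecutive blocks. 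Since $\G$ acts freely, no nontrivial element has translation length $\le 0$, so counting the conjugacy classes with an axis in $\G U$ and translation length in each block gives the exact identity
\[
\CGinset{U}(0,t)
\; = \; \sum_{k=0}^{n} \CGinset{U}(s_k - \epsilon,\, s_k + \epsilon)
\; - \; \sum_{j=1}^{n} \CGinset{U}(t - 2j\epsilon,\, t - 2j\epsilon).
\]

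For the main sum, \lemref{ConjGU asymptotics} gives, for each $\alpha > 0$, a $t_0$ with $(\frac{1}{\Ced} - \alpha)\, 2\epsilon\, \frac{e^{hs}}{s} \le \CGinset{U}(s - \epsilon,\, s + \epsilon) \le (\Ced + \alpha)\, 2\epsilon\, \frac{e^{hs}}{s}$ for all $s \ge t_0$. Applying this block-by-block to the terms with $s_k \ge t_0$, and absorbing the bounded (independent of $t$) number of remaining terms into an $O(1)$ error exactly as in the proof of \lemref{standard limiting process} (replacing $\frac{e^{hx}}{x}$ by a positive nondecreasing function agreeing with it for large $x$), sandwiches $\sum_{k=0}^{n} \CGinset{U}(s_k - \epsilon,\, s_k + \epsilon)$ between $\frac{1}{\Ced}(1 - o(1))$ and $\Ced(1 + o(1))$ times $\sum_{k=0}^{n} 2\epsilon\, \frac{e^{hs_k}}{s_k}$, up to $O(1)$. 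That last sum is exactly the one estimated in \lemref{li(x)}, so dividing by $\frac{e^{ht}}{ht}$ and passing to $\fakelim$ gives $\frac{1}{e^{h\epsilon}\Ced} \le \fakelim_{t \to \infty} \frac{\sum_{k=0}^{n} \CGinset{U}(s_k - \epsilon,\, s_k + \epsilon)}{e^{ht}/ht} \le e^{h\epsilon}\Ced$.

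For the overlap sum, fix an auxiliary $\eta \in (0, \epsilon]$; then $\Aset(v_0, \eta, \delta) \subseteq \Aed \subseteq U$ and $C_{\eta,\delta} \le \Ced$ (as $r \mapsto C_{r,\delta}$ is increasing), so \lemref{ConjGU asymptotics} applies with $\eta$ in place of $\epsilon$, giving $\CGinset{U}(t - 2j\epsilon,\, t - 2j\epsilon) \le \CGinset{U}(t - 2j\epsilon - \eta,\, t - 2j\epsilon + \eta) \le (1 + o(1))\, \Ced\, 2\eta\, \frac{e^{h(t - 2j\epsilon)}}{t - 2j\epsilon}$ uniformly over the blocks with $t - 2j\epsilon$ large (the boundedly many remaining blocks contributing $O(1)$). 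Summing over $j$, and using that the substitution $t' = t - \epsilon$, $k = j - 1$ turns $\sum_{j=1}^{n} 2\epsilon\, \frac{e^{h(t - 2j\epsilon)}}{t - 2j\epsilon}$ into (at most) a \lemref{li(x)} sum for $t'$, hence into something $\le (1 + o(1))\, e^{h\epsilon}\, \frac{e^{ht'}}{ht'} \le (1 + o(1))\, e^{h\epsilon}\, \frac{e^{ht}}{ht}$, shows the overlap sum is at most $(1 + o(1))\, \frac{e^{h\epsilon}\Ced}{\epsilon}\, \eta\, \frac{e^{ht}}{ht} + O(1)$. As $\eta > 0$ is arbitrary while the overlap sum does not depend on $\eta$, this forces $\limsup_{t \to \infty} \frac{1}{e^{ht}/ht}\sum_{j=1}^{n} \CGinset{U}(t - 2j\epsilon,\, t - 2j\epsilon) = 0$. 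Combining this with the previous paragraph in the displayed identity yields $\frac{1}{e^{h\epsilon}\Ced} \le \fakelim_{t \to \infty} \frac{\CGinset{U}(0,t)}{e^{ht}/ht} \le e^{h\epsilon}\Ced$.

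The main obstacle is purely bookkeeping: setting up the counting identity with the right overlap term, and handling the boundedly many blocks near $k = n$ whose centres $s_k$ are small (or negative), which is where one copies the reduction from the proof of \lemref{standard limiting process}. One should also note that invoking \lemref{ConjGU asymptotics} (both for $\epsilon$ and for $\eta$) requires $\delta$ to be a point of continuity of $r \mapsto m(\Aset_r)$, so the hypothesis ``$\Aed$ is a continuity set for $m$'' above should be understood as ``$\delta$ is chosen as in \lemref{ConjGU asymptotics}'', which in particular makes $\Aed$ a continuity set.
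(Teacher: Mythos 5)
Your proof is correct, and its skeleton is the same as the paper's: decompose $[0,t]$ into the $2\epsilon$-blocks centered at $t-(2k+1)\epsilon$, apply \lemref{ConjGU asymptotics} block-by-block (with the usual trimming of the boundedly many blocks whose centers are small), and compare against the sum estimated in \lemref{li(x)}. The one place you genuinely diverge is the treatment of the endpoint overlaps. The paper never writes your exact inclusion--exclusion identity: it bounds $\CGinset{U}(0,t)$ \emph{below} by a sum of pairwise disjoint blocks of half-width $\alpha<\epsilon$ and \emph{above} by the covering sum of full blocks, and then recovers the constant $\Ced$ in the lower bound by letting $\alpha\to\epsilon^{-}$, using that $\alpha\mapsto\Cad$ is continuous. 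You instead keep full-width blocks, isolate the overlap term $\sum_j \CGinset{U}(t-2j\epsilon,\,t-2j\epsilon)$ exactly, and kill it by covering each grid point with an interval of half-width $\eta$, invoking \lemref{ConjGU asymptotics} with parameter $\eta$ (legitimate, since $\Aset(v_0,\eta,\delta)\subseteq\Aed\subseteq U$, $C_{\eta,\delta}\le\Ced$, and the continuity points of $r\mapsto m(\Aset_r)$ do not depend on the first parameter), and letting $\eta\to 0$. Both mechanisms give the same constants; the paper's shrinking trick is slightly leaner (no auxiliary parameter, no estimate on point masses), while yours has the mild bonus of showing directly that conjugacy classes with translation length exactly at the grid points are asymptotically negligible. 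Your closing remarks---reading the hypothesis ``$\Aed$ is a continuity set'' as ``$\delta$ is a continuity point of $r\mapsto m(\Aset_r)$'' so that \lemref{ConjGU asymptotics} applies, and replacing $e^{hx}/x$ by a positive nondecreasing function agreeing with it for large $x$ when quoting \lemref{standard limiting process}---match the paper's intent and, if anything, are more careful about the last few summands than the paper itself.
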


\begin{proof}
By \lemref{ConjGU asymptotics}, for all $\alpha \in (0, \epsilon]$ and $U$ such that $\Aed \subseteq U \subseteq SX$, we have
\begin{equation*}
\frac{1}{\Cad} \le \fakelim_{t \to \infty} \frac{\CGinset{U}(\tpma)}{2\alpha e^{ht}/t} \le \Cad,
\end{equation*}
and therefore
\begin{equation*}
\frac{1}{\Cad} \le \fakelim_{t \to \infty} \cfrac%
{\sum_{k = 0}^{\floor{\frac{t}{2\epsilon}}} {\CGinset{U} \big( t - (2k+1)\epsilon - \alpha, t - (2k+1)\epsilon + \alpha \big)}}%
{\sum_{k = 0}^{\floor{\frac{t}{2\epsilon}}} 2\alpha \cfrac{e^{h(t - (2k+1) \epsilon)}}{t - (2k+1) \epsilon}}
\le \Cad.
\end{equation*}
Since for all $\alpha \in (0, \epsilon)$,
\begin{align*}
&\sum_{k = 0}^{\floor{\frac{t}{2\epsilon}}} {\CGinset{U} \big( t - (2k+1)\epsilon - \alpha, t - (2k+1)\epsilon + \alpha \big)} \\
&\le
\CGinset{U}(0,t)
\le
\sum_{k = 0}^{\floor{\frac{t}{2\epsilon}}} {\CGinset{U} \big( t - (2k+2)\epsilon, t - 2k\epsilon \big)},
\end{align*}
letting $\alpha \to \epsilon$ from below gives us
\begin{equation*}
\frac{1}{\Ced}
= \lim_{\alpha \to \epsilon^-} \frac{1}{\Cad}
\le \fakelim_{t \to \infty} \cfrac%
{\CGinset{U}(0,t)}%
{\sum_{k = 0}^{\floor{\frac{t}{2\epsilon}}} 2\epsilon \cfrac{e^{h(t - (2k+1) \epsilon)}}{t - (2k+1) \epsilon}}
\le \Ced.
\end{equation*}
Thus by \lemref{li(x)},
\begin{equation*}
\frac{1}{C} \cdot \frac{1}{\Ced} \le \fakelim_{t \to \infty} \frac{\CGinset{U}(0,t)}{e^{ht}/ht} \le C \cdot \Ced,
\end{equation*}
where $C = e^{h \epsilon}$.
\end{proof}

\begin{theorem}					\label{main counting}
Let $\G$ be a group acting freely geometrically on a proper, geodesically complete $\CAT(0)$ space $X$ with rank one axis.
Let $U \subseteq SX$ contain a nonempty open set.
Then
\begin{equation*}
\lim_{t \to \infty} \frac{\CGpinset{U}(0,t)}{e^{ht}/ht} = \lim_{t \to \infty} \frac{\CGinset{U}(0,t)}{e^{ht}/ht} = 1.
\end{equation*}
\end{theorem}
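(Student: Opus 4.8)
The plan is to derive both limits from \lemref{ConjGU asymptotics from zero} by driving the multiplicative error terms $\Ced$ and $e^{h\epsilon}$ down to $1$ through shrinking $\epsilon$ and $\delta$, and then to transfer the asymptotics for all conjugacy classes to the primitive ones via \corref{prime corollary}.

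First I would observe that, since $\G$ acts cocompactly on $X$ and the footpoint projection $\pi \colon SX \to X$ is proper, $\G$ also acts cocompactly on $SX$: if $K_X \subseteq X$ is compact with $\G K_X = X$, then $K := \pi^{-1}(K_X)$ is compact and $\G K = SX$. Hence every $U \subseteq SX$ automatically satisfies the hypothesis $U \subseteq \G K$ needed in \lemref{most are prime} and \corref{prime corollary}. Consequently it suffices to prove $\lim_{t\to\infty}\frac{\CGinset{U}(0,t)}{e^{ht}/ht} = 1$; the primitive version then follows from $\CGpinset{U}(0,t) \le \CGinset{U}(0,t)$ together with the ratio $\CGpinset{U}(0,t)/\CGinset{U}(0,t) \to 1$ supplied by \corref{prime corollary}.

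For the lower bound I would use that $U$ contains a nonempty open set $V$. By \propref{zero-width geodesics are dense} there is a zero-width $v_0 \in V$, and by \lemref{Ballmann's lemma} the set $\Aset(v_0,\epsilon,\delta)$ lies in $\Reg \cap V \subseteq U$ once $\epsilon,\delta$ are small enough. Using \lemref{diam A} together with $\diam CS(v_0) = 0$, I can choose a sequence $\epsilon_n \to 0$ and, for each $n$, a point of continuity $\delta_n$ of the nondecreasing function $r \mapsto m(\Aset_r)$ (these abound, the discontinuities being countable) small enough that $\diam \Aset_{\epsilon_n,\delta_n} \to 0$ and all the side conditions of \lemref{ConjGU asymptotics from zero} (in particular $\epsilon_n < \frac{1}{3}\injrad(\modG X)$ and $\diam \Aset_{\epsilon_n,\delta_n} < \injrad(\modG{SX})$) hold. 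Applying \lemref{ConjGU asymptotics from zero} with this very $U$ gives $\liminf_{t\to\infty}\frac{\CGinset{U}(0,t)}{e^{ht}/ht} \ge \frac{1}{e^{h\epsilon_n}\,C_{\epsilon_n,\delta_n}}$ for each $n$; since $C_{\epsilon_n,\delta_n} = e^{6h\diam\Aset_{\epsilon_n,\delta_n}} \to 1$ and $e^{h\epsilon_n} \to 1$, letting $n \to \infty$ yields $\liminf_{t\to\infty}\frac{\CGinset{U}(0,t)}{e^{ht}/ht} \ge 1$.

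For the upper bound I would run the same construction but apply \lemref{ConjGU asymptotics from zero} with $U$ replaced by all of $SX$ (fixing any zero-width $v_0 \in SX$ and an analogous sequence $\epsilon_n,\delta_n$), obtaining $\limsup_{t\to\infty}\frac{\CGinset{SX}(0,t)}{e^{ht}/ht} \le e^{h\epsilon_n} C_{\epsilon_n,\delta_n} \to 1$. Since $\CGinset{U}(0,t) \le \CGinset{SX}(0,t)$, this gives $\limsup_{t\to\infty}\frac{\CGinset{U}(0,t)}{e^{ht}/ht} \le 1$, so the limit exists and equals $1$; combined with \corref{prime corollary} this proves the theorem. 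The only delicate point — really the only thing requiring care — is the interchange of the limits in $t$ and in $(\epsilon,\delta)$: one must choose the sequences so that simultaneously $\Aset(v_0,\epsilon_n,\delta_n) \subseteq U$, each $\delta_n$ is a continuity point of $r \mapsto m(\Aset_r)$, the injectivity-radius constraints hold, and $\diam\Aset_{\epsilon_n,\delta_n} \to 0$ — this last being exactly where the zero width of $v_0$ enters, via \lemref{diam A}, so that $C_{\epsilon_n,\delta_n} \to 1$.
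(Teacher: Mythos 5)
Your proposal is correct and follows essentially the same route as the paper: choose a zero-width $v_0$ in the interior of $U$ (via \propref{zero-width geodesics are dense}), let $\epsilon,\delta \to 0$ so that $e^{h\epsilon}\Ced \to 1$ (using \lemref{diam A} and zero width) in \lemref{ConjGU asymptotics from zero}, and then pass to primitive classes via \corref{prime corollary}, with cocompactness supplying the hypothesis $U \subseteq \G K$. Your detour through $U = SX$ for the upper bound is harmless but unnecessary, since \lemref{ConjGU asymptotics from zero} already gives both bounds for $U$ itself.
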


\begin{proof}
Let $v_0$ be a zero-width geodesic in the interior of $U$.
Then
$\lim_{\epsilon,\delta \to 0} \Ced = 1$, so the second equality holds by \lemref{ConjGU asymptotics from zero}.
The first equality holds by \corref{prime corollary}.
\end{proof}

In particular, this holds for $U = \Reg$ and $U = SX$:
\begin{align*}
&\lim_{t \to \infty} \frac{\CGpinset{\Reg}(0,t)}{e^{ht}/ht} = \lim_{t \to \infty} \frac{\CGinset{\Reg}(0,t)}{e^{ht}/ht} \\
= &\lim_{t \to \infty} \frac{\CGp(0,t)}{e^{ht}/ht} = \lim_{t \to \infty} \frac{\CG(0,t)}{e^{ht}/ht} = 1.
\end{align*}
This proves \thmref{main}.

\section{Finish}

Much of the proof of \thmref{main counting} goes through without assuming cocompactness.
In particular, what we used was equidistribution (the conclusion of \lemref{alternate equidistribution on A}) for the second equality and \corref{prime corollary} for the first.

\bibliographystyle{amsplain}
\bibliography{refs}

\end{document}